\documentclass[12pt]{article}
\usepackage{amsmath,amscd,amsthm,amssymb,amsfonts,mathtools}
\usepackage{setspace}
\usepackage{ytableau}
\usepackage{mathrsfs}
\usepackage{graphics}
\usepackage[new]{old-arrows}
\usepackage{hyperref}
\usepackage{epsfig}
\usepackage[utf8]{inputenc}
\usepackage[T1]{fontenc}
\usepackage[version=4]{mhchem}
\usepackage{xfrac} 
\usepackage{stmaryrd}
\usepackage{url}
\usepackage{tikz}
\usepackage{mathdots}
\usepackage{yhmath}
\usepackage{cancel}
\usepackage{color}
\usepackage{siunitx}
\usepackage{array}
\usepackage{authblk}
\usepackage{multirow}
\usepackage{gensymb}
\usepackage{enumitem}
\usepackage{tabularx}
\usepackage{extarrows}
\usepackage{booktabs}
\usetikzlibrary{decorations.markings}
\usetikzlibrary{fadings}
\usetikzlibrary{patterns}
\usetikzlibrary{shadows.blur}
\usetikzlibrary{shapes}
\usepackage{cleveref}
\usepackage{graphicx}
\usepackage{xcolor}
\usepackage{float}
\usepackage[top=1.5in,bottom=1in,left=1.25in,right=1.25in]{geometry}
\usepackage{tikz-cd}
\usepackage{braket}
\usepackage{custom_macros}
\usepackage{adjustbox}
\usetikzlibrary{positioning,arrows.meta}

\usepackage[normalem]{ulem}

\usepackage[framemethod=TikZ]{mdframed} 

\newmdenv[
  skipabove=10pt,
  skipbelow=10pt,
  backgroundcolor=blue!3,
  linecolor=blue!70!black,
  linewidth=1pt,
  roundcorner=10pt,
  leftmargin=-5pt,
  rightmargin=-5pt,
  innerleftmargin=10pt,
  innerrightmargin=10pt,
  innertopmargin=10pt,
  innerbottommargin=10pt
]{theorembox}

\DeclareMathOperator{\Hom}{Hom}

\DeclareMathOperator{\gl}{\mathfrak{gl}}

\DeclareMathOperator{\End}{End}

\DeclareMathOperator{\Spec}{Spec}

\DeclareMathOperator{\depth}{depth}
\DeclareMathOperator{\codim}{codim}

\def\CV{{\cal V}}

\newcommand{\bc}{\mathsf{bc}}

\newcommand{\jet}{J_{\infty}}
\newcommand{\CVs}{\CV^{\sharp}}

\newcommand{\JBRST}{\mathcal{J}}

\numberwithin{equation}{section}

\title{Chiralization of Quiver Varieties\bigskip}

\author[1]{Ioana Coman\thanks{icomanl@ed.ac.uk}}
\author[2]{Myungbo Shim\thanks{mbshim@tsinghua.edu.cn}}
\author[3,4,5]{Masahito Yamazaki\thanks{masahito.yamazaki@ipmu.jp}}
\author[6,7]{Yehao Zhou\thanks{yehao.zhou@simis.cn}}

\affil[1]{School of Mathematics and Maxwell Institute for Mathematical Sciences, University of Edinburgh, Edinburgh EH9 3FD, UK}
\affil[2]{Yau Mathematical Sciences Center, Tsinghua University, Beijing 100084, China}
\affil[3]{Kavli Institute for the Physics and Mathematics of the Universe (WPI), University of Tokyo, Kashiwa, Chiba 277-8583, Japan}
\affil[4]{Graduate School of Physics, University of Tokyo, Tokyo 113-0033, Japan}
\affil[5]{Trans-Scale Quantum Science Institute, University of Tokyo, Tokyo 113-0033, Japan}
\affil[6]{Center for Mathematics and Interdisciplinary Sciences, Fudan University, Shanghai 200433, China}
\affil[7]{Shanghai Institute for Mathematics and Interdisciplinary Sciences (SIMIS), Shanghai 200433, China}

\date{}

\begin{document}

\maketitle
\setstretch{1.2}

\begin{abstract}

Given a quiver $Q$ with gauge dimension $\mathbf v$ and framing dimension $\mathbf w$, one can define the extended quiver variety $\widetilde{\mathcal M}(\mathbf v,\mathbf w)$, which is a smooth family of deformations of the Nakajima quiver variety $\mathcal M(\mathbf v,\mathbf w)$. In this paper we discuss two vertex algebras which chiralize the geometry $\widetilde{\mathcal M}(\mathbf v,\mathbf w)$. We construct a sheaf of $\hbar$-adic vertex superalgebras $\mathscr D^{\mathrm{ch}}_{\widetilde{\mathcal M}(\mathbf v,\mathbf w),\hbar}$ on $\widetilde{\mathcal M}(\mathbf v,\mathbf w)$ which quantizes the jet bundle of $\widetilde{\mathcal M}(\mathbf v,\mathbf w)$, and define a vertex algebra $\mathsf D^{\mathrm{ch}}(\widetilde{\mathcal M}(\mathbf v,\mathbf w))$ to be the $\hbar=1$ specialization of the $\mathbb C^{\times}$-finite part of the vector space of global sections $\Gamma(\widetilde{\mathcal M}(\mathbf v,\mathbf w), \mathscr D^{\mathrm{ch}}_{\widetilde{\mathcal M}(\mathbf v,\mathbf w),\hbar})$.
We define another vertex superalgebra $\mathcal V(\mathbf v,\mathbf w)$
by BRST reduction of the tensor product of the $\beta\gamma bc$-system and Heisenberg VOA associated to the quiver $Q$, and show that there exists a natural vertex superalgebra map from $\mathcal V(\mathbf v,\mathbf w)$ to $\mathsf D^{\mathrm{ch}}(\widetilde{\mathcal M}(\mathbf v,\mathbf w))$. Under certain technical assumptions, we prove that the negative degree BRST cohomologies of the tensor product of $\beta\gamma bc$-systems and Heisenberg VOA associated to the quiver $Q$ are zero, and under stronger assumptions, that the aforementioned vertex superalgebra map is injective. 

Physically, the vertex superalgebra $\mathcal V(\mathbf v,\mathbf w)$ is closely related to the boundary VOA of the H-twisted 3D $\mathcal N=4$ quiver gauge theory associated to the quiver $Q$ with gauge and framing
dimension vectors $\mathbf v$ and $\mathbf w$.

\end{abstract}

{\tableofcontents}

\newpage 

\section{Introduction}\label{sec:intro}

Since the axiomatic definition of lattice vertex algebras by R. Borcherds \cite{Borcherds} in his work on the Monstrous Moonshine conjecture, and that of vertex operator algebras (VOAs) by I. Frenkel, J. Lepowsky and A. Meurman \cite{MR0996026}, 
vertex algebra research has established a wealth of relations to different areas of mathematics and thereby proven to be immensely prolific. 
Among these we find the representation theory of Lie algebras, geometry --- notably algebraic geometry of curves, moduli spaces and singular varieties; Poisson geometry and quantization of singular varieties; derived geometry and homological algebra involving BRST cohomology --- all related to the present work, as well as category theory and number theory (through vertex algebra modules), integrable systems, low-dimensional topology and mathematical physics applications in string and quantum field theory. While there exists no single universal construction of vertex algebras, a toolkit of complementary constructions has instead been developed over the past decades, with each approach adapted to the particular origin -- geometric, algebraic or physical -- of the vertex algebra. 

\medskip
 
\begin{figure}[ht] \centering
\begin{tikzpicture}[
    node distance=7cm,
    every node/.style={
        draw,
        rectangle,
        rounded corners=2pt,
        align=center,
        inner sep=6pt,
        font=\small,
        text width=6.7cm
    },
    dbl/.style={
        <->,
        dashed,
        thin
    }
]

\node (top) at (0,3.2)
{
Representation theory\\
of Lie algebras\\[2pt]
{\small (affine Kac--Moody algebras)} \\[2pt]
Free boson / fermion; Heisenberg VOA 
};

\node (left) at (-5,-2.5)
{
Chiral quantization\\
of Poisson schemes
};

\node (right) at (5,-2.5)
{
Physical (S)QFT\\
origin of vertex algebras
};

\node[text width=5cm] (center) at (0,0.2)
{
Symmetry-based reductions\\[2pt]
{\small (BRST, cosets, orbifolds)}
};

\draw[dbl] ([xshift=-1pt]top.south west) -- ([xshift=-1pt]left.north);
\draw[dbl] ([xshift=1pt]top.south east) -- ([xshift=1pt]right.north);
\draw[dbl] (left) -- (right);

\draw[dbl] (center) -- (top);
\draw[dbl] (center) -- (left);
\draw[dbl] (center) -- (right);

\end{tikzpicture}
\caption{Interplay within a network of relations of vertex algebra constructions from representation theory, chiral quantization and symmetry-based reductions, as well as physical realizations.} \label{fig:quiver-voa-diagram} 
\end{figure}

\medskip

A simplified network containing a subset of these relations is presented in Figure \ref{fig:quiver-voa-diagram}. The top node contains elementary algebraic constructions which will represent fundamental building blocks of the vertex algebras constructed in this paper --- specifically the free field realizations of symplectic boson or fermion VOAs, as well as Heisenberg algebras. At one level higher in complexity, one encounters affine Kac–Moody or current-algebra constructions, whose underlying vector spaces are vacuum modules of affine Lie algebras and which often admit realizations inside free-field VOAs. In contrast, the central node hosts more sophisticated constructions which start from a \emph{parent} vertex algebra defined using the fundamental building blocks, and which are based on reductions through orbifolds, coset / commutants, or via BRST / quantum Hamiltonian reductions. BRST reduction, in particular, implements a derived semi-infinite quotient of a parent vertex algebra by a current algebra using a ghost system and BRST differential, and its cohomology realizes the corresponding reduced vertex algebra, such as affine $\mathcal{W}$-algebras \cite{Feigin:1990pn}, \cite{kac2003quantum}, \cite{arakawa2015localization}. \cite{arakawa2017introduction}, \cite{MR1849359} and \cite{arakawa2024arc} are excellent recent resources with pedagogical presentations.

\medskip 

A complementary constructive approach towards vertex algebras comes from geometry, from the left-most node in Figure \ref{fig:quiver-voa-diagram} and the chiral quantization of affine Poisson varieties or Poisson schemes. Such a classical geometry represents the associated variety of the vertex algebra in question, where the arc space of the associated variety captures geometrical aspects of the vertex algebra through the associated graded algebra. The monograph \cite{arakawa2024arc} is currently the most comprehensive and up to date exposition emphasizing this geometrical aspect of vertex algebras. The relation between left-most and central nodes of Figure \ref{fig:quiver-voa-diagram}, which has been explored for instance in \cite{arakawa2015localization, kuwabara2017vertex, arakawa2023hilbert}, will play a key role in the work presented in this paper.

\medskip

The third perspective in the right-most node in Figure \ref{fig:quiver-voa-diagram} is from the physical origin of vertex algebras as distinguished algebras of operators inside quantum field theories in three dimensions or higher. An established example is the 3D/2D correspondence between Chern-Simons gauge theory with compact gauge group at integer level, placed on a three-dimensional manifold with boundary, and Wess-Zumino-Witten algebra supported on the boundary \cite{Witten:1988ze, Elitzur:1989nr}. A modern analog of this correspondence is realized within $\mathcal{N}=4$ supersymmetric quantum field theories in three dimensions on manifolds with a transverse holomorphic foliation structure, whose topologically twisted sector supports boundary vertex algebra \cite{Gaiotto:2016wcv, Costello:2018fnz}. These new correspondences have been studied intensely in recent years, in particular in the context of 3D $\mathcal{N}=4$ Abelian gauge theories \cite{Beem:2023dub, Ferrari:2024dst, Ballin:2023rmt}, the so-called rank-0 theories \cite{Gang:2023rei, Ferrari:2023fez} which have zero-dimensional moduli spaces of vacua, 3D $\mathcal{N}=4$ Chern-Simons-matter theories \cite{Creutzig:2021ext} which admit, upon a topological twist, holomorphic boundary conditions supporting logarithmic vertex operator algebras, and a systematic study of 3D $\mathcal{N}=4$ non-Abelian quiver gauge theories \cite{Coman:2023xcq}. From this physical perspective, the role of the classical Poisson geometry from Figure \ref{fig:quiver-voa-diagram}, which is the associated variety of the vertex algebra under consideration and related to this via chiral quantization, is played by (a branch of) the moduli space of vacua of three-dimensional theory. This type of analysis has in turn uncovered surprising vertex algebra dualities which had been motivated physically and related to 3D mirror symmetry \cite{Creutzig:2024ljv, Ballin:2023rmt}, and established the existence of a braided tensor structure on categories of line operators in the three-dimensional quantum field theory, which end on the vertex algebra-supporting boundary, and which are viewed as non-semisimple (derived bounded) categories of modules for the boundary vertex algebras \cite{Creutzig:2021ext, Ballin:2023rmt}. Furthermore, the chiral quantization construction of boundary vertex algebras \cite{Coman:2023xcq} has allowed for their detailed local analysis, and the study of relations among VOAs via embeddings and free-field realizations.

\medskip

In the following subsection, we will give a brief introduction to the precise context in which we construct and analyze vertex algebras in this paper, specifically via the \emph{global} and respectively \emph{local} chiral quantizations of certain Poisson schemes defined from representation spaces of quivers. We will first state in a somewhat informal manner the key mathematical results of the paper --- which will ultimately characterize the precise relation between vertex algebras resulting from the local and global chiral quantization procedures --- together with the logic flow which connects the core statements and the intuition which lies behind them, deferring the presentation of the rigorous statements and their proofs to the subsequent sections.

\subsection{Core mathematical results}

\cite{Coman:2023xcq} initiated a systematic study of vertex operator algebras constructed as chiralizations, or chiral quantizations \cite{arakawa2024arc}, of quiver varieties. In this context, features of this classical geometry are reflected in the resulting vertex algebra structure, with the quiver variety recovered as the associated variety of the VOA. The input data for the chiralization procedure is comprised of a quiver $Q$ and dimension vectors $\mathbf v, \, \mathbf w$, which define an associated Cartan matrix $\mathsf C$ and reductive Lie group $\mathrm G$. 

\medskip

The data $(Q,\mathbf v,\mathbf w)$ then determines the representation space $T^*\mathrm{Rep}(Q,\mathbf v,\mathbf w)$ and the extended representation space $\mathfrak{R}(Q)=T^*\mathrm{Rep}(Q,\mathbf v,\mathbf w)\times\mathcal{Z}$ of the quiver, on which the group $\mathrm G$ has a Hamiltonian action with moment map $\mu:T^*\mathrm{Rep}(Q,\mathbf v,\mathbf w)\to\mathfrak g^\ast$ and extended moment map $\widetilde\mu:\mathfrak{R}\to\mathfrak g^\ast$, where $\mathfrak g^\ast$ is the dual Lie algebra of $\mathrm G$ and $\mathcal{Z}$ is a particular linear subspace of $\mathfrak g^\ast$. We will assume throughout the paper that the quiver data satisfies a technical \emph{balance} condition, specifically that each component of $\mathbf w-\mathsf{C}\mathbf v$ is a non-negative integer. The extended Nakajima quiver variety $\widetilde{\mathcal{M}}(Q)$ is then defined as a Hamiltonian reduction $\widetilde{\mathcal{M}}(Q)=\widetilde\mu^{-1}(0)^{\mathrm{st}}\sslash  \mathrm{G}$ from stable quiver representations. For notational ease, we will henceforth drop the explicit quiver dependence when referring to the extended representation space $\mathfrak{R}$ of the quiver and Nakajima quiver variety $\widetilde{\mathcal{M}}$. 

\medskip 

The chiralization procedure naturally promotes the extended representation space $\mathfrak{R}$ of the quiver to a vertex superalgebra $\mathcal{D}^{\mathrm{ch}}(\mathfrak{R})$ given by the tensor product of a collection of $\beta\gamma\mathsf{bc}$ systems and Heisenberg algebras, with a chiral counterpart $\widetilde\mu^{\mathrm{ch}}$ of the moment map $\widetilde\mu: V^{-\kappa_{\mathfrak{g}}}\to \mathcal{D}^{\mathrm{ch}}(\mathfrak{R})$ where $\kappa_{\mathfrak{g}}$ is the Killing form on $\mathfrak{g}$. The quantum counterpart of the Hamiltonian reduction defining the Nakajima quiver variety $\widetilde{\mathcal{M}}$ is a BRST reduction procedure which computes BRST cohomology, with two vertex superalgebras thus constructed in \cite{Coman:2023xcq}:

\medskip

\begin{itemize}
    \item $\mathcal{V}=H^{\infty/2+0}_{\mathrm{BRST}}(\mathfrak{g},\mathcal{D}^{\mathrm{ch}}(\mathfrak{R}))$, defined by a \emph{global} BRST reduction from the vertex superalgebra $\mathcal{D}^{\mathrm{ch}}(\mathfrak{R})$ which is the chiralization of the extended representation space $\mathfrak{R}$ of the quiver;
    \item $\mathsf D^{\mathrm{ch}}(\widetilde{\mathcal M})$ constructed from the \emph{local} BRST reduction which defines a sheaf of $\hbar$-adic vertex superalgebras $\mathscr D^{\mathrm{ch}}_{\widetilde{\mathcal M},\hbar}=\mathscr{H}^{\infty/2+0}_{\hbar\mathrm{BRST}}(\mathfrak{g},\mathscr D^{\mathrm{ch}}_{\mathfrak{R},\hbar})$ on $\widetilde{\mathcal M}$ from a sheaf of $\hbar$-adic vertex superalgebras $\mathscr D^{\mathrm{ch}}_{\mathfrak{R},\hbar}$ on $\mathfrak{R}$, where $\Gamma(\mathfrak{R},\mathscr D^{\mathrm{ch}}_{\mathfrak{R},\hbar})=\mathcal{D}^{\mathrm{ch}}(\mathfrak{R})_\hbar$ is the $\hbar$-adic completion of $\mathcal{D}^{\mathrm{ch}}(\mathfrak{R})$. The sheaf $\mathscr D^{\mathrm{ch}}_{\widetilde{\mathcal M},\hbar}$ quantizes the jet bundle $J_\infty\widetilde{\mathcal M}$, and the $\hbar=1$ specialization of the $\mathbb C^{\times}$-finite part of its vector space of global sections $\Gamma(\widetilde{\mathcal M}, \mathscr D^{\mathrm{ch}}_{\widetilde{\mathcal M},\hbar})$ defines $\mathsf D^{\mathrm{ch}}(\widetilde{\mathcal M})$.
\end{itemize}

\medskip

At the semi-classical level, we therefore work with the infinite jets --- or arc spaces --- of the extended representation space of the quiver and the Nakajima quiver variety, denoted by $J_\infty\mathfrak{R}$ and $J_\infty\widetilde{\mathcal{M}}$ respectively, and their structure sheaves $\mathcal{O}_{J_\infty\mathfrak{R}}$ and $\mathcal{O}_{J_\infty\widetilde{\mathcal{M}}}$. More precisely, there are four perspectives to consider: \emph{global} vs.\ \emph{local}, and \emph{before} vs.\ \emph{after} BRST reduction, as illustrated in four quadrants of Figure \ref{table:global-VS-local-semiclassical}, with natural relations among the quadrants. 

\bigskip

\begin{figure}[h]
  \centering
\begin{center}
\begin{tikzpicture}[
  label/.style={font=\itshape}
]
\node[] (A) at (0,0) {$\mathcal{O}_{J_\infty \mathfrak R}$};
\node[] (B) at (5,0) {$\mathcal{O}_{J_\infty\widetilde{\mathcal{M}}}$};
\node[] (C) at (-0.0,-2.5) {$J_\infty\mathfrak R$};
\node[] (D) at (5,-2.5) {$J_\infty\widetilde\mu^{-1}(0)^{\mathrm{st}}/J_\infty G$};

\draw[dashed] (2.3,1.9) -- (2.3,-3.5);   
\draw[dashed] (-2.2,1.8) -- (-2.2,-3.5);   
\draw[dashed] (-3.2,-1.25) -- (6.7,-1.25); 
\draw[dashed] (-3.2,1.0) -- (6.7,1.0); 

\draw[<-] (0.0,-0.8) -- (0.0,-1.8);   
\draw[<-] (4.90,-0.8) -- (4.90,-1.8);   
\draw[->] (1.85,-2.5) -- (2.72,-2.5);   
\draw[->] (1.85,0.0) -- (2.72,0.0);   

\node[label] at (0.15,1.5) {before reduction};
\node[label] at (5,1.5) {after reduction};

\node[label, rotate=90] at (-2.7,-0.2) {local};
\node[label, rotate=90] at (-2.7,-2.5) {global};

\node[label] at (3.0,0.0) {\scriptsize (3)};
\node[label] at (3.0,-2.5) {\scriptsize (1)};
\node[label] at (5.3,-0.90) {\scriptsize (4)};
\node[label] at (-0.4,-0.90) {\scriptsize (2)}; 

\node[label] at (2.53,-1.52) {\scriptsize 4};
\node[label] at (2.05,-1.52) {\scriptsize 1};
\node[label] at (2.53,-1.0) {\scriptsize 3};
\node[label] at (2.05,-1.0) {\scriptsize 2}; 

\draw[solid] (1.87,-1.72) -- (1.87,-0.8);   
\draw[solid] (2.75,-1.72) -- (2.75,-0.8);   
\draw[solid] (2.3,-1.72) -- (2.3,-0.8);   
\draw[solid] (1.87,-0.8) -- (2.75,-0.8);   
\draw[solid] (1.87,-1.72) -- (2.75,-1.72);   
\draw[solid] (1.87,-1.25) -- (2.75,-1.25);   

\end{tikzpicture}
\end{center}
\caption{Semiclassical descriptions of the vertex superalgebras associated to the quiver data $(Q,\mathbf v,\mathbf w)$, at the global vs. local level, before vs. after the Hamiltonian reduction procedure, with natural relations connecting the quadrants.} 
\label{table:global-VS-local-semiclassical}
\end{figure}

\bigskip

Quantizing the picture from Figure \ref{table:global-VS-local-semiclassical} lifts this to its chiral analogue, as outlined in Figure \ref{table:global-VS-local-quantization}. Chiralization furthermore lifts functions from $\mathbb{C}[\widetilde{\mathcal{M}}]$ to BRST invariant currents generating the quiver vertex superalgebra $\mathcal{V}$.

\bigskip

\begin{figure}[h]
  \centering
\begin{center}
\begin{tikzpicture}[
  label/.style={font=\itshape}
]

\node[] (A) at (0,0) {$\hbar$-adic VOA $\mathscr D^{\mathrm{ch}}_{\mathfrak R,\hbar}$};
\node[] (B) at (6.1,0) {$\mathscr{D}^{ch}_{\widetilde{\mathcal{M}},\hbar}=\mathscr H^{\infty/2+0}_{\hbar\mathrm{BRST}}(\mathfrak{g},\mathscr D^{\mathrm{ch}}_{\mathfrak R,\hbar})$};
\node[] (C) at (-0.0,-2.5) {VOA $\mathcal{D}^{\mathrm{ch}}(\mathfrak{R})$};
\node[] (D) at (6.1,-2.5) {$\mathcal{V}=H^{\infty/2+n}_{\mathrm{BRST}}(\mathfrak{g},\mathcal{D}^{\mathrm{ch}}(\mathfrak{R}))$};

\draw[dashed] (2.3,1.9) -- (2.3,-3.5);   
\draw[dashed] (-2.2,1.8) -- (-2.2,-3.5);   
\draw[dashed] (-3.2,-1.25) -- (8.9,-1.25); 
\draw[dashed] (-3.2,1.0) -- (8.9,1.0); 

\draw[<-] (0.0,-0.8) -- (0.0,-1.8);   
\draw[<-] (4.50,-0.8) -- (4.50,-1.8);   
\draw[->] (1.85,-2.5) -- (2.72,-2.5);   
\draw[->] (1.85,0.0) -- (2.72,0.0);   

\node[label] at (0.15,1.5) {before reduction};
\node[label] at (5.35,1.5) {after reduction};

\node[label, rotate=90] at (-2.7,-0.2) {local};
\node[label, rotate=90] at (-2.7,-2.5) {global};

\node[label] at (3.0,0.0) {\scriptsize (3)};
\node[label] at (3.0,-2.5) {\scriptsize (1)};
\node[label] at (4.9,-0.90) {\scriptsize (4)};
\node[label] at (-0.4,-0.90) {\scriptsize (2)}; 

\node[label] at (2.53,-1.52) {\scriptsize 4};
\node[label] at (2.05,-1.52) {\scriptsize 1};
\node[label] at (2.53,-1.0) {\scriptsize 3};
\node[label] at (2.05,-1.0) {\scriptsize 2}; 

\draw[solid] (1.87,-1.72) -- (1.87,-0.8);   
\draw[solid] (2.75,-1.72) -- (2.75,-0.8);   
\draw[solid] (2.3,-1.72) -- (2.3,-0.8);   
\draw[solid] (1.87,-0.8) -- (2.75,-0.8);   
\draw[solid] (1.87,-1.72) -- (2.75,-1.72);   
\draw[solid] (1.87,-1.25) -- (2.75,-1.25);   

\end{tikzpicture}
\end{center}
\caption{Global (resp.\ local) descriptions of the VOAs associated to the quiver data $(Q,\mathbf v,\mathbf w)$ before and after BRST reduction.}
\label{table:global-VS-local-quantization}
\end{figure}

\bigskip

From their construction, there exists a natural vertex superalgebra map $\mathcal V \to \mathsf D^{\mathrm{ch}}(\widetilde{\mathcal M})$.
One of the main mathematical results of this paper provides the following insights into the nature of this map for families of quivers, under certain technical assumptions on the quiver data $(Q,\mathbf v, \mathbf w)$. 

\begin{theorembox}
        \begin{theorem}\label{thm:vanishing and embedding-Intro} 
        For quiver data $(Q,\mathbf v, \mathbf w)$ chosen such that each component of $\mathbf w-\mathsf C\mathbf v$ is a non-negative integer, under some additional conditions, we have
        \begin{enumerate}
            \item $H^{\infty/2+n}_{\mathrm{BRST}}(\mathfrak{g},\mathcal{D}^{\mathrm{ch}}(\mathfrak{R}))$ vanishes for $n<0$,
            \item the natural vertex superalgebra map $\CV\to \mathsf D^{\mathrm{ch}}(\widetilde\CM)$ is injective.
        \end{enumerate}
        \end{theorem}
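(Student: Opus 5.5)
The strategy is to reduce both statements to the semiclassical picture of Figure \ref{table:global-VS-local-semiclassical}, using a filtration argument. We equip $\mathcal{D}^{\mathrm{ch}}(\mathfrak R)\otimes\bigwedge^{\infty/2+\bullet}(\mathfrak g)$ with the Li (or conformal-weight/$\hbar$-adic) filtration, which is compatible with the BRST differential. The associated graded complex is then the classical BRST (Koszul--Chevalley--Eilenberg) complex of the arc space. It computes the derived Hamiltonian reduction of $J_\infty\mathfrak R$ by $J_\infty \mathrm G$ at the moment map $J_\infty\widetilde\mu$, namely the Koszul complex of $J_\infty\widetilde\mu$ on $\mathcal O(J_\infty\mathfrak R)$ tensored with the Lie algebra cochains of $J_\infty\mathfrak g$.

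\textbf{Step 1 (vanishing for $n<0$).} Negative BRST degree in the associated graded is governed by Koszul homology of the components of $J_\infty\widetilde\mu$, in the spirit of Kostant--Sternberg. The key technical input, which is where the ``additional conditions'' enter, is that $J_\infty\widetilde\mu^{-1}(0)$ is a complete intersection in $J_\infty\mathfrak R$ of the expected codimension $\dim\mathfrak g\cdot\infty$. More precisely, each finite jet truncation $J_m\widetilde\mu^{-1}(0)$ should have codimension $(m+1)\dim\mathfrak g$.

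By Musta\c{t}\u{a}'s criterion, this holds if $\widetilde\mu^{-1}(0)$ is a local complete intersection with rational (or at least canonical) singularities. Equivalently, it holds if $\dim J_m\widetilde\mu^{-1}(0)=(m+1)\dim\widetilde\mu^{-1}(0)$ for all $m$. We would establish this via a Crawley-Boevey-type dimension count, stratifying $\widetilde\mu^{-1}(0)$ by representation type. The balance condition that $\mathbf w-\mathsf C\mathbf v$ is nonnegative (together with flatness of $\widetilde\mu$, coming from the extended $\mathcal Z$ direction) should make each stratum satisfy the required inequality. Granting the complete intersection property, the Koszul complex is acyclic in negative degrees. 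Hence the associated graded cohomology vanishes for $n<0$, and a spectral sequence argument (convergent, since the filtration is exhaustive and bounded in each conformal weight) gives $H^{\infty/2+n}_{\mathrm{BRST}}=0$ for $n<0$.

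\textbf{Step 2 (injectivity).} In degree $0$, the same spectral sequence identifies $\mathrm{gr}\,\mathcal V$ with a subquotient of $\big(\mathcal O(J_\infty\widetilde\mu^{-1}(0))\big)^{J_\infty\mathfrak g}$. Under the stronger hypotheses, the degree-$0$ collapse gives $\mathrm{gr}\,\mathcal V\cong\mathcal O(J_\infty\widetilde\mu^{-1}(0))^{J_\infty \mathrm G}$. Meanwhile $\mathsf D^{\mathrm{ch}}(\widetilde{\mathcal M})$ has associated graded embedding into $\Gamma(J_\infty\widetilde{\mathcal M},\mathcal O)$, using the sheaf version of the same vanishing on the free locus. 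The map $\mathcal V\to\mathsf D^{\mathrm{ch}}(\widetilde{\mathcal M})$ is filtered, and on associated graded it is restriction of invariant functions from $J_\infty\widetilde\mu^{-1}(0)$ to the open subset $J_\infty(\widetilde\mu^{-1}(0)^{\mathrm{st}})$, which then descends to $J_\infty\widetilde{\mathcal M}$. Injectivity of a filtered map follows from injectivity on gr. So it suffices that $J_\infty(\widetilde\mu^{-1}(0)^{\mathrm{st}})$ is dense in $J_\infty\widetilde\mu^{-1}(0)$, and that the latter is reduced, i.e. irreducible and reduced as a scheme.

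By Musta\c{t}\u{a} again, irreducibility of the arc space holds when $\widetilde\mu^{-1}(0)$ is an irreducible lci with rational singularities. It reduces to a strict codimension inequality for the unstable locus in every jet level. The extra $\mathcal Z$-deformation helps here, since generic fibres of $\widetilde\mu$ consist of free orbits.

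\textbf{Main obstacle.} The crux is the jet-scheme dimension estimates: proving $\dim J_m\widetilde\mu^{-1}(0)\le (m+1)\dim\widetilde\mu^{-1}(0)$, with strict inequality off the stable locus in the irreducibility step. I would first try the stratification by decomposition into simple representations, bounding each stratum's jet fibres via the $\mathrm{Ext}$/$p(\alpha)$ function of Crawley-Boevey. The technical assumptions would be chosen exactly so that these inequalities hold.
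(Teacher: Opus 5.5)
Your two steps follow the paper's architecture. Step~1 is Theorems~\ref{lem:vanishingHigherDegCoh-SemiClassical} and~\ref{thm:vanishingHigherDegCoh}: jet-flatness makes $\{\partial^n\widetilde\mu^*(g_i)\}$ a regular sequence, so the Koszul part is acyclic in negative degree. Step~2 is Lemma~\ref{lem:grH-vs-Hgr} together with Theorem~\ref{thm:semiclassical-injectivity}: Musta\c{t}\u{a} gives irreducible jet schemes, and one then restricts to the dense stable locus.

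There is, however, a genuine gap: you treat $\mathfrak R$ as an ordinary variety. In the paper the balance condition is not a geometric input. It is what allows one to define $U$ with $\dim U_i=u_i=w_i-(\mathsf C\mathbf v)_i$, i.e.\ the fermions $\Phi\Psi$ that bring the chiral moment map to level $-\kappa_{\mathfrak g}$, as required for $\mathcal Q^2=0$. So $\mathfrak R=T^*\mathrm{Rep}(\mathbf v,\mathbf w|\mathbf u)\times\mathcal Z$ has odd coordinates, and the relevant moment map is $\widetilde\mu_{\mathsf s}=\widetilde\mu+\sum_i\psi_i\phi_i$.

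Your Step~2 criterion then fails as stated. $J_n\widetilde\mu_{\mathsf s}^{-1}(0)$ is not reduced when $\mathbf u\neq0$, since odd sections are nilpotent, so ``dense open plus reduced'' does not give injectivity of restriction. Likewise, in Step~1 regularity must hold in the supercommutative ring $\mathcal O(J_n\mathfrak R)$.

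The missing idea is to filter by powers of the ideal $\mathfrak n$ generated by odd elements. Since the graded pieces $\mathfrak n^i/\mathfrak n^{i+1}$ are locally free over the bosonic quotient, an even sequence is regular iff its bosonic reduction is, and the quotient keeps this local freeness (Lemma~\ref{lem: regular sequence}). Property $(S_1)$ then passes from $J_n\widetilde\mu^{-1}(0)$ to $J_n\widetilde\mu_{\mathsf s}^{-1}(0)$ (Lemma~\ref{lem: property S_i}). It is $(S_1)$, not reducedness, that makes restriction to the stable locus injective (Lemma~\ref{lem: restrict to open is injective}).

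A smaller point: vanishing of the degree $-1$ terms only kills the differentials entering degree $0$. So you get an injection $\mathrm{gr}\,\CV\hookrightarrow H^{\infty/2+0}_{\mathrm{BRST}}(\mathfrak g,\mathrm{gr}_F\mathcal D^{\mathrm{ch}}(\mathfrak R))$, not the isomorphism you assert; the injection is all that is needed.

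Second, the dimension estimates you call the main obstacle are left unproved, and you attribute them to the wrong source. Flatness of $\widetilde\mu$ is itself a nontrivial inequality (Proposition~\ref{prop: flat extended moment map}), and neither it nor jet-flatness is derived from balance. For example, for the Jordan quiver $\mathsf C=0$, so balance is vacuous, yet the paper imposes $\mathbf w\ge\mathbf v$. If you read the ``additional conditions'' as property $(\widetilde P_1)$ itself, your argument (once the super issue is repaired) does prove the statement.

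But the paper's actual work is verifying $(\widetilde P_1)$ for concrete families. It does this not by a direct $p(\alpha)$-count on each stratum but through Theorem~\ref{thm: property P tilde}:
\begin{itemize}
\item every nontrivial representation type $\tau$ is handled inductively via the \'etale slice and the auxiliary quiver $(Q_\tau,\mathbf v_\tau,\mathbf w_\tau)$, which must have $(\widetilde P_1)$ with $\mathcal Z\to\mathcal Z_\tau$ surjective, or $(P_1)$;
\item the most singular (trivial-type) stratum is handled by the nilpotent-cone bound \eqref{dimension bound tilde}, the identification $\widetilde\varphi_m^{-1}(0)\cong J_{m-2}\widetilde\mu^{-1}(0)\times T^*\mathrm{Rep}(\mathbf v,\mathbf w)$, and induction on $m$.
\end{itemize}
These hypotheses are then checked family by family to obtain Theorem~\ref{thm:vanishing and embedding}.
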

\end{theorembox}

The precise statement can be found in Theorem \ref{thm:vanishing and embedding} in the main body of this paper, providing a proof of Conjecture \ref{thm:vanishing and embedding} of \cite{Coman:2023xcq}. 

\bigskip 

The key to proving point $1.$ is the jet-flatness of the moment map $\widetilde\mu$, which is primarily a condition about infinitesimal (i.e.\ jet) behavior and which we will refer to as property $(\widetilde{P}_2)$. A flat morphism of schemes $f:X\to B$ is a morphism which preserves regular sequences, where the induced local ring map $f^\ast_x:\mathcal{O}_{B,f(x)}\to \mathcal{O}_{X,x}$ at any point $x\in X$ makes $\mathcal{O}_{X,x}$ a flat $\mathcal{O}_{B, f(x)}$-module. Geometrically, there are no sudden jumps in the dimension of the fibers $X_p = f^{-1}(p)$ for $p\in B$. In the present context, the jet-flatness of the moment map is the flatness of $J_n\widetilde\mu,\,\forall n\in\mathbb{Z}_{\geq 0}$, which is equivalent to $\{\partial^n\widetilde\mu^*(a_i)\}_{1\le i\le \dim \mathfrak{g}}$ being a regular sequence, where $\{a_i\}_{1\le i\le \dim \mathfrak{g}}$ is a basis of $\mathfrak{g}$ and $J_n\widetilde\mu^*:\mathcal O_{J_n\mathfrak{g}^*}\to \mathcal O_{J_n\mathfrak{R}}$ is the pull-back map between function rings. Following \cite{arakawa2015localization}, the regularity of the sequence $\{\partial^n \widetilde\mu^*(a_i)\}_{1\le i\le \dim \mathfrak{g}},\,\forall n\in\mathbb{Z}_{\geq 0}$ determines in Theorems \ref{lem:vanishingHigherDegCoh-SemiClassical} and \ref{thm:vanishingHigherDegCoh} the vanishing of cohomology $H^{\infty/2+n}_{\mathrm{BRST}}(\mathfrak{g},\mathcal{O}_{J_\infty\mathfrak{R}})$ and $H^{\infty/2+n}_{\mathrm{BRST}}(\mathfrak{g},\mathcal{D}^{\mathrm{ch}}(\mathfrak{R}))$ in negative degree.

\bigskip 

Point $2.$ of Theorem \ref{thm:vanishing and embedding-Intro}, the injectivity of the vertex superalgebra map $\CV\to \mathsf D^{\mathrm{ch}}(\widetilde\CM)$ (see Theorem \ref{thm:VOAinjectivity} and Theorem \ref{thm:VOA-module-injectivity} for the corresponding natural map of vertex superalgebra modules $\CV_\lambda\to \mathsf D^{\mathrm{ch}}_\lambda(\widetilde\CM)$), holds under stronger conditions on the moment map and the singular structure of $\widetilde\mu^{-1}(0)$, which we will refer to as property $(\widetilde{P}_1)$. Property $(\widetilde{P}_1)$ requires in particular that the extended moment map $\widetilde\mu$ be flat, and that $\widetilde\mu^{-1}(0)$ be reduced, irreducible and with rational singularities, with property $(\widetilde{P}_1)$ implying property $(\widetilde{P}_2)$ by Proposition \ref{prop: various jet flatness}. In particular, the requirement of property $(\widetilde{P}_1)$ for $\widetilde\mu^{-1}(0)$ to have rational singularities implies that the $n$-jet $J_n\widetilde\mu^{-1}(0)$ is reduced and irreducible for all $n\in\mathbb{Z}_{\geq 0}$, which then implies the vanishing of local cohomology sheaves on $J_n\widetilde\mu^{-1}(0)$ in degree zero; this will be referred to as $J_n\widetilde\mu^{-1}(0)$ having the property "$(S_1)$". In turn, having property $(S_1)$ leads to the following semiclassical result. 

\bigskip

Semiclassically, if the quiver data $(Q,\mathbf v, \mathbf w)$ is such that the conditions of property $(\widetilde{P}_1)$ are satisfied, then there are natural injective maps at the level of function rings.

\bigskip 

        \begin{theorembox}
        \begin{theorem}\label{thm:semiclassical-injectivity-Intro}
        If $\widetilde\mu$ is flat, and $\widetilde\mu^{-1}(0)$ is reduced and irreducible and has rational singularities, then, for all $n\in \mathbb Z_{\ge 0}$, the following natural maps are injective 
        \begin{align*}
            \mathbb C[J_n\widetilde\mu^{-1}(0)]^{J_n\mathrm{G}}\longrightarrow \Gamma(J_n\widetilde{\mathcal M},\mathcal O_{J_n\widetilde{\mathcal M}})~.
        \end{align*}
        \end{theorem}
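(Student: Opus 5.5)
The map factors as restriction of invariant functions from $J_n\widetilde\mu^{-1}(0)$ to its stable open locus, followed by descent along the quotient. Since $\widetilde{\mathcal M}=\widetilde\mu^{-1}(0)^{\mathrm{st}}/\mathrm G$ with $\mathrm G$ acting freely on the stable locus, we have $J_n\widetilde{\mathcal M}=J_n(\widetilde\mu^{-1}(0)^{\mathrm{st}})/J_n\mathrm G$. The morphism $J_n(\widetilde\mu^{-1}(0)^{\mathrm{st}})\to J_n\widetilde{\mathcal M}$ is a principal $J_n\mathrm G$-bundle: jets of a principal bundle form a principal bundle for the jet group. Hence
\begin{align*}
\Gamma(J_n\widetilde{\mathcal M},\mathcal O_{J_n\widetilde{\mathcal M}})=\Gamma\bigl(J_n(\widetilde\mu^{-1}(0)^{\mathrm{st}}),\mathcal O\bigr)^{J_n\mathrm G}.
\end{align*}
Moreover $J_n(\widetilde\mu^{-1}(0)^{\mathrm{st}})=\pi_n^{-1}(\widetilde\mu^{-1}(0)^{\mathrm{st}})$ is an open subscheme $U_n\subset J_n\widetilde\mu^{-1}(0)$, where $\pi_n$ is the jet projection. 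The map in the statement is therefore restriction $\mathbb C[J_n\widetilde\mu^{-1}(0)]^{J_n\mathrm G}\to \mathcal O(U_n)^{J_n\mathrm G}$. It suffices to show that the unrestricted map $\mathbb C[J_n\widetilde\mu^{-1}(0)]\to\mathcal O(U_n)$ is injective.

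Injectivity of restriction to an open subset holds if $J_n\widetilde\mu^{-1}(0)$ is reduced and irreducible and $U_n$ is nonempty. A function vanishing on a nonempty open subset of an integral scheme vanishes identically. Equivalently, the local cohomology $\mathcal H^0_{Z_n}(\mathcal O)$ vanishes for $Z_n$ the closed complement, which is the $(S_1)$-type property mentioned in the introduction. So the proof has two remaining steps.

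First, using the hypotheses, I would show that $J_n\widetilde\mu^{-1}(0)$ is reduced and irreducible for all $n$. Since $\widetilde\mu^{-1}(0)$ is a complete intersection (flatness of $\widetilde\mu$) with rational singularities, Mustaţă's theorem on jet schemes of locally complete intersections applies. Rational singularities imply canonical singularities, and for an lci variety canonical singularities are equivalent to all $J_n$ being irreducible. Irreducibility makes $J_n$ equidimensional of dimension $(n+1)\dim\widetilde\mu^{-1}(0)$, and being a complete intersection it is Cohen–Macaulay, hence $(S_1)$. It is generically reduced because the smooth locus jets are dense, so it is reduced. This is the step where I would simply cite Proposition \ref{prop: various jet flatness} or Mustaţă, and I expect it to be the main technical input.

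Second, I would check that $U_n\neq\emptyset$, which amounts to $\widetilde\mu^{-1}(0)^{\mathrm{st}}\neq\emptyset$. This holds because $\widetilde{\mathcal M}$ is assumed nonempty, a consequence of the balance condition. Then $U_n$, as an open subset of the irreducible scheme $J_n\widetilde\mu^{-1}(0)$, is dense. The main subtlety to double-check is the identification of the quotient in the first paragraph, namely that $J_n$ commutes with the free quotient. I would handle it by étale-local triviality of the principal $\mathrm G$-bundle together with the formal smoothness of étale maps on jets.
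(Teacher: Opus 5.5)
Your argument is essentially the paper's proof: $J_n\widetilde\mu^{-1}(0)$ is reduced and irreducible by Musta\c{t}\u{a}'s theorem, which the paper records as property $(S_1)$. Restriction to the open stable locus is then injective by Lemma \ref{lem: restrict to open is injective}, and one takes $J_n\mathrm{GL}(\mathbf v)$-invariants. The paper phrases this via $(S_1)$ rather than integrality because its full version (Theorem \ref{thm:semiclassical-injectivity}) concerns the super scheme $J_n\widetilde\mu_{\mathsf s}^{-1}(0)$. That scheme has odd nilpotents when $\mathbf u\neq 0$, so the paper transfers $(S_1)$ from its bosonic reduction using Lemmas \ref{lem: regular sequence} and \ref{lem: property S_i}.

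One correction: nonemptiness of $\widetilde\mu^{-1}(0)^{\mathrm{st}}$ does not follow from the balance condition. For example, the Jordan quiver with $\mathbf v=1$, $\mathbf w=0$ is balanced and satisfies $(\widetilde P_1)$, yet its stable locus is empty and the map is not injective. Nonemptiness is instead the standing assumption imposed after Proposition \ref{prop: stable locus}.
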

        \end{theorembox}

\medskip 

The full statement can be found in Theorem \ref{thm:semiclassical-injectivity} in the main body of the paper, and is the key input of the proof of Theorem \ref{thm:VOAinjectivity} on the injectivity of the vertex superalgebra map $\CV\to \mathsf D^{\mathrm{ch}}(\widetilde\CM)$.

\bigskip 

Given a particular set of quiver data $(Q,\mathbf v, \mathbf w)$, the essential properties required to prove the two points of Theorem \ref{thm:vanishing and embedding-Intro} are $(\widetilde{P}_2)$ (jet-flatness of $\widetilde\mu$) and respectively $(\widetilde{P}_1)$ (flatness of $\widetilde\mu$, and $\widetilde\mu^{-1}(0)$ being reduced, irreducible and with rational singularities). These properties must therefore be translated into algebraic conditions on the quiver data $(Q,\mathbf v, \mathbf w)$, which are precisely those provided by Theorems \ref{thm: property P} - \ref{thm: property P tilde}. These conditions are based on: 
\begin{itemize}
    \item the flatness of the moment map, which is determined by a combinatorial criterion due to \cite{crawley2001geometry}, and which implies that $\widetilde\mu^{-1}(0)$ is reduced and irreducible (Propositions \ref{prop: flat moment map} and \ref{prop: flat moment map 2}); 
    \item dimension criteria on the scheme of $m$-jets passing through the singular locus of $\widetilde\mu^{-1}(0)$ due to \cite{mustata2001jet}, and which in turn implies the rationality of singularities of $\widetilde\mu^{-1}(0)$; 
    \item the stratification of the zero locus of the moment map by representation type "$\tau$", with a distinguished most singular stratum; we develop a generalization of the étale slice construction of \cite{crawley2003normality} in Appendix \ref{sec:étaleSlice}, which defines for a representation $x\in\widetilde{\mu}^{-1}(0)\sslash \mathrm{G}$ of type $\tau$ an auxiliary quiver $(Q_\tau,\mathbf{v}_\tau,\mathbf{w}_\tau)$, where a neighborhood $U\subset \widetilde{\mu}^{-1}(0)\sslash  \mathrm{G}$ of $x$ is isomorphic in étale topology to a neighborhood $U_{(x),\tau}$ of $0\in\hat{\widetilde\mu}^{-1}(0)\sslash  G_x$, where $\hat{\widetilde\mu}^{-1}(0)$ is an extension of $\widetilde\mu^{-1}_\tau(0)$ for the auxiliary quiver, and where the auxiliary quiver also defines $G_x$ (we refer to Appendix \ref{sec:étaleSlice} for further detail). The dimension criteria from the second bullet point are therefore applied to the auxiliary quivers in order to demonstrate the jet-flatness of the moment map, and rationality of the singular points of $\widetilde{\mu}^{-1}(0)$.
\end{itemize}
Having established the algebraic conditions of Theorem \ref{thm: property P tilde} which imply the properties $(\widetilde{P}_2)$ and $(\widetilde{P}_1)$, we then prove in Section \ref{sec:examples} that these conditions are indeed satisfied for particular families of quiver data $(Q, \mathbf v, \mathbf w)$, which in turn provide the final input element in the proof of Theorem \ref{thm:vanishing and embedding-Intro}.

\subsection{Structure of the paper}

In Section \ref{sec:GeomQuiverVarieties} we review key statements pertaining to the geometry of extended quiver (super)-varieties. Propositions \ref{prop: stable locus}-\ref{prop: flat moment map 2} in particular address the flatness of the moment map $\mu$, equivalent conditions on the quiver and dimension data and their implications for the reducedness and irreducibility of preimages under the moment map, as well as natural maps between function rings. Proposition \ref{prop: flat extended moment map} then lists equivalent conditions for the flatness of extended moment maps. This section also summarizes the construction of the simple locus in the representation space of a quiver, with Propositions \ref{prop: simple locus}-\ref{prop: simple locus criterion} providing numerical conditions for the regular locus $\mu^{-1}(0)^{\mathrm{reg}}$ to be non-empty and implications for the reducedness and irreducibility of $\mu^{-1}(0)$, and flatness of the moment map. This covers the \emph{classical} aspects of the chiralization procedure of quiver varieties. 

Section \ref{sec:JetFlatness} then covers the \emph{semi-classical} aspects of this procedure. In particular, Proposition \ref{prop: various jet flatness} introduces the properties $(\widetilde{P}_1)$ --- that the extended moment map $\widetilde\mu$ be flat, and that $\widetilde\mu^{-1}(0)$ be reduced, irreducible and with rational singularities --- and $(\widetilde{P}_2)$ (jet-flatness of $\widetilde\mu$), as well as their non-extended counterparts $({P}_1)-({P}_2)$, and demonstrates how property $(\widetilde{P}_1)$ implies the others. Two key theorems in this section are Theorems \ref{thm: property P}-\ref{thm: property P tilde}, which provide conditions (dimension criteria) for the jet-flatness of quiver moment maps, and for $\mu^{-1}(0)$ and respectively $\widetilde\mu^{-1}(0)$ to be reduced, irreducible and with rational singularities. Examples of quivers for which these conditions are satisfied are then discussed in Section \ref{sec:examples}. The remaining key theorems in this section are Theorems \ref{thm:semiclassical-injectivity}-\ref{thm:semiclassical-injectivity3}, which assert the injectivity of natural maps between function rings on jet spaces, when the quiver and dimension data is such that properties introduced in Proposition \ref{prop: various jet flatness} hold.   

Sections \ref{sec:ChiralizationQuiverRep}-\ref{sec:VSA-globalsections} are then focused on the \emph{chiral quantum} aspects of the chiralization procedure. Specifically, Sections \ref{sec:ChiralizationQuiverRep}-\ref{sec:QuiverSheavesVSA} review the construction of quiver vertex superalgebras from \cite{Coman:2023xcq} as chiralizations of quiver varieties via the \emph{global} and respectively \emph{local} BRST reduction procedures, which we have outlined in Figure \ref{table:global-VS-local-quantization}, and which have the semi-classical description presented in Figure \ref{table:global-VS-local-semiclassical}. Section \ref{sec:VSA-globalsections} then contains the main statements relating the quiver vertex superalgebras constructed by the global and respectively local chiralization procedures. Theorems \ref{lem:vanishingHigherDegCoh-SemiClassical}-\ref{thm:vanishingHigherDegCoh} demonstrate the vanishing of BRST cohomology in negative degrees, under the condition of jet-flatness of the moment map. Theorem \ref{thm:VOAinjectivity} then proves the injectivity of natural maps between the quiver vertex superalgebras constructed in Sections \ref{sec:ChiralizationQuiverRep}-\ref{sec:QuiverSheavesVSA}, under the conditions that the extended moment map $\widetilde{\mu}$ is flat and $\widetilde{\mu}^{-1}(0)$ is reduced, irreducible and with rational singularities (property $(\widetilde{P}_1)$) --- with Theorem \ref{thm:VOA-module-injectivity} providing the analogous statement for modules of the quiver vertex superalgebra. Having demonstrated these results, Theorem \ref{thm:vanishing and embedding} then proves their applicability to specific classes of quivers and dimension data.

The appendices contain the derivation of technical results, and some relevant vertex algebra background. In particular, Appendix \ref{sec:étaleSlice} reviews and generalizes the étale slice construction of quiver varieties from \cite{crawley2003normality}, while Appendix \ref{appendix:VA-PVA} collects vertex algebra and vertex Poisson algebra definitions.

\subsection*{Acknowledgments}
I.C.\ is supported by the Horizon Europe MSCA grant No.101204790, \emph{QFT2VOA-DuStRel}. M.S.\ is supported in part by the Beijing Natural Science Foundation (BJNSF) Grant No.~IS24010 and by the Shuimu Scholar Program of Tsinghua University. M.Y.\ is supported in part by the JSPS Grant-in-Aid for Scientific Research (Grant No.~23K25865) and by JST, Japan (CREST Grant No.~JPMJCR26XA, Moonshot R\&D Grant No.~JPMJMS2061). During preparation of this paper we learned of the independent, related work \cite{Ferrari:2026} by Tomoyuki Arakawa, Andrea Ferrari and Sven M{\"o}ller, and coordinated simultaneous submissions. We thank the authors for their correspondence.

\section{Geometry of Quiver Varieties}\label{sec:GeomQuiverVarieties}

In this section we summarize the classical geometries of (extended) quiver varieties.

\subsection{Quiver varieties}\label{sec:GeomQuiverVarieties-intro}
A quiver $Q$ is an oriented graph which consists of a finite set of nodes $Q_0$ and a finite set of arrows $Q_1$ together with two maps $\mathsf h,\mathsf t: Q_1\to Q_0$ sending an arrow to its head and tail, respectively. We define the adjacency matrix $\mathsf Q$ to be
\begin{align}
    \mathsf Q_{ij}:=\#(a\in Q_1:\mathsf h(a)=j,\mathsf t(a)=i)~,
\end{align}
and correspondingly, define the Cartan matrix $\mathsf C$ to be
\begin{align}
    \mathsf C_{ij}:=2\delta_{ij}-\mathsf Q_{ij}-\mathsf Q_{ji}~.
\end{align}
The space of representations of $Q$ with gauge dimension vector $\mathbf v$ and framing dimension vector $\mathbf w$ is defined to be
\begin{align}
    \mathrm{Rep}(\mathbf v,\mathbf w):=\bigoplus_{i\in Q_0}\Hom(W_i,V_i)\oplus \bigoplus_{a\in Q_1}\Hom(V_{\mathsf t(a)},V_{\mathsf h(a)}) ~,
\end{align}
where $V=\bigoplus_{i\in Q_0}V_i$ and $W=\bigoplus_{i\in Q_0}W_i$ are $Q_0$-graded vector spaces of dimensions $\dim V_i=v_i$, with $(v_i)_{i\in Q_0}=\mathbf v$, and $\dim W_i=w_i$, with $(w_i)_{i\in Q_0}=\mathbf w$ respectively. The space of representations $\mathrm{Rep}(\mathbf v,\mathbf w)$ naturally possesses an action of 
$$\GL(\mathbf v):=\prod_{i\in Q_0}\GL( v_i) ~,$$
and this action induces a Hamiltonian $\GL(\mathbf v)$-action on the cotangent bundle $T^*\mathrm{Rep}(\mathbf v,\mathbf w)$. Note that $T^*\mathrm{Rep}(\mathbf v,\mathbf w)$ is naturally isomorphic to the space of representations of the doubled quiver $\overline{Q}$, which has the same set of nodes as $Q$, while the set of arrows is $\overline{Q}_1=Q_1\sqcup Q^*_1$, where $Q^*_1$ is the orientation reverse of $Q_1$. Explicitly, the cotangent bundle $T^*\mathrm{Rep}(\mathbf v,\mathbf w)$ of the space of representations is
\begin{multline}
    T^*\mathrm{Rep}(\mathbf v,\mathbf w)=\bigoplus_{i\in Q_0}\Hom(W_i,V_i)\oplus \bigoplus_{i\in Q_0}\Hom(V_i,W_i) \\
    \oplus\bigoplus_{a\in Q_1}\Hom(V_{\mathsf t(a)},V_{\mathsf h(a)})\oplus\bigoplus_{a\in Q_1}\Hom(V_{\mathsf h(a)},V_{\mathsf t(a)})~.
\end{multline}
We introduce the notation for the elements in the direct summands in $T^*\mathrm{Rep}(\mathbf v,\mathbf w)$ as follows
\begin{center}
\begin{tabular}{ | c | c | c | c |} 
 \hline
 $\Hom(V_{\mathsf t(a)},V_{\mathsf h(a)})$ & $\Hom(V_{\mathsf h(a)},V_{\mathsf t(a)})$ & $\Hom(W_i,V_i)$ & $\Hom(V_i,W_i)$  \\  [1ex] 
 \hline
$x_a$ & $y_a$ & $\alpha_i$ & $\beta_i$  \\ [0.5ex] 
 \hline
\end{tabular}
\end{center}
The moment map 
\begin{equation} 
\mu:T^*\mathrm{Rep}(\mathbf v,\mathbf w)\to \mathfrak{gl}(\mathbf v)^*
\end{equation}
for the Hamiltonian $\GL(\mathbf v)$-action on $T^*\mathrm{Rep}(\mathbf v,\mathbf w)$ is
\begin{align}\label{moment map} 
    \mu(x,y,\alpha,\beta)=\sum_{a\in Q_1}[x_a,y_a]+\sum_{i\in Q_0}\alpha_i\beta_i ~,
\end{align}
with pullback map between the function rings 
\begin{equation}
    \mu^\ast :\mathcal{O}_{\gl(\mathbf v)^\ast} \to \Gamma(T^*\mathrm{Rep}(\mathbf v,\mathbf w),\mathcal{O}_{T^*\mathrm{Rep}(\mathbf v,\mathbf w)}) ~.
\end{equation}
Inside $\mathfrak{gl}(\mathbf v)^*=\bigoplus_{i\in Q_0}\mathfrak{gl}( v_i)^*$ there is a linear subspace $\mathcal Z$ which is the span of scalar matrices 
\begin{align}\label{the center Z}
    \mathcal Z:=\bigoplus_{i\in Q_0} \mathbb C\cdot\mathrm{Id}_{V_i} ~.
\end{align}

\medskip

\begin{defn}
The affine quiver variety ${\mathcal M}^0(\mathbf v,\mathbf w)$ and its deformation $\widetilde{\mathcal M}^0(\mathbf v,\mathbf w)$ are defined as the Hamiltonian reductions
\begin{align}\label{def:affinequivervariety}
    {\mathcal M}^0(\mathbf v,\mathbf w):=\mu^{-1}(0)\sslash \GL(\mathbf v),\quad \widetilde{\mathcal M}^0(\mathbf v,\mathbf w):=\mu^{-1}(\mathcal Z)\sslash \GL(\mathbf v)~.
\end{align}
\end{defn}

\medskip

\begin{remark}
It is known that if the moment map $\mu$ is flat then both ${\mathcal M}^0(\mathbf v,\mathbf w)$ and $\widetilde{\mathcal M}^0(\mathbf v,\mathbf w)$ are reduced \cite[Theorem B]{zhou2022reducedness}. It does not seem to be known whether the scheme ${\mathcal M}^0(\mathbf v,\mathbf w)$ is reduced in general. On the other hand it is known that ${\mathcal M}^0(\mathbf v,\mathbf w)_{\mathrm{red}}$ is an irreducible symplectic singularity \cite[Theorem 1.2]{bellamy2016symplectic}. Here $X_{\mathrm{red}}$ denotes the reduced subscheme of a scheme $X$. 
\end{remark}

Note that ${\mathcal M}^0(\mathbf v,\mathbf w)$ is a closed subscheme of $\widetilde{\mathcal M}^0(\mathbf v,\mathbf w)$. In fact, since $\GL(\mathbf v)$ acts on $\mathcal Z$ trivially, the natural projection $\mu^{-1}(\mathcal Z)\to \mathcal Z$ descends to a morphism 
\begin{equation} \label{p0morphism-Mtilde0-to-Z}
p_0: \widetilde{\mathcal M}^0(\mathbf v,\mathbf w)\to \mathcal Z~,
\end{equation} 
and moreover ${\mathcal M}^0(\mathbf v,\mathbf w)\cong p_0^{-1}(0)$. Alternatively, one can define the extended moment map 
\begin{equation}\label{extended moment map}
\begin{split}
    \widetilde\mu: T^*\mathrm{Rep}(\mathbf v,\mathbf w)\times\mathcal Z &\longrightarrow \gl(\mathbf v)^*\\
    ((x,y,\alpha,\beta),z)&\mapsto \mu(x,y,\alpha,\beta)+ z~,
\end{split}
\end{equation}
and similarly the pullback map
\begin{equation}
    \tilde\mu^\ast:\mathcal{O}_{\gl(\mathbf v)^*} \longrightarrow \Gamma(T^*\mathrm{Rep}(\mathbf v,\mathbf w)\times\mathcal Z, \mathcal{O}_{T^*\mathrm{Rep}(\mathbf v,\mathbf w)\times\mathcal Z}) ~.
\end{equation}
Then we obviously have
\begin{align*}
    \mu^{-1}(\mathcal Z)\cong \widetilde\mu^{-1}(0) ~.
\end{align*}

\medskip 

\begin{defn}
A representation $(x,y,\alpha,\beta)\in T^*\mathrm{Rep}(\mathbf v,\mathbf w)$ is called stable if there is no proper $Q_0$-graded linear subspace $S_i\subset V_i$ such that 
\begin{align*}
    \alpha_i(W_i)\subset S_i,\quad x_a(S_{\mathsf t(a)})\subset S_{\mathsf h(a)},\quad y_a(S_{\mathsf {h}(a)})\subset S_{\mathsf {t}(a)}~.
\end{align*}
A representation $(x,y,\alpha,\beta)\in T^*\mathrm{Rep}(\mathbf v,\mathbf w)$ is called costable if there is no proper $Q_0$-graded linear subspace $T_i\subset V_i$ such that 
\begin{align*}
    \beta_i(T_i)=0,\quad x_a(T_{\mathsf t(a)})\subset T_{\mathsf h(a)},\quad y_a(T_{\mathsf {h}(a)})\subset T_{\mathsf {t}(a)} ~.
\end{align*}
\end{defn}
Denote by $T^*\mathrm{Rep}(\mathbf v,\mathbf w)^{\mathrm{st}}$ (resp. $T^*\mathrm{Rep}(\mathbf v,\mathbf w)^{\mathrm{cst}}$) the locus of stable (resp. costable) representations, and for every $\GL(\mathbf v)$-invariant subset $S\subset T^*\mathrm{Rep}(\mathbf v,\mathbf w)$ we set
\begin{align}
    S^{\mathrm{st}}:=S\cap T^*\mathrm{Rep}(\mathbf v,\mathbf w)^{\mathrm{st}}, \quad S^{\mathrm{cst}}:=S\cap T^*\mathrm{Rep}(\mathbf v,\mathbf w)^{\mathrm{cst}}.
\end{align}

\medskip

\begin{defn} \label{def:extendedNakajimaQuiverVariety}
The Nakajima quiver variety ${\mathcal M}(\mathbf v,\mathbf w)$ and its deformation $\widetilde{\mathcal M}(\mathbf v,\mathbf w)$ are defined as the Hamiltonian reductions
\begin{align}
    {\mathcal M}(\mathbf v,\mathbf w):=\mu^{-1}(0)^{\mathrm{st}}\sslash \GL(\mathbf v),\quad \widetilde{\mathcal M}(\mathbf v,\mathbf w):=\mu^{-1}(\mathcal Z)^{\mathrm{st}}\sslash \GL(\mathbf v)~.
\end{align}
\end{defn}
Note that ${\mathcal M}(\mathbf v,\mathbf w)$ is a closed subscheme of $\widetilde{\mathcal M}(\mathbf v,\mathbf w)$. In fact, if $\mu^{-1}(\mathcal Z)^{\mathrm{st}}$ is nonempty, then the natural projection $\mu^{-1}(\mathcal Z)^{\mathrm{st}}\to \mathcal Z$ is smooth and the action of $\GL(\mathbf v)$ on $\mu^{-1}(\mathcal Z)^{\mathrm{st}}$ is free. After passing to the quotient we get a smooth morphism $p: \widetilde{\mathcal M}(\mathbf v,\mathbf w)\to \mathcal Z$, and ${\mathcal M}(\mathbf v,\mathbf w)=p^{-1}(0)$. Since ${\mathcal M}(\mathbf v,\mathbf w)$ is a good quotient, its dimension is the expected one:
\begin{align}
    \dim {\mathcal M}(\mathbf v,\mathbf w)=\mathbf v\cdot(2\mathbf w-\mathsf C\mathbf v)~.
\end{align}

\medskip

\begin{proposition}\label{prop: stable locus}
The following statements are equivalent to each other:
\begin{enumerate}
    \item $\mu^{-1}(0)^{\mathrm{st}}$ is nonempty.
    \item $\mu^{-1}(\mathcal Z)^{\mathrm{st}}$ is nonempty.
    \item There exists a free $\GL(\mathbf v)$-orbit in $\mu^{-1}(\mathcal Z)$.
    \item The map $p: \widetilde{\mathcal{M}}(\mathbf v, \mathbf w)\to \mathcal{Z}$ is surjective.
    \item The moment map $\mu:T^*\mathrm{Rep}(\mathbf v,\mathbf w)\to \mathfrak{gl}(\mathbf v)^*$ is surjective.  
\end{enumerate}
\end{proposition}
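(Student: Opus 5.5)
We prove the equivalences through a cycle of implications, using two standard facts. First, stability forces trivial stabilizers: if $g\in\GL(\mathbf v)$ fixes a stable point $(x,y,\alpha,\beta)$, then $S=\ker(g-1)$ is a $Q_0$-graded subspace containing $\alpha(W)$ and invariant under $x,y$, so $S=V$ and $g=1$. Second, for any Hamiltonian action the image of $d\mu_p$ is the annihilator of the Lie algebra of the stabilizer $\mathrm{Stab}(p)$. Hence $d\mu_p$ is surjective onto $\gl(\mathbf v)^*$ exactly when $\mathrm{Stab}(p)$ is finite.

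The trivial implications are $(1)\Rightarrow(2)$ and $(2)\Rightarrow(3)$; the latter follows from the stabilizer argument above.

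For $(3)\Rightarrow(5)$, take $p\in\mu^{-1}(\mathcal Z)$ with free orbit. Then $d\mu_p$ is surjective, so $\mu$ is dominant near $p$. We also use the scaling action $t\cdot(x,y,\alpha,\beta)=(tx,ty,t\alpha,t\beta)$, under which $\mu$ is homogeneous of degree $2$, together with $\GL(\mathbf v)$-equivariance. To get genuine surjectivity rather than dominance, we use the criterion of \cite{crawley2001geometry}. For the deformed preprojective setting, $\mu^{-1}(\lambda)\neq\emptyset$ for all $\lambda\in\gl(\mathbf v)^*$ is equivalent, via the framing trick (Crawley-Boevey's vertex $\infty$), to the non-emptiness of $\mu^{-1}(0)$ in suitable dimension vectors combined with a positivity condition. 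We expect this to be the main obstacle.

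A more self-contained route to $(3)\Rightarrow(5)$ is the following. The image of $\mu$ is a $\GL(\mathbf v)\times\mathbb C^\times$-stable constructible cone. Every element $\xi$ is conjugate to an element of the Borel subalgebra. For $\xi$ in the Borel, one shows $\xi\in\mathrm{Im}\,\mu$ by deforming: first solve $\mu=\xi_{ss}$, the semisimple part, then adjust using surjectivity of $d\mu$ at free points. The plan is to argue via properness: the map $\mu^{-1}(\mathcal Z)^{\mathrm{st}}\to\mathcal Z$ being smooth with free action forces the existence of stable solutions at every $\lambda\in\mathcal Z$. Here $\mathbb C^\times$-scaling moves $\lambda\neq 0$ to $0$ in the limit, and the image of a smooth morphism is open, and the fibers over $\lambda\neq0$ are all stable, since at $\lambda\in\mathcal Z$ generic any subrepresentation violating stability gives a trace contradiction. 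Combining these gives $(3)\Rightarrow(4)$ directly. For full $(5)$ one invokes \cite[Theorem 1.1]{crawley2001geometry}, whose numerical condition $\mathbf v\cdot(\mathbf w-\mathsf C\mathbf v)$-type inequalities for sub-dimension vectors is exactly the condition for existence of a point with trivial stabilizer, i.e.\ condition (3).

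To close the cycle:

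$(4)\Rightarrow(1)$: the fiber $p^{-1}(0)=\mathcal M(\mathbf v,\mathbf w)$ is nonempty, so $\mu^{-1}(0)^{\mathrm{st}}\ne\emptyset$.

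$(5)\Rightarrow(1)$: use King's criterion with the character $\det^{-1}$. The moment map surjecting implies, by \cite{crawley2001geometry}, that $\mathbf v$ decomposes appropriately. In particular, there is a point of $\mu^{-1}(0)$ whose $\alpha$-generated subrepresentation is all of $V$. To see this, take $\lambda\in\mathcal Z$ with generic values $\lambda_i$. Any $\alpha$-invariant-complement subrepresentation $S$ with $\alpha(W)\subset S$ gives a quotient representation of the deformed preprojective algebra at $\lambda$ without framing, whose trace condition $\sum\lambda_i\dim(V_i/S_i)=0$ forces $S=V$. So points of $\mu^{-1}(\lambda)$ are stable. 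Then degenerate $\lambda\to0$ inside the open stable locus, which is a smooth family over $\mathcal Z$ whose image is open and $\mathbb C^\times$-stable, hence contains $0$.
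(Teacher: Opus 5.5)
There is a genuine gap, and it sits exactly at the nontrivial step that the paper outsources to \cite[Proposition 2.2.2]{maulik2019quantum}: passing from stable points over generic $\lambda\in\mathcal Z$ to a stable point over $\lambda=0$.

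Your ``(3)$\Rightarrow$(4) directly'' and the final sentence of your (5)$\Rightarrow$(1) both rest on one claim: that the image of the smooth map $\mu^{-1}(\mathcal Z)^{\mathrm{st}}\to\mathcal Z$, being open and $\mathbb C^\times$-stable, contains $0$. That is false. The sets $\mathcal Z\setminus\{0\}$, or the complement of the hyperplanes $\lambda\cdot\mathbf v'=0$, are open and $\mathbb C^\times$-stable. Openness plus scaling only gives the converse: if the image contains $0$, then it is everything. For the same reason, smoothness and freeness do not force stable solutions over every $\lambda$, since a family of stable points can degenerate out of the open stable locus. Also, ``fibres over $\lambda\neq 0$ are all stable'' holds only for generic $\lambda$.

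What is missing is properness. The map $\pi:\widetilde{\mathcal M}(\mathbf v,\mathbf w)\to\widetilde{\mathcal M}^0(\mathbf v,\mathbf w)$ is projective, and the scaling action (weight $1$ on $T^*\mathrm{Rep}(\mathbf v,\mathbf w)$, weight $2$ on $\mathcal Z$) contracts the affine cone $\widetilde{\mathcal M}^0(\mathbf v,\mathbf w)$ to its vertex. Take any stable $m$, for instance one your trace argument produces over generic $\lambda$; these fibres are nonempty because $\mu$ is smooth at the free point. Then:
\begin{itemize}
\item $\pi(t\cdot m)\to 0$ as $t\to 0$;
\item by the valuative criterion, $\lim_{t\to 0}t\cdot m$ exists in $\widetilde{\mathcal M}(\mathbf v,\mathbf w)$;
\item since $p(t\cdot m)=t^2\lambda$, this limit lies in $p^{-1}(0)=\mathcal M(\mathbf v,\mathbf w)$.
\end{itemize}
Only after this does your open-cone argument apply to the smooth map $p$, giving (4).

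Separately, you never actually reach (5).
\begin{itemize}
\item Your (3)$\Rightarrow$(5) stalls at dominance.
\item The Borel-deformation sketch presupposes what it needs, namely solvability of $\mu=\xi_{ss}$ and free points over $\xi_{ss}$.
\item The inequalities of \cite{crawley2001geometry} characterize flatness of $\mu$ and non-emptiness of the simple locus (cf.\ Propositions~\ref{prop: flat moment map} and~\ref{prop: simple locus criterion}), not the existence of a free orbit. Flatness is strictly stronger than (1): for $A_1$ with $\mathbf v=\mathbf w=2$, $\mathcal M(\mathbf v,\mathbf w)$ is a point, but $\mu^{-1}(0)$ contains a $5$-dimensional locus ($\alpha$ of rank one, $\mathrm{im}\,\beta\subset\ker\alpha$), so $\mu$ is not flat.
\end{itemize}
The easy route is the paper's (1)$\Rightarrow$(5). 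By your two standard facts, $\mu$ is smooth on $T^*\mathrm{Rep}(\mathbf v,\mathbf w)^{\mathrm{st}}$, so its image contains an open neighbourhood of $0$ as soon as $\mu^{-1}(0)^{\mathrm{st}}\neq\emptyset$. Degree-$2$ homogeneity then gives all of $\mathfrak{gl}(\mathbf v)^*$.

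With these two repairs, the rest of what you wrote matches the paper's cycle and is correct: the stabilizer argument for (2)$\Rightarrow$(3), the implication (4)$\Rightarrow$(1), and the trace argument giving (5)$\Rightarrow$(2).
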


\medskip

\begin{proof}
Tautologically, we have $1\Rightarrow 2$. Since $\GL(\mathbf v)$-action on stable locus is free, we have $2\Rightarrow 3$. The implication $3\Rightarrow 4$ is proven in \cite[Proposition 2.2.2]{maulik2019quantum}. Assuming $4$, then ${\mathcal{M}}(\mathbf v, \mathbf w)=p^{-1}(0)$ is nonempty, this implies $1$. 

$1\Rightarrow 5$. Assuming $1$, then $\mu$ is smooth along $\mu^{-1}(0)^{\mathrm{st}}$, so the image of $\mu$ contains an open subscheme of $\mathfrak{gl}(\mathbf v)^*$. Since $\mu$ is equivariant with respect to the $\mathbb C^\times$ action that scales every arrow in the quiver with weight one and scales $\mathfrak{gl}(\mathbf v)^*$ with weight two, the image of $\mu$ is $\mathbb C^\times$-invariant, therefore must be the whole $\mathfrak{gl}(\mathbf v)^*$. This implies $5$.

$5\Rightarrow 2$. Assuming $5$, then for every generic $\lambda\in \mathcal Z$ the preimage $\mu^{-1}(\lambda)$ is nonempty. According to the proof of \cite[Proposition 2.2.2]{maulik2019quantum}, every point in $\mu^{-1}(\lambda)$ for a generic $\lambda$ is stable, therefore $\mu^{-1}(\mathcal Z)^{\mathrm{st}}$ is nonempty. This implies $2$.
\end{proof}

\bigskip 

From now on we assume that the statements in Proposition \ref{prop: stable locus} hold, in particular the quiver varieties ${\mathcal M}(\mathbf v,\mathbf w)$ and $\widetilde{\mathcal M}(\mathbf v,\mathbf w)$ are nonempty. The GIT construction gives a projective morphism
\begin{align}\label{proj-morph-Mtilde-to-Mtilde0}
    \pi: \widetilde{\mathcal M}(\mathbf v,\mathbf w)\to \widetilde{\mathcal M}^0(\mathbf v,\mathbf w)~,
\end{align}
and moreover $p=p_0\circ\pi$. For a point $\lambda\in \mathcal Z$, define ${\mathcal M}_\lambda(\mathbf v,\mathbf w):=p^{-1}(\lambda)$ and ${\mathcal M}^0_\lambda(\mathbf v,\mathbf w):=p_0^{-1}(\lambda)$, then the restriction of $\pi$ to the fibers of $p$ at $\lambda$ gives a projective morphism 
\begin{equation}\label{lambdafiber-proj-morph-M-to-M0} 
\pi_\lambda: {\mathcal M}_\lambda(\mathbf v,\mathbf w)\to {\mathcal M}^0_\lambda(\mathbf v,\mathbf w) ~.
\end{equation}
For a generic $\lambda\in \mathcal Z$, it is known that $\pi_\lambda$ is an isomorphism; moreover, it is also known that for all $\lambda\in \mathcal Z$, $\dim\mathrm{Im}(\pi_\lambda)=\dim {\mathcal M}_\lambda(\mathbf v,\mathbf w)$. The proof of both of these results can be found in \cite[Proposition 3.6]{losevlecture}. As a corollary, we see that
\begin{align}
    \dim {\mathcal M}^0_\lambda(\mathbf v,\mathbf w)\ge \dim {\mathcal M}_\lambda(\mathbf v,\mathbf w)=\mathbf v\cdot(2\mathbf w-\mathsf C\mathbf v)~.
\end{align}
The following is well-known to experts, see \cite[Theorem 1.1, Theorem 1.3]{crawley2001geometry} for a proof.

\medskip 

\begin{proposition}\label{prop: flat moment map}
The following statements are equivalent to each other:
\begin{enumerate}
    \item $\mu:T^*\mathrm{Rep}(\mathbf v,\mathbf w)\to \mathfrak{gl}(\mathbf v)^*$ is flat.
    \item $\dim \mu^{-1}(0)=\mathbf v\cdot(\mathbf v+2\mathbf w-\mathsf C\mathbf v)$. 
    \item $\dim \mathcal M^0(\mathbf v,\mathbf w)=\mathbf v\cdot(2\mathbf w-\mathsf C\mathbf v)$.
    \item $\pi_0: {\mathcal M}(\mathbf v,\mathbf w)\to {\mathcal M}^0(\mathbf v,\mathbf w)$ is surjective. 
    \item $\pi: \widetilde{\mathcal M}(\mathbf v,\mathbf w)\to \widetilde{\mathcal M}^0(\mathbf v,\mathbf w)$ is surjective.
    \item For every decomposition $\mathbf v=\sum_{l=0}^k\mathbf v^{(l)}$ where $k\ge 1$, and $\mathbf v^{(l)}\in \mathbb Z^{Q_0}_{\ge 0}$, $l=0,1,\cdots,k$, and $\mathbf v^{(s)}\neq 0$, $s=1,\cdots,k$, the following inequality holds: 
    \begin{align}\label{key inequality}
        \boxed{ ~~ \mathbf v\cdot(2\mathbf w-\mathsf C\mathbf v)\ge \mathbf v^{(0)}\cdot(2\mathbf w-\mathsf C\mathbf v^{(0)})+\sum_{l=1}^k(2-\mathbf v^{(l)}\cdot\mathsf C\mathbf v^{(l)})~.~}
    \end{align}
\end{enumerate}
\end{proposition}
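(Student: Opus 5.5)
Because $\mu$ is homogeneous for the $\mathbb C^\times$-action of the proof of Proposition \ref{prop: stable locus}, the plan is to reduce everything to the single number $\dim\mu^{-1}(0)$ and then compute that number by a stratification argument.

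\textbf{$1\Leftrightarrow 2$.} Source and target of $\mu$ are affine spaces, hence regular. By miracle flatness, $\mu$ is flat iff every nonempty fiber has dimension $\dim T^*\mathrm{Rep}-\dim\mathfrak{gl}(\mathbf v)^*$. Note that
\[
2\mathbf v\cdot\mathbf w+\sum_a 2v_{\mathsf t(a)}v_{\mathsf h(a)}-\mathbf v\cdot\mathbf v=\mathbf v\cdot(\mathbf v+2\mathbf w-\mathsf C\mathbf v).
\]
Upper semicontinuity of fiber dimension, combined with the $\mathbb C^\times$-contraction of every point of $T^*\mathrm{Rep}$ to $0\in\mu^{-1}(0)$, shows that the maximal fiber dimension is attained at $0$. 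Every fiber has dimension at least the expected one, and by Proposition \ref{prop: stable locus} $\mu$ is surjective. Hence flatness is equivalent to $\dim\mu^{-1}(0)$ being the expected value.

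\textbf{$2\Leftrightarrow 3\Leftrightarrow 4\Leftrightarrow 5$.} For $2\Rightarrow 3$: the stable locus $\mu^{-1}(0)^{\mathrm{st}}$ is open, nonempty, and of dimension $\dim\mathcal M+\mathbf v\cdot\mathbf v$. Under 2 it is dense, because by flatness every component has the expected dimension, which must then be checked component by component. Irreducibility is exactly the delicate point, so instead I would derive 3 from 6 directly, as below.

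For $3\Leftrightarrow 4$: $\pi_0$ is projective with image of dimension $\dim\mathcal M$ (the $\lambda=0$ case of the cited result of \cite{losevlecture}). If $\mathcal M^0$ is irreducible, surjectivity is equivalent to $\dim\mathcal M^0=\dim\mathcal M$; irreducibility of $\mathcal M^0_{\mathrm{red}}$ is provided by \cite{bellamy2016symplectic}.

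For $4\Leftrightarrow 5$: use the $\mathbb C^\times$-equivariance together with $p=p_0\circ\pi$. Surjectivity over generic $\lambda$ is automatic since $\pi_\lambda$ is an isomorphism there. The image of $\pi$ is closed and conic, so it is the whole space iff it contains the central fiber, which is the image of $\pi_0$.

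\textbf{$2,3\Leftrightarrow 6$ (main step).} This is the Crawley-Boevey computation. Using the framing trick, replace $(Q,\mathbf v,\mathbf w)$ by the quiver $Q^\infty$ with an extra vertex $\infty$ of dimension $1$ and $w_i$ arrows from $\infty$ to $i$. Then $\mu^{-1}(0)$ becomes the representation space of the preprojective relations for the dimension vector $\alpha=(1,\mathbf v)$, modulo an extra $\GL_1$.

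Stratify $\mathcal M^0$ by representation type: a semisimple representation decomposes as $x_0\oplus\bigoplus_l x_l$, where $x_0$ contains the $\infty$-part with dimension $\mathbf v^{(0)}$ and the $x_l$ are simple representations of $\overline Q$ with dimensions $\mathbf v^{(l)}$. The stratum dimension is
\[
\mathbf v^{(0)}\cdot(2\mathbf w-\mathsf C\mathbf v^{(0)})+\sum_l\bigl(2-\mathbf v^{(l)}\cdot\mathsf C\mathbf v^{(l)}\bigr),
\]
using $p(\beta)=1-\tfrac12\beta\cdot\mathsf C\beta$, so that simple moduli have dimension $2p(\beta)$ when nonempty. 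Grouping equal summands only lowers this count. Then $\dim\mathcal M^0$ is the maximum over types, which gives $3\Leftrightarrow 6$, assuming each realizable stratum attains this dimension and non-realizable ones do not exceed it.

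Passing from $\mathcal M^0$ to $\mu^{-1}(0)$ requires bounding fibers of $\mu^{-1}(0)\to\mathcal M^0$ over each stratum by the Ext-count argument of \cite[Thm 1.1]{crawley2001geometry}. This bound, $\dim\mu^{-1}(0)\le\mathbf v\cdot\mathbf v+\max(\text{stratum dims})$ with equality characterization, is the expected main obstacle, and I would invoke \cite[Theorems 1.1, 1.3]{crawley2001geometry} for it rather than reprove it.
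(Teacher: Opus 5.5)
Your argument for $1\Leftrightarrow 2$ (miracle flatness, with the $\mathbb C^\times$-contraction making $\mu^{-1}(0)$ the largest fiber) is sound. So is your argument for $3\Leftrightarrow 4$ (the dimension of $\mathrm{Im}\,\pi_0$ from \cite{losevlecture}, plus irreducibility of $\mathcal M^0(\mathbf v,\mathbf w)_{\mathrm{red}}$). For the link with \eqref{key inequality} you rely on \cite[Theorems 1.1, 1.3]{crawley2001geometry}, and that citation is the whole of the paper's proof.

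The gap is in $4\Rightarrow 5$. You use the principle that a closed $\mathbb C^\times$-stable subset of $\widetilde{\mathcal M}^0(\mathbf v,\mathbf w)$ is everything once it contains $p_0^{-1}(0)$, and this principle is false. The complement of $\mathrm{Im}\,\pi$ is open and conic, and an open conic set need not meet the central fiber: the $t\to 0$ limit of one of its points can leave it. Surjectivity over generic $\lambda$ only adds $\mathrm{Im}\,\pi\supset\overline{p_0^{-1}(U)}\cup p_0^{-1}(0)$, where $U\subset\mathcal Z$ is the complement of the hyperplanes $\{\lambda\cdot\mathbf u=0\}$, $0<\mathbf u\le\mathbf v$. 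Nothing in your argument rules out an irreducible component of $\widetilde{\mathcal M}^0$ that lies over one of these hyperplanes, is not contained in $p_0^{-1}(0)$, and is missed by $\pi$. Such components do occur in general. For $A_1\sqcup A_1$ with $\mathbf v=(2,1)$, $\mathbf w=(2,2)$, $\widetilde{\mathcal M}^0$ is a product, and the nilpotent cone of $\mathfrak{gl}_2$ in the first factor gives a component over $\{\lambda_1=0\}$ not contained in $p_0^{-1}(0)$. So you need some input beyond conicity to exclude them under $4$.

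One fix is to argue fiberwise.
\begin{itemize}
\item The function $y\mapsto\dim_y p_0^{-1}(p_0(y))$ on $\widetilde{\mathcal M}^0$ is upper semicontinuous and $\mathbb C^\times$-invariant, and the cone point lies in every orbit closure. Hence $\dim\mathcal M^0_\lambda(\mathbf v,\mathbf w)\le\dim\mathcal M^0(\mathbf v,\mathbf w)$ for every $\lambda\in\mathcal Z$.
\item Under $4$ (hence $3$) this gives $\dim\mathcal M^0_\lambda\le\mathbf v\cdot(2\mathbf w-\mathsf C\mathbf v)=\dim\mathcal M_\lambda=\dim\mathrm{Im}\,\pi_\lambda$.
\item Each $\mathcal M^0_\lambda$ is irreducible \cite[Corollary 1.4]{crawley2002decomposition}, so every $\pi_\lambda$ is surjective, and therefore so is $\pi$.
\end{itemize}
Alternatively, go $4\Rightarrow 3\Rightarrow 6\Rightarrow 1$ through the rest of your chain. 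Then, as in Proposition \ref{prop: flat moment map 2}(1), flatness of $\mu^{-1}(\mathcal Z)\to\mathcal Z$ and irreducibility of the generic fiber \cite[Theorem 1.2]{crawley2001geometry} make $\widetilde{\mathcal M}^0$ irreducible. The closed set $\mathrm{Im}\,\pi$ contains the nonempty open set $p_0^{-1}(U)$, so it is everything. Your direction $5\Rightarrow 4$ is fine as written.
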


\begin{remark}
If $\mu:T^*\mathrm{Rep}(\mathbf v,\mathbf w)\to \mathfrak{gl}(\mathbf v)^*$ is flat, then the image of $\mu$ is a $\mathbb C^{\times}$-invariant open subset of $\mathfrak{gl}(\mathbf v)^*$ which contains the origin, where the $\mathbb C^{\times}$-action scales $\mathfrak{gl}(\mathbf v)^*$ with weight $2$, therefore it follows that $\mu$ is surjective. In particular, the flatness of $\mu$ implies that both $\mu^{-1}(0)^{\mathrm{st}}$ and $\mu^{-1}(\mathcal Z)^{\mathrm{st}}$ are nonempty by Proposition \ref{prop: stable locus}.
\end{remark}

\begin{proposition}\label{prop: flat moment map 2}
The equivalent statements in Proposition \ref{prop: flat moment map} imply the following.
\begin{itemize}
    \item[(1)] $\mu^{-1}(\mathcal Z)$ is reduced and irreducible.
    \item[(2)] Both $\widetilde{\mathcal M}^0(\mathbf v,\mathbf w)$ and ${\mathcal M}^0_\lambda(\mathbf v,\mathbf w)$ are reduced and irreducible and normal for all $\lambda\in \mathcal Z$.
    \item[(3)] $\pi: \widetilde{\mathcal M}(\mathbf v,\mathbf w)\to \widetilde{\mathcal M}^0(\mathbf v,\mathbf w)$ and $\pi_\lambda: {\mathcal M}_\lambda(\mathbf v,\mathbf w)\to {\mathcal M}^0_\lambda(\mathbf v,\mathbf w)$ are resolutions of singularities for all $\lambda\in \mathcal Z$.
    \item[(4)] The natural maps
    \begin{align*}
    \mathbb C[\mathcal M^0_\lambda(\mathbf v,\mathbf w)]\longrightarrow \Gamma(\mathcal M_{\lambda}(\mathbf v,\mathbf w),\mathcal O_{\mathcal M_\lambda(\mathbf v,\mathbf w)}),\quad \mathbb C[\widetilde{\mathcal M}^0(\mathbf v,\mathbf w)]\longrightarrow \Gamma(\widetilde{\mathcal M}(\mathbf v,\mathbf w),\mathcal O_{\widetilde{\mathcal M}(\mathbf v,\mathbf w)})~,
\end{align*}
are isomorphisms.
\end{itemize}
\end{proposition}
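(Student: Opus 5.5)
Assume $\mu$ is flat, equivalently that inequality \eqref{key inequality} holds. The plan is to first control $\mu^{-1}(0)$ through Crawley-Boevey's theory, and then propagate that control along the $\mathcal Z$-direction.

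\textbf{Step 1: $\mu^{-1}(0)$ is a reduced, irreducible, normal complete intersection.} By \cite[Theorem 1.1]{crawley2001geometry}, flatness means $\mu^{-1}(0)$ is a complete intersection of the expected dimension $\mathbf v\cdot(\mathbf v+2\mathbf w-\mathsf C\mathbf v)$, cut out by the regular sequence of coordinates of $\mu$.

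\begin{itemize}
\item \emph{Irreducibility.} Using the stratification by representation type, the inequality \eqref{key inequality} forces every stratum other than the one containing stable points to have strictly smaller dimension. Since $\mu^{-1}(0)^{\mathrm{st}}$ is nonempty by the Remark, $\mu^{-1}(0)$ is irreducible, with dense open subset $\mu^{-1}(0)^{\mathrm{st}}$.
\item \emph{Reducedness.} $\mu$ is smooth on the stable locus. A complete intersection is Cohen--Macaulay, and it is generically reduced because of this smooth dense open. Serre's criterion ($R_0+S_1$) then gives reducedness.
\item \emph{Normality.} For normality I would invoke \cite[Theorem 1.3]{crawley2001geometry}, or, for the non-extended case, the normality of $\mathcal M^0$ from \cite{crawley2003normality}.
\end{itemize}

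\textbf{Step 2: passing to $\mu^{-1}(\mathcal Z)$ and to the fibers over $\lambda$.} We have $\mu^{-1}(\mathcal Z)=\widetilde\mu^{-1}(0)$. Flatness of $\mu$ implies flatness of $\widetilde\mu$, since $\widetilde\mu$ is $\mu$ composed with a translation in the $\mathcal Z$ factor. Hence $\mu^{-1}(\mathcal Z)\to\mathcal Z$ is flat with fibers $\mu^{-1}(\lambda)$. Each $\mu^{-1}(\lambda)$ is a complete intersection of the same dimension, and by upper semicontinuity together with the $\mathbb C^\times$-equivariance it is generically smooth along its (nonempty) stable locus.

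For (1), I would use the $\mathbb C^\times$-action: it contracts $\mu^{-1}(\mathcal Z)$ onto $\mu^{-1}(0)$, scaling $\mathcal Z$ with weight $2$. The Cohen--Macaulay and $R_0$ properties of the central fiber then spread to the whole total space. The locus where $\mu^{-1}(\mathcal Z)$ is not CM, or is not reduced, is closed and $\mathbb C^\times$-stable, so if nonempty its closure would meet the central fiber. Flatness over a smooth base with a reduced CM fiber excludes this. The same argument applies to irreducibility: every irreducible component has dimension at least $\dim\mu^{-1}(0)+\dim\mathcal Z$ and is $\mathbb C^\times$-stable, so it meets $\mu^{-1}(0)$. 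By flatness each component dominates $\mathcal Z$, and since the central fiber is irreducible of the expected dimension, there can be only one component.

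\textbf{Step 3: GIT quotients.} Invariants of a reductive group inherit reducedness, irreducibility and normality from $\mu^{-1}(\mathcal Z)$ and from $\mu^{-1}(\lambda)$. This handles $\widetilde{\mathcal M}^0$, and handles $\mathcal M^0_\lambda$ once each $\mu^{-1}(\lambda)$ is known to be normal. For $\lambda\neq 0$ I expect that normality of $\mu^{-1}(\lambda)$ requires the Crawley-Boevey criterion applied to $\mu^{-1}(\lambda)$, which holds because the relevant dimension inequality for $\lambda$ is implied by the $\lambda=0$ one.

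\textbf{Steps 4 and 5: resolutions and global functions.} For (3), $\pi$ and $\pi_\lambda$ are projective. By Proposition \ref{prop: flat moment map} (5), $\pi$ is surjective, and $\pi_\lambda$ is surjective similarly. The sources are smooth, being free quotients of the smooth stable locus, and the maps are birational because both sides are irreducible of equal dimension and $\pi$ is an isomorphism over the locus of free closed orbits. This makes them resolutions.

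For (4), a projective birational morphism onto a normal variety satisfies $\pi_*\mathcal O=\mathcal O$ by Zariski's main theorem, and taking global sections gives the isomorphisms.

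\textbf{Main obstacle.} I expect the hardest part to be normality and irreducibility of $\mu^{-1}(\lambda)$ for nonzero but non-generic $\lambda$. There I would fall back on the full strength of \cite[Theorem 1.3]{crawley2001geometry}, which is stated for arbitrary $\lambda$.
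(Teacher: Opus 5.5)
Your Step 1 is false, and Steps 2 and 3 rest on it. Flatness of $\mu$ is only the non-strict inequality \eqref{key inequality}. When equality holds for some decomposition, the corresponding stratum of non-stable representations has full dimension and gives an extra irreducible component of $\mu^{-1}(0)$.

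Already the $A_1$ quiver with $v=w=1$ shows this. There $\mu(\alpha,\beta)=\alpha\beta$ on $\mathbb C^2$ is flat, since \eqref{key inequality} reads $0\ge 0$. Yet $\mu^{-1}(0)=\{\alpha\beta=0\}$ is two lines. It is not irreducible and not normal, and the stable locus $\{\alpha\neq 0,\ \beta=0\}$ is dense in only one of them, so your generic-reducedness argument covers only that line. For $v=2$, $w=3$ one similarly gets two $8$-dimensional components, distinguished by whether $\operatorname{rk}\alpha=2$ or $\operatorname{rk}\alpha\le 1$. Irreducibility of $\mu^{-1}(0)$ and density of the stable locus follow from the strict inequality \eqref{key strict inequality} via Propositions \ref{prop: simple locus criterion} and \ref{prop: simple locus}, and that inequality is not assumed here.

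Consequently two of your steps break down. Your proof that $\mu^{-1}(\mathcal Z)$ is irreducible via an irreducible central fiber fails. So does Step 3's plan of obtaining $\mathcal M^0_\lambda$ as invariants of a reduced, irreducible, normal $\mu^{-1}(\lambda)$: at $\lambda=0$ in these examples $\mathcal M^0_0$ is irreducible and normal while $\mu^{-1}(0)$ is neither. No citation can supply this for all $\lambda$, and \cite{crawley2003normality} concerns normality of the quotient, not of $\mu^{-1}(0)$.

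The repair, which is what the paper does, is to use the generic fiber instead of the central one, and to prove the claims about $\mathcal M^0_\lambda$ on the quotient itself.
\begin{itemize}
\item For (1): flatness of $\mu^{-1}(\mathcal Z)\to\mathcal Z$ forces every irreducible component and associated point to dominate $\mathcal Z$. For generic $\lambda$, $\mu^{-1}(\lambda)$ consists of simple representations and is reduced and irreducible by \cite[Theorem 1.2]{crawley2001geometry}. Hence there is a single, generically reduced component, and the complete intersection has no embedded points.
\item For (2): irreducibility of $\mathcal M^0_\lambda$ is \cite[Corollary 1.4]{crawley2002decomposition}, reducedness is the nontrivial \cite[Theorem B]{zhou2022reducedness}, and normality is \cite[Theorem 1.1]{crawley2003normality}. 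Normality of $\widetilde{\mathcal M}^0$ then follows from three facts: $p_0$ is flat, because $\mathbb C[\widetilde{\mathcal M}^0]$ is a $\mathbb C[\mathcal Z]$-direct summand of $\mathbb C[\mu^{-1}(\mathcal Z)]$; $\mathcal Z$ is smooth; and the fibers are normal. It does not come from normality of $\mu^{-1}(\mathcal Z)$, which you never establish.
\item For (3) there is a second gap. At special $\lambda$ the simple locus can be empty. In the $v=2$, $w=3$ example, $\mathbf w-\mathsf C\mathbf v<0$, while $\pi_0$ is the resolution $T^*\mathbb P^2\to\{x\in\mathfrak{gl}_3: x^2=0\}$. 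So ``isomorphism over free closed orbits'' says nothing at $\lambda=0$, and equal dimensions give only generic finiteness. The paper cites \cite[Corollary 2.16]{zhou2022reducedness}. Alternatively, once $\pi$ is birational and $\widetilde{\mathcal M}^0$ normal, Zariski's connectedness theorem makes the fibers of $\pi$, hence of $\pi_\lambda$, connected. This forces $\mathcal M_\lambda$ to be connected and $\pi_\lambda$ to have degree one.
\end{itemize}
Your arguments for $\pi$ itself and for (4) are fine.
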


\medskip 

\begin{proof}
Since $\mu: \mu^{-1}(\mathcal Z)\to \mathcal Z$ is flat, in order to prove that $\mu^{-1}(\mathcal Z)$ is reduced and irreducible, it suffices to prove that $\mu^{-1}(\lambda)$ is reduced and irreducible for generic $\lambda\in \mathcal Z$, which is implied by \cite[Theorem 1.2]{crawley2001geometry}. This proves (1).

Since $\mathbb C[\widetilde{\mathcal M}^0(\mathbf v,\mathbf w)]=\mathbb C[\mu^{-1}(\mathcal Z)]^{\GL(\mathbf v)}$, we see that $\widetilde{\mathcal M}^0(\mathbf v,\mathbf w)$ is reduced and irreducible. The fact that ${\mathcal M}^0_\lambda(\mathbf v,\mathbf w)$ is irreducible for all $\lambda\in \mathcal Z$ is due to \cite[Corollary 1.4]{crawley2002decomposition}; the reducedness of ${\mathcal M}^0_\lambda(\mathbf v,\mathbf w)$ is shown in \cite[Theorem B]{zhou2022reducedness}, and the normality of ${\mathcal M}^0_\lambda(\mathbf v,\mathbf w)$ is proven in \cite[Theorem 1.1]{crawley2003normality}. Note that $p_0: \widetilde{\mathcal M}^0(\mathbf v,\mathbf w)\to \mathcal Z$ is flat since $\mathbb C[\widetilde{\mathcal M}^0(\mathbf v,\mathbf w)]$ is a direct summand of $\mathbb C[\mu^{-1}(\mathcal Z)]$ as a $\mathbb C[\mathcal Z]$-module, then the normality follows from the flatness of $p_0$, and the smoothness of $\mathcal Z$, and the normality of the fibers ${\mathcal M}^0_\lambda(\mathbf v,\mathbf w)$. This proves (2).

$\pi_\lambda: {\mathcal M}_\lambda(\mathbf v,\mathbf w)\to {\mathcal M}^0_\lambda(\mathbf v,\mathbf w)$ is birational by \cite[Corollary 2.16]{zhou2022reducedness}, therefore it is a resolution of singularities. Since $\pi_\lambda$ is an isomorphism for generic $\lambda\in \mathcal Z$, we conclude that $\pi:\widetilde{\mathcal M}(\mathbf v,\mathbf w)\to \widetilde{\mathcal M}^0(\mathbf v,\mathbf w)$ is birational, therefore it is a resolution of singularities. This proves (3). 

(4) follows from the normality of $\widetilde{\mathcal M}^0(\mathbf v,\mathbf w)$ and ${\mathcal M}^0_\lambda(\mathbf v,\mathbf w)$ and (3).
\end{proof}

\medskip

\begin{remark}
In fact $\pi_\lambda: {\mathcal M}_\lambda(\mathbf v,\mathbf w)\to {\mathcal M}^0_\lambda(\mathbf v,\mathbf w)$ \eqref{lambdafiber-proj-morph-M-to-M0} is a symplectic resolution according to \cite[Section 8]{crawley2003normality}. Using \cite[Lemma 8.5]{crawley2003normality}, we see that $\pi_\lambda$ induces an isomorphism  $$\pi_\lambda^{-1}({\mathcal M}^0_\lambda(\mathbf v,\mathbf w)^{\mathrm{sm}}) \cong {\mathcal M}^0_\lambda(\mathbf v,\mathbf w)^{\mathrm{sm}}~,$$ where $X^{\mathrm{sm}}$ denotes the smooth locus of a variety $X$. Since $p_0$ \eqref{p0morphism-Mtilde0-to-Z} is flat, the smooth locus of $\widetilde{\mathcal M}{^0}(\mathbf v,\mathbf w)$ is the union $\bigcup_{\lambda\in \mathcal Z}{\mathcal M}^0_\lambda(\mathbf v,\mathbf w)^{\mathrm{sm}}$, then it follows that $\pi$ induces an isomorphism between $\pi^{-1}(\widetilde{\mathcal M}^0(\mathbf v,\mathbf w)^{\mathrm{sm}})$ and $\widetilde{\mathcal M}^0(\mathbf v,\mathbf w)^{\mathrm{sm}}$. 
\end{remark}

\begin{proposition}\label{prop: flat extended moment map}
The following statements are equivalent to each other:
\begin{enumerate}
    \item $\widetilde\mu:T^*\mathrm{Rep}(\mathbf v,\mathbf w)\times \mathcal Z\to \mathfrak{gl}(\mathbf v)^*$ is flat.
    \item $\dim \widetilde\mu^{-1}(0)=\mathbf v\cdot(\mathbf v+2\mathbf w-\mathsf C\mathbf v)+|Q_0|$.
    \item For every decomposition $\mathbf v=\sum_{l=0}^k\mathbf v^{(l)}$ where $k\ge 1$, and $\mathbf v^{(l)}\in \mathbb Z^{Q_0}_{\ge 0}$, $l=0,1,\cdots,k$, and $\mathbf v^{(s)}\neq 0$ if $s>0$, the following inequality holds:
    \begin{align}\label{key inequality tilde}
        \boxed{~~ \mathbf v\cdot(2\mathbf w-\mathsf C\mathbf v)+2|Q_0|\ge \mathbf v^{(0)}\cdot(2\mathbf w-\mathsf C\mathbf v^{(0)})+\sum_{l=1}^k(2-\mathbf v^{(l)}\cdot\mathsf C\mathbf v^{(l)})+2d~,~}
    \end{align}
    where $d=\dim\{\lambda\in \mathcal Z:\lambda\cdot \mathbf v^{(l)}=0,\, l=1,\cdots,k\}$.
\end{enumerate}
\end{proposition}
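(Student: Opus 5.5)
The plan is to reduce all three statements to a dimension count on $\widetilde\mu^{-1}(0)=\mu^{-1}(\mathcal Z)$, stratified by representation type, in the same way Proposition \ref{prop: flat moment map} is obtained from \cite{crawley2001geometry}.

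\emph{Equivalence $1\Leftrightarrow 2$.} The source $T^*\mathrm{Rep}(\mathbf v,\mathbf w)\times\mathcal Z$ and the target $\mathfrak{gl}(\mathbf v)^*$ are smooth affine spaces. The map $\widetilde\mu$ is equivariant for the $\mathbb C^\times$-action of weight $1$ on arrows and weight $2$ on $\mathcal Z$ and on $\mathfrak{gl}(\mathbf v)^*$. By miracle flatness, $\widetilde\mu$ is flat exactly when every fiber has the expected dimension
\[
2\dim\mathrm{Rep}+|Q_0|-\mathbf v\cdot\mathbf v=\mathbf v\cdot(\mathbf v+2\mathbf w-\mathsf C\mathbf v)+|Q_0|.
\]
Upper semicontinuity of fiber dimension, combined with $\mathbb C^\times$-equivariance (every fiber degenerates to the zero fiber), shows that the zero fiber has the maximal fiber dimension. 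So flatness is equivalent to $\dim\widetilde\mu^{-1}(0)$ equalling the expected value. The inequality $\ge$ always holds because $\widetilde\mu^{-1}(0)$ is cut out by $\mathbf v\cdot\mathbf v$ equations.

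\emph{Equivalence $2\Leftrightarrow 3$.} Use the projection $\widetilde\mu^{-1}(0)=\mu^{-1}(\mathcal Z)\to\mathcal Z$, whose fiber over $\lambda$ is $\mu^{-1}(\lambda)$. Apply Crawley-Boevey's framing trick: $\mathbf w$ is absorbed into a vertex $\infty$ with dimension $1$, giving a dimension vector $\alpha=(1,\mathbf v)$ for a quiver $Q^\infty$. For fixed $\lambda$, \cite{crawley2001geometry} shows that $\dim\mu^{-1}(\lambda)$ equals the maximum over decompositions $\alpha=\beta^{(0)}+\sum_l\beta^{(l)}$ into $\lambda$-compatible roots, meaning $\lambda\cdot\mathbf v^{(l)}=0$ for $l\ge1$, of the quantity
\[
\mathbf v\cdot\mathbf v-\mathbf v\cdot(2\mathbf w-\mathsf C\mathbf v)+\mathbf v^{(0)}\cdot(2\mathbf w-\mathsf C\mathbf v^{(0)})+\sum_{l\ge1}p(\mathbf v^{(l)}).
\]
Here $p(\beta)=1-\tfrac12\beta\cdot\mathsf C\beta$, and the summand $\mathbf v^{(0)}\cdot(2\mathbf w-\mathsf C\mathbf v^{(0)})$ is $2p$ of the framed piece. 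This is the count behind inequality \eqref{key inequality}, since there the right-hand side is $2\times$ the dimension of the type stratum modulo the free part.

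Stratify $\mu^{-1}(\mathcal Z)$ by representation type $\tau=(\mathbf v^{(0)};\mathbf v^{(1)},\dots,\mathbf v^{(k)})$. The set of $\lambda\in\mathcal Z$ admitting type $\tau$ is the linear subspace $\{\lambda:\lambda\cdot\mathbf v^{(l)}=0,\ l\ge1\}$, of dimension $d$. The type-$\tau$ stratum therefore has dimension $d$ plus the fiberwise dimension. Comparing with the expected value in statement $2$, multiplying by $2$ and cancelling the common $\mathbf v\cdot\mathbf v$ terms gives exactly \eqref{key inequality tilde}. The trivial type $k=0$ gives $d=|Q_0|$ and equality, which is consistent with $\lambda$ generic, where every point is stable and has free orbit. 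So statement $2$ holds iff every stratum satisfies \eqref{key inequality tilde}.

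The main obstacle is justifying the stratum dimension count uniformly in $\lambda$. Concretely, I must show that the locus of type $\tau$ inside $\mu^{-1}(\mathcal Z)$ has dimension exactly $d$ plus the Crawley-Boevey fiber count, which requires the fiber dimension to be constant along the $d$-dimensional family of admissible $\lambda$. I would get this from Crawley-Boevey's description of the type stratum as fibered over products of simple loci, whose dimensions $2p(\mathbf v^{(l)})$ and $\mathbf v^{(0)}\cdot(2\mathbf w-\mathsf C\mathbf v^{(0)})$ do not depend on $\lambda$ once the simples exist. The existence for generic $\lambda$ in the subspace is handled by translating $\lambda$ within the subspace, using the same $\mathbb C^\times$ and semicontinuity argument.

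Finally, I have to make sure that taking the maximum over all decompositions, including non-root ones, does not change the criterion. Refining a decomposition into roots only increases the right-hand side, because $p$ is superadditive on such splits, so it is harmless to quantify over all decompositions as in statement $3$.
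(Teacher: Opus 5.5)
Your route is the paper's: $1\Leftrightarrow2$ via the equation count and the $\mathbb C^\times$-contraction (weights $1,2,2$), and $2\Leftrightarrow3$ by fibering $\widetilde\mu^{-1}(0)\cong\mu^{-1}(\mathcal Z)\to\mathcal Z$, applying Crawley-Boevey's fibre-dimension formula and adding the dimension of the family of admissible $\lambda$. The first part is fine.

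The formula you display for $\dim\mu^{-1}(\lambda)$ is wrong, however. As a result, the step ``multiplying by $2$ and cancelling gives exactly \eqref{key inequality tilde}'' does not follow from what you wrote. For the dimension vector $(1,\mathbf v)$ of $Q^+$, Crawley-Boevey's count is $\mathbf v\cdot\mathbf v+p(1,\mathbf v)+\max\sum(\text{$p$ of the pieces})$. Here $p(1,\mathbf v)=\mathbf v\cdot(\mathbf w-\tfrac12\mathsf C\mathbf v)$, and $p$ of the framed piece is $\mathbf v^{(0)}\cdot(\mathbf w-\tfrac12\mathsf C\mathbf v^{(0)})$. That is,
\[
\dim\mu^{-1}(\lambda)=\mathbf v\cdot\big(\mathbf v+\mathbf w-\tfrac12\mathsf C\mathbf v\big)+\max\Big(\mathbf v^{(0)}\cdot\big(\mathbf w-\tfrac12\mathsf C\mathbf v^{(0)}\big)+\sum_{l\ge1}p(\mathbf v^{(l)})\Big),
\]
with the maximum over decompositions having $\mathbf v^{(s)}\in\lambda^\perp\setminus\{0\}$ for $s\ge1$. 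You have $-2p(1,\mathbf v)$ in place of $+p(1,\mathbf v)$, and $2p$ instead of $p$ for the framed piece. Your expression already fails for the Jordan quiver with $\mathbf v=\mathbf w=1$ and $\lambda\neq0$: the fibre $\{(x,y,\alpha,\beta)\in\mathbb C^4:\alpha\beta=\lambda\}$ has dimension $3$, while you get $1$. It also contradicts your own remark that the trivial type gives equality. With the correct formula, adding $d$, comparing with $\mathbf v\cdot(\mathbf v+2\mathbf w-\mathsf C\mathbf v)+|Q_0|$ and doubling does give exactly \eqref{key inequality tilde}, as in the paper.

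The ``main obstacle'' you identify is handled more simply in the paper, and your sketched fix would not work as stated. The $\mathbb C^\times$-action only rescales $\lambda$, and translating $\lambda$ inside $\mathcal Z$ is not a symmetry of $\mu$. You also do not need representation-type strata of $\mu^{-1}(\mathcal Z)$. Moreover, the set of $\lambda$ at which a given type occurs need not be the whole subspace $\bigcap_l(\mathbf v^{(l)})^\perp$: for $A_2$, simples of dimension $(1,1)$ exist when $\lambda_1+\lambda_2=0$, $\lambda\neq0$, but not at $\lambda=0$.

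The paper's device is this. The maximum in Crawley-Boevey's formula depends on $\lambda$ only through $\{\mathbf u:0\le\mathbf u\le\mathbf v,\ \lambda\cdot\mathbf u=0\}$. Hence it is constant on each stratum $\mathcal Z_\sigma$ of the finite arrangement $\{\mathbf u^\perp\}$, so $\dim\mu^{-1}(\mathcal Z)=\max_\sigma(\dim\mu^{-1}(\lambda_\sigma)+\dim\mathcal Z_\sigma)$. Maximizing over $\sigma$ is the same as maximizing, over decompositions, the Crawley-Boevey quantity plus $\dim\bigcap_{l\ge1}(\mathbf v^{(l)})^\perp$.

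Finally, your last paragraph does not justify its claim. Refining a piece $\mathbf v^{(l)}$ into roots $\gamma^{(1)},\dots,\gamma^{(s)}$ can lower $d$ by up to $s-1$. The net change of the right-hand side of \eqref{key inequality tilde} is then only bounded below by $2\big(q(\mathbf v^{(l)})-\sum_j q(\gamma^{(j)})\big)$, where $q:=1-p$, and superadditivity of $p$ does not control this. The paper avoids the step by quoting Crawley-Boevey's formula with the maximum already taken over all nonnegative decompositions in $\lambda^\perp$. If you work with root decompositions instead, you need an extra argument at this point.
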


\medskip 

\begin{proof}
$\widetilde\mu$ is flat if and only if its fibers are of the expected dimension, which is $$\dim T^*\mathrm{Rep}(\mathbf v,\mathbf w)\times \mathcal Z-\dim \mathfrak{gl}(\mathbf v)^*=\mathbf v\cdot(\mathbf v+2\mathbf w-\mathsf C\mathbf v)+|Q_0|~.$$ Note that $\widetilde\mu$ is $\mathbb C^{\times}$-equivariant, where $\mathbb C^{\times}$ scales $T^*\mathrm{Rep}(\mathbf v,\mathbf w)$, $\mathcal Z$, and $\mathfrak{gl}(\mathbf v)^*$ with weights $1$, $2$, and $2$ respectively. Since the $\mathbb C^{\times}$ contracts $T^*\mathrm{Rep}(\mathbf v,\mathbf w)\times \mathcal Z$ to the origin, therefore $\widetilde\mu$ is flat if and only if $\widetilde\mu^{-1}(0)$ is of the expected dimension. This proves the equivalence between statements 1 and 2.

Next we prove the equivalence between 2 and 3. According to \cite[Theorem 4.4]{crawley2001geometry}, let $\lambda\in \mathcal Z$, then 
\begin{align}
    \dim\mu^{-1}(\lambda)=\mathbf v\cdot(\mathbf v+\mathbf w-\frac{1}{2}\mathsf C\mathbf v)+m ~,
\end{align}
where $$m=\mathrm{max}\left(\mathbf v^{(0)}\cdot(\mathbf w-\frac{1}{2}\mathsf C\mathbf v^{(0)})+\sum_{l={{1}}}^k(1-\frac{1}{2}\mathbf v^{(l)}\cdot\mathsf C\mathbf v^{(l)})\right)~,$$
$\mathbf v=\sum_{l=0}^k\mathbf v^{(l)}$ is a decomposition such that $k\ge 0$, and $\mathbf v^{(l)}\in \mathbb Z^{Q_0}_{\ge 0}$, $l=0,\cdots,k$, and $\mathbf v^{(s)}\in \lambda^{\perp}\setminus \{0\}$ if $s>0$. Here $\lambda^{\perp}:=\{z\in \mathcal Z:\lambda\cdot z=0\}$. Consider the hyperplane arrangement on $\mathcal Z$ given by the hyperplanes $\{\mathbf u^{\perp}:\mathbf u\in \mathbb Z^{Q_0}_{\ge 0},\mathbf v-\mathbf u\in \mathbb Z^{Q_0}_{\ge 0}\}$; this gives a stratification $\mathcal Z=\bigsqcup_{\sigma}\mathcal Z_{\sigma}$. For every strata $\mathcal Z_{\sigma}$, there is a unified choice of decomposition $\mathbf v=\sum_{l=0}^k\mathbf v^{(l)}$ that maximizes $\mathbf v^{(0)}\cdot\mathbf w-1+\sum_{l=0}^k(1-\frac{1}{2}\mathbf v^{(l)}\cdot\mathsf C\mathbf v^{(l)})$ for every point $\lambda$ in $\mathcal Z_{\sigma}$, and we denote the maximal value by $m_{\sigma}$. Then the dimension of $\widetilde\mu^{-1}(0)\cong\mu^{-1}(\mathcal Z)$ can be written as
\begin{align}\label{dim of extended shell}
    \dim\widetilde\mu^{-1}(0)=\underset{\sigma}{\max}\left(\mathbf v\cdot(\mathbf v+\mathbf w-\frac{1}{2}\mathsf C\mathbf v)+m_{\sigma}+\dim\mathcal Z_{\sigma}\right)~.
\end{align}
Evidently $\underset{\sigma}{\max}(m_{\sigma}+\dim\mathcal Z_{\sigma})$ is the maximal value of $$\mathbf v^{(0)}\cdot\mathbf w-1+\sum_{l=0}^k(1-\frac{1}{2}\mathbf v^{(l)}\cdot\mathsf C\mathbf v^{(l)})+\dim\left(\bigcap_{l=1}^k(\mathbf v^{(l)})^\perp\right)~,$$
among all decompositions $\mathbf v=\sum_{l=0}^k\mathbf v^{(l)}$ such that $k\ge 0$, and $\mathbf v^{(l)}\in \mathbb Z^{Q_0}_{\ge 0}$, $l=0,\cdots,k$, and $\mathbf v^{(s)}\neq 0$ if $s>0$. Since we always have $$\dim \widetilde\mu^{-1}(0)\ge \dim T^*\mathrm{Rep}(\mathbf v,\mathbf w)\times \mathcal Z-\dim \mathfrak{gl}(\mathbf v)^*=\mathbf v\cdot(\mathbf v+2\mathbf w-\mathsf C\mathbf v)+|Q_0|~,$$ the equality holds if and only if $\dim \widetilde\mu^{-1}(0)\le\mathbf v\cdot(\mathbf v+2\mathbf w-\mathsf C\mathbf v)+|Q_0|$, and by the dimension formula \eqref{dim of extended shell} such inequality is equivalent to that the following inequality
\begin{gather} \label{key inequality tilde V2}
\begin{split}
    & \mathbf v\cdot(\mathbf v+2\mathbf w-\mathsf C\mathbf v)+|Q_0| \ge \\ & \qquad \mathbf v\cdot(\mathbf v+\mathbf w-\frac{1}{2}\mathsf C\mathbf v)+\mathbf v^{(0)}\cdot\mathbf w-1+\sum_{l=0}^k(1-\frac{1}{2}\mathbf v^{(l)}\cdot\mathsf C\mathbf v^{(l)})+\dim\left(\bigcap_{l=1}^k(\mathbf v^{(l)})^\perp\right)
\end{split}
\end{gather}
holds for all allowed decompositions $\mathbf v=\sum_{l=0}^k\mathbf v^{(l)}$. A simple computation shows that this is equivalent to  statement (3).
\end{proof}

\medskip 

\begin{remark}\label{rmk: mu flat implies tilde mu flat}
It is easy to see that the inequality \eqref{key inequality} implies the inequality \eqref{key inequality tilde}, so the flatness of $\mu$ implies the flatness of $\widetilde\mu$. This can also be derived from Lemma \ref{lem: jet flatness for homogeneous map} (2) by taking $g=\mathrm{pr}_{\mathcal Z}$ and $$V=T^*\mathrm{Rep}(\mathbf v,\mathbf w)\times\mathcal Z~, \qquad U=\gl(\mathbf v)^*~, \qquad f=\mu\circ\mathrm{pr}_{T^*\mathrm{Rep}(\mathbf v,\mathbf w)}~.$$ 
\end{remark}

\subsection{Simple locus}
We define the simple locus $T^*\mathrm{Rep}(\mathbf v,\mathbf w)^{\mathrm{reg}}$ to be the intersection of stable and costable loci, i.e.
\begin{align}
    T^*\mathrm{Rep}(\mathbf v,\mathbf w)^{\mathrm{reg}}:=T^*\mathrm{Rep}(\mathbf v,\mathbf w)^{\mathrm{st}}\bigcap T^*\mathrm{Rep}(\mathbf v,\mathbf w)^{\mathrm{cst}} ~.
\end{align}

\medskip 

\begin{remark}\label{rmk: CB trick}
The terminology \textit{simple locus} is derived from the following fact. Consider the unframed quiver $Q^+$ associated to the framed quiver $Q$ defined as $Q^+_0:=Q_0\sqcup \{\infty\}$ with the adjacency matrix $\mathsf Q^+$ defined as
\begin{align*}
    \mathsf Q^+_{ij}=\begin{cases}
        \mathsf Q_{ij}, & i,j\neq \infty ~,\\
         w_j, & i=\infty, j\neq \infty ~,\\
        0, & j=\infty ~.
    \end{cases}
\end{align*}
Let $ \mathbf v^+\in \mathbb Z^{Q^+_0}$ be the following
\begin{align*}
     v^+_i:=\begin{cases}
         v_i, & i\neq \infty ~,\\
        1, & i=\infty ~.
    \end{cases}
\end{align*}
There is a canonical isomorphism $T^*\mathrm{Rep}(\mathbf v,\mathbf w)\cong \mathrm{Rep}(\overline{Q}^+,\mathbf v^+)$, where the latter is the space of representations of the doubled quiver $\overline{Q}^+$ with dimension vector $\mathbf v^+$ (see the end of Section 1 in \cite{crawley2001geometry}). Then a representation in $T^*\mathrm{Rep}(\mathbf v,\mathbf w)$ is in the simple locus if and only if its corresponding $\overline{Q}^+$-representation is simple.
\end{remark}

For every $\GL(\mathbf v)$-invariant subset $S\subset T^*\mathrm{Rep}(\mathbf v,\mathbf w)$ we set
\begin{align}
    S^{\mathrm{reg}}:=S\cap T^*\mathrm{Rep}(\mathbf v,\mathbf w)^{\mathrm{reg}} ~.
\end{align}
$\mu^{-1}(0)^{\mathrm{reg}}$ is an open subset (possibly empty) in $\mu^{-1}(0)$, and the $\GL(\mathbf v)$-action on $\mu^{-1}(0)^{\mathrm{reg}}$ is free. Moreover, every $\GL(\mathbf v)$-orbit in $\mu^{-1}(0)^{\mathrm{reg}}$ is closed in $\mu^{-1}(0)$, thus $\mu^{-1}(0)^{\mathrm{reg}}$ descends to an open subset $\mathcal M^0(\mathbf v,\mathbf w)^{\mathrm{reg}}$ along the quotient map $\mu^{-1}(0)\to \mu^{-1}(0)\sslash \GL(\mathbf v)$. Since $\mu^{-1}(0)^{\mathrm{reg}}\subset \mu^{-1}(0)^{\mathrm{st}}$, $\mu^{-1}(0)^{\mathrm{reg}}$ also descends to an open subset $\mathcal M(\mathbf v,\mathbf w)^{\mathrm{reg}}$ along the quotient map $\mu^{-1}(0)^{\mathrm{st}}\to \mu^{-1}(0)^{\mathrm{st}}\sslash \GL(\mathbf v)$. Nakajima shows that the natural projection
\begin{align*}
    \pi|_{\mathcal M(\mathbf v,\mathbf w)^{\mathrm{reg}}}: \mathcal M(\mathbf v,\mathbf w)^{\mathrm{reg}}\to \mathcal M^0(\mathbf v,\mathbf w)^{\mathrm{reg}}
\end{align*}
is an isomorphism \cite[Proposition 3.24]{nakajima1998quiver}.

\medskip

\begin{proposition}\label{prop: simple locus}
If $\mu^{-1}(0)^{\mathrm{reg}}$ is nonempty, then
\begin{itemize}
    \item[(1)] $\mathbf w-\mathsf C\mathbf v\in \mathbb Z^{Q_0}_{\ge 0}$, i.e.\  $ w_i-\sum_j\mathsf C_{ij} v_j\ge 0$ for all $i\in Q_0$.
    \item[(2)] $\mu^{-1}(0)$ is reduced and irreducible, and $\dim \mu^{-1}(0)=\mathbf v\cdot(\mathbf v+2\mathbf w-\mathsf C\mathbf v)$. In particular $\mu$ is flat.
\end{itemize}
\end{proposition}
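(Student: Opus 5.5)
Both parts can be handled by passing to the unframed quiver $Q^+$ of Remark \ref{rmk: CB trick} and applying Crawley-Boevey's results for $\mathrm{Rep}(\overline{Q}^+,\mathbf v^+)$. Under $T^*\mathrm{Rep}(\mathbf v,\mathbf w)\cong \mathrm{Rep}(\overline{Q}^+,\mathbf v^+)$, the moment map $\mu$ corresponds to the moment map $\mu^+$ for $\GL(\mathbf v^+)$. More precisely, $\mu^{-1}(0)$ is identified with $(\mu^+)^{-1}(0)$; the extra component at the vertex $\infty$ equals $-\sum_i \mathrm{tr}(\beta_i\alpha_i)$. That trace vanishes automatically, since the traces of the components of $\mu$ sum to $\sum_i\mathrm{tr}(\alpha_i\beta_i)$. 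The group $\GL(\mathbf v^+)=\GL(\mathbf v)\times\mathbb C^\times$ acts through $\GL(\mathbf v^+)/\mathbb C^\times\cong \GL(\mathbf v)$. By the remark, a point of $\mu^{-1}(0)^{\mathrm{reg}}$ is exactly a simple representation of the deformed preprojective algebra $\Pi^0(Q^+)$ of dimension $\mathbf v^+$.

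For (1), we use Crawley-Boevey's characterization. If a simple $\Pi^0$-module of dimension $\alpha$ exists, then $\alpha$ is a positive root and $p(\alpha)\ge \sum_t p(\beta^{(t)})$ for every decomposition of $\alpha$ into positive roots. Alternatively, one can argue more directly using the local structure near a simple point. At a simple point $x$ the stabilizer in $\GL(\mathbf v)$ is trivial, so $d\mu_x$ is surjective. A more elementary route to (1) is to test $\mu$ at $x$ and use costability and stability to show that for each vertex $i$ the map
\[
V_i\to W_i\oplus\bigoplus_{a:\mathsf t(a)=i}V_{\mathsf h(a)}\oplus\bigoplus_{a:\mathsf h(a)=i}V_{\mathsf t(a)}
\]
is injective. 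Dually, the map into $V_i$ is surjective, and the composite is zero by the $i$-th component of $\mu=0$. This composite is the middle term of a complex $V_i\to (\cdots)\to V_i$. Taking the Euler characteristic of this complex gives $w_i+\sum_j(\mathsf Q_{ij}+\mathsf Q_{ji})v_j-2v_i\ge 0$, i.e.\ $(\mathbf w-\mathsf C\mathbf v)_i\ge 0$. The key check is the injectivity and surjectivity. If $\ker\subset V_i$ were nonzero, it would give a graded subspace supported at $i$, killed by $\beta_i$ and by all outgoing maps. Such a subspace is invariant, so it contradicts costability, unless it is all of $V$. If it is all of $V$, all maps out of $V_i$ vanish, and stability then forces $V=V_i$ with a trivial quiver, a degenerate case to handle separately. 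Surjectivity follows dually from stability.

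For (2), we use the standard Crawley-Boevey argument. Because the $\GL(\mathbf v)$-action on $\mu^{-1}(0)^{\mathrm{reg}}$ is free, $\mu$ is smooth there. Hence every component meeting the regular locus has dimension $\dim T^*\mathrm{Rep}-\dim\gl(\mathbf v)=\mathbf v\cdot(\mathbf v+2\mathbf w-\mathsf C\mathbf v)$. Nonemptiness of the simple locus gives, via \cite[Theorem 1.2]{crawley2001geometry}, the inequality $p(\mathbf v^+)\ge \sum p(\beta^{(t)})$, which translates to the inequality \eqref{key inequality}. Proposition \ref{prop: flat moment map} then gives flatness and the dimension equality. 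The main obstacle is this translation: we must check that the root-theoretic inequality from the simple locus for $(Q^+,\mathbf v^+)$ yields \eqref{key inequality} for all decompositions, not only root decompositions. The plan is to refine an arbitrary decomposition into roots using \cite[Theorem 1.3]{crawley2001geometry}. Finally, $\mu^{-1}(0)$ is a complete intersection, hence Cohen--Macaulay. By \cite[Theorem 1.2]{crawley2001geometry}, the simple locus is dense. Since $\mu^{-1}(0)^{\mathrm{reg}}$ is smooth, $\mu^{-1}(0)$ is generically reduced, so by Serre's criterion ($R_0+S_1$) it is reduced. Irreducibility follows because the simple locus of $\Pi^0(Q^+)$ in dimension $\mathbf v^+$ is irreducible and dense, again by \cite{crawley2001geometry}.
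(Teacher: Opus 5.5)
Your proposal is correct and takes essentially the paper's route. The paper cites \cite[Lemma 4.7]{nakajima1998quiver} for (1), whose proof is exactly your three-term complex at each vertex $i$ (injective by costability, surjective by stability, composite equal to the $i$-th component of $\mu$), and it cites \cite[Theorems 1.1 and 1.2]{crawley2001geometry} for (2).

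Two loose ends can simply be dropped. Your ``degenerate case'' never occurs: costability forbids any \emph{nonzero} invariant subspace of $\ker\beta$, and when $\beta=0$ the subspace $(0,V)$ is a proper nonzero subrepresentation of the $\overline{Q}^+$-module. Your root-versus-arbitrary-decomposition translation is also unnecessary, because Crawley-Boevey's Theorem 1.2 already states that $\mu^{-1}(0)$ is a reduced, irreducible complete intersection of the expected dimension $\mathbf v\cdot(\mathbf v+2\mathbf w-\mathsf C\mathbf v)$, and that already gives flatness.
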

\begin{proof}
The first statement is in \cite[Lemma 4.7]{nakajima1998quiver}, the second statement is a special case of \cite[Theorem 1.2 and Theorem 1.1]{crawley2001geometry}.
\end{proof}
It follows from Proposition \ref{prop: simple locus} and Proposition \ref{prop: flat moment map 2} that if $\mu^{-1}(0)^{\mathrm{reg}}$ is nonempty, then $\mu^{-1}(\mathcal Z)$ is reduced and irreducible. 

\medskip 

Crawley-Boevey proved the following criterion for the non-emptiness of $\mu^{-1}(0)^{\mathrm{reg}}$ \cite[Theorem 1.2]{crawley2001geometry}.

\medskip

\begin{proposition}\label{prop: simple locus criterion}
$\mu^{-1}(0)^{\mathrm{reg}}$ is nonempty if and only if for every decomposition $\mathbf v=\sum_{l=0}^k\mathbf v^{(l)}$ where $k\ge 1$, and $\mathbf v^{(l)}\in \mathbb Z^{Q_0}_{\ge 0}$, $l=0,1,\cdots,k$, and $\mathbf v^{(s)}\neq 0$ if $s>0$, the inequality \eqref{key inequality} is strict, i.e.
    \begin{align}\label{key strict inequality}
        \boxed{ ~~ \mathbf v\cdot(2\mathbf w-\mathsf C\mathbf v)> \mathbf v^{(0)}\cdot(2\mathbf w-\mathsf C\mathbf v^{(0)})+\sum_{l=1}^k(2-\mathbf v^{(l)}\cdot\mathsf C\mathbf v^{(l)})~.~}
    \end{align}
\end{proposition}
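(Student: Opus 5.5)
The plan is to reduce the statement to Crawley-Boevey's criterion for simple representations of the unframed quiver $Q^+$, using the identification of Remark \ref{rmk: CB trick}. Under $T^*\mathrm{Rep}(\mathbf v,\mathbf w)\cong \mathrm{Rep}(\overline{Q}^+,\mathbf v^+)$, the moment map $\mu$ becomes the $\GL(\mathbf v)$-part of the moment map $\mu^+$ for $\GL(\mathbf v^+)=\GL(\mathbf v)\times \mathbb C^\times$. Since the trace of $\mu^+$ vanishes identically, the condition $\mu(x)=0$ forces the $\infty$-component of $\mu^+$ to vanish as well. Thus $\mu^{-1}(0)$ is identified with $(\mu^{+})^{-1}(0)$. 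By Remark \ref{rmk: CB trick}, $\mu^{-1}(0)^{\mathrm{reg}}$ is then identified with the locus of simple representations of the deformed preprojective algebra $\Pi^0(Q^+)$ of dimension $\mathbf v^+$.

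Crawley-Boevey's theorem states that a simple $\Pi^0(Q^+)$-module of dimension $\alpha$ exists if and only if $\alpha$ is a positive root of $Q^+$ and $p(\alpha)>\sum_t p(\beta^{(t)})$ for every nontrivial decomposition $\alpha=\sum_t\beta^{(t)}$ into at least two positive roots. Here $p(\beta)=1-\tfrac12\beta\cdot \mathsf C^+\beta$, with $\mathsf C^+$ the Cartan matrix of $Q^+$. I would apply this with $\alpha=\mathbf v^+$. Any decomposition of $\mathbf v^+$ has exactly one summand with $\infty$-component $1$, which we write $\mathbf v^{(0)}+e_\infty$; every other summand is some $\mathbf v^{(l)}$ supported on $Q_0$. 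A direct computation gives $(\mathbf u+e_\infty)\cdot\mathsf C^+(\mathbf u+e_\infty)=\mathbf u\cdot\mathsf C\mathbf u+2-2\mathbf u\cdot\mathbf w$, so $p(\mathbf u+e_\infty)=\mathbf u\cdot(\mathbf w-\tfrac12\mathsf C\mathbf u)$. Multiplying the Crawley-Boevey inequality by $2$ turns it exactly into the boxed inequality \eqref{key strict inequality}.

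There is a mismatch to handle. In Crawley-Boevey's criterion the summands range over positive roots, whereas \eqref{key strict inequality} is imposed for all nonzero $\mathbf v^{(l)}\in\mathbb Z^{Q_0}_{\ge0}$, and the proposition imposes no separate root condition on $\mathbf v^+$.

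For the direction ``$\Rightarrow$'', if $\mu^{-1}(0)^{\mathrm{reg}}\neq\emptyset$, then for an arbitrary decomposition with non-root parts I would bound $p$ using the ``$p$ of a non-root is dominated by $\sum p$ of a refinement into roots'' argument; this is the standard trick from \cite{crawley2001geometry}, namely $\dim$ counting of $\mu^{-1}$ via Theorem 1.3 there. Alternatively, and more directly, I would use the dimension estimate. The locus of representations with a composition of that type has dimension at most $\mathbf v\cdot(\mathbf v+\mathbf w-\tfrac12\mathsf C\mathbf v)+\tfrac12(\text{RHS})$. The simple locus is open of dimension $\mathbf v\cdot(\mathbf v+2\mathbf w-\mathsf C\mathbf v)$, and it is irreducible and dense by Proposition \ref{prop: simple locus}(2). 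Hence the other strata must have strictly smaller dimension, which gives strict inequality. I would then use the inequality $p(\beta)\le \sum p(\gamma_i)$ for a refinement to extend from roots to arbitrary vectors.

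For the direction ``$\Leftarrow$'', strictness over all decompositions in particular covers decompositions into roots. It remains to show $\mathbf v^+$ is a root. I expect this to be the main obstacle. I would argue via the dimension count: the strict inequalities imply that every non-simple stratum of $\mu^{-1}(0)$ has dimension below the expected value, while $\mu^{-1}(0)$ has every component of dimension at least $\dim T^*\mathrm{Rep}-\dim\gl(\mathbf v)$. So $\mu^{-1}(0)$ is nonempty (it contains $0$), and its generic points cannot lie in non-simple strata, which forces the simple locus to be nonempty. This bypasses the root check, at the cost of invoking the stratification dimension formula of \cite[Theorem 1.3]{crawley2001geometry}.
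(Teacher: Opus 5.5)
Your proposal is correct and is essentially the paper's argument. The paper just cites Crawley-Boevey's Theorem 1.2 \cite{crawley2001geometry} via the framing trick of Remark~\ref{rmk: CB trick}, while you make the translation explicit: the identity $p(\mathbf u+e_\infty)=\mathbf u\cdot(\mathbf w-\tfrac12\mathsf C\mathbf u)$; the passage from root decompositions to arbitrary ones via $p(\beta)\le\max\sum_i p(\gamma_i)$, which follows from Theorem 1.3 there together with the expected-dimension lower bound for the nonempty fibre $\mu_\beta^{-1}(0)$; and the stratum count for $\Leftarrow$, which sidesteps checking that $\mathbf v^+$ is a root.

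Drop the ``more direct'' alternative you offer for $\Rightarrow$, though. An upper bound on a stratum's dimension, together with that stratum being smaller than $\mu^{-1}(0)$, does not force the bound itself below $\dim\mu^{-1}(0)$: strata need not attain their bounds, and non-root decompositions have no stratum at all. So only your first route yields the strict inequality.
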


\medskip

\begin{remark}\label{rmk: KM quiver simple locus}
Assume that $Q$ is a Kac-Moody quiver (i.e.\  there is no edge loop), and also assume that $\mathbf w\cdot \lambda\ge 2$ for every imaginary root $\lambda$ (cf. definition in \cite[p.262]{crawley2001geometry}), then the computation in \cite[Proposition 10.5]{nakajima1998quiver} shows that \eqref{key strict inequality} holds if and only if $\mathbf w-\mathsf C\mathbf v\in \mathbb Z^{Q_0}_{\ge 0}$.
\end{remark}

\subsection{Quiver super-varieties}\label{sec:QSV}
Let $Q$ be a quiver and let $\mathbf v,\mathbf w$ be the dimension vectors of gauge nodes and framings respectively. Let $U=\bigoplus_{i\in Q_0} U_i$ be a $Q_0$-graded vector space with dimension vector $\mathbf u$, then we consider an enrichment of the original quiver such that the framing vector space $W$ is replaced by the super vector space $W\oplus \Pi U$. Define the superspace
\begin{align}
    \mathrm{Rep}(\mathbf v,\mathbf w|\mathbf u):=\mathrm{Rep}(\mathbf v,\mathbf w)\oplus\bigoplus_{i\in Q_0}\Pi\Hom(U_i,V_i)~,
\end{align}
which is a representation of $\GL(\mathbf v)$. Its cotangent bundle is identified with the following 
\begin{align}
    T^*\mathrm{Rep}(\mathbf v,\mathbf w|\mathbf u)=T^*\mathrm{Rep}(\mathbf v,\mathbf w)\oplus \bigoplus_{i\in Q_0}\Pi\Hom(U_i,V_i)\bigoplus_{i\in Q_0}\Pi\Hom(V_i,U_i)~.
\end{align}
We introduce the notation for the elements in the odd summands in $T^*\mathrm{Rep}(\mathbf v,\mathbf w|\mathbf u)$ as follows
\begin{center}
\begin{tabular}{ | c | c |} 
  \hline
  $\Hom(U_i,V_i)$ & $\Hom(V_i,U_i)$  \\  
 \hline
 $\psi_i$ & $\phi_i$  \\ [0.5ex] 
 \hline
\end{tabular}
\end{center}
The moment map for the Hamiltonian $\GL(\mathbf v)$-action on $T^*\mathrm{Rep}(\mathbf v,\mathbf w|\mathbf u)$ is
\begin{gather}\label{super moment map}
\begin{split}
    & \mu_{\mathsf s}:T^*\mathrm{Rep}(\mathbf v,\mathbf w|\mathbf u)\to \mathfrak{gl}(\mathbf v)^* ~, \\
    & \mu_{\mathsf s}(x,y,\alpha,\beta,\psi,\phi)=\sum_{a\in Q_1}[x_a,y_a]+\sum_{i\in Q_0}\alpha_i\beta_i+\sum_{i\in Q_0}\psi_i\phi_i ~,
\end{split}
\end{gather}
with pullback map
\begin{equation}
    \mu_{\mathsf s}^\ast : \mathcal{O}_{\mathfrak{gl}(\mathbf v)^*} \to \Gamma(T^*\mathrm{Rep}(\mathbf v,\mathbf w|\mathbf u), \mathcal{O}_{T^*\mathrm{Rep}(\mathbf v,\mathbf w|\mathbf u)}) ~.
\end{equation}
Similarly, we define the extended moment map 
\begin{equation} \label{eqdef:extendedsupermomentmap}
\widetilde\mu_{\mathsf s}: \mathfrak{R}\to \mathfrak{gl}(\mathbf v)^* ~,
\end{equation} 
where we introduce for future reference the shorthand notation\footnote{Remark that $\mathfrak{R}\big|_{\mathbf u=0}=T^*\mathrm{Rep}(\mathbf v,\mathbf w)\times  \mathcal Z$ is the extended representation space relevant in Section \ref{sec:GeomQuiverVarieties-intro}.} 
\begin{equation} \label{def:prequotient}
\mathfrak{R}:=T^*\mathrm{Rep}(\mathbf v,\mathbf w | \mathbf u)\times  \mathcal Z ~,
\end{equation}
as
\begin{align}\label{extended super moment map}
    \widetilde\mu_{\mathsf s}((x,y,\alpha,\beta,\psi,\phi),z)=\mu_{\mathsf s}(x,y,\alpha,\beta,\psi,\phi)+z ~,
\end{align}
so $\widetilde\mu_{\mathsf s}^{-1}(0)\cong \mu_{\mathsf s}^{-1}(\mathcal Z)$. Given the extended moment map \ref{eqdef:extendedsupermomentmap}, we also have the pullback map
\begin{equation}
    \tilde\mu_{\mathsf s}^\ast : \mathcal{O}_{\mathfrak{gl}(\mathbf v)^*} \to \Gamma( \mathfrak{R} , \mathcal{O}_{ \mathfrak{R}}) ~.
\end{equation}
Note that the underlying bosonic scheme of $\mu^{-1}_{\mathsf s}(0)$ is $\mu^{-1}(0)$, and the underlying bosonic scheme of $\mu^{-1}_{\mathsf s}(\mathcal Z)$ is $\mu^{-1}(\mathcal Z)$. Since a super-scheme has the same topology as its underlying bosonic scheme, we define the stable locus $\mu^{-1}_{\mathsf s}(0)^{\mathrm{st}}$ to be the open subset $\mu^{-1}(0)^{\mathrm{st}}$ endowed with the super-scheme structure of $\mu^{-1}_{\mathsf s}(0)$. Similarly we define $\mu^{-1}_{\mathsf s}(\mathcal Z)^{\mathrm{st}}$ to be the open subset $\mu^{-1}(\mathcal Z)^{\mathrm{st}}$ endowed with the super-scheme structure of $\mu^{-1}_{\mathsf s}(\mathcal Z)$.

\medskip 

\begin{defn}
The affine quiver super-variety ${\mathcal M}^0(\mathbf v,\mathbf w|\mathbf u)$ and its deformation $\widetilde{\mathcal M}^0(\mathbf v,\mathbf w|\mathbf u)$ are defined as the Hamiltonian reductions
\begin{align}
    {\mathcal M}^0(\mathbf v,\mathbf w|\mathbf u):=\mu^{-1}_{\mathsf s}(0)\sslash \GL(\mathbf v),\quad \widetilde{\mathcal M}^0(\mathbf v,\mathbf w|\mathbf u):=\mu^{-1}_{\mathsf s}(\mathcal Z)\sslash \GL(\mathbf v)~.
\end{align}
The Nakajima quiver super-variety ${\mathcal M}(\mathbf v,\mathbf w|\mathbf u)$ and its deformation $\widetilde{\mathcal M}(\mathbf v,\mathbf w|\mathbf u)$ are defined as the Hamiltonian reductions
\begin{align}\label{eqdef:ExtNakajimaQuiverSuperVariety}
    {\mathcal M}(\mathbf v,\mathbf w|\mathbf u):=\mu^{-1}_{\mathsf s}(0)^{\mathrm{st}}\sslash \GL(\mathbf v),\quad \widetilde{\mathcal M}(\mathbf v,\mathbf w|\mathbf u):=\mu^{-1}_{\mathsf s}(\mathcal Z)^{\mathrm{st}}\sslash \GL(\mathbf v)~.
\end{align}
\end{defn}

Note that the restriction of $\mu_{\mathsf s}$ to the bosonic scheme $T^*\mathrm{Rep}(\mathbf v,\mathbf w)$ agrees with the bosonic moment map \eqref{moment map}. The restriction of bosonic moment map $\mu$ to the stable locus is a smooth morphism between smooth varieties, and so it follows that $\mu_{\mathsf s}|_{T^*\mathrm{Rep}(\mathbf v,\mathbf w|\mathbf u)^{\mathrm{st}}}$ is smooth as a morphism between super-schemes. In particular $\mu^{-1}_{\mathsf s}(0)^{\mathrm{st}}$ and $\mu^{-1}_{\mathsf s}(\mathcal Z)^{\mathrm{st}}$ are smooth super-schemes. Moreover, the action of $\GL(\mathbf v)$ on the stable locus is free, thus the quotient maps 
\begin{align}\label{quotientmaps-super}
    q_{\mathsf s}: \mu^{-1}_{\mathsf s}(0)^{\mathrm{st}}\longrightarrow \mu^{-1}_{\mathsf s}(0)^{\mathrm{st}}\sslash \GL(\mathbf v),\quad \widetilde q_{\mathsf s}: \mu^{-1}_{\mathsf s}(\mathcal Z)^{\mathrm{st}}\longrightarrow \mu^{-1}_{\mathsf s}(\mathcal Z)^{\mathrm{st}}\sslash \GL(\mathbf v)~,
\end{align}
are principal $\GL(\mathbf v)$-bundles, and ${\mathcal M}(\mathbf v,\mathbf w|\mathbf u)$ and $\widetilde{\mathcal M}(\mathbf v,\mathbf w|\mathbf u)$ are smooth super-schemes.

\section{Jet-Flatness}\label{sec:JetFlatness}

Throughout this section, all the schemes are assumed to be finite type over complex numbers. For a scheme $X$, denote $J_nX$ the $n$-th jet scheme of $X$, i.e.\  
\begin{align*}
    J_nX=\Hom(\Spec\mathbb C[t]/(t^{n+1}),X)~.
\end{align*}
\begin{defn}
We say that a morphism of schemes $f:X\to Y$ is $J_n$-flat if $J_nf: J_nX\to J_nY$ is flat. We say $f$ is jet-flat if it is $J_n$-flat for all $n$.
\end{defn}

Clearly, smooth morphisms are jet-flat. In fact, a surjective morphism $f:X\to Y$ is smooth if and only if $J_nf$ is surjective for all $n$. The following lemma is elementary.

\medskip

\begin{lem}
Let $f:X\to Y$ and $g:Y\to Z$ be morphisms between schemes. Then the following statements are true.
\begin{itemize}
    \item[(1)] If both $f$ and $g$ are $J_n$-flat morphisms, then $g\circ f:X\to Z$ is a $J_n$-flat morphism. 
    \item[(2)] Let $W\to Y$ be a morphism, and denote by $f_W:X\times_Y W\to W$ the base change of $f$ to W, then $f_W$ is $J_n$-flat if $f$ is $J_n$-flat. 
    \item[(3)] Assume $f$ is smooth and surjective, then $g$ is $J_n$-flat if and only if $g\circ f$ is $J_n$-flat. 
\end{itemize}
\end{lem}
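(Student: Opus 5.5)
The plan is to reduce all three statements to standard facts about flatness, using two properties of jet schemes: the functor $J_n$ commutes with composition and with fibre products, and it sends smooth surjective morphisms to smooth surjective morphisms.

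For (1), functoriality of $\Hom(\Spec\mathbb C[t]/(t^{n+1}),-)$ gives $J_n(g\circ f)=J_ng\circ J_nf$. A composition of flat morphisms is flat, so $g\circ f$ is $J_n$-flat.

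For (2), we use that $J_n$ preserves fibre products: $J_n(X\times_Y W)\cong J_nX\times_{J_nY}J_nW$. This holds because $J_n$ is representable as a right adjoint, namely maps $T\to J_nX$ correspond to maps $T\times\Spec\mathbb C[t]/(t^{n+1})\to X$, so $J_n$ commutes with limits. Under this identification $J_n(f_W)$ is the base change of $J_nf$ along $J_nW\to J_nY$. Flatness is stable under base change, which gives (2).

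For (3), the first step is that if $f$ is smooth and surjective, then $J_nf$ is smooth and surjective.
\begin{itemize}
\item Smoothness is étale local. Étale locally $f$ is a projection $Y\times\mathbb A^r\to Y$, up to an étale map. The jet functor takes étale maps to étale maps and cartesian squares to cartesian squares, since $J_nU\cong J_nX\times_X U$ for $U\to X$ étale. Also $J_n\mathbb A^r=\mathbb A^{r(n+1)}$. Hence $J_nf$ is smooth.
\item For surjectivity, take a point of $J_nY$ with residue field $K$, i.e.\ a $K[t]/(t^{n+1})$-point of $Y$. After a field extension we can lift the closed point, and by formal smoothness (infinitesimal lifting along nilpotent thickenings) we lift the whole jet. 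So $J_nf$ is surjective, hence faithfully flat.
\end{itemize}
With this in hand, the forward direction of (3) follows from (1), since smooth morphisms are flat and so $f$ is $J_n$-flat. For the reverse direction, suppose $J_ng\circ J_nf$ is flat and $J_nf$ is faithfully flat. Flatness of $J_ng$ can be checked locally: for $x'\in J_nY$, choose $x\in J_nX$ mapping to it. Then $\mathcal O_{J_nX,x}$ is faithfully flat over $\mathcal O_{J_nY,x'}$ and flat over $\mathcal O_{J_nZ,z}$. The standard descent criterion (if $B\to C$ is faithfully flat and $A\to C$ is flat, then $A\to B$ is flat, because $-\otimes_AB$ composed with $-\otimes_BC$ is exact and $-\otimes_BC$ reflects exactness) gives flatness of $J_ng$ at $x'$.

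I expect the only nonformal step to be the preservation of smoothness and surjectivity under $J_n$. I would handle it through the étale-local structure of smooth morphisms and the lifting property, as above.
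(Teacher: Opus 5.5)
Your proposal is correct and complete. The paper gives no proof of this lemma (it only calls it elementary), so there is no competing argument to compare against. What you give is the standard reasoning:
\begin{itemize}
\item (1) follows from functoriality, $J_n(g\circ f)=J_ng\circ J_nf$.
\item (2) holds because $J_n$ is a right adjoint and therefore commutes with fibre products.
\item (3) follows once you show that $J_nf$ is smooth and surjective, hence faithfully flat; then flat descent on local rings gives the ``if'' direction.
\end{itemize}

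Your descent step is the same one the paper uses without comment in the next lemma. There, flatness of $J_{n-1}f$ is deduced from flatness of $J_nf$ via the smooth surjective truncations $\phi_n,\phi'_n$.

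One simplification is available. Smoothness of $J_nf$ follows directly from the infinitesimal lifting criterion. By adjunction, a lifting problem for $J_nf$ along a square-zero thickening $T_0\hookrightarrow T$ of affine schemes becomes a lifting problem for $f$ along $T_0\times\Spec\mathbb C[t]/(t^{n+1})\hookrightarrow T\times\Spec\mathbb C[t]/(t^{n+1})$. This is again a square-zero thickening of affine schemes, so formal smoothness of $f$ solves it. Finite presentation is automatic, since everything is of finite type over $\mathbb C$. This route avoids the étale-local structure theorem and the identity $J_nU\cong J_nX\times_XU$ for étale $U\to X$. The same lifting property is what you already use for surjectivity.
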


\medskip

\begin{lem}
Let $X$ and $Y$ be smooth schemes and let $f:X\to Y$ be a morphism. If $f$ is $J_n$-flat for some $n\in \mathbb Z_{\ge 0}$, then $f$ is $J_m$-flat for all $m\le n$.
\end{lem}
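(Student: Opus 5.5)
Since $X$ and $Y$ are smooth, the jet schemes $J_nX$ and $J_nY$ are smooth, and the truncation maps $\pi^X_{n,m}:J_nX\to J_mX$ and $\pi^Y_{n,m}:J_nY\to J_mY$ are Zariski-locally trivial affine bundles with fibre $\mathbb A^{(n-m)\dim}$. In particular they are smooth and surjective. We also have $\pi^Y_{n,m}\circ J_nf=J_mf\circ\pi^X_{n,m}$. The plan is to deduce flatness of $J_mf$ from flatness of $J_nf$ through this commutative square.

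It suffices to treat $m=n-1$ and then induct downwards, but the argument works for any $m\le n$ directly. We use the miracle flatness criterion: a morphism between smooth (hence Cohen--Macaulay and regular) varieties is flat if and only if all its nonempty fibres have the expected dimension $\dim(\text{source})-\dim(\text{target})$, computed locally at each point. Equivalently, we need the local fibre dimension at every point to be $\le$ the expected one.

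Fix $\gamma\in J_mX$ with image $\delta=J_mf(\gamma)$. We compare the fibre $F_m=(J_mf)^{-1}(\delta)$ with fibres of $J_nf$.

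\textbf{Step 1: lifting to $J_nX$.} Since $\pi^X_{n,m}$ is surjective, choose $\tilde\gamma\in J_nX$ over $\gamma$, and set $\tilde\delta=J_nf(\tilde\gamma)$, which lies over $\delta$.

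\textbf{Step 2: the composite $\pi^Y_{n,m}\circ J_nf$.} This composite is flat, being a composition of flat maps. Its fibre over $\delta$ therefore has local dimension
\[
\dim J_nX-\dim J_mY=\dim J_mX+(n-m)\dim X-\dim J_mY
\]
at every point.

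\textbf{Step 3: the same fibre via $\pi^X_{n,m}$.} The same fibre equals $(\pi^X_{n,m})^{-1}(F_m)$, which is an affine bundle over $F_m$ with fibre dimension $(n-m)\dim X$. Its local dimension at $\tilde\gamma$ is therefore $\dim_\gamma F_m+(n-m)\dim X$.

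\textbf{Step 4: conclusion.} Comparing Steps 2 and 3 gives $\dim_\gamma F_m=\dim J_mX-\dim J_mY$, which is the expected dimension. Hence $J_mf$ is flat at $\gamma$ by miracle flatness.

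Written this way, the hypothesis that $J_nf$ is flat enters only through the flatness of the composite in Step 2. Alternatively, one can use descent: flatness of $J_mf\circ\pi^X_{n,m}$ implies flatness of $J_mf$, because $\pi^X_{n,m}$ is faithfully flat and flatness can be checked after a faithfully flat base on the source. Concretely, $\mathcal O_{J_mX,\gamma}\to\mathcal O_{J_nX,\tilde\gamma}$ is faithfully flat, and a module $M$ over $\mathcal O_{J_mY,\delta}$ is flat if $M\otimes_{\mathcal O_{J_mX}}\mathcal O_{J_nX}$ is flat. This version is cleaner and I would present it as the main argument, with the dimension count as a sanity check.

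The main obstacle is justifying that $\pi^X_{n,m}$ is smooth and surjective (an affine bundle) and that $J_nY\to J_mY$ is flat. Both facts rely on the smoothness of $X$ and $Y$: étale-locally, $X\cong\mathbb A^d$ and the jet schemes are products of affine spaces. Everything else is formal.
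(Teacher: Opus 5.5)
Your proposal is correct and is essentially the paper's argument: the paper uses the same commutative square of truncation maps, notes that the vertical projections are smooth and surjective because $X$ and $Y$ are smooth, deduces flatness of $J_{n-1}f$ from that of $J_nf$, and then inducts. Your faithfully-flat descent step is exactly the justification the paper leaves implicit: $J_mf\circ\pi^X_{n,m}=\pi^Y_{n,m}\circ J_nf$ is flat, and $\pi^X_{n,m}$ is faithfully flat. Going directly from $n$ to $m$ instead of inducting, or using the dimension count with miracle flatness, are inessential variations.
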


\begin{proof}
Consider the commutative diagram
\begin{equation*}
\begin{tikzcd}
    J_nX \ar[r,"J_nf"] \ar[d,"\phi_n"] & J_nY \ar[d,"\phi'_n"] \\
    J_{n-1}X  \ar[r,"J_{n-1}f"] & J_{n-1}Y
\end{tikzcd}
\end{equation*}
Here $\phi_n:J_nX\to J_{n-1}X$ and $\phi'_n:J_nY\to J_{n-1}Y$ are the natural projections which are induced from the canonical embedding $\Spec\mathbb C[t]/(t^{n})\hookrightarrow \Spec\mathbb C[t]/(t^{n+1})$. Since $X$ and $Y$ are smooth, $\phi_n$ and $\phi'_n$ are smooth and surjective. Thus the flatness of $J_nf$ implies the flatness of $J_{n-1}f$. And by induction, $f$ is $J_m$-flat for all $m\le n$.
\end{proof}

\medskip 

\begin{defn}
    Let $X$ be a normal algebraic variety over $\mathbb{C}$ and $\pi: \tilde{X} \to X$ a resolution of singularities, where $\tilde{X}$ is a smooth variety and $\pi$ is a proper and birational morphism, which is an isomorphism over the smooth locus of $X$. A point $x \in X$ is called a rational singularity if higher direct image sheaf $R^i \pi_* \mathcal{O}_{\tilde{X}} = 0,~ \forall  i>0$ in a neighborhood of $x$, or equivalently $H^i(\tilde{U}, \mathcal{O}_{\tilde{X}}) = 0,~\forall i>0$ for small neighborhoods $U \subset X$ of $x$, with preimage $\tilde{U}\subset \tilde{X}$ or $U$.
\end{defn}

Rational singularities are therefore cohomologically mild: the singularity behaves like a smooth point from the point of view of the structure sheaf, the pushforward of $\mathcal{O}_{\tilde{X}}$ is $\mathcal{O}_X$ and higher cohomology vanishes. The rationality of singularities is necessary in what follows towards the proof of jet-flatness.

\medskip

\begin{lem}\label{lem: jet flatness for homogeneous map}
Let $V$ and $U$ be vector spaces, and let $f:V\to U$ be a homogeneous morphism of degree $d$, i.e.\ $f$ is equivariant under the $\mathbb C^{\times}$-actions which scale $V$ and $U$ with weights $1$ and $d$ respectively. Then the following statements are true.
\begin{itemize}
    \item[(1)] If $f$ is flat, and $f^{-1}(0)$ is reduced, irreducible and has rational singularities, then $f$ is jet-flat.
    \item[(2)] If $f$ is $J_n$-flat, and $g:V\to U$ is a homogeneous morphism of degree $d'<d$, then $h=f+g:V\to U$ is also $J_n$-flat, and $\dim J_nh^{-1}(0)=\dim J_nf^{-1}(0)$. Assume moreover that $J_nf^{-1}(0)$ is reduced and irreducible, then $J_nh^{-1}(0)$ is reduced and irreducible.
\end{itemize}
\end{lem}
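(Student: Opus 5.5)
For part (1), we combine Mustaţă's theorem with homogeneity. Since $f$ is flat and $V$ is smooth, $X:=f^{-1}(0)$ is a complete intersection in $V$ of codimension $\dim U$. It is reduced, irreducible and has rational singularities, so by \cite{mustata2001jet} $J_nX$ is irreducible for all $n$, hence of dimension $(n+1)\dim X$. The fibre of $J_nf$ over $0\in J_nU$ is exactly $J_nX$, so it has the expected dimension $(n+1)(\dim V-\dim U)$.

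To pass from the zero fibre to all fibres, we use the homogeneity. The $\mathbb C^\times$-action on $V,U$ induces actions on $J_nV\cong V^{n+1}$ and $J_nU\cong U^{n+1}$ with positive weights, say $t\cdot(v_0,\dots,v_n)=(tv_0,\dots,tv_n)$ and similarly with weight $d$ on $U$. The map $J_nf$ is equivariant for these actions, which contract both spaces to the origin. By upper semicontinuity of fibre dimension, the locus in $J_nV$ where the local fibre dimension exceeds the expected one is closed and $\mathbb C^\times$-stable. If it were nonempty, it would contain $0$ in its closure and hence contain $0$, which contradicts the dimension count at $J_nX$. Therefore every fibre has dimension $\dim J_nV-\dim J_nU$. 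Since the source and target are smooth affine spaces, miracle flatness gives that $J_nf$ is flat.

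For part (2), we deform $h$ to $f$ by a one-parameter family. Set $h_s:=f+s^{d-d'}g$, so that $h_s(tv)=t^d h_{s/t}(v)$ because $g$ is homogeneous of degree $d'$. Rescaling by $t$ shows that $h_s^{-1}(0)\cong h_1^{-1}(0)$ for $s\neq0$, and the same isomorphism holds on $J_n$. Consider the family $H\colon J_nV\times\mathbb A^1\to J_nU\times\mathbb A^1$, $(\gamma,s)\mapsto (J_nh_s(\gamma),s)$. This is a weighted-homogeneous map with positive weights on $J_nV\times\mathbb A^1$, with $s$ given weight $1$. Its zero fibre $\{H=0,s=0\}$ is $J_nf^{-1}(0)$, which has the expected dimension by $J_n$-flatness of $f$. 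The same contraction argument as in (1) shows that $H$ is flat of relative dimension $\dim J_nV-\dim J_nU$. Restricting to $s=1$, where the fibre is again a fibre of a flat map between smooth spaces, and repeating the contraction argument for $J_nh$ itself shows that $J_nh$ is flat and that $\dim J_nh^{-1}(0)=\dim J_nf^{-1}(0)$.

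For the reducedness and irreducibility claim, we view $\mathcal Y:=\{H=0\}\subset J_nV\times\mathbb A^1$ as a flat family over $\mathbb A^1_s$. Its special fibre $J_nf^{-1}(0)$ is reduced and irreducible, and its general fibres are all isomorphic to $J_nh^{-1}(0)$. By the $\mathbb C^\times$-action, $\mathcal Y$ is a cone and every irreducible component passes through the special fibre. Flatness over $\mathbb A^1$ means $s$ is a nonzerodivisor, so no component lies inside $s=0$. Since the fibre over $0$ is irreducible of the correct dimension, $\mathcal Y$ is irreducible. Reducedness of the special fibre, together with $s$ being a nonzerodivisor, gives reducedness of $\mathcal Y$ near $s=0$. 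Using the contracting action, this extends to all of $\mathcal Y$, so the generic fibre is geometrically reduced and irreducible, and hence so is $J_nh^{-1}(0)$.

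The main obstacle is this last transfer of reducedness and irreducibility from the special fibre to the generic fibre. The plan is to use the graded Nakayama argument: the contracting action makes the coordinate ring of $\mathcal Y$ positively graded, so reducedness and irreducibility, which are open conditions near the special fibre, propagate to the whole family.
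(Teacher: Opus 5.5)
The one real gap is the irreducibility step in part (2). The rest is sound: part (1) and the flatness and dimension claims of part (2) follow the paper, namely the criterion of \cite{mustata2001jet} plus the contraction argument, and the deformation $f+s^{d-d'}g$ with weights $1,d,1$. Its flatness is read off at the zero fibre and specialised to $s=1$ by base change; no separate contraction argument for $J_nh$ is needed, and none would apply literally, since $J_nh$ is not homogeneous. Your proof that $\mathcal Y$ is reduced, via graded Nakayama on the nilradical with $s$ a homogeneous nonzerodivisor of positive weight, is a valid alternative to the paper's ``complete intersection plus a smooth point'' argument.

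You have three facts: every component of $\mathcal Y$ meets the special fibre, none lies in $\{s=0\}$, and $\mathcal Y_0=J_nf^{-1}(0)$ is irreducible of the expected dimension. From these, a dimension count only shows that every component of $\mathcal Y$ \emph{contains} $\mathcal Y_0$ set-theoretically. Nothing yet stops two components from both doing so. A case inside the lemma's own setting shows this: take $V=U=\mathbb A^1$, $f(x)=x^2$, $g(x)=-x$, $n=0$. Then $\mathcal Y=\{x(x-s)=0\}$ is flat over $\mathbb A^1_s$, weighted homogeneous, and even reduced, and its special fibre is irreducible of the correct dimension. Yet it has the two components $x=0$ and $x=s$, and $h^{-1}(0)=\{0,1\}$ is reducible.

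What rules this out is that $\mathcal Y_0$ is reduced, but you only use that afterwards, for reducedness of $\mathcal Y$. It has to be used here. Since $\mathcal Y_0$ is reduced, there is a point $p\in\mathcal Y_0$ where $\mathcal Y_0$ is smooth. Flatness of $\mathcal Y\to\mathbb A^1$ then makes $\mathcal Y$ smooth at $p$, so only one component passes through $p$. Equivalently, at the generic point of $\mathcal Y_0$ the local ring of $\mathcal Y$ has maximal ideal generated by the nonzerodivisor $s$, so it is a DVR. This is exactly the key step of the paper's proof: two components of $F^{-1}(\ell)$ that both contain $F^{-1}(0,0)$ are played off against a smooth point of $F^{-1}(\ell)$ lying on $F^{-1}(0,0)$.

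To pass from $\mathcal Y$ to $J_nh^{-1}(0)$, do not argue through ``geometric'' properties of the generic fibre; in general these do not descend to closed fibres. Instead, use the trivialisation $J_nh^{-1}(0)\times\mathbb C^\times\cong\mathcal Y\cap\{s\neq0\}$, $(\gamma,t)\mapsto(t\cdot\gamma,t)$, which follows from your rescaling identity. A nonempty open subscheme of a reduced irreducible scheme is reduced and irreducible, and $Y\times\mathbb C^\times$ being reduced and irreducible forces $Y$ to be so.
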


\medskip

\begin{proof}
To prove (1), we notice that $J_nf:J_nV\to J_nU$ is a homogeneous morphism between vector spaces of degree $d$, so it suffices to show that the dimension of $(J_nf)^{-1}(0)\cong J_n(f^{-1}(0))$ is less than or equal to $(n+1)(\dim V-\dim U)$ for all $n\ge 0$. The case when $n=0$ follows from the flatness of $f$. In particular, the flatness of $f$ implies $f^{-1}(0)$ is a locally complete intersection and, by the assumptions stated in point (1), $f^{-1}(0)$ is reduced and irreducible and has rational singularities. Then \cite[Theorem 0.1]{mustata2001jet}, together with \cite[Proposition 1.4]{mustata2001jet}, imply that $\dim J_n(f^{-1}(0))\le (n+1)(\dim V-\dim U)$. This proves (1).

\emph{(Deformation argument)} To prove (2), it suffices to prove the $n=0$ case, since we can replace $V$ by $J_nV$ and $U$ by $J_nU$ respectively. Consider the morphism $F:V\times \mathbb A^1\to U\times \mathbb A^1$ such that $$F(x,t)=(f(x)+t^{d-d'}g(x),t)~.$$ We claim that $F$ is flat. In fact, if we let $\mathbb C^{\times}$ scale $V$, $U$, and $\mathbb A^1$ with weights $1$, $d$, and $1$ respectively, then $F$ is $\mathbb C^{\times}$-equivariant. Notice $\mathbb C^{\times}$-action on $V\times \mathbb A^1$ contracts the whole space to the origin, so it is enough to show that $F$ is flat in an open neighborhood of $(0,0)\in V\times \mathbb A^1$. Equivalently, we need to show that $\dim_{(0,0)}F^{-1}(0,0)\le \dim V-\dim U$. Since $F^{-1}(0,0)=f^{-1}(0)$, and the flatness of $f$ implies that $f^{-1}(0)$ is of pure dimension $\dim V-\dim U$, this proves the claim. Then it follows that the specialization $F(\cdot,1):V\to U$ is flat, and $\dim F^{-1}(0,1)=\dim F^{-1}(0,0)$. Since $F(\cdot,1)=h$, this proves the first statement of (2).

Assume moreover that $f^{-1}(0)$ is reduced and irreducible, then we claim that $F^{-1}(s,t)$ is reduced and irreducible for all $(s,t)\in U\times \mathbb A^1$. Notice that the smooth locus of $F$ is $\mathbb C^{\times}$-invariant, thus $F^{-1}(s,t)$ contains a smooth point. By the flatness of $F$ and existence of a smooth point in $F^{-1}(s,t)$, $F^{-1}(s,t)$ is a locally complete intersection. Hence, to prove the second statement in (2), we only need to show that $F^{-1}(s,t)$ is irreducible;  the reducedness then follows from the existence of a smooth point. Suppose that $F^{-1}(s,t)$ is not irreducible, and we take two irreducible components $F^{-1}(s,t)_k$, with $k=0,1$. Let $\ell\subset U\times \mathbb A^1$ be the line $\{(\lambda s,\lambda t):\lambda\in \mathbb C\}$, and define $F^{-1}(\ell)_k$ to be the closure of $\mathbb C^\times\times  F^{-1}(s,t)_k$ in $F^{-1}(\ell)$, where $\mathbb C^\times\times  F^{-1}(s,t)_k$ is the orbit of $F^{-1}(s,t)_k$ under the $\mathbb C^\times$-action. $F^{-1}(\ell)_{k=0,1}$ are two irreducible components of $F^{-1}(\ell)$. By $\mathbb C^\times$-invariance, $F^{-1}(\ell)_k\cap F^{-1}(0,0)$ is non-empty for both $k=0,1$. Moreover, by the semi-continuity of the dimension of fibers, we have $$\dim F^{-1}(\ell)_k\cap F^{-1}(0,0)\ge \dim F^{-1}(s,t)_k=\dim F^{-1}(s,t)=\dim F^{-1}(0,0)~,$$ but $F^{-1}(0,0)$ is irreducible, this implies that $F^{-1}(\ell)_k\cap F^{-1}(0,0)=F^{-1}(0,0)$. By the flatness of $F$ and the reducedness of $F^{-1}(0,0)$, there exists $x\in F^{-1}(0,0)$ such that $F^{-1}(\ell)$ is smooth at $x$. However, the fact that $x$ belongs to two irreducible components $F^{-1}(\ell)_0$ and $F^{-1}(\ell)_1$ contradicts with the smoothness at $x$. Thus $F^{-1}(s,t)$ is irreducible. This finishes the proof of (2).
\end{proof}

\subsection{Jet-flatness of moment maps}
Let $G$ be a reductive group and let $R$ be a linear representation of $G$. Consider the symplectic representation $T^*R=R\oplus R^*$, then there is a moment map $\mu: T^*R\to \mathfrak{g}^*$, where $\mathfrak{g}$ is the Lie algebra of $G$. Let $\mathfrak Z:=[\mathfrak{g},\mathfrak{g}]^\perp = \mathfrak{g}/[\mathfrak{g},\mathfrak{g}]$. Then $\mathfrak Z$ is a linear subspace of $\mathfrak{g}^*$, i.e.\  there is a canonical embedding $i:\mathfrak Z\hookrightarrow\mathfrak{g}^*$. We define the extended moment map 
\begin{gather} 
\begin{split}
& ~~~\,\widetilde \mu :  T^*R\times \mathfrak Z\to \mathfrak{g}^* \\
& \widetilde \mu(x,z):=\mu(x)+i(z)~.
\end{split}
\end{gather} 

\medskip

\begin{proposition}\label{prop: various jet flatness}
Consider the following four statements.
\begin{itemize}
    \item[$(P_1)$] $\mu$ is flat, and $\mu^{-1}(0)$ is reduced and irreducible and has rational singularities \footnote{This is also called the CIFR condition in the literature, see \cite{herbig2021does}.}.
    \item[$(P_2)$] $\mu$ is jet-flat.
    \item[$(\widetilde P_1)$] $\widetilde\mu$ is flat, and $\widetilde\mu^{-1}(0)$ is reduced and irreducible and has rational singularities.
    \item[$(\widetilde P_2)$] $\widetilde\mu$ is jet-flat.
\end{itemize}
Then we have the implications:
\begin{equation*}
\begin{tikzcd}
    (P_1)\ar[r,Rightarrow] \ar[d,Rightarrow] & (P_2)\ar[d,Rightarrow]\\
    (\widetilde P_1)\ar[r,Rightarrow] & (\widetilde P_2)
\end{tikzcd}
\end{equation*}
Moreover, if $(R,G)$ comes from a quiver representation, i.e.\ $(R,G)=(\mathrm{Rep}(\mathbf v,\mathbf w),\GL(\mathbf v))$ for some quiver $Q$, then we also have the implication
\begin{equation*}
\begin{tikzcd}
    (P_2)\ar[r,Rightarrow] & (\widetilde P_1)~.
\end{tikzcd}
\end{equation*}
\end{proposition}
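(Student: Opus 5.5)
The four implications should follow from Lemma \ref{lem: jet flatness for homogeneous map} together with Musta\c{t}\u{a}'s characterization \cite[Theorem 0.1, Proposition 1.4]{mustata2001jet}: a locally complete intersection $X$ has rational singularities if and only if $J_mX$ is irreducible for all $m$, and this is detected by dimension counts of jets. Throughout, flatness of a $\mathbb C^\times$-equivariant map between affine spaces with contracting positive weights is checked on the zero fiber, as in the proofs above.

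\emph{Horizontal arrows.} For $(P_1)\Rightarrow(P_2)$, apply Lemma \ref{lem: jet flatness for homogeneous map}(1) to $f=\mu$, which is homogeneous of degree $2$.

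For $(\widetilde P_1)\Rightarrow(\widetilde P_2)$ the lemma does not literally apply, since $\mathfrak Z$ must be scaled with weight $2$. I would instead repeat its proof. Every $J_n\widetilde\mu$ is equivariant for a $\mathbb C^\times$-action with positive weights that contracts $J_n(T^*R\times\mathfrak Z)$ to the origin. It therefore suffices to bound $\dim (J_n\widetilde\mu)^{-1}(0)=\dim J_n(\widetilde\mu^{-1}(0))$ by $(n+1)\dim \widetilde\mu^{-1}(0)$. Since $\widetilde\mu^{-1}(0)$ is a reduced, irreducible complete intersection with rational singularities, this bound is exactly Musta\c{t}\u{a}'s theorem.

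\emph{Vertical arrows.} For $(P_2)\Rightarrow(\widetilde P_2)$, follow Remark \ref{rmk: mu flat implies tilde mu flat}. Set $V=T^*R\times\mathfrak Z$, with every factor scaled with weight $1$, and $f=\mu\circ\mathrm{pr}_{T^*R}$, which is homogeneous of degree $2$. Then $J_nf=J_n\mu\times(\text{projection})$ is a base change of $J_n\mu$ composed with a smooth map, hence flat. Taking $g=i\circ \mathrm{pr}_{\mathfrak Z}$, which is linear of degree $1<2$, Lemma \ref{lem: jet flatness for homogeneous map}(2) shows that $h=f+g=\widetilde\mu$ is $J_n$-flat for every $n$.

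For $(P_1)\Rightarrow(\widetilde P_1)$, the $n=0$ case of the same argument gives flatness of $\widetilde\mu$. It also shows that $\widetilde\mu^{-1}(0)$ is reduced and irreducible, using the second half of Lemma \ref{lem: jet flatness for homogeneous map}(2). For rational singularities, note that $\widetilde\mu^{-1}(0)\cong\mu^{-1}(\mathfrak Z)\to\mathfrak Z$ is flat with special fiber $\mu^{-1}(0)$. By Elkik's theorem the total space has rational singularities in a neighborhood of $\mu^{-1}(0)$. The non-rational locus is closed and $\mathbb C^\times$-invariant, and the $\mathbb C^\times$-action contracts every point into that neighborhood, so this locus is empty. (Alternatively, one can argue via jets: both routes use Musta\c{t}\u{a}.)

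\emph{The quiver arrow $(P_2)\Rightarrow(\widetilde P_1)$.} This is the step I expect to be the main obstacle. By $(P_2)\Rightarrow(\widetilde P_2)$, $\widetilde\mu$ is jet-flat, and in particular flat. Proposition \ref{prop: flat moment map 2}(1) then makes $\widetilde\mu^{-1}(0)\cong\mu^{-1}(\mathcal Z)$ reduced and irreducible; it is a complete intersection. By Musta\c{t}\u{a}, it remains to show that each $J_n\widetilde\mu^{-1}(0)\cong(J_n\mu)^{-1}(J_n\mathcal Z)$ is irreducible.

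Since $J_n\mu$ is flat, all its fibers have dimension $(n+1)(\dim T^*R-\dim\mathfrak g)$. Hence the preimage of any proper closed subset of $J_n\mathcal Z$ has dimension strictly less than $(n+1)\dim\widetilde\mu^{-1}(0)$. On the other hand, jet-flatness forces every irreducible component of this complete intersection to have exactly that dimension. So every component meets the preimage of the open set $U\subset J_n\mathcal Z$ of jets whose constant term $\lambda$ is generic.

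For generic $\lambda$, every point of $\mu^{-1}(\lambda)$ is stable (proof of Proposition \ref{prop: stable locus}), so $\mu$ is smooth there. Consequently $J_n\mu$ is smooth over $U$. Its fibers over $U$ are iterated affine bundles over $\mu^{-1}(\lambda)$, which is irreducible by \cite{crawley2001geometry}. Therefore the preimage of $U$ is smooth with irreducible fibers over the irreducible base $U$, hence irreducible, and it is dense in $J_n\widetilde\mu^{-1}(0)$. This gives irreducibility of every jet scheme, and rational singularities follow.
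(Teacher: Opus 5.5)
Your proposal is correct. The arrows $(P_1)\Rightarrow(P_2)$, $(P_2)\Rightarrow(\widetilde P_2)$ and $(\widetilde P_1)\Rightarrow(\widetilde P_2)$ are argued exactly as in the paper.

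\textbf{The arrow $(P_1)\Rightarrow(\widetilde P_1)$.} This is the one genuinely different step.

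The paper argues at every jet level:
\begin{itemize}
\item It deforms $J_n\mu\circ\mathrm{pr}$ into $J_n\widetilde\mu$ through $F_n(x,z,t)=(J_n\mu(x)+t\,J_ni(z),t)$.
\item It shows that $F_n$ is flat and that its special fibre $J_n\mu^{-1}(0)\times J_n\mathfrak Z$ is irreducible and contains a smooth point.
\item It transports these properties to $F_n^{-1}(0,1)=(J_n\widetilde\mu)^{-1}(0)$ by the contraction argument of Lemma \ref{lem: jet flatness for homogeneous map}(2).
\item Only then does it obtain rational singularities, from Musta\c{t}\u{a}'s criterion.
\end{itemize}

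You stay at $n=0$. You use flatness of $\mu^{-1}(\mathfrak Z)\to\mathfrak Z$, Elkik's theorem near the special fibre $\mu^{-1}(0)$, and the openness and $\mathbb C^\times$-invariance of the rational locus.

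\begin{itemize}
\item \emph{What your route buys.} It is shorter and needs no jet schemes for this arrow. The cost is importing Elkik.
\item \emph{What the paper's route buys.} It avoids Elkik. It also directly produces irreducibility of every $J_n\widetilde\mu^{-1}(0)$, which is the form in which the property is used later. You recover this a posteriori from Musta\c{t}\u{a} once $(\widetilde P_1)$ is established.
\end{itemize}

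\textbf{The quiver arrow $(P_2)\Rightarrow(\widetilde P_1)$.} Your idea coincides with the paper's: $\mu$ is smooth over generic $\lambda\in\mathfrak Z$ with irreducible fibres, and one then passes to jets of a smooth map. Your write-up is the more careful one, in two respects.
\begin{itemize}
\item You use the correct identification $(J_n\widetilde\mu)^{-1}(0)\cong(J_n\mu)^{-1}(J_n\mathfrak Z)$. The paper instead writes $\mu_n^{-1}(\mathfrak Z)$ with $\mu_n=\tau_n\circ J_n\mu$, which only constrains the constant term of the jet.
\item You supply the dimension count showing that every irreducible component meets the generic locus. The paper leaves this density step implicit.
\end{itemize}
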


\medskip

\begin{proof}
The implication $(P_1)\Longrightarrow (P_2)$ follows from Lemma \ref{lem: jet flatness for homogeneous map}, by letting $V=T^*R$ and $U=\mathfrak g^*$ and $f=\mu$. Similarly the implication $(P_2)\Longrightarrow (\widetilde P_2)$ follows from Lemma \ref{lem: jet flatness for homogeneous map}, by letting $V=T^*R\times \mathfrak Z$ and $U=\mathfrak g^*$ and $f=\mu\circ \mathrm{pr}_{T^*R}$ and $g=i\circ \mathrm{pr}_{\mathfrak Z}$. 

The implication $(\widetilde P_1)\Longrightarrow (\widetilde P_2)$ follows from an argument similar to that of the first statement in Lemma \ref{lem: jet flatness for homogeneous map}. Namely, if we set the $\mathbb C^{\times}$-weight of $T^*R$ to be $1$, and set the $\mathbb C^{\times}$-weights of $\mathfrak g^*$ and $\mathfrak Z$ to be $2$, then $J_n\widetilde\mu:J_n(T^*R\times \mathfrak Z)\to J_n\mathfrak g^*$ is $\mathbb C^{\times}$-equivariant, such that $\mathbb C^{\times}$ contracts $J_n(T^*R\times \mathfrak Z)$ to the origin, so it suffices to show that the dimension of $(J_n\widetilde\mu)^{-1}(0)\cong J_n(\widetilde\mu^{-1}(0))$ is less than or equal to $(n+1)(\dim T^*R+\dim \mathfrak Z-\dim \mathfrak g^*)$ for all $n\ge 0$. Since $\widetilde\mu$ is flat, $\widetilde\mu^{-1}(0)$ is a locally complete intersection of dimension $\dim T^*R+\dim \mathfrak Z-\dim \mathfrak g^*$; moreover, since $\widetilde\mu^{-1}(0)$ is reduced and irreducible and has rational singularities, \cite[Theorem 0.1]{mustata2001jet} together with \cite[Proposition 1.4]{mustata2001jet} implies that $\dim J_n(\widetilde\mu^{-1}(0))\le (n+1)(\dim T^*R+\dim \mathfrak Z-\dim \mathfrak g^*)$. This proves the implication $(\widetilde P_1)\Longrightarrow (\widetilde P_2)$.

Let us prove $(P_1)\Longrightarrow (\widetilde P_1)$. Assume that $(P_1)$ holds. Then $\widetilde \mu$ is jet-flat by what we have shown above, thus it suffices to show that
\begin{itemize}
    \item $(J_n\widetilde \mu)^{-1}(0)$ is irreducible and contains a smooth point for all $n$.
\end{itemize}
We prove the above assertion using the deformation argument similar to that of Lemma \ref{lem: jet flatness for homogeneous map}. Define $F_n:J_nT^*R\times J_n\mathfrak Z\times\mathbb A^1 \to J_n\mathfrak g^*\times \mathbb A^1$ by
\begin{align*}
    F_n(x,z,t)=(J_n\mu(x)+t\cdot J_ni(z),t)~.
\end{align*}
Then $F_n(\cdot,0)=J_n\mu\circ\mathrm{pr}_{J_nT^*R}$ and $F_n(\cdot,1)=J_n\widetilde\mu$. We assign the $\mathbb C^{\times}$-weights to $T^*R,\mathfrak Z,\mathbb A^1,\mathfrak g^*$ as follows:

\medskip 

\begin{center}
\begin{tabular}{| c | c | c | c | c |} 
 \hline
& $T^*R$ & $\mathfrak Z$ & $\mathbb A^1$ & $\mathfrak g^*$  \\  
 \hline
$\mathbb C^{\times}$-weights & $1$ & {$2$} & $1$ & $2$  \\ 
 \hline
\end{tabular}
\end{center}

\medskip 

Then $F_n$ is $\mathbb C^{\times}$-equivariant, and $\mathbb C^{\times}$ contracts $J_nT^*R\times J_n\mathfrak Z\times\mathbb A^1$ to the origin. To show that $F_n$ is flat, it suffices to demonstrate that the dimension condition $\dim F_n^{-1}(0,0)\le (n+1)(\dim T^*R+\dim \mathfrak Z-\dim \mathfrak g^*)$ is satisfied. This bound follows from the isomorphism $F_n^{-1}(0,0)\cong J_n\mu^{-1}(0)\times J_n\mathfrak Z$ and \cite[Proposition 1.4]{mustata2001jet}, and proves the flatness of $F_n$. Next, we notice that $F_n^{-1}(0,0)$ is irreducible and contains a smooth point. This is so because $F_n(0,0)=J_n\mu(0)\circ\mathrm{pr}_{J_nT^*R}$, where $\mu^{-1}(0)$ is irreducible and contains a smooth point (by the assumption that (1) is true) and $\mu$ is jet-flat, together with Lemma \ref{lem: jet flatness for homogeneous map} (2). Since $\mathbb C^{\times}$ also contracts $J_n\mathfrak g^*\times \mathbb A^1$ to the origin, we conclude that $F_n^{-1}(s,t)$ is irreducible and contains a smooth point for all $(s,t)\in J_n\mathfrak g^*\times \mathbb A^1$. Specializing to $(s,t)=(0,1)$, we see that $(J_n\widetilde \mu)^{-1}(0)$ is irreducible and contains a smooth point. This proves the implication $(P_1)\Longrightarrow (\widetilde P_1)$.

Finally, if $(R,G)=(\mathrm{Rep}(\mathbf v,\mathbf w),\GL(\mathbf v))$ for some quiver $Q$, and assume that $\mu$ is jet-flat. Then $\widetilde\mu$ is jet-flat according the implication $(P_2)\Longrightarrow (\widetilde P_2)$ that we have previously proven. Therefore, to show that $\widetilde\mu^{-1}(0)$ is reduced and irreducible and has rational singularities, it suffices to show that $(J_n\widetilde \mu)^{-1}(0)$ is irreducible and contains a smooth point for all $n$. Define $\mu_n:J_nT^*R\to \mathfrak g^*$ by $$\mu_n:=\tau_n\circ J_n\mu~,$$ where $\tau_n:J_n\mathfrak g^*\to \mathfrak g^*$ is the natural projection from the jet scheme to the base. Then $(J_n\widetilde \mu)^{-1}(0)\cong \mu_n^{-1}(\mathfrak Z)$, where $\mathfrak Z$ is regarded as a subspace of $\mathfrak g^*$ via the natural embedding $i$. $\mu_n$ is flat because $\tau_n$ is smooth and $J_n\mu$ is flat. For a generic $\lambda\in \mathfrak Z$, $\mu^{-1}(\lambda)$ is smooth and connected \cite[Section 8]{crawley2003normality}, thus there exists open subset $U\subset \mathfrak Z$ such that $\mu|_{\mu^{-1}(U)}: \mu^{-1}(U)\to U $ is smooth with irreducible fibers. We conclude that $\mu_n|_{\mu_n^{-1}(U)}: \mu_n^{-1}(U)\to U $ is smooth with connected fibers, which implies that $ \mu_n^{-1}(\mathfrak Z)$ is irreducible and contains a smooth a point. This finishes the proof of $(P_2)\Longrightarrow (\widetilde P_1)$ in the quiver case.
\end{proof}

\medskip

\begin{example}
If $G$ is a torus, then $\widetilde\mu$ can be decomposed into $\widetilde\mu=\mathrm{pr}_{\mathfrak g^*}\circ a$, where $a: T^*R\times \mathfrak g^*\cong T^*R\times \mathfrak g^*$ is the automorphism such that
\begin{align*}
    a(x,z)=(x,z+\mu(x))~,
\end{align*}
and thus $\widetilde\mu$ is smooth. In particular $(\widetilde P_1)$ holds. However, it is possible that $\mu$ fails to be flat, for example when $G$ acts on $R$ trivially. It is also known that $(R,G)$ has property $(P_1)$ if $R$ is stable and faithful \cite[Proposition 5.4]{herbig2020symplectic}.
\end{example}

\medskip

\begin{example}
If $G$ is semisimple and $R=\mathfrak g^{\oplus p}$ where $p\ge 2$, then $(R,G)$ has property $(P_1)$ \cite[Theorem E]{herbig2021does}.
\end{example}

\medskip

\begin{defn}
    A quiver $Q$ is called totally negative if every node has at least two loops and every pair of nodes has at least one arrow between them.
\end{defn}

\medskip

\begin{example}\label{eq:totally negative quiver}
Let $(R,G)=(\mathrm{Rep}(\mathbf v,\mathbf w),\GL(\mathbf v))$ such that $Q$ is totally negative (see \cite{2022arXiv220914791V} for a detailed study). If $\mu$ is flat, then  $(R,G)$ has property $(P_2)$. If $\mathcal M^0(\mathbf v,\mathbf w)^{\mathrm{reg}}\neq \emptyset$, then $(R,G)$ has property $(P_1)$.
\end{example}

In the notation of Section \ref{sec:QSV}, we have the following proposition.

\medskip

\begin{proposition}\label{prop: J_n-flatness of super moment maps}
$\mu_{\mathsf s}$ is $J_n$-flat if and only if $\mu$ is $J_n$-flat. $\widetilde\mu_{\mathsf s}$ is $J_n$-flat if and only if $\widetilde\mu$ is $J_n$-flat.
\end{proposition}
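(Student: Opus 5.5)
The plan is to reduce flatness of the super jet map to flatness of its bosonic reduction, using the standard criterion for flatness over a polynomial ring in the presence of odd variables. Write $A=\mathbb C[J_nT^*\mathrm{Rep}(\mathbf v,\mathbf w)]$ and $\Lambda$ for the exterior algebra on the odd jet coordinates $\psi_{i}^{(k)},\phi_{i}^{(k)}$, $0\le k\le n$. Then $\mathbb C[J_nT^*\mathrm{Rep}(\mathbf v,\mathbf w|\mathbf u)]=A\otimes\Lambda$. The pullback $J_n\mu_{\mathsf s}^*$ sends the coordinate functions $a^{(k)}$ of $J_n\mathfrak{gl}(\mathbf v)^*$ to $J_n\mu^*(a^{(k)})+\theta^{(k)}_a$, where $\theta^{(k)}_a$ is the $k$-th jet of the matrix coefficient of $\sum_i\psi_i\phi_i$. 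This is an even element of $\Lambda$ lying in the ideal $\Lambda^{\ge2}$, hence nilpotent.

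The core step is a nilpotent filtration argument. Since everything is $\mathbb C^\times$-equivariant and contracting (weights $1$ on all coordinates, odd ones included, and $2$ on $\mathfrak{gl}(\mathbf v)^*$), flatness of $J_n\mu_{\mathsf s}$ is equivalent to the images $f_1,\dots,f_N$ of a basis of coordinates of $J_n\mathfrak{gl}(\mathbf v)^*$ forming a regular sequence in the graded ring $A\otimes\Lambda$, equivalently to the vanishing of the positive Koszul homology $H_{>0}(K(f_\bullet;A\otimes\Lambda))$. The same holds for $J_n\mu$ with $g_j=J_n\mu^*(a_j)$ in $A$. In the paper's setting these reductions are the standard graded local criterion and the super analogue of ``fibers of expected dimension''.

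For the easy direction, assume $J_n\mu$ is flat. Filter $A\otimes\Lambda$ by the powers of the odd ideal $\Lambda^{\ge1}$; this filtration is finite. We have $f_j=g_j+\theta_j$ with $\theta_j$ of strictly higher filtration degree. On the associated graded, the Koszul complex becomes $K(g_\bullet;A)\otimes\Lambda$, which is acyclic in positive degrees because $\Lambda$ is free over $\mathbb C$. The spectral sequence of a finite filtration then gives acyclicity of $K(f_\bullet;A\otimes\Lambda)$, so $J_n\mu_{\mathsf s}$ is flat.

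For the converse, assume $J_n\mu_{\mathsf s}$ is flat. Base change along the closed embedding given by setting all odd coordinates to zero, i.e.\ the quotient $A\otimes\Lambda\to A$, is compatible with $J_n\mu_{\mathsf s}^*$ and $J_n\mu^*$ because $\theta_j\mapsto0$. Flatness is not preserved by closed subschemes in general, however, so I would instead use the dimension criterion directly. Flatness of $J_n\mu_{\mathsf s}$ implies the even part $A\otimes\Lambda^{\mathrm{ev}}$ is flat too, since $\Lambda^{\mathrm{odd}}$ is a direct summand as a module over the image of $J_n\mu_{\mathsf s}^*$, which lands in even elements. The bosonic reduced fiber of $J_n\mu_{\mathsf s}^{-1}(0)$ is $J_n\mu^{-1}(0)$, because the $\theta$'s are nilpotent. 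Flatness of the super map forces this fiber, as a topological space, to have dimension equal to the expected bosonic dimension $(n+1)(\dim T^*\mathrm{Rep}-\dim\mathfrak{gl}(\mathbf v))$. By equivariance and contraction, that dimension bound gives flatness of $J_n\mu$.

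I expect the main obstacle to be this converse step, namely making rigorous that flatness of a super morphism bounds the dimension of the underlying bosonic fiber. The cleanest route is probably the Koszul argument again: regularity of $f_\bullet$ on the graded module $A\otimes\Lambda$ passes to the quotient by the odd ideal $\Lambda^{\ge1}$. One shows by induction on the filtration that $\mathrm{Tor}$ against $\Lambda/\Lambda^{\ge1}$ vanishes, using that $\Lambda$ is a finite free extension and $f_j\equiv g_j$ modulo $\Lambda^{\ge1}$.

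The extended statement for $\widetilde\mu_{\mathsf s}$ and $\widetilde\mu$ follows verbatim. Replace $A$ by $\mathbb C[J_n(T^*\mathrm{Rep}(\mathbf v,\mathbf w)\times\mathcal Z)]$; the odd correction $\theta$ is unchanged, and the weights on $\mathcal Z$ are taken to be $2$ as in the proof of Proposition \ref{prop: various jet flatness}.
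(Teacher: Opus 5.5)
\textbf{The forward direction is correct; the converse has a gap.}

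Your forward direction ($J_n\mu$ flat $\Rightarrow$ $J_n\mu_{\mathsf s}$ flat) is correct. It is the paper's argument in Koszul-complex form. The paper reduces flatness to regularity of $\{J_n\mu_{\mathsf s}^*(a_i)\}$. It then proves the ``if'' half of Lemma \ref{lem: regular sequence} by passing to the associated graded for the filtration by powers of the odd ideal $\mathfrak n$. There each $f_j$ becomes $g_j$ acting on the free pieces $A\otimes\Lambda^i$.

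\textbf{Where the converse fails.} You flag the dimension claim as the obstacle, and the fallback you propose does not supply it. Put $R=A\otimes\Lambda$. Since each $f_j$ is even, $K(f_\bullet;R)$ is a finite complex of free $\Lambda$-modules. Once it is acyclic in positive degrees, it is a free $\Lambda$-resolution of $R/(f)$. Hence $H_i(K(g_\bullet;A))\cong\mathrm{Tor}^{\Lambda}_i(R/(f),\Lambda/\Lambda^{\ge1})$. So the vanishing you want to get ``by induction on the filtration'' is just a restatement of the claim. A naive induction along $R\supset\mathfrak n\supset\mathfrak n^2\supset\cdots$ cannot produce it, because a regular sequence on $R$ need not remain regular on the submodules $\mathfrak n^i$ or on the quotients $R/\mathfrak n^i$.

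\textbf{The missing idea.} This is the ``only if'' half of Lemma \ref{lem: regular sequence}: use the last nonzero power $\mathfrak n^{\mathrm{top}}=A\otimes\Lambda^{\mathrm{top}}\cong A$.
\begin{itemize}
\item It is an $R$-submodule on which each $f_j$ acts as $g_j$, since $\theta_j\Lambda^{\mathrm{top}}=0$.
\item A single non-zero-divisor remains a non-zero-divisor on any submodule. So $g_1$ is $A$-regular.
\item To iterate, you must show that $R/f_1R$ has the same structure. Regularity of $g_1$ on each $A\otimes\Lambda^i$ gives $f_1R\cap\mathfrak n^i=f_1\mathfrak n^i$.
\item Hence $\mathrm{gr}(R/f_1R)\cong(A/g_1A)\otimes\Lambda$, whose top piece is $A/g_1A$ with $f_2$ acting as $g_2$. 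Continue in the same way.
\end{itemize}

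\textbf{Alternatively, close your dimension route.}
\begin{itemize}
\item Regard $R$ as a finitely generated graded module over the commutative ring $R^0=A\otimes\Lambda^{\mathrm{ev}}$, whose reduction is $A$.
\item A homogeneous $R$-regular sequence of length $N$ avoids the associated, hence minimal, primes at each step. So $\dim\mathrm{Supp}(R/(f)R)=\dim A-N$.
\item Since $R/(f)R=R^0/fR^0\oplus R^1/fR^1$, its support is exactly $V(fR^0)$.
\item Under the homeomorphism $\Spec R^0\simeq\Spec A$, $V(fR^0)$ is $J_n\mu^{-1}(0)$, because $f_j-g_j$ is nilpotent.
\item So the fiber has the expected dimension. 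Cohen--Macaulayness of the graded polynomial ring $A$ then gives regularity of $g_\bullet$, i.e.\ flatness of $J_n\mu$.
\end{itemize}
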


\begin{proof}
Let us prove the equivalence: $\mu_{\mathsf s}$ is $J_n$-flat $\Longleftrightarrow$ $\mu$ is $J_n$-flat, and the extended version follows from similar argument. Since $J_n\gl(\mathbf v)^*$ is purely bosonic and smooth, the flatness of $J_n\mu_{\mathsf s}$ is equivalent to that $\{J_n\mu_{\mathsf s}^*(a_i)\}_{1\le i\le \dim J_n\gl(\mathbf v)}$ is a regular sequence, where $\{a_i\}_{1\le i\le \dim J_n\gl(\mathbf v)}$ is a basis of $J_n\gl(\mathbf v)$ and $J_n\mu_{\mathsf s}^*:\mathcal O_{J_n\gl(\mathbf v)^*}\to \mathcal O_{J_nT^*\mathrm{Rep}(\mathbf v,\mathbf w|\mathbf u)}$ is the pull-back map between function rings. Then the result is a corollary of the super-commutative algebra Lemma \ref{lem: regular sequence} below.
\end{proof}

\medskip

\begin{lem}\label{lem: regular sequence}
Let $A=A^0\oplus A^1$ be a Noetherian super-commutative algebra with even (resp. odd) component $A^0$ (resp. $A^1$), define $\mathfrak n_A$ to be the ideal generated by $A^1$, and define $A^{\mathrm{bos}}:=A/\mathfrak n_A$. Let $\{a_i\in A^0\}_{1\le i\le m}$ be a finite set of even elements, and suppose that $A$ satisfies the following condition:
\begin{itemize}
    \item[$(*)$] $\mathfrak n_A^i/\mathfrak n_A^{i+1}$ is a locally free $A^{\mathrm{bos}}$-module of finite rank~.
\end{itemize}
Then $\{a_i\}$ is a regular sequence if and only if $\{\overline{a}_i\}$ is a regular sequence, where $\overline{a}_i$ is the image of ${a}_i$ in $A^{\mathrm{bos}}$. Moreover, if $\{a_i\}$ is a regular sequence, then the quotient algebra $A/(a_i:1\le i\le m)$ also satisfies the condition $(*)$.
\end{lem}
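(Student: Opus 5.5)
The plan is to filter $A$ by powers of the odd ideal, $A=\mathfrak n_A^0\supset\mathfrak n_A\supset\mathfrak n_A^2\supset\cdots$, and compare regularity on $A$ with regularity on the associated graded. Since $A$ is Noetherian, $\mathfrak n_A$ is generated by finitely many odd elements $\theta_1,\dots,\theta_r$. Each $\theta_j$ squares to zero, so $\mathfrak n_A^{r+1}=0$ and the filtration is finite. Put $\mathrm{gr}A=\bigoplus_{i}\mathfrak n_A^i/\mathfrak n_A^{i+1}$. Because the $a_i$ are even, multiplication by $a_i$ preserves the filtration, and on each graded piece it acts through the $A^{\mathrm{bos}}$-module structure, that is, as multiplication by $\overline a_i$.

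For the direction ``$\{\overline a_i\}$ regular $\Rightarrow$ $\{a_i\}$ regular'' I would argue as follows. Condition $(*)$ says each graded piece $M_i=\mathfrak n_A^i/\mathfrak n_A^{i+1}$ is a finite locally free, hence flat, $A^{\mathrm{bos}}$-module. So if $\overline a_1,\dots,\overline a_m$ is regular on $A^{\mathrm{bos}}$, it is regular on every $M_i$: tensoring the Koszul complex with a flat module keeps it acyclic, and the nonvanishing $M_i/(\overline a)M_i\neq0$ holds locally because the modules are locally free. The remaining step is to lift regularity from $\mathrm{gr}A$ to the finite filtration. I would do this by induction on $k$, working with the short exact sequences $0\to\mathfrak n_A^{i+1}\to\mathfrak n_A^{i}\to M_i\to0$. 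Suppose $a_1,\dots,a_{k-1}$ is regular on every $\mathfrak n_A^i$, and that the quotients fit into exact sequences $0\to\mathfrak n_A^{i+1}/(a_{<k})\to\mathfrak n_A^{i}/(a_{<k})\to M_i/(\overline a_{<k})\to0$. Then $a_k$ is a nonzerodivisor on the outer terms: on the right term by the previous step, and on the left term by descending induction on $i$ starting from $\mathfrak n_A^{r+1}=0$. Hence $a_k$ is a nonzerodivisor on the middle term. The snake lemma then produces the corresponding exact sequence for $k+1$.

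I expect the main bookkeeping obstacle to be the exactness on the left, namely $\mathfrak n_A^{i+1}\cap(a_{<k})\mathfrak n_A^i=(a_{<k})\mathfrak n_A^{i+1}$. This is exactly what the Koszul/Tor vanishing on $M_i$ supplies: $\mathrm{Tor}_1$ of $M_i$ against $A^{\mathrm{bos}}/(\overline a_{<k})$ vanishes. For properness I would use the $\mathbb C^\times$/graded-local setting implicit in the application. Alternatively, I would note that $A/(a)\neq0$ follows from $A^{\mathrm{bos}}/(\overline a)\neq0$ through the surjection $A/(a)\to A^{\mathrm{bos}}/(\overline a)$.

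For the converse, suppose $\{a_i\}$ is regular on $A$. The base case is that $a_1$ is a nonzerodivisor on $A$; I need to show $\overline a_1$ is a nonzerodivisor on $A^{\mathrm{bos}}$. I would reduce mod the top filtration piece. Let $N=\mathfrak n_A^{r'}\neq0$ be the last nonzero power. By $(*)$, $N$ is locally free over $A^{\mathrm{bos}}$, and so is the top exterior-type piece. If $\overline a_1\overline b=0$ with $\overline b\neq0$, then locally on $\mathrm{Spec}A^{\mathrm{bos}}$ this gives $a_1\cdot(b\,\omega)=0$ for a local basis element $\omega$ of $N$. Local freeness forces $b\,\omega\neq0$, which contradicts regularity of $a_1$ on $A$. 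This step is where $(*)$ is essential, and it is the delicate point; I would phrase it via the associated primes of $A$, showing they coincide with those of $A^{\mathrm{bos}}$ (pulled back) using the locally free filtration. Once this holds, I would pass to $A/(a_1)$ and iterate.

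To make the iteration work, and to get the ``moreover'' clause, I need that $A'=A/(a_1,\dots,a_m)$ again satisfies $(*)$. Its odd ideal is the image of $\mathfrak n_A$, and $A'^{\mathrm{bos}}=A^{\mathrm{bos}}/(\overline a)$. The exact sequences from the forward direction identify $\mathfrak n_{A'}^i/\mathfrak n_{A'}^{i+1}\cong M_i/(\overline a)M_i=M_i\otimes_{A^{\mathrm{bos}}}A'^{\mathrm{bos}}$, and this base change of a locally free module of finite rank is again locally free of finite rank. Proving $(*)$ for $A/(a_1)$ first in this way justifies the induction on $m$ in both directions.
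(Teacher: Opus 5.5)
Your proposal is correct and follows essentially the same route as the paper. You filter by powers of $\mathfrak n_A$ and use that multiplication by $a$ induces multiplication by $\overline a$ on the locally free graded pieces for the ``if'' direction. For the ``only if'' direction you test $\overline a$ on the last nonzero power of $\mathfrak n_A$, which is an $A^{\mathrm{bos}}$-module, and then localize. Finally, you get $(*)$ for the quotient from $aA\cap\mathfrak n_A^i=a\,\mathfrak n_A^i$ together with base change of locally free modules.

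The differences are cosmetic: you handle the whole sequence at once via the snake lemma and also check $A/(a)\neq 0$, whereas the paper reduces to $m=1$ and inducts. Your converse shares the paper's tacit assumption that the top power of $\mathfrak n_A$ has positive rank at the relevant primes. This holds when $\mathrm{Spec}\,A^{\mathrm{bos}}$ is connected, and in general after splitting $A$ along lifted idempotents.
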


\medskip

\begin{proof}
It is enough to prove the lemma for $m=1$, and the general situation follows from induction. In other words, we need to show that an even element $a\in A^0$ has no zero-divisor if and only if its image $\overline{a}\in A^{\mathrm{bos}}$ has no zero-divisor in $A^{\mathrm{bos}}$. Moreover, if $a\in A^0$ has no zero-divisor, then $\mathfrak n_A^i/(\mathfrak n_A^{i+1}+aA\cap \mathfrak n_A^i)$ is a locally free $A/(\mathfrak n_A+aA)$-module of finite rank. We observe that the multiplication map $A\overset{\times a}{\longrightarrow}A$ preserves the filtration $A\supset \mathfrak n_A\supset \mathfrak n_A^2\supset \cdots \supset \mathfrak n_A^k=0$ for some finite $k$ (since $A$ is Noetherian, such $k$ always exists).

The ``if'' part: suppose that $\overline{a}$ has no zero-divisor in $A^{\mathrm{bos}}$, then the associated graded map of $A\overset{\times a}{\longrightarrow}A$ with respect to the filtration induced by the powers of $\mathfrak n_A$ is the multiplication by $\overline{a}$:
\begin{align*}
    \bigoplus_{j=0}^{k-1}\mathfrak n_A^j/\mathfrak n_A^{j+1}\overset{\times\overline{a}}{\longrightarrow}\bigoplus_{j=0}^{k-1}\mathfrak n_A^j/\mathfrak n_A^{j+1}.
\end{align*}
Since $\overline{a}$ has no zero-divisor in $A^{\mathrm{bos}}=A/\mathfrak n_A$ and every direct summand $\mathfrak n_A^j/\mathfrak n_A^{j+1}$ is a locally free $A^{\mathrm{bos}}$-module of finite rank, it follows that the multiplication by $\overline{a}$ map is injective. Therefore the multiplication by $a$ map $A\overset{\times a}{\longrightarrow}A$ is also injective, i.e.\  $a$ has no zero-divisor in $A$.

The ``only if'' part. Suppose that ${a}$ has no zero-divisor in $A$. Let us take $k$ such that $\mathfrak n_A^{k-1}\neq 0$ and $\mathfrak n_A^k=0$. Note that the multiplication map $\mathfrak n_A^{k-1}\overset{\times a}{\longrightarrow}\mathfrak n_A^{k-1}$ agrees with the multiplication map $\mathfrak n_A^{k-1}\overset{\times \overline{a}}{\longrightarrow}\mathfrak n_A^{k-1}$. By our assumption, the map $\mathfrak n_A^{k-1}\overset{\times a}{\longrightarrow}\mathfrak n_A^{k-1}$ is injective. Then it follows that the map $(\mathfrak n_A^{k-1})_{\mathfrak{p}}\overset{\times \overline{a}}{\longrightarrow}(\mathfrak n_A^{k-1})_{\mathfrak{p}}$ is injective for every prime ideal ${\mathfrak{p}}\in \Spec A^{\mathrm{bos}}$. After localization to ${\mathfrak{p}}$, the locally free module $\mathfrak n_A^{k-1}$ becomes free, thus $A^{\mathrm{bos}}_{\mathfrak{p}}\overset{\times \overline{a}}{\longrightarrow}A^{\mathrm{bos}}_{\mathfrak{p}}$ is injective. Therefore $A^{\mathrm{bos}}\overset{\times \overline{a}}{\longrightarrow}A^{\mathrm{bos}}$ is injective, i.e.\  $\overline{a}$ has no zero-divisor in $A^{\mathrm{bos}}$.

Finally, if $a$ has no zero-divisor, then the natural inclusion $aA\cap \mathfrak n_A^i\supseteq a\cdot\mathfrak n_A^i$ is actually an equality $aA\cap \mathfrak n_A^i=a\cdot\mathfrak n_A^i$, because otherwise there exists $b\in \mathfrak n_A^j, b\notin \mathfrak n_A^{j+1},j<i$ such that $ab\in \mathfrak n_A^i$, but this will imply that the multiplication map $\mathfrak n_A^j/\mathfrak n_A^{j+1}\overset{\times \overline{a}}{\longrightarrow}\mathfrak n_A^j/\mathfrak n_A^{j+1}$ has nontrivial kernel, which contradicts with regularity of $\overline{a}$. Therefore we have
\begin{align*}
    \mathfrak n_A^i/(\mathfrak n_A^{i+1}+aA\cap \mathfrak n_A^i)=\mathfrak n_A^i/(\mathfrak n_A^{i+1}+a\cdot\mathfrak n_A^i)\cong (\mathfrak n_A^i/\mathfrak n_A^{i+1})\otimes _{A^{\mathrm{bos}}} A/(\mathfrak n_A+aA)~,
\end{align*}
which is locally free $A/(\mathfrak n_A+aA)$-module of finite rank. This completes the proof.
\end{proof}

\subsection{Conditions for moment map jet-flatness}\label{Conditions for moment map jet-flatness} 

We define the nilpotent cone $\mathcal N(\mathbf v,\mathbf w)$ of the doubled quiver representation $T^*\mathrm{Rep}(\mathbf v,\mathbf w)$ to be $q^{-1}(q(0))$, where $$q:\mu^{-1}(0)\to \mathcal M^0(\mathbf v,\mathbf w)$$ is the quotient map. Denote by $\mathsf g_i$ the number of loops at a node $i\in Q_0$, and let $\mathsf g:=\sum_{i\in Q_0}\mathsf g_i$.

\medskip

\begin{lem}[{\cite[Lemma 3.3]{budur2021rational}}]\label{lem: preimage of zero}
Let $\varphi_m:J_m\mu^{-1}(0)\to \mu^{-1}(0)$ be the natural projection, then
\begin{align*}
    \varphi_m^{-1}(0)\cong\begin{cases}
    J_{m-2}\mu^{-1}(0)\times T^*\mathrm{Rep}(\mathbf v,\mathbf w), & \text{if }m\ge 2 ~,\\
    T^*\mathrm{Rep}(\mathbf v,\mathbf w), & \text{if }m=1 ~.
    \end{cases}
\end{align*}
\end{lem}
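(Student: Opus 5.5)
We compute the fibre directly in the coordinates of an $m$-jet. A point of $J_m\mu^{-1}(0)$ is a truncated arc
\[
x(t)=x_0+x_1t+\cdots+x_mt^m,\qquad x_j\in T^*\mathrm{Rep}(\mathbf v,\mathbf w),
\]
taken modulo $t^{m+1}$. It must satisfy $\mu(x(t))\equiv 0 \bmod t^{m+1}$. The key structural input is that $\mu$ is a homogeneous quadratic map. So we write $\mu(a,b)$ for the associated symmetric bilinear map, with $\mu(x)=\mu(x,x)$. The coefficient of $t^k$ in $\mu(x(t))$ is then
\[
\textstyle\sum_{i+j=k}\mu(x_i,x_j).
\]
The fibre $\varphi_m^{-1}(0)$ is cut out by the additional condition $x_0=0$.

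\textbf{Step 1 (substitution).} Impose $x_0=0$ and write $x(t)=t\,y(t)$ with $y(t)=x_1+x_2t+\cdots+x_mt^{m-1}$. Then $\mu(x(t))=t^2\mu(y(t))$. The equation $\mu(x(t))\equiv 0 \bmod t^{m+1}$ therefore becomes $\mu(y(t))\equiv 0\bmod t^{m-1}$.

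\textbf{Step 2 (splitting the variables).} Only the coefficients $x_1,\dots,x_{m-1}$ of $y$ enter this congruence modulo $t^{m-1}$. The last coefficient $x_m$ only contributes to $\mu(y)$ from degree $t^{m-1}$ onward. For $m\ge 2$ the condition is thus exactly that
\[
x_1+x_2t+\cdots+x_{m-1}t^{m-2}
\]
is an $(m-2)$-jet lying in $\mu^{-1}(0)$, while $x_m\in T^*\mathrm{Rep}(\mathbf v,\mathbf w)$ is arbitrary. This gives the map
\[
\varphi_m^{-1}(0)\;\to\; J_{m-2}\mu^{-1}(0)\times T^*\mathrm{Rep}(\mathbf v,\mathbf w),\qquad (x_1,\dots,x_m)\mapsto\bigl((x_1,\dots,x_{m-1}),\,x_m\bigr).
\]
For $m=1$ the condition modulo $t^0$ is vacuous, so $x_1$ is free and the fibre is $T^*\mathrm{Rep}(\mathbf v,\mathbf w)$.

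\textbf{Step 3 (scheme-theoretic identification).} The only point needing care is that this is an isomorphism of schemes, not merely a bijection on points. I would verify it at the level of ideals. The ideal of $\varphi_m^{-1}(0)$ inside $J_mT^*\mathrm{Rep}(\mathbf v,\mathbf w)$ is generated by two families:
\begin{itemize}
\item the coordinates of $x_0$;
\item the $t^k$-coefficients of $\mu(x(t))$, for $0\le k\le m$.
\end{itemize}
Modulo the coordinates of $x_0$, the $t^k$-coefficient equals $\sum_{i+j=k,\ i,j\ge1}\mu(x_i,x_j)$. This vanishes identically for $k=0,1$. For $2\le k\le m$ it is precisely the $t^{k-2}$-coefficient of $\mu(y(t))$, and it involves only $x_1,\dots,x_{k-1}$, hence never $x_m$.

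These are exactly the defining equations of $J_{m-2}\mu^{-1}(0)$ in the variables $x_1,\dots,x_{m-1}$, with $x_m$ free. The isomorphism of coordinate rings follows. Since everything reduces to this bookkeeping of homogeneous quadratic equations, I expect no real obstacle. The one thing to double-check is the index range: the coefficients for $k\le m$ correspond to $(m-2)$-jets, which means truncation modulo $t^{m-1}$.
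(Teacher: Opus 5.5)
Your proof is correct and takes the same approach the paper uses. The paper cites this lemma from Budur et al., but it proves the extended analogue (Lemma~\ref{lem: preimage of zero_tilde}) by your substitution: it drops the zero modes and re-indexes $\bar x(t)=\sum_{s=1}^{m-1}x^{(s)}t^{s-1}$, so the quadratic equation modulo $t^{m+1}$ becomes the $(m-2)$-jet equation with the top mode $x^{(m)}$ free. Your ideal-level check in Step 3 goes a bit further than the paper's version, since it confirms the isomorphism holds scheme-theoretically and not only on points.
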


\bigskip

\begin{lem}\label{lem: preimage of nilcone}
Let $\varphi_m:J_m\mu^{-1}(0)\to \mu^{-1}(0)$ be the natural projection. If $p\in \mathcal N(\mathbf v,\mathbf w)\setminus \{0\}$, then 
\begin{align*}
    \varphi_1^{-1}(p)\subsetneq T^*\mathrm{Rep}(\mathbf v,\mathbf w).
\end{align*}
Assume moreover that $m\ge 2$ and $\mu$ is $J_{m-2}$-flat, and $J_{m-2}\mu^{-1}(0)$ is reduced and irreducible, then 
\begin{align*}
    \dim \varphi_m^{-1}(p)<\dim \left(J_{m-2}\mu^{-1}(0)\times T^*\mathrm{Rep}(\mathbf v,\mathbf w)\right).
\end{align*}
\end{lem}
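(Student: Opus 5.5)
The first assertion is pointwise linear algebra. The second I would reduce, via a $\mathbb C^\times$-degeneration, to Lemma \ref{lem: preimage of zero} together with the irreducibility of $J_{m-2}\mu^{-1}(0)$; that second step is the one I am unsure will go through as written.

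\textbf{Step 1: $m=1$.} The fibre $\varphi_1^{-1}(p)$ is the kernel of the differential $d\mu_p:T^*\mathrm{Rep}(\mathbf v,\mathbf w)\to\gl(\mathbf v)^*$, so it is a linear subspace. It is proper if and only if $d\mu_p\neq 0$. Through the symplectic form, $d\mu_p$ is dual to the infinitesimal action $\xi\mapsto\xi\cdot p$ of $\gl(\mathbf v)$. Hence $d\mu_p=0$ exactly when $\mathfrak{gl}(\mathbf v)$, and so the connected group $\GL(\mathbf v)$, fixes $p$. In that case the orbit $\GL(\mathbf v)\cdot p=\{p\}$ is closed. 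Since $p\in\mathcal N(\mathbf v,\mathbf w)$, its orbit closure contains $0$, which forces $p=0$, a contradiction. So $\varphi_1^{-1}(p)\subsetneq T^*\mathrm{Rep}(\mathbf v,\mathbf w)$.

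\textbf{Step 2: $m\ge 2$, setting up the degeneration.} Write a jet over $p$ as $\gamma(t)=p+tp_1+\dots+t^mp_m$. The coefficient of $t$ in $\mu(\gamma)$ is $d\mu_p(p_1)$, so Step 1 gives $p_1\in\ker d\mu_p\subsetneq T^*\mathrm{Rep}(\mathbf v,\mathbf w)$. To compare with the fibre over $0$, use the two commuting $\mathbb C^\times$-actions on $J_m\mu^{-1}(0)$: scaling the target, $\gamma\mapsto s\gamma$, and reparametrising the jet, $\gamma(t)\mapsto\gamma(st)$. Together with $\GL(\mathbf v)$ they preserve $J_m\mu^{-1}(0)$ and move $\varphi_m^{-1}(p)$ isomorphically onto $\varphi_m^{-1}(g\cdot sp)$.

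Because $p\in\mathcal N(\mathbf v,\mathbf w)$, I would pick a one-parameter subgroup of $\GL(\mathbf v)\times\mathbb C^\times$ that contracts $p$ to $0$. Taking the flat limit of the translated fibres gives a closed subset $Z\subset\varphi_m^{-1}(0)$. By upper semicontinuity, $\dim Z\ge\dim\varphi_m^{-1}(p)$.

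\textbf{Step 3: the limit is proper, hence strict inequality.} By Lemma \ref{lem: preimage of zero}, $\varphi_m^{-1}(0)\cong J_{m-2}\mu^{-1}(0)\times T^*\mathrm{Rep}(\mathbf v,\mathbf w)$. By hypothesis this is irreducible, and it has dimension $(m-1)(\dim T^*\mathrm{Rep}-\dim\gl(\mathbf v))+\dim T^*\mathrm{Rep}$ by $J_{m-2}$-flatness. It therefore suffices to show that $Z$ is a proper closed subset, since an irreducible variety has no proper closed subset of full dimension. This gives
\[\dim\varphi_m^{-1}(p)\le\dim Z<\dim\varphi_m^{-1}(0).\]
For properness, I would follow the linear condition $p_1\in\ker d\mu_p$ through the degeneration. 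After rescaling, the jets over $g\cdot sp$ satisfy $p_1\in\ker d\mu_{g\cdot sp}$. This is a nonzero linear condition, and the aim is that it survives in the limit, either as a nonzero linear condition on the limit of the rescaled $p_1$ or on a suitable leading-order coefficient. Concretely, I would normalise $d\mu_{g_s\cdot sp}$ by its leading power of $s$ so that the limit of the kernels is a proper subspace $K\subsetneq T^*\mathrm{Rep}$, and then show that $Z$ lies in the corresponding proper closed condition on a fixed coordinate of $\varphi_m^{-1}(0)$.

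\textbf{Main obstacle.} The hard part is the bookkeeping in Step 3. Under Lemma \ref{lem: preimage of zero}, the coefficient $p_1$ of a jet over $0$ sits in the $J_{m-2}\mu^{-1}(0)$ factor, since $\gamma=t\,\gamma'$ with $\gamma'$ solving $\mu(\gamma')\equiv0$ mod $t^{m-1}$. So I must check that the limiting linear constraint cuts out a proper closed subset of that factor, meaning $J_{m-2}\mu^{-1}(0)$ is not contained in $\{\gamma'(0)\in K\}$.

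For this I would use that $\varphi_{m-2}:J_{m-2}\mu^{-1}(0)\to\mu^{-1}(0)$ is dominant onto $\mu^{-1}(0)$, because smooth points of $\mu^{-1}(0)$ lift to jets. Then I would check that $\mu^{-1}(0)\not\subset K$ by a direct choice, for instance a vector with a single nonzero $\alpha$- or $\beta$-entry paired against the nonzero component of $p$.

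If this flat-limit argument proves unwieldy, my fallback is a direct count. I would fibre $\varphi_m^{-1}(p)$ over $p_1\in\ker d\mu_p$, bound the dimension of each fibre in $(p_2,\dots,p_m)$ by the $p=0$ count using the same degeneration, and collect the codimension $\ge 1$ lost in $p_1$.
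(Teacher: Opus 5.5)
Your Step 1 is exactly the paper's argument. For $m\ge 2$ your route differs from the paper's, and it goes through.

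\textbf{The paper's route.} It deforms the ambient space. On $T^*\mathrm{Rep}(\mathbf v,\mathbf w)^{\oplus m}$ it sets $h=f+g$, where $f$ is $J_{m-2}\mu$ applied to $(p_1,\dots,p_{m-1})$ and $g=\ell_p^{\oplus(m-1)}$ is applied to $(p_2,\dots,p_m)$. It then uses Lemma \ref{lem: jet flatness for homogeneous map}(2) to show that $X_m=h^{-1}(0)$ is irreducible of dimension $\dim f^{-1}(0)=\dim\varphi_m^{-1}(0)$. Finally $\varphi_m^{-1}(p)=X_m\cap\{\ell_p(p_1)=0\}$ is a proper closed subset, since $X_m\supset\mu^{-1}(0)\times 0\times\cdots\times 0\not\subset\{\ell_p(p_1)=0\}$.

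\textbf{Your route.} You degenerate the fibre itself into $\varphi_m^{-1}(0)$. You then trap the limit in the proper closed subset $\varphi_{m-2}^{-1}(\ker\ell_p)\times T^*\mathrm{Rep}(\mathbf v,\mathbf w)$.

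\textbf{Trade-off.} The paper's version produces an explicit irreducible variety of the right dimension containing the fibre. It also reuses the same deformation machinery as Lemma \ref{lem: preimage of nilcone_tilde}. However, it needs $J_{m-2}$-flatness and reducedness to run the deformation lemma. Yours uses only semicontinuity, Lemma \ref{lem: preimage of zero}, and the irreducibility of $J_{m-2}\mu^{-1}(0)$. So it proves the strict inequality under weaker hypotheses: flatness appears in your write-up only through a dimension formula you never use. Your fallback count is unnecessary.

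\textbf{Tightening the details.} Use only the scalar action $\gamma\mapsto s\gamma$; this removes the bookkeeping you are worried about.
\begin{itemize}
\item The scalar action already contracts $p$ to $0$ and carries $\varphi_m^{-1}(p)$ onto $\varphi_m^{-1}(sp)$. Since $d\mu_{sp}=s\,d\mu_p$, the condition $\ell_p(p_1)=0$ is the same closed condition on every member of the family. It therefore passes to the closure with no normalisation and no one-parameter subgroup of $\mathrm{GL}(\mathbf v)$.
\item The bound $\dim Z\ge\dim\varphi_m^{-1}(p)$ holds because every component of the closure of $\{(s,s\gamma): s\neq 0,\ \gamma\in\varphi_m^{-1}(p)\}$ contains $(0,0)$: fix $\gamma$ and let $s\to 0$. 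So its fibre over $s=0$ is nonempty and cut out by one equation, and Krull's principal ideal theorem gives the bound.
\item For properness, constant jets give a section of $\varphi_{m-2}$. Moreover $\mu^{-1}(0)\not\subset\ker\ell_p$, because $\mu^{-1}(0)$ contains both $\mathrm{Rep}(\mathbf v,\mathbf w)\oplus 0$ and $0\oplus\mathrm{Rep}(\mathbf v,\mathbf w)^*$, which together span $T^*\mathrm{Rep}(\mathbf v,\mathbf w)$. Your proposed test with a single $\alpha$- or $\beta$-entry does not cover the case $\alpha^{(0)}=\beta^{(0)}=0$.
\end{itemize}
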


\bigskip 

\begin{proof}
Let us take an arbitrary point $(x(t),y(t),\alpha(t),\beta(t))\in \varphi_m^{-1}(p)$, where $$x(t)=\sum_{s=0}^m x^{(s)} t^s, \quad y(t)=\sum_{s=0}^m y^{(s)} t^s, \quad \alpha(t)=\sum_{s=0}^m \alpha^{(s)} t^s, \quad\beta(t)=\sum_{s=0}^m \beta^{(s)} t^s$$ are the mode expansions, then the equations that define $(x(t),y(t),\alpha(t),\beta(t))$ are
\begin{align*}
    (x^{(0)},y^{(0)},\alpha^{(0)},\beta^{(0)})=p,\quad \sum_{a\in Q_1}[x_a(t),y_a(t)]+\sum_{i\in Q_0} \alpha_i(t)\beta_i(t)\equiv 0 \pmod{t^{m+1}}.
\end{align*}
Define a linear map $\ell_p: T^*\mathrm{Rep}(\mathbf v,\mathbf w)\to \gl(\mathbf v)^*$ by 
\begin{align}\label{map ell_p}
    \ell_p(x,y,\alpha,\beta):= \sum_{a\in Q_1}([x_a^{(0)},y_a]+[x_a,y_a^{(0)}])+\sum_{i\in Q_0} (\alpha_i^{(0)}\beta_i+\alpha_i\beta_i^{(0)})~.
\end{align}
Note that $\ell_p$ is not trivial (the kernel $\ker \ell_p$ is not the whole $T^\ast \mathrm{Rep}(\mathbf v,\mathbf w)$), otherwise the point $(x^{(0)},y^{(0)},\alpha^{(0)},\beta^{(0)})$ is fixed by $\GL(\mathbf v)$-action, which contradicts with the fact that $p\in \mathcal N(\mathbf v,\mathbf w)\setminus \{0\}$. Thus $\varphi_1^{-1}(p)\cong \ker\ell_p\subsetneq T^*\mathrm{Rep}(\mathbf v,\mathbf w)$.

\medskip 

Now assume moreover that $m\ge 2$ and $\mu$ is $J_{m-2}$-flat, and that $J_{m-2}\mu^{-1}(0)$ is irreducible. With these assumptions, we are going to show that $\varphi_m^{-1}(p)$ is a proper subset of a reduced and irreducible variety $X_m$ such that $\dim X_m=\dim \left(J_{m-2}\mu^{-1}(0)\times T^*\mathrm{Rep}(\mathbf v,\mathbf w)\right)$. 

\medskip 

We first define a linear space $Q_m=T^*\mathrm{Rep}(\mathbf v,\mathbf w)^{\oplus m}$ which parametrizes $(x^{(s)},y^{(s)},\alpha^{(s)},\beta^{(s)})$ for $s=1,\cdots,m$, and a quadratic map $f: Q_m\to J_{m-2}\gl(\mathbf v)^*$ by the projection to the first $m-1$ summands, which is identified with $J_{m-2}T^*\mathrm{Rep}(\mathbf v,\mathbf w)$, followed by applying $J_{m-2}\mu$. By our assumption, $f$ is flat and $f^{-1}(0)$ is reduced and irreducible. Next we define a linear map $g: Q_m\to J_{m-2}\gl(\mathbf v)^*$ by projection to the last $m-1$ summands, followed by applying the map $\ell_p^{\oplus (m-1)}: T^*\mathrm{Rep}(\mathbf v,\mathbf w)^{\oplus (m-1)}\to (\gl(\mathbf v)^*)^{\oplus(m-1)}$, and then identify $(\gl(\mathbf v)^*)^{\oplus(m-1)}\cong J_{m-2}\gl(\mathbf v)^*$. We claim that $X_m:=h^{-1}(0)$ is the desired variety, where $h:=f+g:Q_m\to J_{m-2}\gl(\mathbf v)^*$. 

\medskip 

Let us first show that $\varphi_m^{-1}(p)$ is a proper subset of $X_m$. We identify $Q_m$ with the fiber of $p$ along the projection $J_mT^*\mathrm{Rep}(\mathbf v,\mathbf w)\to T^*\mathrm{Rep}(\mathbf v,\mathbf w)$, then the equation $$h(\{(x^{(s)},y^{(s)},\alpha^{(s)},\beta^{(s)})\}_{1\leq s \leq m})=0$$ is equivalent to $\sum_{a\in Q_1}[x_a(t),y_a(t)]+\sum_{i\in Q_0} \alpha_i(t)\beta_i(t)$ only containing terms linear in $t$, modulo $t^{m+1}$. Thus $\varphi_m^{-1}(p)\subset X_m$. Moreover, if we consider the embedding $$\mu^{-1}(0)^{(1)} := \mu^{-1}(0)\times\{0\}\times\cdots\times\{0\}\hookrightarrow T^*\mathrm{Rep}(\mathbf v,\mathbf w)^{\oplus m}=Q_m~,$$ then $\mu^{-1}(0)^{(1)}\subset X_m$. The intersection $\mu^{-1}(0)^{(1)}\cap \varphi_m^{-1}(p)$ is identified with $\mu^{-1}(0)\cap \ker\ell_p$, which must be a proper subset of $\mu^{-1}(0)$, otherwise $\ker\ell_p$ contains the tangent space of $\mu^{-1}(0)$ at $0$, which equals to the whole $T^*\mathrm{Rep}(\mathbf v,\mathbf w)$, and this contradicts with the non-triviality of $\ell_p$. Thus $\varphi_m^{-1}(p)$ is a proper subset of $X_m$.

\medskip 

Since $f$ is flat and $f^{-1}(0)\cong J_{m-2}\mu^{-1}(0)\times T^*\mathrm{Rep}(\mathbf v,\mathbf w)$ is reduced and irreducible by our assumptions, then according to Lemma \ref{lem: jet flatness for homogeneous map} $h$ is flat and $X_m=h^{-1}(0)$ is reduced and irreducible and $\dim X_m=\dim f^{-1}(0)=\dim \left(J_{m-2}\mu^{-1}(0)\times T^*\mathrm{Rep}(\mathbf v,\mathbf w)\right)$. Now $\varphi_m^{-1}(p)$ is a proper subset of irreducible variety $X_m$, thus 
\begin{align*}
    \dim \varphi_m^{-1}(p)<\dim X_m=\dim \left(J_{m-2}\mu^{-1}(0)\times T^*\mathrm{Rep}(\mathbf v,\mathbf w)\right)~.
\end{align*}
This finishes the proof.
\end{proof}

\medskip

Given the framed quiver data $(Q,\mathbf{v},\mathbf{w})$, the étale slice construction of \cite{crawley2003normality} defines an auxiliary quiver $(Q_\tau,\mathbf{v}_\tau,\mathbf{w}_\tau)$, where $\tau$ denotes the representation type of the auxiliary quiver. In Appendix \ref{sec:étaleSlice} we review and generalize the étale slice construction of \cite{crawley2003normality}. Referring to Appendix \ref{sec:étaleSlice} for the details, the theorem below is key for establishing criteria for the jet-flatness of moment maps.

\bigskip

\begin{theorembox}
\begin{thm}\label{thm: property P}
$(Q,\mathbf v,\mathbf w)$ has the property $(P_2)$ if the following three conditions are satisfied:
\begin{enumerate}[label=\arabic*)]
    \item $\mu$ is flat,
    \item $\dim \mathcal N(\mathbf v,\mathbf w)+\dim T^*\mathrm{Rep}(\mathbf v,\mathbf w)\le 2\dim \mu^{-1}(0)-2\mathsf g$, or equivalently
    \begin{align}\label{dimension bound}
        \dim \mathcal N(\mathbf v,\mathbf w)\le \mathbf v \cdot(2\mathbf w-\mathsf C\mathbf v)-2\mathsf g ~,
    \end{align}
    \item for every representation type $\tau$ which is not the trivial type, the auxiliary quiver $(Q_{\tau},\mathbf v_{\tau},\mathbf w_{\tau})$ has the property $(P_2)$.
\end{enumerate}
$(Q,\mathbf v,\mathbf w)$ has the property $(P_1)$ if the following three conditions are satisfied:
\begin{enumerate}[label=\alph*)]
    \item $\mu$ is flat and $\mu^{-1}(0)$ is reduced\footnote{The requirement for $\mu^{-1}(0)$ to be reduced is somewhat redundant. The properties that $\mu$ is flat and $\mu^{-1}(0)$ irreducible, imply that $\mu^{-1}(0)$ is reduced. Flatness implies that $\mu^{-1}(0)$ is a locally complete intersection, so one only needs to show that there is an open locus in $\mu^{-1}(0)$ which is reduced, and the stable locus $\mu^{-1}(0)_{(1,\mathbf v^+)}$ does the job, condition (c) implying that $\mu^{-1}(0)_{(1,\mathbf v^+)}$ is reduced).} and irreducible,
    \item inequality \eqref{dimension bound} holds,
    \item for every representation type $\tau$ which is not the trivial type, the auxiliary quiver $(Q_{\tau},\mathbf v_{\tau},\mathbf w_{\tau})$ has the property $(P_1)$.
\end{enumerate}
\end{thm}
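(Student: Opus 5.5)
We argue by induction on $m$ (the jet order), following Budur's strategy \cite{budur2021rational} for the unframed case. By the $\mathbb C^\times$-contraction argument used in Lemma \ref{lem: jet flatness for homogeneous map}, $\mu$ is $J_m$-flat if and only if
\[
\dim J_m\mu^{-1}(0)\le (m+1)\bigl(\dim T^*\mathrm{Rep}(\mathbf v,\mathbf w)-\dim\gl(\mathbf v)\bigr)=(m+1)\dim\mu^{-1}(0),
\]
the last equality by flatness of $\mu$, which is condition 1). The case $m=0$ is this flatness. For the $(P_1)$ statement we aim, via \cite[Theorem 0.1, Proposition 1.4]{mustata2001jet}, at showing that $J_m\mu^{-1}(0)$ is irreducible for all $m$. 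Since $\mu^{-1}(0)$ is a reduced irreducible complete intersection by a), this irreducibility is equivalent to the strict inequality $\dim \varphi_m^{-1}(\mu^{-1}(0)^{\mathrm{sing}})<(m+1)\dim\mu^{-1}(0)$, and Mustaţă's theorem then gives rational singularities. So in both cases we bound $\dim J_m\mu^{-1}(0)$ by stratifying $\mu^{-1}(0)$ and bounding each piece $\varphi_m^{-1}(S)$.

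We stratify $\mu^{-1}(0)$ by the representation type $\tau$ of the image point in $\mathcal M^0(\mathbf v,\mathbf w)$, using the étale slice of Appendix \ref{sec:étaleSlice}. There are three kinds of strata.
\begin{itemize}
\item[(i)] On the open stratum where the point is simple (the type $(1,\mathbf v^+)$), $\mu$ is smooth. There $\varphi_m^{-1}$ has fibre dimension $m\dim\mu^{-1}(0)$, so this stratum contributes exactly $(m+1)\dim\mu^{-1}(0)$.
\item[(ii)] For a non-trivial, non-simple type $\tau$, a point $x$ has an étale neighbourhood in $\mu^{-1}(0)$ of the form $\mathrm{GL}(\mathbf v)\times_{G_x}(\text{slice})$, where the slice is $\mu_\tau^{-1}(0)\times(\text{smooth factor})$. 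Jets of a smooth morphism and of étale maps behave well. Hence $\dim\varphi_m^{-1}(\text{stratum})$ is bounded using $(m+1)$ times the dimension of the orbit and smooth directions plus $\dim J_m\mu_\tau^{-1}(0)$. By hypothesis 3) (resp.\ c)), the auxiliary quiver satisfies $(P_2)$ (resp.\ $(P_1)$). This yields the bound $\le(m+1)\dim\mu^{-1}(0)$, and the bound is strict in the $(P_1)$ case because we are over the singular locus.
\item[(iii)] The remaining stratum is the most singular one: the nilpotent cone $\mathcal N(\mathbf v,\mathbf w)$, i.e.\ the fibre over $q(0)$.
\end{itemize}
Some care is needed with the loops. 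A loop contributes an $\mathfrak{gl}$ factor whose scalar part acts trivially, and this is the source of the $-2\mathsf g$ correction in \eqref{dimension bound}.

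The main obstacle is the nilpotent cone, which we handle with Lemmas \ref{lem: preimage of zero} and \ref{lem: preimage of nilcone} together with induction on $m$. For $p=0$, Lemma \ref{lem: preimage of zero} gives
\[
\dim\varphi_m^{-1}(0)=\dim J_{m-2}\mu^{-1}(0)+\dim T^*\mathrm{Rep}\le (m-1)\dim\mu^{-1}(0)+\dim T^*\mathrm{Rep},
\]
using the inductive hypothesis. For $p\in\mathcal N\setminus\{0\}$, Lemma \ref{lem: preimage of nilcone} gives the same bound, strictly, since the induction supplies $J_{m-2}$-flatness (and, in case $(P_1)$, irreducibility). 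Therefore
\[
\dim\varphi_m^{-1}(\mathcal N)\le \dim\mathcal N+(m-1)\dim\mu^{-1}(0)+\dim T^*\mathrm{Rep}.
\]
Inequality \eqref{dimension bound} turns this into a quantity $\le(m+1)\dim\mu^{-1}(0)-2\mathsf g$. This is $\le(m+1)\dim\mu^{-1}(0)$, as required for $(P_2)$.

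For $(P_1)$ we need strictness. When $\mathsf g>0$ it comes from the $-2\mathsf g$ term. When $\mathsf g=0$ we instead use the strict inequality in Lemma \ref{lem: preimage of nilcone} for $p\ne 0$, together with the fact that $\{0\}$ has dimension $0<\dim\mathcal N$ unless $\mathcal N=\{0\}$; the case $\mathcal N=\{0\}$ is handled by the $m=1$ computation $\varphi_1^{-1}(0)=T^*\mathrm{Rep}$. For $m=1$ we also need $\dim\mathcal N+\dim T^*\mathrm{Rep}\le 2\dim\mu^{-1}(0)$, which is exactly condition 2).

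I expect the bookkeeping with loops to be the delicate part. With $\mathsf g_i$ loops, the scalar matrices give a trivial summand of $\mathfrak{gl}$ that acts trivially on the jets, which produces the extra $2\mathsf g$. I would first check the bound in the one-vertex, one-loop case to calibrate the $-2\mathsf g$. Combining the three kinds of strata completes the induction.
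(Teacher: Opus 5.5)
Your overall route is the paper's: stratify $\mu^{-1}(0)$ by representation type, send non-trivial types to the auxiliary quivers via the étale slice and hypothesis 3)/c), and treat the most singular stratum with Lemmas \ref{lem: preimage of zero} and \ref{lem: preimage of nilcone} by induction on $m$. The gap is in how you account for the loops.

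\textbf{The trivial-type stratum is not the nilpotent cone.} It is not $\mathcal N(\mathbf v,\mathbf w)=q^{-1}(q(0))$. Translating each loop arrow $x_a,y_a$ at vertex $i$ by a scalar multiple of $\mathrm{Id}_{V_i}$ commutes with $\mathrm{GL}(\mathbf v)$, preserves $\mu$, and preserves the representation type. So the trivial-type stratum is $\mathcal N(\mathbf v,\mathbf w)\times\mathbb C^{2\mathsf g}$.

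Take $p\in\mathcal N$ and $\lambda\neq 0$ a scalar shift of the loops. The point $p+\lambda$ is not in your stratum (iii), and it is not of non-trivial type. So your strata (i)--(iii) do not cover $\mu^{-1}(0)$.

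Since $J_m\mu^{-1}(0)$ is stable under the same shifts, $\varphi_m^{-1}(p+\lambda)\cong\varphi_m^{-1}(p)$. Hence $\dim\varphi_m^{-1}(\mu^{-1}_{\tau_{\mathrm{triv}}}(0))=\dim\varphi_m^{-1}(\mathcal N(\mathbf v,\mathbf w))+2\mathsf g$. This is where the $-2\mathsf g$ in \eqref{dimension bound} goes: it is used up exactly, not slack.

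\textbf{Consequences.} For $(P_2)$, your bound $\dim\varphi_m^{-1}(\mathcal N)\le(m+1)\dim\mu^{-1}(0)-2\mathsf g$ still suffices once you add $2\mathsf g$ back. But your claim that for $\mathsf g>0$ strictness ``comes from the $-2\mathsf g$ term'' is false. As written, the $(P_1)$ part is therefore not proved whenever $Q$ has loops, e.g.\ for the Jordan quiver.

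\textbf{The fix.} Apply the argument you gave only for $\mathsf g=0$ uniformly in $\mathsf g$.
\begin{itemize}
\item Over $\mathcal N\setminus\{0\}$ the fibres are strictly smaller than $\varphi_m^{-1}(0)$. For $m\ge 2$ this is Lemma \ref{lem: preimage of nilcone}; for $m=1$ it holds because $\varphi_1^{-1}(p)$ is a proper linear subspace of $T^*\mathrm{Rep}(\mathbf v,\mathbf w)$.
\item The point $\{0\}$ has dimension $0<\dim\mathcal N$.
\end{itemize}
Together these give $\dim\varphi_m^{-1}(\mathcal N)<(m+1)\dim\mu^{-1}(0)-2\mathsf g$. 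Adding $2\mathsf g$ then yields the strict bound on the whole trivial stratum. This is exactly how the paper relaxes Budur's strict inequality to the non-strict \eqref{dimension bound}.

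\textbf{A smaller point.} In the $(P_2)$ case you cannot invoke Lemma \ref{lem: preimage of nilcone}. Its hypothesis that $J_{m-2}\mu^{-1}(0)$ be reduced and irreducible is not available there. Instead, use upper semicontinuity of fibre dimension together with the contracting $\mathbb C^\times$-action to get $\dim\varphi_m^{-1}(p)\le\dim\varphi_m^{-1}(0)$ for $p\in\mathcal N$, as the paper does.
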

\end{theorembox}

\bigskip

\begin{proof}
The proof of the theorem is based on \cite[Theorem 3.6]{budur2021rational}. In the \emph{loc. cit.}, a weaker version of the second statement is proven, namely $(Q,\mathbf v,\mathbf w)$ has the property $(P_1)$ if conditions a), b'), and c) are satisfied, where b') is the condition that inequality \eqref{dimension bound} strictly holds, i.e.
\begin{align}\label{dimension strict bound}
        \dim \mathcal N(\mathbf v,\mathbf w)< \mathbf v \cdot(2\mathbf w-\mathsf C\mathbf v)-2\mathsf g ~.
\end{align}
Let us explain why the strict inequality in \eqref{dimension strict bound} can be relaxed to \eqref{dimension bound}. The key step in the proof of \cite[Theorem 3.6]{budur2021rational} where the condition b') is used is to show that 
\begin{align*}
    \dim \varphi_m^{-1}(\mu^{-1}(0)_{\mathrm{sing}})< \dim \mu^{-1}(0)\cdot(m+1)
\end{align*}
for all $m\ge 1$, where $\mu^{-1}(0)_{\mathrm{sing}}$ is the singular locus of $\mu^{-1}(0)$ and $\varphi_m:J_m\mu^{-1}(0)\to \mu^{-1}(0)$ is the natural projection. It suffices to show that $\dim \varphi_m^{-1}(\mu_{\tau}^{-1}(0))< \dim \mu^{-1}(0)\cdot(m+1)$ for all representation types $\tau$ such that $\tau\neq (1,\mathbf v^+)$. If $\tau$ is not of trivial type, then we proceed by induction using the condition c). If $\tau$ is of trivial type, then we use the induction on $m$ to show that $\dim \varphi_m^{-1}(\mathcal N(\mathbf v,\mathbf w))<\dim \mu^{-1}(0)\cdot(m+1)-2\mathsf g$. When $m=1$, the condition b), Lemma \ref{lem: preimage of zero} and Lemma \ref{lem: preimage of nilcone} imply that
\begin{align*}
    \dim \varphi_1^{-1}(\mathcal N(\mathbf v,\mathbf w))&=\max(\dim \varphi_1^{-1}(0), \dim \varphi_1^{-1}(\mathcal N(\mathbf v,\mathbf w)\setminus 0))\\
    &\le \dim T^*\mathrm{Rep}(\mathbf v,\mathbf w)+\max(0, \dim \mathcal N(\mathbf v,\mathbf w)-1)\\
    &< \dim T^*\mathrm{Rep}(\mathbf v,\mathbf w)+\dim \mathcal N(\mathbf v,\mathbf w)\\
    &\le \dim T^*\mathrm{Rep}(\mathbf v,\mathbf w)+\mathbf v \cdot(2\mathbf w-\mathsf C\mathbf v)-2\mathsf g\\
    &=2\dim \mu^{-1}(0)-2\mathsf g ~.
\end{align*}
When $m\ge 2$, we use the induction on $m$, i.e.\  assume that $\mu$ is $J_{m-2}$-flat and $J_{m-2}\mu^{-1}(0)$ is reduced and irreducible, thus we can apply Lemma \ref{lem: preimage of nilcone} and obtain the following inequality
\begin{align*}
    \dim \varphi_m^{-1}(\mathcal N(\mathbf v,\mathbf w))&=\max(\dim \varphi_m^{-1}(0), \dim \varphi_m^{-1}(\mathcal N(\mathbf v,\mathbf w)\setminus 0))\\
    &\le \dim J_{m-2}\mu^{-1}(0)+\dim T^*\mathrm{Rep}(\mathbf v,\mathbf w)+\max(0, \dim \mathcal N(\mathbf v,\mathbf w)-1)\\
    &< \dim J_{m-2}\mu^{-1}(0)+\dim T^*\mathrm{Rep}(\mathbf v,\mathbf w)+\dim \mathcal N(\mathbf v,\mathbf w)\\
    &\le \dim J_{m-2}\mu^{-1}(0)+\dim T^*\mathrm{Rep}(\mathbf v,\mathbf w)+\mathbf v \cdot(2\mathbf w-\mathsf C\mathbf v)-2\mathsf g\\
    &=(m+1)\dim \mu^{-1}(0)-2\mathsf g ~.
\end{align*}
In either case, when $m=1$ or when $m\ge 2$, we have
\begin{align*}
    \dim \varphi_m^{-1}(\mu^{-1}_{\tau_{\mathrm{triv}}}(0))=\dim \varphi_m^{-1}(\mathcal N(\mathbf v,\mathbf w))+2\mathsf g< (m+1)\cdot\dim\mu^{-1}(0) ~.
\end{align*}
This finishes the proof of the second statement.

The proof of the first statement is easier. In fact, we only need to show that $$\dim \varphi_m^{-1}(\mathcal N(\mathbf v,\mathbf w))\le\dim \mu^{-1}(0)\cdot(m+1)-2\mathsf g ~.$$ We have inequality $\dim \varphi_m^{-1}(p)\le \dim \varphi_m^{-1}(0)$ for all $p\in \mathcal N(\mathbf v,\mathbf w)$ by semi-continuity of the dimension of fibers, thus we can apply Lemma \ref{lem: preimage of zero} to give a bound on the dimension of $\varphi_m^{-1}(\mathcal N(\mathbf v,\mathbf w))$ as follows
\begin{align*}
    \dim \varphi_m^{-1}(\mathcal N(\mathbf v,\mathbf w))&\le \dim \mathcal N(\mathbf v,\mathbf w)+\dim \varphi_m^{-1}(0)\\
    &\le \mathbf v \cdot(2\mathbf w-\mathsf C\mathbf v)-2\mathsf g+ (m-1)\dim \mu^{-1}(0)+\dim T^*\mathrm{Rep}(\mathbf v,\mathbf w)\\
    &=(m+1)\dim \mu^{-1}(0)-2\mathsf g ~.
\end{align*}
This finishes the proof of the first statement.
\end{proof}

We have the following analog of Lemma \ref{lem: preimage of zero} for $\widetilde\mu^{-1}(0)$.
\begin{lem}\label{lem: preimage of zero_tilde}
Let $\widetilde\varphi_m:J_m\widetilde\mu^{-1}(0)\to \widetilde\mu^{-1}(0)$ be the natural projection, then
\begin{align*}
    \widetilde\varphi_m^{-1}(0)\cong\begin{cases}
    J_{m-2}\widetilde\mu^{-1}(0)\times T^*\mathrm{Rep}(\mathbf v,\mathbf w), & \text{if }m\ge 2 ~,\\
    T^*\mathrm{Rep}(\mathbf v,\mathbf w), & \text{if }m=1 ~.
    \end{cases}
\end{align*}
\end{lem}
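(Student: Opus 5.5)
The plan is to prove this exactly as \cite[Lemma 3.3]{budur2021rational} proves Lemma \ref{lem: preimage of zero}: write down the equations of the fibre $\widetilde\varphi_m^{-1}(0)$ in terms of jet coordinates, and exploit that $\mu$ is homogeneous quadratic while the $\mathcal Z$-coordinate enters $\widetilde\mu$ linearly. Here $0$ denotes the origin $(0,0)\in \widetilde\mu^{-1}(0)\subset T^*\mathrm{Rep}(\mathbf v,\mathbf w)\times\mathcal Z$.

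\textbf{Step 1: equations of the fibre.} A point of $J_m(T^*\mathrm{Rep}(\mathbf v,\mathbf w)\times\mathcal Z)$ is a pair
\begin{align*}
    X(t)=\sum_{s=0}^m X^{(s)}t^s, \qquad z(t)=\sum_{s=0}^m z^{(s)}t^s ,
\end{align*}
where $X=(x,y,\alpha,\beta)$. The fibre $\widetilde\varphi_m^{-1}(0)$ is the closed subscheme cut out by the conditions
\begin{align*}
    X^{(0)}=0,\qquad z^{(0)}=0,\qquad \mu(X(t))+z(t)\equiv 0 \pmod{t^{m+1}}.
\end{align*}

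\textbf{Step 2: peel off powers of $t$.} Set $X(t)=tX'(t)$ and $z(t)=tz'(t)$. Since $\mu$ is homogeneous quadratic, $\mu(X(t))=t^2\mu(X'(t))$. The coefficient of $t^1$ in $\mu(X(t))+z(t)$ is therefore exactly $z^{(1)}$, so the equations force $z^{(1)}=0$. The coefficient of $t^{k}$ for $2\le k\le m$ is
\begin{align*}
    \sum_{a+b=k-2}\mu\text{-bilinear}\bigl(X^{(a+1)},X^{(b+1)}\bigr)+z^{(k)} ,
\end{align*}
which is precisely the coefficient of $t^{k-2}$ in $\widetilde\mu\bigl(\widetilde X(t),\widetilde z(t)\bigr)$. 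Here
\begin{align*}
    \widetilde X(t)=\sum_{j=0}^{m-2}X^{(j+1)}t^j, \qquad \widetilde z(t)=\sum_{j=0}^{m-2}z^{(j+2)}t^j .
\end{align*}

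\textbf{Step 3: read off the isomorphism.} The remaining equations say exactly that $(\widetilde X,\widetilde z)$ is an $(m-2)$-jet of $\widetilde\mu^{-1}(0)$. The coordinate $X^{(m)}$ appears in none of them, because it could only enter through products $X^{(a)}X^{(b)}$ with $a,b\ge 1$ and $a+b\le m$. Thus the linear change of coordinates
\begin{align*}
    (X^{(1)},\dots,X^{(m)},z^{(2)},\dots,z^{(m)})\;\longmapsto\;\bigl((\widetilde X,\widetilde z),X^{(m)}\bigr)
\end{align*}
identifies the ideals and gives a scheme isomorphism $\widetilde\varphi_m^{-1}(0)\cong J_{m-2}\widetilde\mu^{-1}(0)\times T^*\mathrm{Rep}(\mathbf v,\mathbf w)$ for $m\ge2$.

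For $m=1$, the only equations are $z^{(1)}=0$ and $X^{(0)}=z^{(0)}=0$, with $X^{(1)}$ free. This gives $T^*\mathrm{Rep}(\mathbf v,\mathbf w)$.

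\textbf{Main obstacle.} The only point differing from Lemma \ref{lem: preimage of zero} is the bookkeeping of the $\mathcal Z$-direction. The linear term $z(t)$ is not divisible by $t^2$ a priori, so one must first observe that the $t^1$-coefficient forces $z^{(1)}=0$. After that, the shift of $z$ by two and of $X$ by one matches the quadratic scaling of $\mu$. I would also check that this identification is an equality of ideals, not just of reduced sets: this holds because every step is an invertible linear substitution of coordinates.
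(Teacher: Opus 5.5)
Your proposal is correct and follows essentially the same route as the paper: set the zero modes to zero, observe that the $t^1$-coefficient forces $z^{(1)}=0$, and then shift $X$ by one and $z$ by two. This identifies the remaining equations with those of $J_{m-2}\widetilde\mu^{-1}(0)$, with the top mode $X^{(m)}$ left free. Your remark that the identification is an equality of ideals under an invertible linear change of coordinates makes explicit something the paper leaves implicit.
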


\begin{proof}
$\widetilde\varphi_m^{-1}(0)$ is the space of $(x(t) ,y(t), \alpha(t), \beta(t), z(t))$ where 
\footnote{Remark that the sums do not start at $s=0$; the zero modes are dropped here. Similarly to \cite[Lemma 3.3]{budur2021rational}, if the projection to the zero modes $(x^{(0)},y^{(0)},\alpha^{(0)},\beta^{(0)},z^{(0)})$ is a fixed point of the GL$(\mathbf{v})$ action, then the zero modes vanish and the commutators with the zero modes vanish. The sums therefore start with $s=1$.
}
$$x(t)=\sum_{s=1}^m x^{(s)} t^s, ~~ y(t)=\sum_{s=1}^m y^{(s)} t^s, ~~\alpha(t)=\sum_{s=1}^m \alpha^{(s)} t^s, ~~ \beta(t)=\sum_{s=1}^m \beta^{(s)} t^s, ~~ z(t)=\sum_{s=1}^m z^{(s)}t^s$$ are the mode expansions, then the equations that define $(x(t),y(t),\alpha(t),\beta(t),z(t))$ are
\begin{align*}
    \sum_{a\in Q_1}[x_a(t),y_a(t)]+\sum_i \alpha_i(t)\beta_i(t)+\sum_i z_i(t)\equiv 0 \pmod{t^{m+1}} ~.
\end{align*}
When $m=1$, then the above equation is equivalent to $z^{(1)}=0$, so we have $\widetilde\varphi_1^{-1}(0)\cong T^*\mathrm{Rep}(\mathbf v,\mathbf w)$. When $m\ge 2$, define $$\bar x(t)=\sum_{s=1}^{m-1}x^{(s)}t^{s-1},\quad  \bar y(t)=\sum_{s=1}^{m-1} y^{(s)} t^{s-1}, \quad \bar\alpha(t)=\sum_{s=1}^{m-1} \alpha^{(s)} t^{s-1}, \quad \bar\beta(t)=\sum_{s=1}^{m-1} \beta^{(s)} t^{s-1}$$ and $$\bar z(t)=\sum_{s=2}^m z^{(s)}t^{s-2}~,$$ then the above equation can be rewritten as follows
\begin{align*}
    z^{(1)}=0,\quad \sum_{a\in Q_1}[\bar x_a(t),\bar y_a(t)]+\sum_{i\in Q_0} \bar\alpha_i(t)\bar\beta_i(t)+\sum_{i\in Q_0} \bar z_i(t)\equiv 0 \pmod{t^{m-1}}~.
\end{align*}
Thus $\widetilde\varphi_m^{-1}(0)\cong J_{m-2}\widetilde\mu^{-1}(0)\times T^*\mathrm{Rep}(\mathbf v,\mathbf w)$.
\end{proof}

We also have the following analog of Lemma \ref{lem: preimage of nilcone} for $\widetilde\mu^{-1}(0)$.

\medskip

\begin{lem}\label{lem: preimage of nilcone_tilde}
Let $\widetilde\varphi_m:J_m\widetilde\mu^{-1}(0)\to \widetilde\mu^{-1}(0)$ be the natural projection. If $p\in \mathcal N(\mathbf v,\mathbf w)\setminus \{0\}$, then 
\begin{align*}
    \widetilde\varphi_1^{-1}(p)\subsetneq T^*\mathrm{Rep}(\mathbf v,\mathbf w).
~\end{align*}
Assume moreover that $m\ge 2$ and $\widetilde\mu$ is $J_{m-2}$-flat and $J_{m-2}\widetilde\mu^{-1}(0)$ is reduced and irreducible, then 
\begin{align*}
    \dim \widetilde\varphi_m^{-1}(p)<\dim \left(J_{m-2}\widetilde\mu^{-1}(0)\times T^*\mathrm{Rep}(\mathbf v,\mathbf w)\right)~.
\end{align*}
\end{lem}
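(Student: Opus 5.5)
We mimic the proof of Lemma \ref{lem: preimage of nilcone}, carrying the extra $\mathcal Z$-coordinate along. Throughout, $p\in\mathcal N(\mathbf v,\mathbf w)\setminus\{0\}$ is viewed as the point $(p,0)\in\widetilde\mu^{-1}(0)$. Indeed, $p\in\mu^{-1}(0)$, so its $z$-component is $0$.

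\textbf{The case $m=1$.} A $1$-jet over $(p,0)$ is given by $(x^{(1)},y^{(1)},\alpha^{(1)},\beta^{(1)},z^{(1)})$ satisfying
\[
\ell_p(x^{(1)},y^{(1)},\alpha^{(1)},\beta^{(1)})+z^{(1)}=0 ,
\]
with $\ell_p$ the linear map \eqref{map ell_p}. Since the $\mathcal Z$-component is determined by the rest, $\widetilde\varphi_1^{-1}(p)$ is identified with $\ell_p^{-1}(\mathcal Z)\subset T^*\mathrm{Rep}(\mathbf v,\mathbf w)$. We must show $\ell_p^{-1}(\mathcal Z)\neq T^*\mathrm{Rep}(\mathbf v,\mathbf w)$.

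The image of $\ell_p$ is the annihilator of the stabilizer Lie algebra $\mathfrak{gl}(\mathbf v)_p$, under the trace pairing. Hence $\mathrm{Im}\,\ell_p\subset\mathcal Z$ would mean $\mathfrak{gl}(\mathbf v)_p\supseteq\mathcal Z^\perp=\mathfrak{sl}(\mathbf v)=\bigoplus_i\mathfrak{sl}(v_i)$. Then $p$ is fixed by $\prod_i \mathrm{SL}(v_i)$. Now $p$ lies in the nilcone, so $0\in\overline{\GL(\mathbf v)\cdot p}$. We argue the $\mathrm{SL}$-fixed components force $p=0$, splitting by arrow type:
\begin{itemize}
\item For arrows between distinct nodes, or for framing maps, an $\mathrm{SL}$-invariant linear map vanishes unless $v_i=1$.
\item For loops, $\mathrm{SL}$-invariance forces the map to be a scalar.
\end{itemize}
The remaining degenerate cases are the nodes with $v_i=1$. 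These I would handle by observing that the trace invariants $\mathrm{tr}$ of cycles, and the $\beta(\text{path})\alpha$ invariants, vanish on the nilcone, and then using the $\mathbb C^\times$-scaling to conclude that a nilpotent point fixed by $\mathrm{SL}(\mathbf v)$ is $0$.

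This case analysis is the step I expect to be most delicate. If it becomes messy, a fallback is to use Lemma \ref{lem: preimage of zero_tilde} together with upper semicontinuity, plus the fact that $\dim \widetilde\varphi_1^{-1}(p)$ is computed by $\dim\ker$ of the composite $T^*\mathrm{Rep}\to\mathfrak{gl}(\mathbf v)^*/\mathcal Z$.

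\textbf{The case $m\ge 2$.} Set
\[
\widetilde Q_m=\big(T^*\mathrm{Rep}(\mathbf v,\mathbf w)\times\mathcal Z\big)^{\oplus m},
\]
parametrizing the modes $s=1,\dots,m$. Define $f:\widetilde Q_m\to J_{m-2}\gl(\mathbf v)^*$ by projecting to the first $m-1$ $T^*\mathrm{Rep}$-summands and the last $m-1$ $\mathcal Z$-summands, shifted as in Lemma \ref{lem: preimage of zero_tilde}, and then applying $J_{m-2}\widetilde\mu$. Define the linear map $g$ as $\ell_p^{\oplus(m-1)}$ applied to the last $m-1$ $T^*\mathrm{Rep}$-summands, and take $h=f+g$.

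We need one constraint beyond $h=0$: the equation on $z^{(1)}$ reads $z^{(1)}+\ell_p(\cdot^{(1)})=0$, linear in the first mode. We handle it by letting $X_m$ be $h^{-1}(0)$ intersected with that linear condition, or more simply by solving $z^{(1)}$ out so that it is not a free coordinate.

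With this setup:
\begin{itemize}
\item $f^{-1}(0)\cong J_{m-2}\widetilde\mu^{-1}(0)\times T^*\mathrm{Rep}(\mathbf v,\mathbf w)$, which is flat, reduced and irreducible by hypothesis.
\item $g$ has lower degree, so Lemma \ref{lem: jet flatness for homogeneous map}(2) shows $X_m=h^{-1}(0)$ is reduced and irreducible of the same dimension.
\item The expansion of the equations shows $\widetilde\varphi_m^{-1}(p)\subset X_m$.
\end{itemize}

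For properness, embed $\widetilde\mu^{-1}(0)$ as the first-mode slice, with all other modes zero. This lies in $X_m$, and its intersection with $\widetilde\varphi_m^{-1}(p)$ is cut out by $\ell_p(\cdot)+z\in$ the relevant linear condition. If this intersection were everything, then the linear span of $\widetilde\mu^{-1}(0)$ near $0$, which is its tangent cone and hence all of $T^*\mathrm{Rep}\times\mathcal Z$ by flatness, would satisfy the condition. That would contradict the $m=1$ nontriviality just established.

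Irreducibility of $X_m$ then gives strict dimension inequality, which is the claim.
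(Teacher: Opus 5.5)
The gap is the $m=1$ step, and your properness argument for $m\ge 2$ rests on it.

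Your reformulation is correct and in fact sharp. $\ell_p=d\mu_p$ has image $\mathfrak{gl}(\mathbf v)_p^{\perp}$, and $\mathcal Z=[\mathfrak{gl}(\mathbf v),\mathfrak{gl}(\mathbf v)]^{\perp}$. So $\widetilde\varphi_1^{-1}(p)\cong\ell_p^{-1}(\mathcal Z)$ is all of $T^*\mathrm{Rep}(\mathbf v,\mathbf w)$ exactly when $p$ is fixed by $\prod_i\mathrm{SL}(v_i)$.

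The fact you then need is false: a nonzero point of $\mathcal N(\mathbf v,\mathbf w)$ can be $\prod_i\mathrm{SL}(v_i)$-fixed. Such fixed points may be nonzero on framing maps at nodes with $v_i=1$, and on arrows joining two such nodes. The cycle and $\beta(\text{path})\alpha$ invariants impose nothing on an isolated $\alpha_i$, and the $\mathbb C^\times$-scaling only re-proves nilpotency.

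Concretely, take the $A_1$ quiver with $v=1$, $w=2$ (a case of Example \ref{ex: A_1 quiver tilde P_1}), and $p$ with $\alpha=(1,0)$, $\beta=0$.
\begin{itemize}
\item Since $\beta\alpha=0$, $p\in\mathcal N(\mathbf v,\mathbf w)\setminus\{0\}$.
\item But $\mathcal Z=\mathfrak{gl}(1)^*$, so $\widetilde\mu^{-1}(0)\cong T^*\mathrm{Rep}(\mathbf v,\mathbf w)\cong\mathbb C^4$ is smooth.
\item Hence $\widetilde\varphi_1^{-1}(p)$ is the whole $T^*\mathrm{Rep}(\mathbf v,\mathbf w)$.
\item For every $m\ge 2$ all hypotheses hold, yet both sides of the claimed strict inequality equal $4m$.
\end{itemize}
So no argument can close this step. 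The lemma needs an extra hypothesis, for instance that $p$ is not fixed by $\prod_i\mathrm{SL}(v_i)$; equivalently, some nonzero component of $p$ touches a node with $v_i\ge 2$.

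The paper's proof has the same hole. It asks for $\beta_i$ with $\alpha_i^{(0)}\beta_i$ non-scalar, or $y_a$ with $x_a^{(0)}y_a$ non-scalar, which is impossible when the relevant node has dimension one. Under the extra hypothesis, your stabilizer criterion gives the $m=1$ claim more cleanly than the paper's case analysis. Your semicontinuity fallback cannot replace it, since it only yields non-strict bounds.

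For $m\ge 2$ you are following the paper's construction of $Y_m$, but only if you drop $z^{(1)}$ as a coordinate. Intersecting $h^{-1}(0)$ with $z^{(1)}+\ell_p(q^{(1)})=0$ just reproduces $\widetilde\varphi_m^{-1}(p)$, because $z^{(1)}\in\mathcal Z$ forces $\ell_p(q^{(1)})\in\mathcal Z$. That leaves nothing for $\widetilde\varphi_m^{-1}(p)$ to be a proper subset of. If instead $z^{(1)}$ stays free, $f^{-1}(0)$ picks up an extra factor $\mathcal Z$.

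Two smaller corrections:
\begin{itemize}
\item Lemma \ref{lem: jet flatness for homogeneous map}(2) does not apply verbatim, since $f$ is linear in the $\mathcal Z$-coordinates. You need the weighted deformation the paper uses: weight $1$ on $T^*\mathrm{Rep}(\mathbf v,\mathbf w)$ and $\mathbb A^1$, weight $2$ on $\mathcal Z$ and the target, and $F=(\widetilde f+t\widetilde g,t)$.
\item The tangent space of $\widetilde\mu^{-1}(0)$ at $0$ is $T^*\mathrm{Rep}(\mathbf v,\mathbf w)\times\{0\}$, not the whole space. Properness actually needs a point of $\widetilde\mu^{-1}(0)$ whose $T^*\mathrm{Rep}$-component $q$ has $\ell_p(q)\notin\mathcal Z$. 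Such a point exists once $\ell_p^{-1}(\mathcal Z)\neq T^*\mathrm{Rep}(\mathbf v,\mathbf w)$, because these components contain $\mathrm{Rep}(\mathbf v,\mathbf w)\oplus 0$ and $0\oplus\mathrm{Rep}(\mathbf v,\mathbf w)^*$, which together span.
\end{itemize}
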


\begin{proof}
Let us take an arbitrary point $(x(t),y(t),\alpha(t),\beta(t),z(t))\in \widetilde\varphi_m^{-1}(p)$, where $$x(t)=\sum_{s=0}^m x^{(s)} t^s, \quad y(t)=\sum_{s=0}^m y^{(s)} t^s, \quad \alpha(t)=\sum_{s=0}^m \alpha^{(s)} t^s, \quad \beta(t)=\sum_{s=0}^m \beta^{(s)} t^s~,$$  
$$z(t)=\sum_{s=0}^m z^{(s)} t^s$$ are the mode expansions, then the equations that define $(x(t),y(t),\alpha(t),\beta(t),z(t))$ are
\begin{align}\label{varphi^{-1}(p)}
    (x^{(0)},y^{(0)},\alpha^{(0)},\beta^{(0)})=p,\quad \sum_{a\in Q_1}[x_a(t),y_a(t)]+\sum_{i\in Q_0} \alpha_i(t)\beta_i(t)+\sum_{i\in Q_0} z_i(t)\equiv 0 \pmod{t^{m+1}}.
\end{align}
Then $\widetilde\varphi_1^{-1}(p)$ is isomorphic to $\ell_p^{-1}(\mathcal Z)$, where $\ell_p$ is the linear map defined in \eqref{map ell_p}. We claim that there exists $q\in T^*\mathrm{Rep}(\mathbf v,\mathbf w)$ such that $\widetilde\varphi_1(q)\notin \mathcal Z$. In fact $q$ can be taken such that it is nonzero only for one arrow and zero elsewhere. Let us assume that there exists $a\in Q_1$ such that $x_a^{(0)}$ is nonzero, and moreover $x_a^{(0)}$ is not a scalar matrix if $a$ is an edge loop. Then we can always find $y_a$ such that $x_a^{(0)}\cdot y_a$ is a nonzero non-scalar matrix if $a$ is not an edge loop, or $[x_a^{(0)}, y_a]\neq 0$ if $a$ is an edge loop. In this case we take the only nonzero arrow of $q$ to be $y_a$. Similarly, let us assume that there exists $i\in Q_0$ such that $\alpha_i^{(0)}$ is nonzero, then we can find $\beta_i$ such that $\alpha_i^{(0)}\cdot \beta_i$ is a nonzero non-scalar matrix. In this case we take the only nonzero arrow of $q$ to be $\beta_i$. Thus we have shown that $\widetilde\varphi_1^{-1}(p)$ is isomorphic to a proper subset of $T^*\mathrm{Rep}(\mathbf v,\mathbf w)$.

Now assume that $m\ge 2$ and $\widetilde\mu$ is $J_{m-2}$-flat and $J_{m-2}\widetilde\mu^{-1}(0)$ is reduced and irreducible, then we are going to show that $\varphi_m^{-1}(p)$ is a proper subset of a reduced and irreducible variety $Y_m$ such that $\dim Y_m=\dim \left(J_{m-2}\widetilde\mu^{-1}(0)\times T^*\mathrm{Rep}(\mathbf v,\mathbf w)\right)$.

Define a linear space $\widetilde Q_m=T^*\mathrm{Rep}(\mathbf v,\mathbf w)^{\oplus m}\oplus \mathcal Z^{\oplus (m-1)}$, and its direct summands are denoted by $T^*\mathrm{Rep}(\mathbf v,\mathbf w)^{(s)},s=1\cdots,m$ and $\mathcal Z^{(r)},r=2,\cdots m$ respectively. We define the coordinates of $T^*\mathrm{Rep}(\mathbf v,\mathbf w)^{(s)}$ to be $(x^{(s)},y^{(s)},\alpha^{(s)},\beta^{(s)})$ and the coordinate of $\mathcal Z^{(r)}$ to be $z^{(r)}$, and then define $$\bar x(t)=\sum_{s=1}^{m-1}x^{(s)}t^{s-1},\quad \bar y(t)=\sum_{s=1}^{m-1} y^{(s)} t^{s-1}, \quad \bar\alpha(t)=\sum_{s=1}^{m-1} \alpha^{(s)} t^{s-1}, \quad \bar\beta(t)=\sum_{s=1}^{m-1} \beta^{(s)} t^{s-1}~,$$  
$$\bar z(t)=\sum_{s=2}^{m} z^{(s)}t^{s-2}~.$$ Using the above coordinate system, we define a map $\widetilde f: \widetilde Q_m\to J_{m-2}\gl(\mathbf v)^*$ by 
\begin{align*}
    \widetilde f((x^{(s)},y^{(s)},\alpha^{(s)},\beta^{(s)})_{s=1}^m, (z^{(r)})_{r=2}^m):=\sum_{a\in Q_1}[\bar x_a(t),\bar y_a(t)]+\sum_{i\in Q_0} \bar\alpha_i(t)\bar\beta_i(t)+\sum_{i\in Q_0} \bar z_i(t) \pmod{t^{m-1}}.
\end{align*}
Note that we can identify $\widetilde Q_m\cong J_{m-2}(T^*\mathrm{Rep}(\mathbf v,\mathbf w)\times \mathcal Z)\oplus T^*\mathrm{Rep}(\mathbf v,\mathbf w)^{(m)}$, then $\widetilde f$ agrees with the composition of first projection to $J_{m-2}(T^*\mathrm{Rep}(\mathbf v,\mathbf w)\times \mathcal Z)$ then followed by $J_{m-2}\widetilde\mu$, thus $\widetilde f$ is flat and $\widetilde f^{-1}(0)$ is reduced and irreducible by our assumption. Next we define a linear map $\widetilde g: \widetilde Q_m\to J_{m-2}\gl(\mathbf v)^*$ by 
\begin{align*}
    \widetilde g((x^{(s)},y^{(s)},\alpha^{(s)},\beta^{(s)})_{s=1}^m, (z^{(r)})_{r=2}^m):=\sum_{r=2}^m \ell_p(x^{(r)},y^{(r)},\alpha^{(r)},\beta^{(r)})t^{r-2}~.
\end{align*}
We set $\widetilde h:=\widetilde f+\widetilde g$, and we claim that $Y_m:=\widetilde h^{-1}(0)$ is the desired variety. 

First of all, $Y_m$ contains the set of solutions to the equations \eqref{varphi^{-1}(p)}. In fact, $\varphi_m^{-1}(p)$ can be identified with the subset of $Y_m$ cut out by the linear relation
\begin{align}\label{defining eqn of varphi^{-1}(p) in Y_m}
    \ell_p(x^{(1)},y^{(1)},\alpha^{(1)},\beta^{(1)})\in \mathcal Z ~.
\end{align}
We claim that $\varphi_m^{-1}(p)$ is a proper subset of $Y_m$, in other words there exists $q\in Y_m$ such that \eqref{defining eqn of varphi^{-1}(p) in Y_m} fails to hold for $q$. According to our previous discussion, $\ell_p^{-1}(\mathcal{Z})\neq T^*\mathrm{Rep}(\mathbf v,\mathbf w)$. Since the tangent space of $\mu^{-1}(0)$ is $T^*\mathrm{Rep}(\mathbf v,\mathbf w)$, there must exist $(x,y,\alpha,\beta)\in \mu^{-1}(0)$ such that $\ell_p(x,y,\alpha,\beta)\notin \mathcal Z$, otherwise $\ell_p^{-1}(\mathcal Z)$ contains the tangent space of $\mu^{-1}(0)$ by the linearity of $\ell_p$ which is a contradiction. So we take $(x^{(1)},y^{(1)},\alpha^{(1)},\beta^{(1)})$ coordinate of $q$ to be $(x,y,\alpha,\beta)$ and zero elsewhere. Then it is easy to verify that $q\in Y_m$ but \eqref{defining eqn of varphi^{-1}(p) in Y_m} fails to hold for $q$. Thus we have shown that $\varphi_m^{-1}(p)$ is a proper subset of $Y_m$. Finally, we notice that $\widetilde f$ is flat, so the map $$F:=(\widetilde f+t\widetilde g,\mathrm{id}): \widetilde Q_m\times\mathbb A^1\to J_{m-2}\gl(\mathbf v)^*\times \mathbb A^1$$ is also flat, where $t$ is the coordinate on $\mathbb A^1$. Let us assign $\mathbb C^{\times}$-weights as follows
\begin{center}
\begin{tabular}{| c | c | c | c | c |} 
 \hline
  & $T^*\mathrm{Rep}(\mathbf v,\mathbf w)^{\oplus m}$ & $\mathcal Z^{\oplus(m-1)}$ & $J_{m-2}\gl(\mathbf v)^*$ & $\mathbb A^1$ \\  [1ex] 
 \hline
$\mathbb C^{\times}$-weights & $1$ & $2$ & $2$ & $1$ \\ [0.5ex] 
 \hline
\end{tabular}
\end{center}
Then $F$ is a homogenous map such that the $\mathbb C^{\times}$-actions are attracting both on the source and the target, thus the same deformation argument in the proof of Lemma \ref{lem: jet flatness for homogeneous map}(2) can be applied to $F$. Since $\widetilde f^{-1}(0)=F^{-1}(0,0)$ is reduced and irreducible, we conclude that $F^{-1}(u,t)$ is reduced and irreducible and $\dim F^{-1}(u,t)=\dim F^{-1}(0,0)$ for all $(u,t)\in J_{m-2}\gl(\mathbf v)^*\times \mathbb A^1$. In particular $Y_m\cong F^{-1}(0,1)$ is reduced and irreducible and $$\dim Y_m=\dim F^{-1}(0,0)=\dim \left(J_{m-2}\widetilde\mu^{-1}(0)\times T^*\mathrm{Rep}(\mathbf v,\mathbf w)\right)~.$$ 
This finishes the proof of the lemma.
\end{proof}

\bigskip

\begin{theorembox}
\begin{thm}\label{thm: property P tilde}
$(Q,\mathbf v,\mathbf w)$ has the property $(\widetilde P_2)$ if the following three conditions are satisfied:
\begin{enumerate}[label=\arabic*)]
    \item $\widetilde \mu$ is flat; 
    \item $\dim \mathcal N(\mathbf v,\mathbf w)+\dim T^*\mathrm{Rep}(\mathbf v,\mathbf w)\le 2\dim \widetilde \mu^{-1}(0)-2\mathsf g$, or equivalently
    \begin{align}\label{dimension bound tilde}
        \dim \mathcal N(\mathbf v,\mathbf w)\le \mathbf v \cdot(2\mathbf w-\mathsf C\mathbf v)-2\mathsf g+2|Q_0|~,
    \end{align}
    \item for every representation type $\tau$ which is not the trivial type, the auxiliary quiver $(Q_{\tau},\mathbf v_{\tau},\mathbf w_{\tau})$ satisfies that either (1) it has the property $(\widetilde P_2)$ and the transition map $\mathcal Z\to \mathcal Z_{\tau}$ is surjective, or (2) it has the property $(P_2)$.
\end{enumerate}
$(Q,\mathbf v,\mathbf w)$ has the property $(\widetilde P_1)$ if the following three conditions are satisfied:
\begin{enumerate}[label=\alph*)]
    \item $\widetilde \mu$ is flat and $\widetilde \mu^{-1}(0)$ is irreducible\footnote{Reducedness follows from the rest of conditions, similar to Theorem \ref{thm: property P}};
    \item inequality \eqref{dimension bound tilde} holds,
    \item for every representation type $\tau$ which is not the trivial type, the auxiliary quiver $(Q_{\tau},\mathbf v_{\tau},\mathbf w_{\tau})$ satisfies that either (1) it has the property $(\widetilde P_1)$ and the transition map $\mathcal Z\to \mathcal Z_{\tau}$ is surjective, or (2) it has the property $(P_1)$.
\end{enumerate}
\end{thm}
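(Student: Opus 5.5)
We follow the proof of Theorem \ref{thm: property P}, replacing $\mu$ by $\widetilde\mu$ and Lemmas \ref{lem: preimage of zero}, \ref{lem: preimage of nilcone} by Lemmas \ref{lem: preimage of zero_tilde}, \ref{lem: preimage of nilcone_tilde}. Set $N:=\dim\widetilde\mu^{-1}(0)=\mathbf v\cdot(\mathbf v+2\mathbf w-\mathsf C\mathbf v)+|Q_0|$ (Proposition \ref{prop: flat extended moment map}). As in the proof of Proposition \ref{prop: various jet flatness}, $\widetilde\mu$ is homogeneous with respect to a contracting $\mathbb C^\times$-action, so $J_m$-flatness is equivalent to $\dim J_m\widetilde\mu^{-1}(0)\le (m+1)N$. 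For $(\widetilde P_1)$ we instead need the strict bound $\dim\widetilde\varphi_m^{-1}(\widetilde\mu^{-1}(0)_{\mathrm{sing}})<(m+1)N$ for all $m\ge1$. Together with flatness this gives irreducibility and reducedness of $J_m\widetilde\mu^{-1}(0)$, hence rational singularities by \cite[Theorem 0.1, Proposition 1.4]{mustata2001jet}. In both parts we stratify $\widetilde\mu^{-1}(0)\cong\mu^{-1}(\mathcal Z)$ by representation type $\tau$ of the underlying $\overline{Q}^+$-representation and estimate $\dim\widetilde\varphi_m^{-1}$ of each stratum.

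\emph{Nontrivial types.} Near a point $x$ of type $\tau$ we use the generalized étale slice of Appendix \ref{sec:étaleSlice}. Locally, $\widetilde\mu^{-1}(0)$ is smoothly equivalent to $G\times_{G_x}\hat{\widetilde\mu}^{-1}(0)$, where $\hat{\widetilde\mu}$ is the moment map of $(Q_\tau,\mathbf v_\tau,\mathbf w_\tau)$ extended along the image of $\mathcal Z\to\mathcal Z_\tau$. If that transition map is surjective, $\hat{\widetilde\mu}=\widetilde\mu_\tau$ up to a smooth factor, and property $(\widetilde P_2)$ or $(\widetilde P_1)$ of the auxiliary quiver transfers to the required jet-dimension bound over the stratum. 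This uses that jet-flatness is étale local and stable under smooth base change (the elementary lemma on $J_n$-flatness). If instead $(Q_\tau,\ldots)$ has $(P_2)$ (resp. $(P_1)$), then $\hat{\widetilde\mu}$ is $\mu_\tau$ plus a linear term in the extension parameters. Lemma \ref{lem: jet flatness for homogeneous map}(2) and the $(P_1)\Rightarrow(\widetilde P_1)$ argument of Proposition \ref{prop: various jet flatness} then give jet-flatness (resp. irreducibility with a smooth point) of that partial extension. The strict inequality on the nontrivial strata is obtained by induction on $\mathbf v$, exactly as in \cite[Theorem 3.6]{budur2021rational}.

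\emph{Trivial type.} This stratum is $\mathcal N(\mathbf v,\mathbf w)$ times the scalar-loop and $\mathcal Z$ directions, which contribute $2\mathsf g$ after jets are accounted for; concretely, $\dim\widetilde\varphi_m^{-1}$ of the stratum equals $\dim\widetilde\varphi_m^{-1}(\mathcal N)+2\mathsf g$, mirroring Theorem \ref{thm: property P}. For $(\widetilde P_2)$ we use semicontinuity, $\dim\widetilde\varphi_m^{-1}(p)\le\dim\widetilde\varphi_m^{-1}(0)$, and Lemma \ref{lem: preimage of zero_tilde} to get
\[
\dim\widetilde\varphi_m^{-1}(\mathcal N)\le\dim\mathcal N+(m-1)N+\dim T^*\mathrm{Rep}(\mathbf v,\mathbf w)\le(m+1)N-2\mathsf g,
\]
where the last step is condition 2). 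For $(\widetilde P_1)$ we induct on $m$. When $m=1$, Lemma \ref{lem: preimage of nilcone_tilde} makes $\dim\widetilde\varphi_1^{-1}(\mathcal N\setminus0)\le \dim T^*\mathrm{Rep}+\dim\mathcal N-1$. When $m\ge2$, the induction hypothesis ($J_{m-2}$-flatness and irreducibility) lets us apply Lemma \ref{lem: preimage of nilcone_tilde} to get the strict drop. Combined with b), this yields $\dim\widetilde\varphi_m^{-1}(\mathcal N)<(m+1)N-2\mathsf g$.

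\emph{Main obstacle.} We expect the hardest step to be the nontrivial-type case: checking that the generalized étale slice really identifies the local model with $\widetilde\mu_\tau$, or with a partial extension of $\mu_\tau$, compatibly on jets. In particular, the case where $\mathcal Z\to\mathcal Z_\tau$ is not surjective needs the deformation argument of Lemma \ref{lem: jet flatness for homogeneous map}(2) adapted to a partial extension. A secondary point to verify is that the $2\mathsf g$ bookkeeping for the trivial stratum is still correct once the $\mathcal Z$-directions are present.
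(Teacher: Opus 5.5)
Your proposal is correct and follows essentially the same route as the paper. Both arguments reduce to bounding $\dim\widetilde\varphi_m^{-1}$ of each representation-type stratum and treat nontrivial $\tau$ with the extended étale slice and hypothesis 3)/c). For the $(P_2)$/$(P_1)$ case the paper asserts that the slice is jet-flat over $\mathcal Z$, which is equivalent to your use of Lemma~\ref{lem: jet flatness for homogeneous map}(2). The trivial type is handled by the same chains of inequalities from Lemmas~\ref{lem: preimage of zero_tilde} and \ref{lem: preimage of nilcone_tilde}.

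The one slip, which also settles your ``secondary point'', is that the trivial-type stratum carries no $\mathcal Z$-directions. Every summand of a trivial-type semisimple representation is one-dimensional at a single vertex, so its moment map value vanishes and forces $z=0$. The extra $2\mathsf g$ therefore comes only from translating the loops by constant scalars, exactly as in Theorem~\ref{thm: property P}.
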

\end{theorembox}

\medskip

\begin{proof}
For the first statement, we need to show that
\begin{align*}
    \dim\widetilde\varphi_m^{-1}(\widetilde\mu^{-1}(0)_{\mathrm{sing}})\le \dim \widetilde\mu^{-1}(0)\cdot (m+1)
\end{align*}
for all $m\ge 1$, where $\widetilde\mu^{-1}(0)_{\mathrm{sing}}$ is the singular locus of $\widetilde\mu^{-1}(0)$ and $\widetilde\varphi_m:J_m\widetilde\mu^{-1}(0)\to \widetilde\mu^{-1}(0)$ is the natural projection. It suffices to show that $\dim \widetilde\varphi_m^{-1}(\widetilde\mu^{-1}_{\tau}(0))\le \dim \widetilde\mu^{-1}(0)\cdot(m+1)$ for all representation types $\tau$ such that $\tau\neq (1,\mathbf v^+)$. 

If $\tau$ is not of trivial type, then we proceed by induction using the condition 3) as follows. Assume that $(Q_\tau,\mathbf v_\tau,\mathbf w_\tau)$ has the property $(\widetilde P_2)$ and the transition map $\mathcal Z\to \mathcal Z_\tau$ is surjective, then for every $x\in \widetilde\mu^{-1}(0)_{\tau}$ there is an étale neighborhood $U$ of $x$ which is smooth over $\widetilde\mu_{\tau}^{-1}(0)$, whence it follows that $\dim J_mU=(m+1)\cdot\dim U$. Assume otherwise that $(Q_\tau,\mathbf v_\tau,\mathbf w_\tau)$ has the property $(P_2)$, then for every $x\in \widetilde\mu^{-1}(0)_{\tau}$ there is an étale neighborhood $U$ of $x$ which is jet-flat over $\mathcal Z$, whence it follows that $\dim J_mU=(m+1)\cdot\dim U$. In any case we have $\dim J_mU\le \dim \widetilde\mu^{-1}(0)\cdot(m+1)$.

If $\tau$ is of trivial type, then the condition 2) and Lemma \ref{lem: preimage of zero_tilde} imply that
\begin{align*}
    \dim \widetilde\varphi_m^{-1}(\mathcal N(\mathbf v,\mathbf w))&\le \dim \mathcal N(\mathbf v,\mathbf w)+\dim \widetilde\varphi_m^{-1}(0)\\
    &\le \mathbf v \cdot(2\mathbf w-\mathsf C\mathbf v)-2\mathsf g+2|Q_0|+(m-1)\dim \widetilde\mu^{-1}(0)+\dim T^*\mathrm{Rep}(\mathbf v,\mathbf w)\\
    &=(m+1)\cdot\dim \widetilde\mu^{-1}(0)-2\mathsf g ~.
\end{align*}
Thus for trivial type $\tau_{\mathrm{triv}}$, $$\dim\widetilde\varphi_m^{-1}(\widetilde\mu^{-1}_{\tau_{\mathrm{triv}}}(0))= \dim \widetilde\varphi_m^{-1}(\mathcal N(\mathbf v,\mathbf w))+2\mathsf g\le (m+1)\cdot\dim \widetilde\mu^{-1}(0)~.$$

The proof of the second statement is similar. It suffices to show that $\dim \widetilde\varphi_m^{-1}(\widetilde\mu^{-1}_{\tau}(0))< \dim \widetilde\mu^{-1}(0)\cdot(m+1)$ for all representation types $\tau$ such that $\tau\neq (1,\mathbf v^+)$. 

If $\tau$ is not of trivial type, then we proceed by induction using the condition c) as follows. Assume that $(Q_\tau,\mathbf v_\tau,\mathbf w_\tau)$ has the property $(\widetilde P_1)$ and the transition map $\mathcal Z\to \mathcal Z_\tau$ is surjective, then for every $x\in \widetilde\mu^{-1}_{\tau}(0)$ there is an étale neighborhood $U$ of $x$ such that there is a smooth map $U\to \widetilde\mu_{\tau}^{-1}(0)$ with connected fibers, whence it follows that $\dim J_m U=(m+1)\cdot\dim U$ and $J_m U$ is irreducible. Assume otherwise that $(Q_\tau,\mathbf v_\tau,\mathbf w_\tau)$ has the property $(P_1)$, then for every $x\in \widetilde\mu^{-1}_{\tau}(0)$ there is an étale neighborhood $U$ of $x$ such that there is a smooth map $U\to \widetilde\mu_{\tau}^{-1}(0)\times_{\mathcal Z_\tau}\mathcal Z'$ with connected fibers, where $\mathcal Z'$ is the image of the transition map $\mathcal Z\to\mathcal Z_\tau$. Since $J_m\widetilde\mu_{\tau}^{-1}(0)$ is irreducible and $J_m\widetilde\mu_\tau$ is flat,  $J_m(\widetilde\mu_{\tau}^{-1}(0)\times_{\mathcal Z_\tau}\mathcal Z')$ is also irreducible, whence it follows that $J_mU$ is irreducible. Moreover $U$ is jet-flat over $\mathcal Z$, thus $\dim J_mU=(m+1)\cdot\dim U$. In any case we have $\dim \widetilde\varphi_m^{-1}(\widetilde\mu^{-1}_{\tau}(0))< \dim \widetilde\mu^{-1}(0)\cdot(m+1)$.

If $\tau$ is of trivial type, then it suffices to show that $\dim \widetilde\varphi_m^{-1}(\mathcal N(\mathbf v,\mathbf w))<\dim \widetilde\mu^{-1}(0)\cdot(m+1)-2\mathsf g$. When $m=1$, the condition b),  Lemma \ref{lem: preimage of zero_tilde} and Lemma \ref{lem: preimage of nilcone_tilde} imply that
\begin{align*}
    \dim \widetilde\varphi_1^{-1}(\mathcal N(\mathbf v,\mathbf w))&=\max(\dim \widetilde\varphi_1^{-1}(0), \dim \widetilde\varphi_1^{-1}(\mathcal N(\mathbf v,\mathbf w)\setminus 0))\\
    &\le \dim T^*\mathrm{Rep}(\mathbf v,\mathbf w)+\max(0, \dim \mathcal N(\mathbf v,\mathbf w)-1)\\
    &< \dim T^*\mathrm{Rep}(\mathbf v,\mathbf w)+\dim \mathcal N(\mathbf v,\mathbf w)\\
    &\le \dim T^*\mathrm{Rep}(\mathbf v,\mathbf w)+\mathbf v \cdot(2\mathbf w-\mathsf C\mathbf v)-2\mathsf g+2|Q_0|\\
    &=2\dim \widetilde\mu^{-1}(0)-2\mathsf g ~.
\end{align*}
When $m\ge 2$, we use the induction on $m$, i.e.\  assume that $\widetilde\mu$ is $J_{m-2}$-flat and $J_{m-2}\widetilde\mu^{-1}(0)$ is reduced and irreducible, thus we can apply Lemma \ref{lem: preimage of nilcone_tilde} and obtain the following inequality
\begin{align*}
    \dim \widetilde\varphi_m^{-1}(\mathcal N(\mathbf v,\mathbf w))&=\max(\dim \widetilde\varphi_m^{-1}(0), \dim \widetilde\varphi_m^{-1}(\mathcal N(\mathbf v,\mathbf w)\setminus 0))\\
    &\le \dim J_{m-2}\widetilde\mu^{-1}(0)+\dim T^*\mathrm{Rep}(\mathbf v,\mathbf w)+\max(0, \dim \mathcal N(\mathbf v,\mathbf w)-1)\\
    &< \dim J_{m-2}\widetilde\mu^{-1}(0)+\dim T^*\mathrm{Rep}(\mathbf v,\mathbf w)+\dim \mathcal N(\mathbf v,\mathbf w)\\
    &\le \dim J_{m-2}\widetilde\mu^{-1}(0)+\dim T^*\mathrm{Rep}(\mathbf v,\mathbf w)+\mathbf v \cdot(2\mathbf w-\mathsf C\mathbf v)-2\mathsf g+2|Q_0|\\
    &=(m+1)\dim \widetilde\mu^{-1}(0)-2\mathsf g ~.
\end{align*}
In either case when $m=1$ or when $m\ge 2$, we have $\dim \widetilde\varphi_m^{-1}(\mathcal N(\mathbf v,\mathbf w))<\dim \widetilde\mu^{-1}(0)\cdot(m+1)-2\mathsf g$, thus $$\dim\widetilde\varphi_m^{-1}(\widetilde\mu^{-1}_{\tau_{\mathrm{triv}}}(0))= \dim \widetilde\varphi_m^{-1}(\mathcal N(\mathbf v,\mathbf w))+2\mathsf g< (m+1)\cdot\dim \widetilde\mu^{-1}(0) ~.$$
This finishes the proof of the second statement.
\end{proof}

\medskip 

\begin{remark}\label{rmk: bound of dim of nilpotent cone}
It is proven in \cite[Theorem 6.3]{crawley2003normality} that
\begin{align}\label{bound of dim of nilpotent cone}
    \dim \mathcal N(\mathbf v,\mathbf w)\le \mathbf v\cdot(\mathbf w+\mathsf Q\mathbf v)-\mathsf g ~,
\end{align}
where $\mathsf Q$ is the adjacency matrix of $Q$. Thus
\begin{align*}
    \text{\eqref{dimension bound}}&\Longleftarrow \mathbf v\cdot(\mathbf w-\mathsf C\mathbf v-\mathsf Q\mathbf v)\ge \mathsf g ~;\\
\text{\eqref{dimension bound tilde}}&\Longleftarrow \mathbf v\cdot(\mathbf w-\mathsf C\mathbf v-\mathsf Q\mathbf v)+2|Q_0|\ge \mathsf g ~.
\end{align*}
\end{remark}

\medskip 

\begin{remark}\label{rmk: N is Lagrangian when Q has no edge loop}
If $Q$ has no edge loop, i.e.\  $\mathsf g=0$, then according to \cite[Theorem 12.9]{lusztig1991quivers} $\mathcal N(\mathbf v,\mathbf w)$ is a Lagrangian subvariety of $T^*\mathrm{Rep}(\mathbf v,\mathbf w)$, in particular the inequality \eqref{bound of dim of nilpotent cone} is saturated, i.e.\  $\dim \mathcal N(\mathbf v,\mathbf w)=\mathbf v\cdot(\mathbf w+\mathsf Q\mathbf v)$ where $\mathsf Q$ is the adjacency matrix of $Q$. In this case, 
\begin{align*}
\text{\eqref{dimension bound}}&\Longleftrightarrow \mathbf v\cdot(\mathbf w-\mathsf C\mathbf v-\mathsf Q\mathbf v)\ge 0 ~;\\
\text{\eqref{dimension bound tilde}}&\Longleftrightarrow \mathbf v\cdot(\mathbf w-\mathsf C\mathbf v-\mathsf Q\mathbf v)+2|Q_0|\ge 0 .
\end{align*}
\end{remark}

\subsection{Examples}\label{sec:examples}

\begin{thm}\label{thm: Dynkin quiver tilde P_1, I}
Let $Q$ be a Dynkin quiver, and assume that $(\mathbf v,\mathbf w)$ is chosen such that $\mu$ is flat, then $(Q,\mathbf v,\mathbf w)$ has the property $(\widetilde P_1)$ if the following condition is satisfied:
\begin{itemize}
    \item[$\star)$] for every decomposition $\mathbf v=\mathbf v^{(0)}+\sum_{s=1}^r k_s\mathbf v^{(s)}$ such that $\mathbf v^{(0)}\in \mathbb Z^{Q_0}_{\ge 0}$ and $k_s\in \mathbb Z_{>0}$ and $\mathbf v^{(s)},s=1,\cdots,r$ are distinct positive roots, the following inequality holds:
    \begin{align}
     \mathbf v\cdot(\mathbf w-\frac{1}{2}\mathsf C\mathbf v)-\mathbf v^{(0)}\cdot(\mathbf w-\frac{1}{2}\mathsf C\mathbf v^{(0)})+2r\ge \sum_{s=1}^r k_s^2 ~.
    \end{align}
\end{itemize}
\end{thm}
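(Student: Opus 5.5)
The plan is to verify the three hypotheses a), b), c) of the second half of Theorem \ref{thm: property P tilde} for $(Q,\mathbf v,\mathbf w)$, arguing by induction on $|\mathbf v|=\sum_i v_i$ over a class of quiver data closed under passing to auxiliary quivers. This class is not simply "Dynkin quivers", since the auxiliary quivers need not be Dynkin. The inductive hypothesis is rather "flat $\mu$ plus the analogue of condition $\star)$".

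Condition a) is immediate from the hypothesis that $\mu$ is flat. By Remark \ref{rmk: mu flat implies tilde mu flat}, $\widetilde\mu$ is flat. By Proposition \ref{prop: flat moment map 2}(1), $\widetilde\mu^{-1}(0)\cong\mu^{-1}(\mathcal Z)$ is reduced and irreducible.

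For condition b), note that a Dynkin quiver has no edge loops, so $\mathsf g=0$. By Remark \ref{rmk: N is Lagrangian when Q has no edge loop}, inequality \eqref{dimension bound tilde} is then equivalent to $\mathbf v\cdot(\mathbf w-\mathsf C\mathbf v-\mathsf Q\mathbf v)+2|Q_0|\ge 0$. I will apply $\star)$ to the decomposition $\mathbf v^{(0)}=0$, $\mathbf v=\sum_{i:v_i\neq 0}v_i\alpha_i$, where the $\alpha_i$ are the simple roots. Here $r=\#\{i:v_i\ne0\}\le|Q_0|$ and $k_i=v_i$, so $\star)$ gives
\begin{align*}
\mathbf v\cdot(\mathbf w-\tfrac12\mathsf C\mathbf v)+2r\ge\textstyle\sum_i v_i^2 .
\end{align*}
From $\mathsf C=2I-\mathsf Q-\mathsf Q^T$ we get $\mathbf v\cdot\mathsf Q\mathbf v=\sum_i v_i^2-\tfrac12\mathbf v\cdot\mathsf C\mathbf v$. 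Substituting, the left side of the required inequality equals $\mathbf v\cdot(\mathbf w-\tfrac12\mathsf C\mathbf v)-\sum_iv_i^2+2|Q_0|$, which is at least $2(|Q_0|-r)\ge0$. So b) holds.

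Condition c) is the main step and the expected obstacle. A non-trivial representation type $\tau$ in $\mu^{-1}(\mathcal Z)$ for a Dynkin quiver is $\mathbf v=\mathbf v^{(0)}+\sum_{s=1}^r k_s\mathbf v^{(s)}$. Here $\mathbf v^{(0)}$ is the framed (stable-costable) part, and the $\mathbf v^{(s)}$ are distinct positive roots: simple modules of a deformed preprojective algebra of a Dynkin quiver have real-root dimension vectors. By the étale slice construction of Appendix \ref{sec:étaleSlice}, the auxiliary quiver $Q_\tau$ then has
\begin{itemize}
\item nodes $s=1,\dots,r$ and no loops (since $p(\mathbf v^{(s)})=0$);
\item $-(\mathbf v^{(s)},\mathbf v^{(t)})$ arrows between nodes $s\neq t$;
\item gauge vector $\mathbf v_\tau=(k_s)$ and framing $w_{\tau,s}=\mathbf v^{(s)}\cdot(\mathbf w-\mathsf C\mathbf v^{(0)})$.
\end{itemize}
Expanding $\mathbf v\cdot(2\mathbf w-\mathsf C\mathbf v)$ through this decomposition, I expect the identity
\begin{align*}
\mathbf v_\tau\cdot(2\mathbf w_\tau-\mathsf C_\tau\mathbf v_\tau)=\mathbf v\cdot(2\mathbf w-\mathsf C\mathbf v)-\mathbf v^{(0)}\cdot(2\mathbf w-\mathsf C\mathbf v^{(0)}) .
\end{align*}
Every decomposition of $\mathbf v_\tau$ pulls back to a coarser decomposition of $\mathbf v$ into $\mathbf v^{(0)}$ plus positive roots. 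This should let me transport both the flatness inequality \eqref{key inequality} and condition $\star)$ from $(Q,\mathbf v,\mathbf w)$ to $(Q_\tau,\mathbf v_\tau,\mathbf w_\tau)$.

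Two cases then arise for the required property of $(Q_\tau,\mathbf v_\tau,\mathbf w_\tau)$.
\begin{itemize}
\item If the $\mathbf v^{(s)}$ are linearly independent, the transition map $\mathcal Z\to\mathcal Z_\tau$, $\lambda\mapsto(\lambda\cdot\mathbf v^{(s)})_s$, is surjective. Since $|\mathbf v_\tau|<|\mathbf v|$ for non-trivial $\tau$, induction gives $(\widetilde P_1)$ for the auxiliary quiver, and alternative (1) of c) applies.
\item If the $\mathbf v^{(s)}$ are dependent, I would aim instead for $(P_1)$ via Theorem \ref{thm: property P}, i.e.\ alternative (2) of c). Then I must check the stricter bound \eqref{dimension bound}, without the $2|Q_0|$ slack, for $Q_\tau$.
\end{itemize}
Making this second case work is the crux. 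The first thing I would try is to show that linear dependence among $r$ roots forces the $+2r$ slack in $\star)$ to exceed what is needed. If that fails, I would check directly that, for the induced inequality, the rank deficiency of $\mathcal Z\to\mathcal Z_\tau$ is compensated by the $2d$ term of \eqref{key inequality tilde}.
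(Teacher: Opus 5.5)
\textbf{Gap in condition c).} Your verification of a) and b) is correct; in b), your bookkeeping with $r=\#\{i:v_i\neq0\}\le|Q_0|$ is actually more careful than the paper's. When the roots $\mathbf v^{(s)}$ are linearly independent, your argument for c) is the paper's. The gap is the case you call the crux. You leave it open with two tentative strategies, and it rests on a false premise, namely that auxiliary quivers of a Dynkin quiver need not be Dynkin.

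\textbf{Missing idea.} This is Lemma \ref{lem: aux quiver of Dynkin quiver}: the dependent case never occurs. It follows from data you already wrote down.
\begin{itemize}
\item The $\mathbf v^{(s)}$ are distinct positive roots with $\mathbf v^{(s)}\cdot\mathsf C\mathbf v^{(t)}\le 0$ for $s\neq t$; this is your count of $-(\mathbf v^{(s)},\mathbf v^{(t)})\ge0$ arrows.
\item $\mathsf C$ is positive definite. Suppose $\sum_s a_s\mathbf v^{(s)}=0$ and set $u=\sum_{a_s>0}a_s\mathbf v^{(s)}=\sum_{a_t<0}(-a_t)\mathbf v^{(t)}$. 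Then $u\cdot\mathsf C u\le 0$, so $u=0$.
\item Positive roots have positive coordinate sum, so $u=0$ forces every $a_s$ to vanish.
\end{itemize}
The paper argues instead via \cite[Theorem 1.2]{crawley2001geometry}: the $\mathbf v^{(s)}$ are simple roots of the subsystem $\{\alpha:\alpha\cdot\zeta=0\}$, i.e.\ of a Levi subalgebra.

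\textbf{Consequences.} $\mathsf C_\tau$ is the Gram matrix of independent vectors for a positive-definite form. So $Q_\tau$ is again Dynkin (indeed a subquiver of $Q$), $\mathcal Z\to\mathcal Z_\tau$ is always surjective, and alternative (1) of c) always applies. Neither Theorem \ref{thm: property P} nor the $2d$ term of \eqref{key inequality tilde} is needed. Your inductive class can simply be ``Dynkin, $\mu$ flat, $\star)$''.

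\textbf{Transport of $\star)$.} The same fact is what your transport of $\star)$ silently requires. Pull back $\mathbf v_\tau=\mathbf v_\tau^{(0)}+\sum_s k_s\mathbf v_\tau^{(s)}$ along $\eta:\mathbf e_s\mapsto\mathbf v^{(s)}$. This gives a decomposition admissible in $\star)$ for $Q$ only if $\eta$ sends distinct positive roots of $Q_\tau$ to distinct positive roots of $Q$.
\begin{itemize}
\item Injectivity of $\eta$ gives distinctness.
\item Since $\eta$ intertwines $\mathsf C_\tau$ with $\mathsf C$, it intertwines the simple reflections of $Q_\tau$ with the reflections in the real roots $\mathbf v^{(s)}$. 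These preserve the root system of $Q$, so roots go to roots; nonnegative combinations of positive roots stay positive.
\end{itemize}
Then apply your identity $\mathbf v_\tau\cdot(2\mathbf w_\tau-\mathsf C_\tau\mathbf v_\tau)=\mathbf v\cdot(2\mathbf w-\mathsf C\mathbf v)-\mathbf v^{(0)}\cdot(2\mathbf w-\mathsf C\mathbf v^{(0)})$ to both $\mathbf v_\tau$ and $\mathbf v_\tau^{(0)}$, the latter via $\mathbf v^{(0)}+\eta(\mathbf v_\tau^{(0)})$. This yields $\star)$ for $(Q_\tau,\mathbf v_\tau,\mathbf w_\tau)$ as in the paper, and the induction on $|\mathbf v|$ closes.
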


\begin{proof}
We claim that if $\mu$ is flat and the condition $\star)$ is satisfied, then for every auxiliary quiver $(Q_{\tau},\mathbf v_{\tau},\mathbf w_{\tau})$, the moment map $\mu_\tau$ is flat and $(Q_{\tau},\mathbf v_{\tau},\mathbf w_{\tau})$ satisfies the the condition $\star)$. $\mu_\tau$ is flat by Lemma \ref{lem: inherit flatness}, it remains to show that $\star)$ is satisfied. Let $\mathbf v_\tau=\mathbf v^{(0)}_\tau+\sum_{s=1}^r k_s\mathbf v^{(s)}_\tau$ be a decomposition such that $\mathbf v^{(0)}_\tau\in \mathbb Z^{Q_{\tau,0}}_{\ge 0}$ and $k_s\in \mathbb Z_{>0}$ and $\mathbf v^{(s)}_\tau,s=1,\cdots,r$ are distinct positive roots of $Q_\tau$. Write the representation type $\tau=(1,\mathbf v^{(0)+};m_1,\beta^{(1)};\cdots;m_t,\beta^{(t)})$, and let $\eta$ be the linear map $\mathbb C^{Q_{\tau,0}}\to \mathbb C^{Q_0}$ such that $\eta(\mathbf e_i)=\beta^{(i)},i=1,\cdots,t$ where $\mathbf e_i$ is $i$-th unit vector. In the proof of Lemma \ref{lem: aux quiver of Dynkin quiver}, we show that $Q_\tau$ is a sub-quiver of $Q'$, and the latter is the Dynkin quiver of the semisimple Lie algebra $[\mathfrak l,\mathfrak l]$, where $\mathfrak l$ is a Levi subalgebra of the Lie algebra $\mathfrak q$ associated to the quiver $Q$. In particular, every root of $Q_\tau$ is a root of $\mathfrak l$, and therefore is a root of $\mathfrak q$. In other words, $\eta:\mathbb C^{Q_{\tau,0}}\to \mathbb C^{Q_0}$ maps roots of $Q_\tau$ to roots of $Q$. Now let $\bar{\mathbf v}^{(0)}:=\mathbf v^{(0)}+\eta(\mathbf v^{(0)}_\tau)\in \mathbb Z^{Q_0}_{\ge 0}$ and $\bar{\mathbf v}^{(s)}:=\eta(\mathbf v^{(s)}_\tau)\in\mathbb Z^{Q_0}_{\ge 0}, s=1,\cdots,r$, then we have decomposition
\begin{align*}
    \mathbf v=\bar{\mathbf v}^{(0)}+\sum_{s=1}^r k_s\bar{\mathbf v}^{(s)} = \mathbf v^{(0)}+\eta(\mathbf v_\tau) ~,
\end{align*}
such that $\bar{\mathbf v}^{(s)}, s=1,\cdots,r$ are distinct positive roots of $Q$. Since $(Q,\mathbf v,\mathbf w)$ satisfies the condition $\star)$, we have
\begin{align}\label{temp inequality}
    \mathbf v\cdot(\mathbf w-\frac{1}{2}\mathsf C\mathbf v)-\bar{\mathbf v}^{(0)}\cdot(\mathbf w-\frac{1}{2}\mathsf C\bar{\mathbf v}^{(0)})+2r\ge \sum_{s=1}^r k_s^2 ~.
\end{align}
We can rewrite the LHS of \eqref{temp inequality}, using Definition \ref{defn: auxiliary quiver}, as
\begin{align*}
    &\mathbf v\cdot(\mathbf w-\frac{1}{2}\mathsf C\mathbf v)-{\mathbf v}^{(0)}\cdot(\mathbf w-\frac{1}{2}\mathsf C{\mathbf v}^{(0)})-\eta(\mathbf v^{(0)}_\tau)\cdot(\mathbf w-\frac{1}{2}\mathsf C{\mathbf v}^{(0)})+\frac{1}{2}\eta(\mathbf v^{(0)}_\tau)\cdot \mathsf C\eta(\mathbf v^{(0)}_\tau)+2r\\
    &=\mathbf v_\tau\cdot(\mathbf w_\tau-\frac{1}{2}\mathsf C_\tau\mathbf v_\tau)-\mathbf v^{(0)}_\tau\cdot\mathbf w_\tau+\frac{1}{2}\mathbf v^{(0)}_\tau\cdot \mathsf C_\tau\mathbf v^{(0)}_\tau+2r ~,
\end{align*}
and then we arrive at the desired inequality
\begin{align*}
    \mathbf v_\tau\cdot(\mathbf w_\tau-\frac{1}{2}\mathsf C_\tau\mathbf v_\tau)-\mathbf v^{(0)}_\tau\cdot(\mathbf w_\tau-\frac{1}{2}\mathsf C_\tau\mathbf v^{(0)}_\tau)+2r\ge \sum_{s=1}^r k_s^2 ~.
\end{align*}
Thus $(Q_{\tau},\mathbf v_{\tau},\mathbf w_{\tau})$ satisfies the the condition $\star)$.

Now let us examine that $(Q,\mathbf v,\mathbf w)$ satisfies the conditions a), b), and c) in the Theorem \ref{thm: property P tilde}. For the condition a), the moment map $\mu$ is flat by assumption, then it follows from Propositions \ref{prop: flat moment map} and \ref{prop: flat moment map 2} that $\widetilde\mu^{-1}(0)$ is reduced and irreducible. For the condition b), we take $\mathbf v^{(s)},s=1,\cdots,r$ be distinct coordinate vectors and $\mathbf v^{(0)}=0$ then the inequality in the $\star)$ can be rewritten as
\begin{align*}
    \mathbf v\cdot(\mathbf w-\mathsf C\mathbf v-\mathsf Q\mathbf v)+2|Q_0|\ge 0 ~.
\end{align*}
According to Remark \ref{rmk: N is Lagrangian when Q has no edge loop}, the above inequality is equivalent to \eqref{dimension bound tilde}. This verifies the condition b). For the condition c), every auxiliary quiver $Q_\tau$ is a Dynkin quiver by the Lemma \ref{lem: aux quiver of Dynkin quiver}, and $\mu_\tau$ is flat by Lemma \ref{lem: inherit flatness}. According to our previous discussion, $(Q_{\tau},\mathbf v_{\tau},\mathbf w_{\tau})$ satisfies the the condition $\star)$. We notice that if a representation type $\tau$ is not trivial, then at least one of $\beta^{(i)}$ in $\tau=(1,\mathbf v^{(0)+};m_1,\beta^{(1)};\cdots;m_t,\beta^{(t)})$ is not a coordinate vector, thus
\begin{align*}
    \sum_{i\in Q_{\tau,0}} v_{\tau,i}=\sum_{i=1}^tm_i<\sum_{i=1}^tm_i\sum_{j\in Q_0}\beta^{(i)}_j=\sum_{j\in Q_0}( v_j-  v^{(0)}_j)\le \sum_{j\in Q_0}  v_j ~.
\end{align*}
Then we can use the induction on the quantity $|\mathbf v|:=\sum_{j\in Q_0} v_j$. Namely, let us assume that if $(Q',\mathbf v',\mathbf w')$ is a Dynkin quiver such that the moment map is flat and it satisfies the condition $\star)$ and $|\mathbf v'|<|\mathbf v|$, then $(Q',\mathbf v',\mathbf w')$ has the property $(\widetilde P_1)$. Then $(Q_{\tau},\mathbf v_{\tau},\mathbf w_{\tau})$ has the property $(\widetilde P_1)$ by induction, and the transition map $\mathcal Z\to \mathcal Z_\tau$ is surjective by the Lemma \ref{lem: aux quiver of Dynkin quiver}, thus $(Q,\mathbf v,\mathbf w)$ satisfies the condition c). Therefore $(Q,\mathbf v,\mathbf w)$ has the property $(\widetilde P_1)$ by Theorem \ref{thm: property P tilde}.
\end{proof}

\medskip 

\begin{example}\label{ex: A_1 quiver tilde P_1}
Let $Q$ be $A_1$ quiver, and take $\mathbf v,\mathbf w\in \mathbb N$ such that $\mathbf w\ge 2\mathbf v$, then the moment map is flat by Remark \ref{rmk: KM quiver simple locus}, and it is easy to see that the condition $\star)$ is satisfied, thus $(Q,\mathbf v,\mathbf w)$ has the property $(\widetilde P_1)$.
\end{example}

\begin{thm}\label{thm: Dynkin quiver tilde P_1, II}
Let $Q$ be a Dynkin quiver, and assume that $\mathbf w-\mathsf C\mathbf v\in \mathbb Z^{Q_0}_{\ge 0}$, and that $ v_i\le 2$ for all $i\in Q_0$, and that $\#\{i\in Q_0: v_i=2\}\le 2$, then $(Q,\mathbf v,\mathbf w)$ has the property $(\widetilde P_1)$. 
\end{thm}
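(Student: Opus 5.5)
The plan is to deduce this from Theorem \ref{thm: Dynkin quiver tilde P_1, I}. That requires two things: that $\mu$ is flat, and that condition $\star)$ holds.

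\textbf{Flatness of $\mu$.} Since $Q$ is Dynkin, there are no imaginary roots, so the hypothesis of Remark \ref{rmk: KM quiver simple locus} holds vacuously. Hence $\mathbf w-\mathsf C\mathbf v\in\mathbb Z^{Q_0}_{\ge 0}$ implies the strict inequality \eqref{key strict inequality}. By Proposition \ref{prop: simple locus criterion}, $\mu^{-1}(0)^{\mathrm{reg}}\neq\emptyset$, and then Proposition \ref{prop: simple locus} gives that $\mu$ is flat.

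\textbf{Condition $\star)$.} Fix a decomposition $\mathbf v=\mathbf v^{(0)}+\sum_{s=1}^r k_s\mathbf v^{(s)}$ into distinct positive roots, and set $\mathbf u:=\mathbf v-\mathbf v^{(0)}=\sum_s k_s\mathbf v^{(s)}$. A direct expansion gives
\begin{align*}
\mathbf v\cdot(\mathbf w-\tfrac12\mathsf C\mathbf v)-\mathbf v^{(0)}\cdot(\mathbf w-\tfrac12\mathsf C\mathbf v^{(0)})=\mathbf u\cdot(\mathbf w-\mathsf C\mathbf v)+\tfrac12\,\mathbf u\cdot\mathsf C\mathbf u .
\end{align*}
The first term is $\ge 0$ because $\mathbf u\ge 0$ and $\mathbf w-\mathsf C\mathbf v\ge0$. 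Since $\mathsf C$ is positive definite with $\mathbf v^{(s)}\cdot\mathsf C\mathbf v^{(s)}=2$ for roots, the second term equals $\sum_s k_s^2+\sum_{s\neq s'}k_sk_{s'}\,\tfrac12\mathbf v^{(s)}\cdot\mathsf C\mathbf v^{(s')}$. So $\star)$ reduces to
\begin{align*}
\mathbf u\cdot(\mathbf w-\mathsf C\mathbf v)+2r+\sum_{s\ne s'}k_sk_{s'}\,\tfrac12(\mathbf v^{(s)},\mathbf v^{(s')})\ \ge\ 0 ,
\end{align*}
where $(\cdot,\cdot)$ denotes the symmetric form given by $\mathsf C$. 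For roots $\alpha\neq\beta$ of a simply-laced root system we have $(\alpha,\beta)\in\{-1,0,1\}$. The cross terms are therefore bounded below by $-\tfrac12\sum_{s\neq s'}k_sk_{s'}$, taken over pairs with $(\mathbf v^{(s)},\mathbf v^{(s')})=-1$.

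\textbf{Main obstacle: controlling the negative cross terms.} Here the hypotheses $v_i\le 2$ and $\#\{i:v_i=2\}\le 2$ enter. Because $\mathbf u\le\mathbf v$ coordinatewise, every $k_s\le 2$. Moreover, $k_s=2$ forces the support of $\mathbf v^{(s)}$ to lie in $\{i:v_i=2\}$, which has at most two nodes.

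I would bound the negative pairs using the graph $\Gamma$ on $\{1,\dots,r\}$ with an edge whenever $(\mathbf v^{(s)},\mathbf v^{(s')})=-1$. Two roots pairing to $-1$ sum to a root $\le\mathbf v$, and their supports are disjoint or overlap only in coordinates where $v_i=2$. Since the $\mathbf v^{(s)}$ are distinct and $|\mathbf v|$-bounded, I would argue that $\Gamma$ is a forest, essentially because a cycle of roots with consecutive pairings $-1$ would violate positive definiteness. A forest has at most $r-1$ edges. When all $k_s=1$ this gives cross terms $\ge -(r-1)>-2r$, which settles that case.

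The remaining cases are those where some $k_s=2$. There are at most three such roots: $\alpha_i$, $\alpha_j$, or $\alpha_i+\alpha_j$ with $v_i=v_j=2$. I would enumerate these directly, using that each edge contributes at most $\tfrac12 k_sk_{s'}\le 2$. When that is insufficient, I would use $\mathbf u\cdot(\mathbf w-\mathsf C\mathbf v)$ together with a node count. The crude edge-weighted bound on the forest may just fail, and careful case checking of the $k_s=2$ configurations is where the real work lies. Having verified $\star)$, Theorem \ref{thm: Dynkin quiver tilde P_1, I} yields $(\widetilde P_1)$.
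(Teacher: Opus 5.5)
Reducing to Theorem~\ref{thm: Dynkin quiver tilde P_1, I} is a legitimate route, and your flatness step is correct; it is the same as the paper's condition a). The trouble is that the entire content of your route is condition $\star)$, and the proposal does not establish it. The cases with some $k_s=2$ are explicitly left open. Moreover, the forest claim you rely on for the cross terms is false. Take $D_4$ with centre $0$ and $\mathbf v=(2,1,1,1)$, which is admissible once $\mathbf w\ge\mathsf C\mathbf v=(1,0,0,0)$. The distinct positive roots $\alpha_0,\alpha_1,\alpha_2,\alpha_0+\alpha_3$ sum to $\mathbf v$. The pairs $\{\alpha_0,\alpha_1\}$, $\{\alpha_1,\alpha_0+\alpha_3\}$, $\{\alpha_0+\alpha_3,\alpha_2\}$, $\{\alpha_2,\alpha_0\}$ all pair to $-1$, so $\Gamma$ contains a $4$-cycle. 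Positive definiteness is not violated because $(\alpha_0,\alpha_0+\alpha_3)=+1$. As written, then, $\star)$ is unproved.

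The gap closes if you stop bounding cross terms and keep $q(\mathbf x):=\tfrac12\,\mathbf x\cdot\mathsf C\mathbf x$ whole.

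\textbf{Reduction.} Your expansion shows that $\star)$ is equivalent to $\mathbf u\cdot(\mathbf w-\mathsf C\mathbf v)+q(\mathbf u)+2r\ge\sum_s k_s^2$. Since $\mathsf C$ is positive definite and even, $q(\mathbf x)\ge1$ for every nonzero $\mathbf x\in\mathbb Z^{Q_0}$. As $k_s\le2$, set $m:=\#\{s:k_s=2\}$; then $\sum_s k_s^2=r+3m$. So it suffices to show $q(\mathbf u)+r\ge 3m$.

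\textbf{Cases $m=0,1$.} For $m=0$ this is immediate, so no forest argument is needed. For $m=1$, use $q(\mathbf u)\ge1$ if $r\ge2$, and $q(2\beta)=4$ if $r=1$.

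\textbf{Case $m=2$.} A doubled root is supported in $\{i:v_i=2\}$, so $m\le 2$. When $m=2$, the doubled roots must be $\alpha_i,\alpha_j$ with $v_i=v_j=2$. The other $r-2$ roots are then $0/1$ vectors with disjoint connected supports in $\{k:v_k=1\}$. Write $\mathbf u=2\alpha_i+2\alpha_j+\mathbf u'$. Let $c\le r-2$ be the number of components of $\mathrm{supp}\,\mathbf u'$, and let $e$ be the number of edges joining it to $\{i,j\}$. Let $\epsilon\in\{0,1\}$ be $1$ exactly when $i,j$ are adjacent. Then $q(\mathbf u)=8-4\epsilon+c-2e$. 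Since the Dynkin graph is a tree, $e+\epsilon\le c+1$. Hence $q(\mathbf u)+r-2\ge 8-4\epsilon+2c-2e\ge 6-2\epsilon\ge4$, which is the required $q(\mathbf u)+r\ge6$.

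With this, Theorem~\ref{thm: Dynkin quiver tilde P_1, I} applies and your argument is complete.

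\textbf{Comparison with the paper.} The paper never passes through $\star)$; it verifies a)--c) of Theorem~\ref{thm: property P tilde} directly. For b), Remark~\ref{rmk: N is Lagrangian when Q has no edge loop} reduces it to the coordinate inequality $\mathbf v\cdot\mathsf Q\mathbf v\le 2|Q_0|$, checked by a degree count plus the small cases $A_2,A_3,D_4$. For c), the hypotheses pass to the Dynkin auxiliary quivers via Lemmas~\ref{lem: aux quiver of Dynkin quiver} and~\ref{lem: inherit positivity}, followed by induction on $|\mathbf v|$. Your route hands that induction off to Theorem~\ref{thm: Dynkin quiver tilde P_1, I}. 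The price is checking the stronger inequality $\star)$ over all root decompositions, which the computation above does in a few lines.
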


\medskip 

\begin{proof}
Let us examine that $(Q,\mathbf v,\mathbf w)$ satisfies the conditions a), b), and c) in the Theorem \ref{thm: property P tilde}. For the condition a), the moment map $\mu$ is flat by Remark \ref{rmk: KM quiver simple locus} and Proposition \ref{prop: simple locus}; then it follows from Proposition \ref{prop: flat moment map 2} that $\widetilde\mu^{-1}(0)$ is reduced and irreducible. 

According to Remark \ref{rmk: N is Lagrangian when Q has no edge loop}, the condition b) is equivalent to $\mathbf v\cdot(\mathbf w-\mathsf C\mathbf v-\mathsf Q\mathbf v)+2|Q_0|\ge 0$. Since $\mathbf w-\mathsf C\mathbf v\in \mathbb Z^{Q_0}_{\ge 0}$ by the assumption, it suffices to show that 
\begin{align}\label{combinatorial inequality}
    \mathbf v\cdot\mathsf Q\mathbf v\le 2|Q_0|
\end{align}
Since $Q$ has no loops and there is at most one trivalent vertex in $Q$, thus 
\begin{align*}
    \text{LHS of \eqref{combinatorial inequality}}\le (|Q_0|-1)+6=|Q_0|+5 ~.
\end{align*}
If $|Q_0|\ge 5$ then \eqref{combinatorial inequality} holds. When $Q$ is of type A, there is no trivalent vertex, so $\text{LHS of \eqref{combinatorial inequality}}\le |Q_0|+4$, thus \eqref{combinatorial inequality} holds for $A_4$ quiver. Note that \eqref{combinatorial inequality} automatically holds for $A_1$ quiver, so it remains to check \eqref{combinatorial inequality} for $A_2,A_3,D_4$ quivers, which is straightforward and we omit the details. 

For the condition c), every auxiliary quiver $Q_\tau$ is a Dynkin quiver by the Lemma \ref{lem: aux quiver of Dynkin quiver}, and $\mathbf w_\tau-\mathsf C_\tau\mathbf v_\tau\in \mathbb Z^{Q_{\tau,0}}_{\ge 0}$ by Lemma \ref{lem: inherit positivity}. It is also easy to see that $ v_{\tau,i}\le 2$ for all $i\in Q_{\tau,0}$ and $\#\{i\in Q_{\tau,0}: v_{\tau,i}=2\}\le 2$. Then we can use the induction on the quantity $|\mathbf v|:=\sum_{j\in Q_0} v_j$. Namely, let us assume that if $(Q',\mathbf v',\mathbf w')$ is a Dynkin quiver such that $\mathbf w'-\mathsf C'\mathbf v'\in \mathbb Z^{Q'_0}_{\ge 0}$ and $v'_i\le 2$ for all $i\in Q'_0$, and that $\#\{i\in Q'_0: v'_i=2\}\le 2$, then $(Q',\mathbf v',\mathbf w')$ has the property $(\widetilde P_1)$. The same argument in the proof of Theorem \ref{thm: Dynkin quiver tilde P_1, I} shows that $|\mathbf v_\tau|<|\mathbf v|$, so $(Q_{\tau},\mathbf v_{\tau},\mathbf w_{\tau})$ has the property $(\widetilde P_1)$ by induction. The transition map $\mathcal Z\to \mathcal Z_\tau$ is surjective by the Lemma \ref{lem: aux quiver of Dynkin quiver}. This verifies the condition c) for $(Q,\mathbf v,\mathbf w)$. Therefore $(Q,\mathbf v,\mathbf w)$ has the property $(\widetilde P_1)$ by Theorem \ref{thm: property P tilde}.
\end{proof}

\medskip 

\begin{thm}\label{thm: Jordan quiver tilde P_1, III}
Let $Q$ be the Jordan quiver (one node with one loop), then $(Q,\mathbf v,\mathbf w)$ has the property $(P_1)$ if $\mathbf w>\mathbf v$, and it has property $(\widetilde P_1)$ if $\mathbf w\ge \mathbf v$.
\end{thm}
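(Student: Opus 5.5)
Jordan quiver: one node, $\mathsf g=1$, $\mathsf C=0$ (since $\mathsf C_{11}=2-1-1=0$) and $\mathsf Q=1$. We argue by induction on $\mathbf v$, applying Theorem \ref{thm: property P} for $(P_1)$ and Theorem \ref{thm: property P tilde} for $(\widetilde P_1)$.

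\textbf{Flatness and irreducibility.} Since $\mathsf C=0$, inequality \eqref{key inequality} reads
\[
2\mathbf v\mathbf w\ge 2\mathbf v^{(0)}\mathbf w+2k,
\]
i.e.\ $(\mathbf v-\mathbf v^{(0)})\mathbf w\ge k$. This holds whenever $\mathbf w\ge 1$, because $\mathbf v-\mathbf v^{(0)}\ge k$. Hence $\mu$ is flat (Proposition \ref{prop: flat moment map}). For $\mathbf w\ge 1$ the strict inequality \eqref{key strict inequality} also holds unless $\mathbf w=1$ and every $\mathbf v^{(l)}=1$. So for $\mathbf w\ge 2$ (in particular for $\mathbf w>\mathbf v\ge 1$) the simple locus is nonempty, and Proposition \ref{prop: simple locus} gives that $\mu^{-1}(0)$ is reduced and irreducible. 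This is condition a). For the tilde version, $\mu$ flat implies $\widetilde\mu$ flat (Remark \ref{rmk: mu flat implies tilde mu flat}), and $\widetilde\mu^{-1}(0)=\mu^{-1}(\mathcal Z)$ is reduced and irreducible by Proposition \ref{prop: flat moment map 2}. This settles a) for $\widetilde P_1$, including the case $\mathbf w=\mathbf v$ (for $\mathbf v=0$ everything is trivial).

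\textbf{Dimension bound.} We use Remark \ref{rmk: bound of dim of nilpotent cone}, $\dim\mathcal N\le \mathbf v\mathbf w+\mathbf v^2-1$. It therefore suffices to have
\[
\mathbf v(\mathbf w-\mathbf v)\ge 1
\]
for \eqref{dimension bound}, which holds when $\mathbf w>\mathbf v\ge1$. For \eqref{dimension bound tilde} it suffices to have
\[
\mathbf v(\mathbf w-\mathbf v)+2\ge 1,
\]
which holds when $\mathbf w\ge\mathbf v$. This gives condition b) in both cases.

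\textbf{Auxiliary quivers.} Condition c) is the main obstacle. We must identify the auxiliary quivers $(Q_\tau,\mathbf v_\tau,\mathbf w_\tau)$ from Appendix \ref{sec:étaleSlice}. For the Jordan quiver the only roots are the positive multiples of $1$. The simple unframed representations have dimension $1$, given by a point $x\in\mathbb C$ with $y=0$ up to the loop, and distinct simple summands are pairwise Ext-orthogonal. So a type $\tau=(1,\mathbf v^{(0)+};m_1,1;\dots;m_t,1)$ should give $Q_\tau$ equal to $t$ disjoint Jordan quivers, with $v_{\tau,i}=m_i$ and framing $w_{\tau,i}=\mathbf w-\mathsf C\cdot(\text{stuff})=\mathbf w$ adjusted by the $\mathbf v^{(0)}$ part. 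Following Definition \ref{defn: auxiliary quiver}, I expect $\mathbf w_{\tau,i}=\mathbf w-2\mathbf v^{(0)}\cdot 0\cdot\ldots$, i.e.\ essentially $w_\tau=\mathbf w$, since $\mathsf C=0$ and the framing pairing is $\mathbf w\cdot\beta=\mathbf w$. The conditions $\mathbf w_{\tau,i}\ge\mathbf w>\mathbf v\ge m_i$ (resp.\ $\ge$) then pass to each component, and non-trivial $\tau$ has $\sum m_i<\mathbf v$ or else a framed part. Products of quivers with $(P_1)$ have $(P_1)$, since jets, flatness and rational singularities are all compatible with products.

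We then apply induction on $\mathbf v$: the auxiliary quivers have $(P_1)$ when $\mathbf w>\mathbf v$, and this satisfies alternative (2) of condition c) in Theorem \ref{thm: property P tilde}. For the tilde statement with $\mathbf w=\mathbf v$, the components have $m_i\le\mathbf v\le\mathbf w_{\tau,i}$. Here we use induction for $(\widetilde P_1)$ on each Jordan factor, together with surjectivity of $\mathcal Z\to\mathcal Z_\tau$. That map is the diagonal $\mathbb C\to\mathbb C^t$, which fails to be surjective for $t\ge2$. In that case we instead need $(P_1)$, which requires $m_i<\mathbf w_{\tau,i}$. This holds when $t\ge 2$, because each $m_i<\mathbf v\le\mathbf w$. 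For $t=1$ the map $\mathcal Z\to\mathcal Z_\tau$ is an isomorphism and we use $(\widetilde P_1)$ by induction. The delicate point is verifying the framing $\mathbf w_\tau$ in the appendix's conventions, where I expect the gauge-loop correction to vanish because $\mathsf C=0$.
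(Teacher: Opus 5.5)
Your proposal is correct and follows the paper's proof. You get flatness from \eqref{key inequality} with $\mathsf C=0$, non-emptiness of the simple locus for $\mathbf w>\mathbf v$, and condition b) via Remark~\ref{rmk: bound of dim of nilpotent cone}. For condition c) you note that each auxiliary quiver is a disjoint union of Jordan quivers with $v_{\tau,i}<\mathbf v$ and $w_{\tau,i}=\beta^{(i)}\cdot(\mathbf w-\mathsf C\mathbf v^{(0)})=\mathbf w$, then induct on $\mathbf v$. That value of $w_{\tau,i}$ follows directly from Definition~\ref{defn: auxiliary quiver}, so there is no delicate point there.

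The only place you do more than needed is the $t=1$ branch of the tilde case. Every non-trivial $\tau$ has $v_{\tau,i}<\mathbf v\le\mathbf w=w_{\tau,i}$, so the first part of the theorem already gives $(P_1)$ for all auxiliary quivers. Alternative (2) of condition c) therefore applies uniformly, and you need neither the $(\widetilde P_1)$-induction nor surjectivity of $\mathcal Z\to\mathcal Z_\tau$.
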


\medskip

\begin{proof}
The Cartan matrix $\mathsf C$ is zero in this case, then it is easy to see that \eqref{key inequality} holds for $(Q,\mathbf v,\mathbf w)$ with arbitrary nonzero $\mathbf v$ and nonzero $\mathbf w$, so $\mu$ is flat according to Proposition \ref{prop: flat moment map}. Then $\widetilde\mu$ is flat by Remark \ref{rmk: mu flat implies tilde mu flat}, and $\widetilde\mu^{-1}(0)$ is reduced and irreducible by Proposition \ref{prop: flat moment map 2}. 

Now let us assume that $\mathbf w>\mathbf v$, we are going to show that $(Q,\mathbf v,\mathbf w)$ has the property $(P_1)$, by verifying the conditions a), b), and c) in Theorem \ref{thm: property P}. It is easy to see that \eqref{key strict inequality} holds for $(Q,\mathbf v,\mathbf w)$ when $\mathbf w>\mathbf v$, so $\mu^{-1}(0)^{\mathrm{reg}}$ is nonempty. Then according to Proposition \ref{prop: simple locus}, $\mu^{-1}(0)$ is reduced and irreducible, whence the condition a) is verified. According to Remark \ref{rmk: bound of dim of nilpotent cone}, the condition b) follows from the inequality $\mathbf v\cdot(\mathbf w-\mathbf v)\ge 1$, which is obviously true since $\mathbf w-\mathbf v> 0$. It is easy to see that every auxiliary quiver $Q_\tau$ is disjoint union of Jordan quivers. Moreover $\mathbf v_{\tau,i}<\mathbf v$ and $\mathbf w_{\tau,i}=\mathbf w$ for all $i\in Q_{\tau,0}$, in particular $\mathbf w_{\tau,i}>\mathbf v_{\tau,i}$, so we can use the induction on $\mathbf v$, and the condition c) is verified. Therefore $(Q,\mathbf v,\mathbf w)$ has the property $(P_1)$ by Theorem \ref{thm: property P}.

Next let us assume that $\mathbf w\ge \mathbf v$, we are going to show that $(Q,\mathbf v,\mathbf w)$ has the property $(\widetilde P_1)$, by verifying the conditions a), b), and c) in Theorem \ref{thm: property P tilde}. The condition a) has been verified above for arbitrary nonzero $\mathbf v$ and nonzero $\mathbf w$. According to Remark \ref{rmk: bound of dim of nilpotent cone}, the condition b) follows from the inequality $\mathbf v\cdot(\mathbf w-\mathbf v)+2\ge 1$, which is obviously true since $\mathbf w-\mathbf v\ge 0$. Since every auxiliary quiver $Q_\tau$ is disjoint union of Jordan quivers, and $\mathbf v_{\tau,i}<\mathbf v$ and $\mathbf w_{\tau,i}=\mathbf w$ for all $i\in Q_{\tau,0}$, the previous step implies that $(Q_\tau,\mathbf v_\tau,\mathbf w_\tau)$ has the property $(P_1)$, this verifies the condition c). Therefore $(Q,\mathbf v,\mathbf w)$ has the property $(\widetilde P_1)$ by Theorem \ref{thm: property P tilde}.
\end{proof}

\subsection{Natural maps between function rings on jets}\label{sec:semiclassicalAnalysis}

In this section we state formally two of the main theorems in this paper, Theorems \ref{thm:semiclassical-injectivity} and \ref{thm:semiclassical-injectivity2}, which were announced in Section \ref{sec:intro}, as well as Theorem \ref{thm:semiclassical-injectivity3}. Theorem \ref{thm:semiclassical-injectivity} states the conditions for injectivity between the ring of $J_n\mathrm{GL}(\mathbf{v})$-invariant functions on $n$-jets of $\mu^{-1}_{\mathsf{s}}(0)$, and global sections of the structure sheaf on the $n$-jet of the Nakajima quiver super-variety $J_n\mathcal{M}(\mathbf{v},\mathbf{w}|\mathbf{u})$ (and similarly for the extended moment map and extended Nakajima quiver super-variety). Then, Theorem \ref{thm:semiclassical-injectivity2} states the conditions for an isomorphism between the ring of functions on the quiver super-variety $\widetilde{\mathcal{M}}^0(\mathbf{v},\mathbf{w}|\mathbf{u})$ and global sections of the structure sheaf on $\widetilde{\mathcal{M}}(\mathbf{v},\mathbf{w}|\mathbf{u})$, while Theorem \ref{thm:semiclassical-injectivity3} is an analogue for \emph{good} ADE quivers.

\bigskip

\noindent In order to state and prove Theorems \ref{thm:semiclassical-injectivity}-\ref{thm:semiclassical-injectivity3}, we first need to establish some preliminary facts.

\subsubsection{Preliminary results}

From \cite{Grothendieck1964tudeLD}, a super-scheme $X$ is said to have property $(S_i)$ if 
\begin{align*}
    \depth(\mathcal O_{X,Z})\ge \min(i,\codim_X (Z))
\end{align*}
for every irreducible closed subset $Z$ of $X$. Here $\depth(\mathcal O_{X,Z})$ is defined as the minimal $j$ such that the local cohomology sheaf $\mathcal H_Z^j(\mathcal O_X)$ is nonzero. The local cohomology is defined in \cite[Section 1]{hartshorne2006local}. Using \cite[Theorem 2.3]{hartshorne2006local}, on an open affine subscheme $U=\Spec A$ of $X$, the local cohomology sheaf $\mathcal H_Z^j(\mathcal O_X)$ can be presented as follows. Take $f_1,\cdots,f_m\in A^0$ such that $Z=V(\mathrm{rad}(f_1,\cdots,f_m))$ is the variety of the corresponding radical ideal, and let $K^{\bullet}(\mathbf f;A^0)$ be the Koszul complex on $A^0$ associated to $\mathbf f=(f_1,\cdots,f_m)$, where $A^0$ is the even part of $A$, and similarly let $K^{\bullet}(\mathbf f^t;A^0)$ be the Koszul complex on $A^0$ associated to $\mathbf f^t=(f_1^t,\cdots,f_m^t)$. Then there are obvious maps between complexes $K^{\bullet}(\mathbf f^t;A^0)\to K^{\bullet}(\mathbf f^{t+1};A^0)$, and so 
\begin{align}\label{local cohomology complex}
    \mathcal H_Z^j(\mathcal O_X)|_{U}\cong H^j\left(\underset{\substack{\longrightarrow\\ t}}{\lim} \:K^{\bullet}(\mathbf f^t;A^0)\otimes_{A^0} A\right).
\end{align}
We shall denote the complex in the right hand side as $K^{\bullet}(\mathbf f^{\infty};A)$.

\medskip

\begin{lem}[Super-scheme analog of {\cite[Theorem 7.1]{crawley2003normality}}]\label{lem: quotient has (Si)}
Let $A$ be a finitely-generated super-commutative algebra, and assume that $X=\Spec A$ has a reductive group $G$ action. Suppose that $X$ and an open subset $U$ of $X\sslash G$ have property $(S_i)$. If $q^{-1}(X\sslash G\setminus U)$ has codimension at least $i$ in $X$, where $q: X\to X\sslash G$ is the quotient map, then $X\sslash G$ has property $(S_i)$.
\end{lem}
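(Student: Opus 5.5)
We follow the strategy of \cite[Theorem 7.1]{crawley2003normality}, replacing each commutative-algebra input by its super analogue, with local cohomology computed through the Koszul-type complexes \eqref{local cohomology complex}. Write $Y=X\sslash G=\Spec A^G$. Let $Z\subset Y$ be an irreducible closed subset. We must show that $\mathcal H^j_Z(\mathcal O_Y)=0$ for $j<\min(i,\codim_Y Z)$.

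There are two cases. If $Z$ meets $U$, we localize at the generic point of $Z$. This point lies in $U$, and $U$ has property $(S_i)$ by assumption, so that case is done. It therefore suffices to treat $Z\subset Y\setminus U$.

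The key step is the Reynolds-operator argument. Choose even $G$-invariant functions $f_1,\dots,f_m\in (A^G)^0$ whose common zero locus in $Y$ is $Z$ set-theoretically. This is possible because $(A^G)^{\mathrm{bos}}$ is a quotient of $(A^{\mathrm{bos}})^G$ (reductivity), and radicals are detected on the bosonic reduction. Pulled back to $X$, these cut out $q^{-1}(Z)$. The complex $K^\bullet(\mathbf f^\infty;A)$ is a complex of rational $G$-modules with $G$-equivariant differentials, since the $f_k$ are invariant. Because $G$ is reductive, taking invariants is exact, so
\begin{align*}
H^j\bigl(K^\bullet(\mathbf f^\infty;A)\bigr)^G\cong H^j\bigl(K^\bullet(\mathbf f^\infty;A)^G\bigr)=H^j\bigl(K^\bullet(\mathbf f^\infty;A^G)\bigr),
\end{align*}
where the last equality holds because $(K^\bullet(\mathbf f^t;A^0)\otimes_{A^0}A)^G=K^\bullet(\mathbf f^t;(A^G)^0)\otimes_{(A^G)^0}A^G$, the Koszul terms being free modules on invariant generators. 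Hence $\Gamma(\mathcal H^j_Z(\mathcal O_Y))$ is a direct summand of $\Gamma(\mathcal H^j_{q^{-1}(Z)}(\mathcal O_X))$.

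Finally, $q^{-1}(Z)\subset q^{-1}(Y\setminus U)$ has codimension at least $i$ in $X$. Applying $(S_i)$ for $X$ to each irreducible component $W$ of $q^{-1}(Z)$ gives depth at least $i$ along $W$. The standard spectral-sequence / Mayer--Vietoris argument passing from components to their union, together with \eqref{local cohomology complex}, then yields $\mathcal H^j_{q^{-1}(Z)}(\mathcal O_X)=0$ for $j<i$. By the direct-summand statement, $\mathcal H^j_Z(\mathcal O_Y)=0$ for $j<i$, and in particular for $j<\min(i,\codim_Y Z)$.

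We expect the main obstacle to be justifying the super versions of two standard facts. The first is the Hartshorne description \eqref{local cohomology complex}, which requires that depth and local cohomology over $A$ can be computed using even elements only. The second is the identification of invariants of the Koszul complex. Both reduce to the facts that $A$ is a finite module over $A^0$ and that $G$ acts compatibly with the $\mathbb Z/2$ grading. We plan to check these first, by filtering by powers of $\mathfrak n_A$ as in Lemma \ref{lem: regular sequence}.
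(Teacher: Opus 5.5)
Your proposal is correct and is essentially the paper's argument. As in \cite[Theorem 7.1]{crawley2003normality}, the points of $Z$ lying in $U$ are handled by $(S_i)$ of $U$ (your localization, the paper's excision), and the part outside $U$ is handled by the injectivity $H^j_Z(X\sslash G)\hookrightarrow H^j_{q^{-1}(Z)}(X)$, which comes from the Reynolds splitting of $A^G\hookrightarrow A$ applied to the even-element complexes \eqref{local cohomology complex}.

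There are two harmless slips. First, it is $(A^{\mathrm{bos}})^G$ that is a quotient of $(A^G)^{\mathrm{bos}}$, not the reverse; the even invariants $f_k$ exist anyway because $|\Spec A^G|=|\Spec (A^G)^0|$. Second, passing from the components of $q^{-1}(Z)$ to their union should use $\depth_T\mathcal O_X=\inf_{x\in T}\depth\mathcal O_{X,x}$, because Mayer--Vietoris by itself does not settle the case $j=i-1$.
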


\medskip

\begin{proof}
The proof is essentially the same as \cite[Theorem 7.1]{crawley2003normality}, we only need to make sure that (1) the natural map $H^j_{Y}(X\sslash G)\to H^j_{q^{-1}(Y)}(X)$ is injective for every closed subset $Y$ in $X\sslash G$, where $q:X\to X\sslash G$ is the quotient map, (2) the excision isomorphism \cite[Proposition 1.3]{hartshorne2006local} $H^j_{Z\cap U}(X\sslash G)\cong H^j_{Z\cap U}(U)$ can be applied to super-scheme. The point (2) is obvious, since the \cite[Proposition 1.3]{hartshorne2006local} is true for any topological space with a sheaf of abelian groups, in particular holds for the topological space $X\sslash G$ and sheaf $\mathcal O_X$. For the point (1), \cite[Theorem 7.1]{crawley2003normality} uses \cite[Corollary 6.8]{hochster1974rings}; here we modify the argument in \cite{hochster1974rings}. Notice that both $X$ and $X\sslash G$ are super-schemes over the base bosonic scheme $\Spec (A^{0})^G$, and since $G$ is reductive, the embedding map $A^G\hookrightarrow A$ splits. Let $Y=V(\mathrm{rad}(f_1,\cdots,f_m))$ where $f_i\in (A^0)^G$, then $q^{-1}(Y)=V(\mathrm{rad}(q^*(f_1),\cdots,q^*(f_m)))$, therefore the natural map $H^j_{Y}(X\sslash G)\to H^j_{q^{-1}(Y)}(X)$ is induced from the injective map between complexes \eqref{local cohomology complex}:
\begin{align*}
    \underset{\substack{\longrightarrow\\ t}}{\lim} \:K^{\bullet}(\mathbf f^t;(A^0)^G)\otimes_{(A^0)^G} A^G\longrightarrow \underset{\substack{\longrightarrow\\ t}}{\lim} \:K^{\bullet}(\mathbf f^t;(A^0)^G)\otimes_{(A^0)^G} A.
\end{align*}
The above map between complexes induces an injective map between cohomologies because $A^G\hookrightarrow A$ splits. This finishes the proof.
\end{proof}

\bigskip

\begin{lem}\label{lem: property S_i}
Let $A=A^0\oplus A^1$ be a Noetherian super-commutative algebra with even (resp. odd) component $A^0$ (resp. $A^1$), define $\mathfrak n_A$ to be the ideal generated by $A^1$, and define $A^{\mathrm{bos}}:=A/\mathfrak n_A$. Suppose that $A$ satisfies the following condition:
\begin{itemize}
    \item[$(*)$] $\mathfrak n_A^i/\mathfrak n_A^{i+1}$ is a locally free $A^{\mathrm{bos}}$-module of finite rank,
\end{itemize}
then $\Spec A$ has property $(S_i)$ if and only if $\Spec A^{\mathrm{bos}}$ has property $(S_i)$.
\end{lem}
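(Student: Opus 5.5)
We compare local cohomology on $X=\Spec A$ and on $X^{\mathrm{bos}}=\Spec A^{\mathrm{bos}}$ using the nilpotent filtration $A\supset\mathfrak n_A\supset\mathfrak n_A^2\supset\cdots\supset\mathfrak n_A^k=0$, which is finite because $A$ is Noetherian, as in the proof of Lemma \ref{lem: regular sequence}. Both spaces have the same underlying topological space, so irreducible closed subsets $Z$ and their codimensions are identical on the two sides. It therefore suffices to show, for each such $Z$, that the minimal $j$ with $\mathcal H^j_Z(\mathcal O_X)\neq 0$ equals the minimal $j$ with $\mathcal H^j_Z(\mathcal O_{X^{\mathrm{bos}}})\neq 0$.

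\textbf{Step 1 (local cohomology of the graded pieces).} Work on an affine open and choose $f_1,\dots,f_m\in A^0$ with $Z=V(\mathrm{rad}(f_1,\dots,f_m))$. Using the presentation \eqref{local cohomology complex}, local cohomology of an $A$-module $M$ supported on $Z$ is computed by $K^\bullet(\mathbf f^\infty;A^0)\otimes_{A^0}M$. This is the \v{C}ech complex, which is flat over $A^0$, so it is exact in $M$. Apply this to $M=\mathfrak n_A^i/\mathfrak n_A^{i+1}$. By $(*)$ this is a locally free $A^{\mathrm{bos}}$-module of finite rank, so locally it is $(A^{\mathrm{bos}})^{\oplus r_i}$. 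Since the \v{C}ech complex only depends on the images $\bar f_j\in A^{\mathrm{bos}}$, we get $\mathcal H^j_Z(\mathfrak n_A^i/\mathfrak n_A^{i+1})\cong \mathcal H^j_Z(\mathcal O_{X^{\mathrm{bos}}})\otimes \mathcal E_i$ with $\mathcal E_i$ locally free. This vanishes if and only if $\mathcal H^j_Z(\mathcal O_{X^{\mathrm{bos}}})$ vanishes or $\mathcal E_i=0$, and $\mathcal E_0=\mathcal O_{X^{\mathrm{bos}}}$ is never zero.

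\textbf{Step 2 (the ``if'' direction).} Suppose $\mathcal H^j_Z(\mathcal O_{X^{\mathrm{bos}}})=0$ for all $j<d:=\min(i,\codim Z)$. Then every graded piece has vanishing $\mathcal H^j_Z$ for $j<d$. The long exact sequences attached to $0\to\mathfrak n_A^{i+1}\to\mathfrak n_A^i\to\mathfrak n_A^i/\mathfrak n_A^{i+1}\to0$, used in a descending induction on $i$, give $\mathcal H^j_Z(\mathfrak n_A^i)=0$ for $j<d$ and all $i$. Taking $i=0$ gives $\mathcal H^j_Z(\mathcal O_X)=0$ for $j<d$.

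\textbf{Step 3 (the ``only if'' direction) — the main obstacle.} Here we cannot simply run the filtration in reverse, since connecting maps could cancel classes. We mimic the ``only if'' argument of Lemma \ref{lem: regular sequence} and use the socle $\mathfrak n_A^{k-1}\neq0$, which is an $A$-submodule of $A$ on which $A$ acts through $A^{\mathrm{bos}}$. Let $j_0$ be the smallest $j$ with $\mathcal H^j_Z(\mathcal O_{X^{\mathrm{bos}}})\neq0$, and suppose $j_0<d$. By Step 1, $j_0$ is also the smallest degree in which $\mathcal H^j_Z(\mathfrak n_A^i/\mathfrak n_A^{i+1})$ can be nonzero for any $i$. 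Inducting up the filtration with the same long exact sequences, every $\mathfrak n_A^i$ and every quotient $A/\mathfrak n_A^i$ has $\mathcal H^j_Z=0$ for $j<j_0$. Now take the sequence $0\to\mathfrak n_A^{k-1}\to A\to A/\mathfrak n_A^{k-1}\to0$. Since $\mathcal H^{j_0-1}_Z(A/\mathfrak n_A^{k-1})=0$, the map $\mathcal H^{j_0}_Z(\mathfrak n_A^{k-1})\to\mathcal H^{j_0}_Z(\mathcal O_X)$ is injective. Its source is $\mathcal H^{j_0}_Z(\mathcal O_{X^{\mathrm{bos}}})\otimes\mathcal E_{k-1}\neq0$, because $\mathcal E_{k-1}$ is locally free and nonzero. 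Hence $\mathcal H^{j_0}_Z(\mathcal O_X)\neq0$ with $j_0<d$, contradicting $(S_i)$ for $X$. So the depths agree and the equivalence follows. The delicate points are the support of $\mathcal E_{k-1}$, handled by localizing at the generic point of $Z$ (where the rank is positive if nonzero, otherwise replace $k$ by the local nilpotency order), and the degree-zero case $j_0=0$, where the injectivity follows directly from left exactness.
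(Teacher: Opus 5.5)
Your proof is correct and follows essentially the same route as the paper. Both arguments compare local cohomology through the filtration by powers of $\mathfrak n_A$, obtain the implication from $X^{\mathrm{bos}}$ to $X$ via the associated graded, and obtain the converse from the exact sequence $0\to\mathfrak n_A^{k-1}\to A\to A/\mathfrak n_A^{k-1}\to 0$ for the top nonzero power. Your handling of the converse through the minimal nonvanishing degree $j_0$ and the injection $\mathcal H^{j_0}_Z(\mathfrak n_A^{k-1})\hookrightarrow\mathcal H^{j_0}_Z(\mathcal O_X)$ is a cleaner version of the paper's degree-by-degree induction on the nilpotency order, and your remark on the rank of $\mathfrak n_A^{k-1}$ along the connected set $Z$ makes explicit a point the paper leaves implicit.
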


\medskip 

\begin{proof}
Let $X=\Spec A$ and $X^{\mathrm{bos}}=\Spec A^{\mathrm{bos}}$, then it is enough to show that $$\depth(\mathcal O_{X,Z})=\depth(\mathcal O_{X^{\mathrm{bos}},Z{^{\mathrm{bos}}}})$$ for every irreducible closed subset $Z$ of $X$ {and $Z^{\mathrm{bos}}$ irreducible closed subset of $X^{\mathrm{bos}}$}. Equivalently, it suffices to show that for every fixed $j$, $\mathcal H_Z^k(\mathcal O_X)=0$ for all $k<j$ if and only if $\mathcal H_Z^k(\mathcal O_{X^{\mathrm{bos}}})=0$ for all $k<j$. We prove this assertion by induction on $r:=$maximal integer such that $\mathfrak n_A^r\neq 0$. The initial case $r=0$ is trivial since $A=A^{\mathrm{bos}}$ in this case, so let us assume $r>0$.

Since $\mathfrak n_A^i/\mathfrak n_A^{i+1}$ is a locally free $A^{\mathrm{bos}}$-module of finite rank, then $\mathcal H_Z^k(\mathcal O_{X^{\mathrm{bos}}})=0$ if and only if $\mathcal H_Z^k(\mathfrak n_A^i/\mathfrak n_A^{i+1})=0$. Thus $\mathcal H_Z^k(\mathcal O_{X^{\mathrm{bos}}})=0$ if and only if $\mathcal H_Z^k(\mathrm{gr}\: \mathcal O_X)=0$, where $$\mathrm{gr}\: \mathcal O_X=\bigoplus_{i\ge 0}\mathfrak n_A^i/\mathfrak n_A^{i+1}~.$$ Now if $\mathcal H_Z^k(\mathrm{gr}\: \mathcal O_X)=0$ for all $k<j$, then $\mathcal H_Z^k(\mathcal O_X)=0$ for all $k<j$, because the former is an associated graded of the latter. 

Conversely, if $\mathcal H_Z^k(\mathcal O_X)=0$ for all $k<j$, then $\mathcal H_Z^k(A/\mathfrak n_A^r)=\mathcal H_Z^{k-1}(\mathfrak n_A^r)$ for all $k<j$ from long exact sequences in cohomology. Since $\mathcal H_Z^{k}(\mathfrak n_A^r)=0$ for $k<0$, we have $\mathcal H_Z^0(A/\mathfrak n_A^r)=0$. Then it follows from induction on $r$ that $\mathcal H_Z^0(\mathcal O_{X^{\mathrm{bos}}})=0$. Thus we obtain that $\mathcal H_Z^{0}(\mathfrak n_A^r)=0$, whence it implies that $\mathcal H_Z^1(A/\mathfrak n_A^r)=0$. It follows from induction on $r$ that $\mathcal H_Z^1(\mathcal O_{X^{\mathrm{bos}}})=0$. Ultimately, we conclude that $\mathcal H_Z^k(\mathcal O_{X^{\mathrm{bos}}})=0$ for all $k<j$. This completes the proof.
\end{proof}

\bigskip

\begin{lem}\label{lem: restrict to open is injective}
Let $A$ be a Noetherian super-commutative algebra with the same assumption as the Lemma \ref{lem: property S_i}, and let $U$ be an open and dense subset in $X=\Spec A$. If $X$ has property $(S_1)$, then the natural map $A\to \mathcal O_X(U)$ is injective. If $X$ has property $(S_2)$, and $\codim_X(Z)\ge 2$, then the natural map $A\to \mathcal O_X(U)$ is an isomorphism.
\end{lem}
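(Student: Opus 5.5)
The plan is to use the standard excision long exact sequence in local cohomology. Put $Z:=X\setminus U$, with its reduced structure. Since $U$ is dense, every irreducible component of $Z$ has codimension $\ge 1$ in $X$. The topological space of $X$ is that of $X^{\mathrm{bos}}$, and \cite[Section 1]{hartshorne2006local} is valid for any sheaf of abelian groups on a topological space. Hence there is an exact sequence
\begin{align*}
0\to \Gamma_Z(X,\mathcal O_X)\to A\to \mathcal O_X(U)\to H^1_Z(X,\mathcal O_X)\to H^1(X,\mathcal O_X)~.
\end{align*}
For the super-scheme $X=\Spec A$, the sheaf $\mathcal O_X$ is quasi-coherent as an $\mathcal O_{\Spec A^0}$-module, because $A$ is a finite $A^0$-module ($A$ Noetherian). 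The underlying space of $X$ is that of $\Spec A^0$, so $H^1(X,\mathcal O_X)=0$. Therefore injectivity of $A\to\mathcal O_X(U)$ is equivalent to $H^0_Z(X,\mathcal O_X)=0$, and surjectivity to $H^1_Z(X,\mathcal O_X)=0$. On the affine $X$, both groups are the global sections of the quasi-coherent sheaves $\mathcal H^j_Z(\mathcal O_X)$, computed by the Koszul model \eqref{local cohomology complex}. So it suffices to show that these sheaves vanish.

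For the vanishing I would argue by contradiction via supports. Suppose $\mathcal H^j_Z(\mathcal O_X)\neq 0$ for some $j\in\{0,1\}$, and take $j$ minimal. This sheaf is quasi-coherent and supported in $Z$. Choose a generic point $\xi$ of an irreducible component $Z'$ of its support, so that $Z'\subset Z$ is an irreducible closed subset. Localizing at $\xi$ is compatible with the Koszul description, since localization is exact and commutes with direct limits. It gives $\mathcal H^j_{Z'}(\mathcal O_X)_\xi=\mathcal H^j_Z(\mathcal O_X)_\xi\neq 0$, and hence $\depth(\mathcal O_{X,Z'})\le j$. Now apply the hypotheses:
\begin{itemize}
\item[(i)] Under $(S_1)$ we have $\depth(\mathcal O_{X,Z'})\ge\min(1,\codim Z')=1$, which excludes $j=0$. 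This proves the injectivity statement.
\item[(ii)] Under $(S_2)$ with $\codim_X Z\ge 2$, every $Z'\subset Z$ has codimension $\ge 2$, so $\depth(\mathcal O_{X,Z'})\ge 2$. This excludes both $j=0$ and $j=1$, and hence gives the isomorphism.
\end{itemize}
If it is more convenient, one can instead first reduce to $A^{\mathrm{bos}}$ using Lemma \ref{lem: property S_i}, together with the $\mathfrak n_A$-adic filtration of $A$ whose graded pieces are locally free over $A^{\mathrm{bos}}$ by $(*)$. Vanishing of $\mathcal H^j_Z$ on the graded pieces then implies vanishing on $A$ by a finite induction on the length of the filtration, exactly as in that proof.

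The main obstacle will be to match the paper's definition of depth, namely the first nonvanishing local cohomology sheaf $\mathcal H^j_Z(\mathcal O_X)$ attached to an irreducible closed $Z$, with the pointwise statement needed at the generic point $\xi$. I will interpret $\depth(\mathcal O_{X,Z'})$ as the depth of the localization of $A$ at the generic point of $Z'$. I will then check, using the Koszul complex over $A^0$, that $\mathcal H^j_{Z'}(\mathcal O_X)_\xi$ is exactly the local cohomology of that local ring at its maximal ideal. This makes the contradiction in the previous paragraph legitimate. Everything else is the classical Hartogs-type argument and goes through verbatim.
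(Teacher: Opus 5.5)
Your proof follows the paper's: the excision sequence $0\to H^0_Z(\mathcal O_X)\to A\to\mathcal O_X(U)\to H^1_Z(\mathcal O_X)$, plus vanishing of $\mathcal H^0_Z$ under $(S_1)$, and of $\mathcal H^0_Z,\mathcal H^1_Z$ under $(S_2)$ with $\codim_X Z\ge 2$. The paper simply reads this vanishing off its definition of depth. That definition is stated only for irreducible closed subsets, whereas $Z=X\setminus U$ may be reducible, so your reduction to an irreducible $Z'$ addresses a point the paper passes over. However, one step of that reduction is not justified as you wrote it.

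Take $I\subset A^0$ with $V(I)=Z$, and let $\mathfrak p$ be the prime of $\xi$. Localizing the Koszul model at $\xi$ gives $\mathcal H^j_Z(\mathcal O_X)_\xi=H^j_{IA^0_{\mathfrak p}}(A_{\mathfrak p})$ and $\mathcal H^j_{Z'}(\mathcal O_X)_\xi=H^j_{\mathfrak pA^0_{\mathfrak p}}(A_{\mathfrak p})$. These are local cohomology functors for two different ideals, and they agree only when $\sqrt{IA^0_{\mathfrak p}}=\mathfrak pA^0_{\mathfrak p}$, i.e.\ when $\xi$ is a generic point of a component of $Z$. Exactness of localization therefore does not give $\mathcal H^j_{Z'}(\mathcal O_X)_\xi=\mathcal H^j_Z(\mathcal O_X)_\xi$. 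A minimal point of $\mathrm{Supp}\,\mathcal H^j_Z(\mathcal O_X)$ need not be generic in $Z$. You also cannot simply choose $\xi$ generic in $Z$, since the stalk may vanish there. Your announced check of the ``main obstacle'' (that $\mathcal H^j_{Z'}(\mathcal O_X)_\xi$ is the local cohomology of $A_{\mathfrak p}$ at its maximal ideal) is correct, but it does not touch this issue.

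The identification does hold under your two minimality assumptions. Put $\mathfrak m=\mathfrak pA^0_{\mathfrak p}$; then $\Gamma_{\mathfrak m}=\Gamma_{\mathfrak m}\circ\Gamma_I$, which gives the Grothendieck spectral sequence $H^p_{\mathfrak m}(H^q_I(A_{\mathfrak p}))\Rightarrow H^{p+q}_{\mathfrak m}(A_{\mathfrak p})$. Its rows $q<j$ vanish by minimality of $j$. By minimality of $\xi$, $H^j_I(A_{\mathfrak p})$ is supported at $\mathfrak m$, hence is $\mathfrak m$-power torsion and $\Gamma_{\mathfrak m}$-acyclic. So $H^j_{\mathfrak m}(A_{\mathfrak p})\cong H^j_I(A_{\mathfrak p})\neq 0$. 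More simply, you can quote the standard fact that for the finite $A^0$-module $A$ one has $\min\{j: H^j_I(A)\neq 0\}=\operatorname{grade}(I,A)=\min_{\mathfrak q\supseteq I}\depth A_{\mathfrak q}$, which yields both vanishing statements at once. With this repair your argument is complete, and the detour through $A^{\mathrm{bos}}$ is unnecessary.
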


\begin{proof}
Let $Z=X\setminus U$, then there is a long exact sequence
\begin{align*}
    0\longrightarrow H^0_Z(\mathcal O_X) \longrightarrow H^0(\mathcal O_X)\longrightarrow H^0_U(\mathcal O_X)\longrightarrow H^1_Z(\mathcal O_X)\cdots
\end{align*}
Note that $H^0(\mathcal O_X)=A$ and $H^0_U(\mathcal O_X)=\mathcal O_X(U)$. If $X$ has property $(S_1)$, then $H^0_Z(\mathcal O_X)$ vanishes and $\codim_X(Z)\ge 1$. In this case $A\to \mathcal O_X(U)$ is injective. If $X$ has property $(S_2)$, and $\codim_X(Z)\ge 2$, then both $H^0_Z(\mathcal O_X)$ and $H^1_Z(\mathcal O_X)$ vanish. In this case $A\to \mathcal O_X(U)$ is an isomorphism.
\end{proof}

\subsubsection{Main results}

\medskip

\begin{theorembox}
\begin{thm}\label{thm:semiclassical-injectivity}
If $\mu$ is flat and $\mu^{-1}(0)$ is reduced and irreducible and has rational singularities (i.e.\  the property $(P_1)$ in Proposition \ref{prop: various jet flatness}), then the natural maps 
\begin{align*}
    \mathbb C[J_n\mu_{\mathsf s}^{-1}(0)]^{J_n\GL(\mathbf v)}\longrightarrow \Gamma(J_n\mathcal M(\mathbf v,\mathbf w|\mathbf u),\mathcal O_{J_n\mathcal M(\mathbf v,\mathbf w|\mathbf u)})~,
\end{align*}
are injective for all $n\in \mathbb Z_{\ge 0}$. If $\widetilde\mu$ is flat, and $\widetilde\mu^{-1}(0)$ is reduced and irreducible and has rational singularities (i.e.\  the property $(\widetilde P_1)$ in Proposition \ref{prop: various jet flatness}), then the natural maps 
\begin{align*}
    \mathbb C[J_n\widetilde\mu_{\mathsf s}^{-1}(0)]^{J_n\GL(\mathbf v)}\longrightarrow \Gamma(J_n\widetilde{\mathcal M}(\mathbf v,\mathbf w|\mathbf u),\mathcal O_{J_n\widetilde{\mathcal M}(\mathbf v,\mathbf w|\mathbf u)})~,
\end{align*}
are injective for all $n\in \mathbb Z_{\ge 0}$.
\end{thm}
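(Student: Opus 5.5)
The plan is to realize the map as restriction of invariant functions to an open dense subset, and then to apply Lemma \ref{lem: restrict to open is injective} together with the $(S_1)$ machinery of Lemmas \ref{lem: quotient has (Si)} and \ref{lem: property S_i}. We treat the extended case; the non-extended one is identical with $\mu$ in place of $\widetilde\mu$. Set $X_n:=J_n\widetilde\mu_{\mathsf s}^{-1}(0)=(J_n\widetilde\mu_{\mathsf s})^{-1}(0)$, an affine super-scheme with an action of the group $J_n\GL(\mathbf v)$. This group is not reductive, but it is the semidirect product of $\GL(\mathbf v)$ with a unipotent group. The stable locus $X_n^{\mathrm{st}}:=\varphi_n^{-1}(\widetilde\mu_{\mathsf s}^{-1}(0)^{\mathrm{st}})$ is open. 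Since $\GL(\mathbf v)$ acts freely on the stable locus and $\widetilde\mu_{\mathsf s}$ is smooth there, $X_n^{\mathrm{st}}\to J_n\widetilde{\mathcal M}(\mathbf v,\mathbf w|\mathbf u)$ is a principal $J_n\GL(\mathbf v)$-bundle. Hence $\Gamma(J_n\widetilde{\mathcal M},\mathcal O)=\mathcal O(X_n^{\mathrm{st}})^{J_n\GL(\mathbf v)}$, and the map in question is the restriction $\mathbb C[X_n]^{J_n\GL(\mathbf v)}\to \mathcal O(X_n^{\mathrm{st}})^{J_n\GL(\mathbf v)}$. Taking invariants is left exact, so it suffices to show that $\mathbb C[X_n]\to\mathcal O(X_n^{\mathrm{st}})$ is injective. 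This avoids any need for reductivity of $J_n\GL(\mathbf v)$.

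By Lemma \ref{lem: restrict to open is injective}, this injectivity follows from two facts: $X_n$ has property $(S_1)$, and $X_n^{\mathrm{st}}$ is dense in $X_n$.

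\textbf{Property $(S_1)$.} By $(\widetilde P_1)$ and Proposition \ref{prop: various jet flatness}, $\widetilde\mu$ is jet-flat. Proposition \ref{prop: J_n-flatness of super moment maps} then makes $\widetilde\mu_{\mathsf s}$ jet-flat, so the components of $J_n\widetilde\mu_{\mathsf s}^*$ form a regular sequence in the function ring of $J_n\mathfrak R$. That function ring is a polynomial ring tensored with an exterior algebra, so it satisfies $(*)$. Lemma \ref{lem: regular sequence} then shows that $\mathbb C[X_n]$ still satisfies $(*)$, and that its bosonic reduction is $\mathbb C[J_n\widetilde\mu^{-1}(0)]$, a complete intersection. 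By Lemma \ref{lem: property S_i}, it suffices to check $(S_1)$ for $J_n\widetilde\mu^{-1}(0)$. A complete intersection is Cohen--Macaulay, so it is in fact $(S_k)$ for all $k$.

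\textbf{Density of $X_n^{\mathrm{st}}$.} This is the main obstacle. Since the topology of a super-scheme is that of its bosonic reduction, we need $J_n\widetilde\mu^{-1}(0)^{\mathrm{st}}$ to be dense in $J_n\widetilde\mu^{-1}(0)$. By Mustaţă's theorem \cite{mustata2001jet}, applied as in the proof of Proposition \ref{prop: various jet flatness}, rational singularities of the lci $\widetilde\mu^{-1}(0)$ give that $J_n\widetilde\mu^{-1}(0)$ is irreducible. Moreover, $\varphi_n^{-1}$ of the singular locus has dimension strictly less than $(n+1)\dim\widetilde\mu^{-1}(0)$. Hence $\varphi_n^{-1}(\widetilde\mu^{-1}(0)^{\mathrm{sm}})$ is a dense open subset. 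The stable locus is a nonempty open subset of the irreducible variety $\widetilde\mu^{-1}(0)$; nonemptiness follows from Proposition \ref{prop: stable locus}, because flatness of $\widetilde\mu$ gives surjectivity. Over the smooth locus, $\varphi_n$ is a smooth fibration (affine-space bundle). Therefore $\varphi_n^{-1}(\widetilde\mu^{-1}(0)^{\mathrm{st}}\cap\widetilde\mu^{-1}(0)^{\mathrm{sm}})$ is a nonempty open subset of the irreducible space $J_n\widetilde\mu^{-1}(0)$, hence dense.

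Combining the two facts, Lemma \ref{lem: restrict to open is injective} gives injectivity of $\mathbb C[X_n]\to\mathcal O(X_n^{\mathrm{st}})$, and left exactness of invariants concludes the proof. The one point to double-check is the identification $\Gamma(J_n\widetilde{\mathcal M},\mathcal O)=\mathcal O(X_n^{\mathrm{st}})^{J_n\GL(\mathbf v)}$. It rests on the fact that the jet functor preserves principal bundles for smooth group schemes: $J_n$ of a principal $\GL(\mathbf v)$-bundle is a principal $J_n\GL(\mathbf v)$-bundle, including in the super setting.
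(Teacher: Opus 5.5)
Your argument is the paper's: you get $(S_1)$ for $J_n\widetilde\mu^{-1}(0)$, transfer it to $J_n\widetilde\mu_{\mathsf s}^{-1}(0)$ via Lemmas \ref{lem: regular sequence} and \ref{lem: property S_i}, apply Lemma \ref{lem: restrict to open is injective} to the stable locus, and take $J_n\mathrm{GL}(\mathbf v)$-invariants. There are two small differences. You obtain $(S_1)$ from the complete-intersection (hence Cohen--Macaulay) structure rather than from reducedness. You also make explicit two things the paper leaves implicit: the density of the stable locus and the principal-bundle identification of $\Gamma(J_n\widetilde{\mathcal M},\mathcal O)$.

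One justification needs correcting. Flatness of $\widetilde\mu$ does not give nonemptiness of the stable locus through Proposition \ref{prop: stable locus}, because condition 5 there is surjectivity of $\mu$, not of $\widetilde\mu$.

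A counterexample is the Jordan quiver with $\mathbf v=1$, $\mathbf w=0$. Here $\widetilde\mu(x,y,z)=z$ is smooth with fibre $\widetilde\mu^{-1}(0)\cong\mathbb C^2$, so $(\widetilde P_1)$ holds. But $S=0$ destabilizes every point, so $\widetilde{\mathcal M}$ is empty, and the map in the statement sends the nonzero invariants to $0$.

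So in the extended case, nonemptiness must come from the paper's standing assumption, stated right after Proposition \ref{prop: stable locus}, that its equivalent conditions hold. In the non-extended case your reasoning is fine, since flatness of $\mu$ does force $\mu$ to be surjective.
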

\end{theorembox}

\medskip

\begin{proof}
If the quiver $(Q,\mathbf v,\mathbf w)$ has the property $(P_1)$ in Proposition \ref{prop: various jet flatness}, then $J_n\mu^{-1}(0)$ is reduced and irreducible, in particular it has property $(S_1)$. According to Proposition \ref{prop: J_n-flatness of super moment maps}, $J_n\mu_{\mathsf s}$ is also flat. 

Applying Lemma \ref{lem: regular sequence} to $A=\mathbb C[T^*\mathrm{Rep}(\mathbf v,\mathbf w|\mathbf u)]$ and a basis $\{a_i\}_{1\le i\le \dim J_n\gl(\mathbf v)}$ of $J_n\gl(\mathbf v)$, we see that the super-commutative algebra $ \mathbb C[J_n\mu_{\mathsf s}^{-1}(0)]$ satisfies the assumption in Lemma \ref{lem: property S_i}, and thus $J_n\mu_{\mathsf s}^{-1}(0)$ also has property $(S_1)$. 

Applying Lemma \ref{lem: restrict to open is injective} to $X=J_n\mu_{\mathsf s}^{-1}(0)$ and $U=J_n\mu_{\mathsf s}^{-1}(0)^{\mathrm{st}}$, we see that the natural map $ \mathbb C[J_n\mu_{\mathsf s}^{-1}(0)]\to \mathcal O_{J_n\mu_{\mathsf s}^{-1}(0)}(J_n\mu_{\mathsf s}^{-1}(0)^{\mathrm{st}})$ is injective. Taking $J_n\GL(\mathbf v)$-invariant, we see that the natural map 
\begin{align*}
    \mathbb C[J_n\mu_{\mathsf s}^{-1}(0)]^{J_n\GL(\mathbf v)}\longrightarrow \Gamma(J_n\mathcal M(\mathbf v,\mathbf w|\mathbf u),\mathcal O_{J_n\mathcal M(\mathbf v,\mathbf w|\mathbf u)}),
\end{align*}
is injective. This proves the first statement. The second statement follows from similar argument, for which we omit the details.
\end{proof}

\begin{theorembox}
\begin{thm}\label{thm:semiclassical-injectivity2}
If $\mu^{-1}(0)^{\mathrm{reg}}$ is nonempty, then $\widetilde{\mathcal M}^0(\mathbf v,\mathbf w|\mathbf u)$ has property $(S_2)$. Moreover, the natural map
\begin{align*}
    \mathbb C[\widetilde{\mathcal M}^0(\mathbf v,\mathbf w|\mathbf u)]\longrightarrow \Gamma(\widetilde{\mathcal M}(\mathbf v,\mathbf w|\mathbf u),\mathcal O_{\widetilde{\mathcal M}(\mathbf v,\mathbf w|\mathbf u)})
\end{align*}
is an isomorphism.
\end{thm}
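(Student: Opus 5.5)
The plan is to mirror the proof of Theorem \ref{thm:semiclassical-injectivity}, with $(S_1)$ upgraded to $(S_2)$ and the dense open set replaced by one with complement of codimension $\ge 2$. We work on $X=\widetilde\mu_{\mathsf s}^{-1}(0)=\mu_{\mathsf s}^{-1}(\mathcal Z)$ and use Lemma \ref{lem: quotient has (Si)} with $i=2$ to pass to the quotient $X\sslash\GL(\mathbf v)=\widetilde{\mathcal M}^0(\mathbf v,\mathbf w|\mathbf u)$. The natural open set is $U=\widetilde{\mathcal M}^0(\mathbf v,\mathbf w|\mathbf u)^{\mathrm{reg}}$, the image of $\mu_{\mathsf s}^{-1}(\mathcal Z)^{\mathrm{reg}}$.

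The steps are as follows.

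\emph{Step 1: $(S_2)$ for $X$.} Since $\mu^{-1}(0)^{\mathrm{reg}}\neq\emptyset$, Proposition \ref{prop: simple locus} gives that $\mu$ is flat, so $\widetilde\mu$ is flat (Remark \ref{rmk: mu flat implies tilde mu flat}). By Proposition \ref{prop: flat moment map 2}, $\widetilde\mu^{-1}(0)$ is reduced and irreducible, and it is a complete intersection, hence Cohen–Macaulay and in particular $(S_2)$. Applying Lemma \ref{lem: regular sequence} to $A=\mathbb C[\mathfrak R]$ and a basis of $\mathfrak{gl}(\mathbf v)$ (this is the $J_0$ case of Proposition \ref{prop: J_n-flatness of super moment maps}), the quotient $\mathbb C[X]$ satisfies $(*)$ with bosonic reduction $\mathbb C[\widetilde\mu^{-1}(0)]$. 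Lemma \ref{lem: property S_i} then gives $(S_2)$ for $X$.

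\emph{Step 2: $(S_2)$ for $U$.} On the regular locus $\GL(\mathbf v)$ acts freely with closed orbits, and $\widetilde\mu_{\mathsf s}$ is smooth there. Hence $U$ is a smooth super-scheme: a geometric quotient by a free action, with a slice argument via Luna's theorem as needed. So $U$ is $(S_2)$.

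\emph{Step 3: the codimension estimate.} We must show that $q^{-1}(\widetilde{\mathcal M}^0\setminus U)=\mu^{-1}(\mathcal Z)\setminus\mu^{-1}(\mathcal Z)^{\mathrm{reg}}$ has codimension $\ge 2$ in $\mu^{-1}(\mathcal Z)$. This is a purely bosonic statement and is the main obstacle. I would stratify by representation type, using the $\overline{Q}^+$ reformulation of Remark \ref{rmk: CB trick}. The non-simple locus over a fibre $\mu^{-1}(\lambda)$ is a union over decompositions $\mathbf v=\sum\mathbf v^{(l)}$ with $k\ge1$. By the Crawley-Boevey dimension count (as in the proof of Proposition \ref{prop: flat extended moment map}, \cite[Theorem 4.4]{crawley2001geometry}), each such stratum has dimension
\[
\mathbf v\cdot(\mathbf v+\mathbf w-\tfrac12\mathsf C\mathbf v)+\mathbf v^{(0)}\cdot(\mathbf w-\tfrac12\mathsf C\mathbf v^{(0)})+\sum_{l\ge1}\bigl(1-\tfrac12\mathbf v^{(l)}\cdot\mathsf C\mathbf v^{(l)}\bigr)+d,
\]
where $d=\dim\bigcap_l(\mathbf v^{(l)})^\perp\le|Q_0|-1$. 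Comparing with $\dim\mu^{-1}(\mathcal Z)=\mathbf v\cdot(\mathbf v+2\mathbf w-\mathsf C\mathbf v)+|Q_0|$, the codimension is at least half the gap in the strict inequality \eqref{key strict inequality}, plus $|Q_0|-d\ge1$. That gap is a positive integer by Proposition \ref{prop: simple locus criterion}. I expect a parity argument to show the gap is even, so that half of it is $\ge1$: both sides of \eqref{key strict inequality} are congruent mod 2, since $\mathbf u\cdot\mathsf C\mathbf u$ is even. This yields total codimension $\ge2$. The delicate point is being careful that Crawley-Boevey's formula bounds the non-simple strata and not just the whole fibre. If this fails, fall back on the normality argument of \cite[Theorem 7.1, §7]{crawley2003normality}, which proves exactly this codimension-2 statement for fibres.

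\emph{Step 4: the isomorphism.} Lemma \ref{lem: quotient has (Si)} gives $(S_2)$ for $\widetilde{\mathcal M}^0(\mathbf v,\mathbf w|\mathbf u)$. The image of the non-regular locus has codimension $\ge 2$ there as well, since the quotient is dimension-non-increasing and the regular part has full dimension. By Lemma \ref{lem: restrict to open is injective}, $\mathbb C[\widetilde{\mathcal M}^0]\to\mathcal O(U)$ is an isomorphism. Nakajima's isomorphism $\mathcal M^{\mathrm{reg}}\cong\mathcal M^{0,\mathrm{reg}}$, extended fibrewise over $\mathcal Z$ and to the super setting, identifies $U$ with an open subset $\pi^{-1}(U)$ of $\widetilde{\mathcal M}$. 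The composite
\[
\mathbb C[\widetilde{\mathcal M}^0]\to\Gamma(\widetilde{\mathcal M},\mathcal O)\to\mathcal O(\pi^{-1}U)\cong\mathcal O(U)
\]
is therefore an isomorphism, so the first map is injective. The second map is also injective, because $\widetilde{\mathcal M}$ is smooth with bosonic part irreducible and $\pi^{-1}U$ is dense. Hence the first map is an isomorphism.
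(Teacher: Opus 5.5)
Your Steps 1 and 2 match the paper. So does the end of Step 4: restricting to the regular locus, identifying it with $\pi^{-1}(U)$, and using injectivity of restriction on $\widetilde{\mathcal M}$.

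\textbf{Step 3 (different route, valid).} The paper argues softly. The set of $\lambda\in\mathcal Z$ with $\mu^{-1}(\lambda)^{\mathrm{reg}}\neq\emptyset$ is open, $\mathbb C^\times$-stable and contains $0$, so every fibre contains simple points. Off the finite union of hyperplanes $(\mathbf v')^\perp$, $0\neq\mathbf v'\le\mathbf v$, the fibre is entirely simple. Using flatness and irreducibility of each fibre (Crawley-Boevey), this gives fibre codimension $\ge 1$ plus base codimension $\ge 1$. Your stratum count gets the same two contributions numerically: half the gap in \eqref{key strict inequality}, and $|Q_0|-d$. It relies on the stratum-wise form of Crawley-Boevey's dimension bound rather than irreducibility of every fibre. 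This is correct. The parity argument is not even needed, since an integer codimension that is at least $3/2$ is at least $2$.

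\textbf{The gap is the codimension claim in Step 4.} This claim is exactly the hypothesis of Lemma \ref{lem: restrict to open is injective}, so it carries the isomorphism. Write $Y=X\setminus X^{\mathrm{reg}}$.
\begin{itemize}
\item ``The quotient is dimension-non-increasing'' only gives $\dim q(Y)\le\dim Y\le\dim X-2$.
\item But $\dim\widetilde{\mathcal M}^0=\dim X-\dim\mathrm{GL}(\mathbf v)$.
\item To conclude, you would need every fibre of $q|_Y$ to have dimension at least $\dim\mathrm{GL}(\mathbf v)$. Nothing you say gives this: points of $Y$ have positive-dimensional stabilisers, so their orbits are smaller than $\mathrm{GL}(\mathbf v)$.
\end{itemize}

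\textbf{How to repair it.} You have to argue on the quotient itself; this is what the paper's ``similarly'' amounts to. Either of the following works.
\begin{itemize}
\item Run the same fibre-plus-base argument for $p_0:\widetilde{\mathcal M}^0\to\mathcal Z$. By Proposition \ref{prop: flat moment map 2} it is flat, with irreducible fibres $\mathcal M^0_\lambda$ of dimension $\mathbf v\cdot(2\mathbf w-\mathsf C\mathbf v)$. Each fibre contains regular points, and non-regular points occur only over the hyperplane arrangement.
\item Rerun your own count downstairs. By the \'etale slice, the type-$\tau$ stratum of $\mathcal M^0_\lambda$ has dimension $\mathbf v^{(0)}\cdot(2\mathbf w-\mathsf C\mathbf v^{(0)})+\sum_{t=1}^r(2-\beta^{(t)}\cdot\mathsf C\beta^{(t)})$. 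Apply \eqref{key strict inequality} to $\mathbf v=\mathbf v^{(0)}+\sum_t k_t\beta^{(t)}$ and use $2-\beta^{(t)}\cdot\mathsf C\beta^{(t)}\ge 0$. This makes the stratum dimension strictly smaller than $\dim\mathcal M^0_\lambda$ for every non-simple $\tau$; add the base codimension as before.
\end{itemize}
With this step repaired, the rest of your argument goes through.
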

\end{theorembox}

\begin{proof}
By Proposition \ref{prop: simple locus}, the non-emptiness of $\mu^{-1}(0)^{\mathrm{reg}}$ implies that $\mu$ is flat and thus $\mu^{-1}(\mathcal Z)$ is a locally complete intersection. In particular $\mu^{-1}(\mathcal Z)$ has property $(S_2)$, and consequently so does $\mu^{-1}_{\mathsf s}(\mathcal Z)$ by Lemma \ref{lem: property S_i}. 

Note that $\mu^{-1}_{\mathsf s}(\mathcal Z)^{\mathrm{reg}}$ is smooth and $\mu^{-1}_{\mathsf s}(\mathcal Z)^{\mathrm{reg}}\to \widetilde{\mathcal M}^0(\mathbf v,\mathbf w|\mathbf u)^{\mathrm{reg}}$ is a principal $\GL(\mathbf v)$-bundle, thus $\widetilde{\mathcal M}^0(\mathbf v,\mathbf w|\mathbf u)^{\mathrm{reg}}$ is smooth. In particular $\widetilde{\mathcal M}^0(\mathbf v,\mathbf w|\mathbf u)^{\mathrm{reg}}$ has property $(S_2)$. We claim that $\mu^{-1}(\mathcal Z)\setminus \mu^{-1}(\mathcal Z)^{\mathrm{reg}}$ has codimension at least $2$ in $\mu^{-1}(\mathcal Z)$. In fact, $\mu^{-1}(\mathcal Z)^{\mathrm{reg}}$ is invariant under the $\mathbb C^{\times}$-action that scales $T^*\mathrm{Rep}(\mathbf v,\mathbf w)$ with weight $1$, then the non-emptiness of $\mu^{-1}(0)^{\mathrm{reg}}$ implies that $\mu^{-1}(\lambda)^{\mathrm{reg}}$ is nonempty for all $\lambda\in\mathcal Z$. In addition, $\mu^{-1}(\lambda)^{\mathrm{reg}}=\mu^{-1}(\lambda)$ for $\lambda$ in an open dense subset of $\mathcal Z$. Thus $\mu^{-1}(\mathcal Z)\setminus \mu^{-1}(\mathcal Z)^{\mathrm{reg}}$ has codimension at least $2$ in $\mu^{-1}(\mathcal Z)$. 

We conclude that $\widetilde{\mathcal M}^0(\mathbf v,\mathbf w|\mathbf u)$ has property $(S_2)$ by applying the Lemma \ref{lem: quotient has (Si)} to $X=\mu_{\mathsf s}^{-1}(\mathcal Z)$ and $U=\widetilde{\mathcal M}^0(\mathbf v,\mathbf w|\mathbf u)^{\mathrm{reg}}$. 

Similarly we can show that $\widetilde{\mathcal M}^0(\mathbf v,\mathbf w|\mathbf u)\setminus \widetilde{\mathcal M}^0(\mathbf v,\mathbf w|\mathbf u)^{\mathrm{reg}}$ has codimension at least $2$ in $\widetilde{\mathcal M}^0(\mathbf v,\mathbf w|\mathbf u)$, then Lemma \ref{lem: restrict to open is injective} implies that the restriction map
\begin{align*}
    \mathbb C[\widetilde{\mathcal M}^0(\mathbf v,\mathbf w|\mathbf u)]\longrightarrow \Gamma(\widetilde{\mathcal M}^0(\mathbf v,\mathbf w|\mathbf u)^{\mathrm{reg}},\mathcal O_{\widetilde{\mathcal M}^0(\mathbf v,\mathbf w|\mathbf u)})
\end{align*}
is an isomorphism. The projection $p:\widetilde{\mathcal M}(\mathbf v,\mathbf w|\mathbf u)\to \widetilde{\mathcal M}^0(\mathbf v,\mathbf w|\mathbf u)$ induces an isomorphism $\widetilde{\mathcal M}^0(\mathbf v,\mathbf w|\mathbf u)^{\mathrm{reg}}\cong \widetilde{\mathcal M}(\mathbf v,\mathbf w|\mathbf u)^{\mathrm{reg}}$, thus the composition $$\mathbb C[\widetilde{\mathcal M}^0(\mathbf v,\mathbf w|\mathbf u)]\longrightarrow \Gamma(\widetilde{\mathcal M}(\mathbf v,\mathbf w|\mathbf u),\mathcal O_{\widetilde{\mathcal M}(\mathbf v,\mathbf w|\mathbf u)})\hookrightarrow \Gamma(\widetilde{\mathcal M}(\mathbf v,\mathbf w|\mathbf u)^{\mathrm{reg}},\mathcal O_{\widetilde{\mathcal M}(\mathbf v,\mathbf w|\mathbf u)})$$ is an isomorphism. This implies that $\mathbb C[\widetilde{\mathcal M}^0(\mathbf v,\mathbf w|\mathbf u)]\to\Gamma(\widetilde{\mathcal M}(\mathbf v,\mathbf w|\mathbf u),\mathcal O_{\widetilde{\mathcal M}(\mathbf v,\mathbf w|\mathbf u)})$ is an isomorphism.
\end{proof}

\subsubsection{Canonical quiver super-variety associated to a good quiver}

Throughout this subsection, we assume the following goodness condition on the quiver $Q$:
\begin{align*}
    \mathbf w^{\mathsf f}:=\mathbf w-\mathsf C\mathbf v\in \mathbb Z^{Q_0}_{\ge 0}.
\end{align*}
We say that $(Q,\mathbf v,\mathbf w|\mathbf w^{\mathsf f})$ is the canonical super quiver data associated to the quiver data $(Q,\mathbf v,\mathbf w)$.

\medskip 

\begin{theorembox}
\begin{thm}\label{thm:semiclassical-injectivity3}
Let $Q$ be a finite or affine ADE quiver (including the case $\widehat{A}_0$), and assume that $\mu^{-1}(0)^{\mathrm{reg}}$ is nonempty, then $\mathcal M^0(\mathbf v,\mathbf w|\mathbf w^{\mathsf f})$ has property $(S_2)$. Moreover, the natural map
\begin{align*}
    \mathbb C[\mathcal M^0(\mathbf v,\mathbf w|\mathbf w^{\mathsf f})]\longrightarrow \Gamma(\mathcal M(\mathbf v,\mathbf w|\mathbf w^{\mathsf f}),\mathcal O_{\mathcal M(\mathbf v,\mathbf w|\mathbf w^{\mathsf f})})
\end{align*}
is an isomorphism.
\end{thm}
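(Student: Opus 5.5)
The argument runs parallel to the proof of Theorem \ref{thm:semiclassical-injectivity2}, now over the fiber $\lambda=0$ instead of all of $\mathcal Z$.

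\textbf{Step 1: $(S_2)$ upstairs.} By Proposition \ref{prop: simple locus}, non-emptiness of $\mu^{-1}(0)^{\mathrm{reg}}$ gives that $\mu$ is flat and $\mu^{-1}(0)$ is a reduced, irreducible complete intersection. Hence $\mu^{-1}(0)$ is Cohen--Macaulay and has $(S_2)$. Applying Lemma \ref{lem: regular sequence} with $n=0$ to $A=\mathbb C[T^*\mathrm{Rep}(\mathbf v,\mathbf w|\mathbf w^{\mathsf f})]$, the algebra $\mathbb C[\mu_{\mathsf s}^{-1}(0)]$ satisfies condition $(*)$. Lemma \ref{lem: property S_i} then shows that $X:=\mu_{\mathsf s}^{-1}(0)$ has $(S_2)$.

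\textbf{Step 2: smoothness on the regular locus.} $X^{\mathrm{reg}}\to \mathcal M^0(\mathbf v,\mathbf w|\mathbf w^{\mathsf f})^{\mathrm{reg}}$ is a principal $\GL(\mathbf v)$-bundle over a smooth base, so $U:=\mathcal M^0(\mathbf v,\mathbf w|\mathbf w^{\mathsf f})^{\mathrm{reg}}$ is smooth and has $(S_2)$.

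\textbf{Step 3: codimension of the non-regular locus (main obstacle).} By Lemma \ref{lem: quotient has (Si)}, it suffices to show $\codim_{\mu^{-1}(0)}\bigl(\mu^{-1}(0)\setminus\mu^{-1}(0)^{\mathrm{reg}}\bigr)\ge 2$. In the extended case this came for free from varying $\lambda\in\mathcal Z$; on the single fiber $\lambda=0$ it does not, and this is where the ADE hypothesis and the canonical choice $\mathbf u=\mathbf w^{\mathsf f}$ must enter.

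The plan is to stratify $\mu^{-1}(0)$ by representation type $\tau$. The non-regular strata of $\mathcal M^0$ are symplectic leaves, so each has even codimension in $\mathcal M^0$. By Crawley-Boevey's dimension formula, the preimage of a stratum of type $\tau$ has dimension
\[
\dim \mathcal M^0_\tau + \dim \GL(\mathbf v) - \dim(\text{stabilizer}) + (\text{fiber contribution}).
\]
The expected result is codimension at least $2$ in $\mu^{-1}(0)$, the point being that $\mu^{-1}(0)$ is a normal complete intersection (see \cite{crawley2003normality}). The role of ADE is to make every root $\beta$ appearing in $\tau$ satisfy $\beta\cdot\mathsf C\beta\in\{0,2\}$, so the strict inequality \eqref{key strict inequality} has slack at least $2$.

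I expect the main work to be checking this for isotropic (affine, imaginary) roots. For those, $p(\beta)=1$ and the stabilizer contributions must be bounded using the strict inequality; in fact, normality of $\mu^{-1}(0)$ in the style of Crawley-Boevey already gives this codimension-$2$ bound.

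\textbf{Step 4: the isomorphism.} The same codimension estimate descends to $\mathcal M^0(\mathbf v,\mathbf w|\mathbf w^{\mathsf f})\setminus U$, because $U$ is the image of the free closed-orbit locus. Lemma \ref{lem: restrict to open is injective} then gives
\[
\mathbb C[\mathcal M^0(\mathbf v,\mathbf w|\mathbf w^{\mathsf f})]\cong\Gamma(U,\mathcal O).
\]
Finally, $\pi$ restricts to an isomorphism $\mathcal M(\mathbf v,\mathbf w|\mathbf w^{\mathsf f})^{\mathrm{reg}}\cong U$ by \cite[Proposition 3.24]{nakajima1998quiver}, with the super structure carried along. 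So the composite
\[
\mathbb C[\mathcal M^0]\to\Gamma(\mathcal M,\mathcal O)\hookrightarrow\Gamma(\mathcal M^{\mathrm{reg}},\mathcal O)
\]
is an isomorphism. Restriction to a dense open is injective because $\mathcal M(\mathbf v,\mathbf w|\mathbf w^{\mathsf f})$ is a smooth super-scheme, so the first map is an isomorphism.
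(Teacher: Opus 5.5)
Your Steps 1, 2 and 4 are fine, but Step 3 has a genuine gap. The bound you reduce to is false under the theorem's hypotheses, so Lemma~\ref{lem: quotient has (Si)} cannot be applied with $U$ equal to the regular locus.

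Take $Q=A_1$, $\mathbf v=1$, $\mathbf w=2$. The simple locus is nonempty and $\mathbf w^{\mathsf f}=0$. Here $\mu^{-1}(0)=\{\alpha_1\beta_1+\alpha_2\beta_2=0\}\subset\mathbb C^4$ is a normal $3$-dimensional quadric cone. Yet $\mu^{-1}(0)\setminus\mu^{-1}(0)^{\mathrm{reg}}=\{\alpha=0\}\cup\{\beta=0\}$ is $2$-dimensional, i.e.\ of codimension one, even though \eqref{key strict inequality} has slack $2$ here. Normality of $\mu^{-1}(0)$ controls its \emph{singular} locus (here just the origin). It says nothing about the non-simple locus, which is a union of possibly large fibres of the quotient map, such as the nilpotent cone.

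This is not an accident of the example. For loop-free $Q$, Remark~\ref{rmk: N is Lagrangian when Q has no edge loop} gives $\codim_{\mu^{-1}(0)}\mathcal N(\mathbf v,\mathbf w)=\tfrac12\dim\mathcal M^0(\mathbf v,\mathbf w)$. Via the étale slice, the preimage of any stratum whose loop-free auxiliary quiver variety is a surface is therefore a divisor in $\mu^{-1}(0)$; for instance $A_1$ with $\mathbf v=2$, $\mathbf w=4$ has such a codimension-two stratum.

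The missing idea is the paper's case split and induction.
\begin{itemize}
\item \textbf{Kleinian base case.} If $Q^+$ is extended Dynkin and $\mathbf v^+=\delta$, then $\mathbf w^{\mathsf f}=0$. There are no odd variables, and the claim is the classical normality of $\mathbb C^2/\Gamma$.
\item \textbf{Dichotomy.} Otherwise, by \cite[Lemma 8.2]{crawley2003normality}, each representation type $\tau$ satisfies either $\sum_t k_t^2<\sum_i v_i^2$ or $\dim\mu^{-1}(0)-\dim\mu^{-1}(0)_\tau\ge 2$.
\item \textbf{Induction case.} In the first case, the super étale slice (Proposition~\ref{prop: auxiliary super quiver}) presents $\mathcal M^0(\mathbf v,\mathbf w|\mathbf w^{\mathsf f})$ étale-locally as $\mathcal M^0(\mathbf v_\tau,\mathbf w_\tau|\mathbf u_\tau)\times T^*F_\tau\times\Pi(T^*E_\tau)$. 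Since $\mathbf u_{\tau,i}=\beta^{(i)}\cdot\mathbf w^{\mathsf f}=(\mathbf w_\tau-\mathsf C_\tau\mathbf v_\tau)_i$, this data is again canonical, and its simple locus is nonempty by Lemma~\ref{lem: inherit simple locus}. Induction then gives $(S_2)$ near these points. This, not a codimension count, is where the choice $\mathbf u=\mathbf w^{\mathsf f}$ matters.
\item \textbf{Excision case.} Only strata of the second kind are excised when applying Lemma~\ref{lem: quotient has (Si)} to $X=\mu_{\mathsf s}^{-1}(0)$. So the open set $U$ there must be allowed to contain non-regular points.
\end{itemize}

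Once $(S_2)$ is established this way, your Step 4 goes through. You should take the codimension-two bound for the non-regular locus directly from the symplectic-leaf stratification of $\mathcal M^0$, as you already note, rather than descending it from the false bound upstairs.
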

\end{theorembox}

\medskip 

\begin{proof}
We say that $(Q,\mathbf v,\mathbf w)$ is Kleinian if the associated unframed quiver $Q^+$ is an extended Dynkin quiver, and $\mathbf v^+=\delta$, the minimal positive imaginary root for the extended Dynkin quiver. It is easy to see that  $(Q,\mathbf v,\mathbf w)$ is Kleinian if and only if $(Q^+,\mathbf v^+)$ is nearly Kleinian in the sense of \cite[Section 8]{crawley2003normality}. In the case that $(Q,\mathbf v,\mathbf w)$ is Kleinian, we compute that $\mathbf w^{\mathsf f}=0$, then the result is well-known, see for example \cite[Section 8]{crawley2003normality}.

For the general cases, we proceed using \cite[Lemma 8.2]{crawley2003normality}, which says that if $$\tau=(1,\mathbf v^{(0)+}; k_1,\beta^{(1)}; \cdots; k_r,\beta^{(r)}) $$ is a representation type, then at least one of the following holds:
\begin{itemize}
    \item[(1)] $\sum_{t=1}^r k_t^2<\sum_{i\in Q_0}  v_i^2$.
    \item[(2)] $\dim \mu^{-1}(0)-\dim \mu^{-1}(0)_\tau\ge 2$, where $\mu^{-1}(0)_\tau$ is the locally closed subset which consists of representations of type $\tau$.
\end{itemize}
Assume that we are not in the Kleinian case. Using the étale local structure (Proposition \ref{prop: étale slice for super moment map}), for a point $x\in \mathcal M^0(\mathbf v,\mathbf w|\mathbf w^{\mathsf f})$ of representation type $\tau$, if (1) holds, we can use the induction. If (2) holds, then we apply Lemma \ref{lem: quotient has (Si)} to $X=\mu^{-1}_{\mathsf s}(0)$ and $U=\mathcal M^0(\mathbf v,\mathbf w|\mathbf w^{\mathsf f})\setminus \overline{\mathcal M^0(\mathbf v,\mathbf w|\mathbf w^{\mathsf f})_\tau}$. This completes the proof.
\end{proof}

\section{Chiralization of Quiver Representations} \label{sec:ChiralizationQuiverRep}

In this section, we construct a vertex superalgebra $\CV[Q]$ associated with the quiver data $(Q,\mathbf{v},\mathbf{w})$ and the corresponding extended, affine Nakajima quiver super-variety $\widetilde{\mathcal{M}}(\mathbf{v},\mathbf{w}|\mathbf{u})$ from Section \ref{sec:GeomQuiverVarieties}. Let us first outline the general idea, noting that some relevant vertex algebra background is collected in Appendix \ref{appendix:VA-PVA}. 

\medskip

Given a quiver super-variety $\mathcal{M}(\mathbf v,\mathbf w|\mathbf u)$, we  define an associated vertex algebra from the quadruple $(G,M_{\mathsf s},\omega_{\mathsf s},\mu_{\mathsf s})$, with $(M_{\mathsf s},\omega_{\mathsf s})$ the standard super symplectic vector space and symplectic structure defined by $T^*\mathrm{Rep}(\mathbf v,\mathbf w|\mathbf u)$, that define a super $\beta\gamma$ system $\mathcal{V}_{\beta\gamma}$, with a chiral moment map  
\begin{equation}\label{eq:momentmap-CDO}
    \mu_{\mathsf s}^{\mathrm{ch}}:V^{k}(\mathfrak{g})\to\mathcal{V}_{\beta\gamma}(M_{\mathsf s},\omega_{\mathsf s})~.
\end{equation}
$(G,\mu_{\mathsf s})$ are the reductive group and moment map discussed in Section \ref{sec:GeomQuiverVarieties}, and $V^{k}(\mathfrak{g})$ is the level $k$ affine vertex algebra associated with the corresponding Lie group $\mathfrak{g}$. Then a vertex algebra $\mathcal{V}[G,M_{\mathsf s},\omega_{\mathsf s},\mu^{\mathrm{ch}}_{\mathsf s}]$ associated to the quiver super-variety $\mathcal{M}(\mathbf v,\mathbf w|\mathbf u)$ can be naively defined by BRST cohomology with respect to a BRST charge $\mathcal Q$, from the associated vertex super-algebra $\mathcal{V}_{\beta\gamma} (M_{\mathsf s},\omega_{\mathsf s})$, modified consistently by introducing a $\mathsf b\mathsf c$-system and a Heisenberg vertex operator algebra associated to the quiver. This modification is necessary in order for the BRST charge $\mathcal Q$ to be nilpotent $\mathcal Q^2= 0$ and construct the BRST cohomology. We review this construction below.

\subsection{General construction}\label{sec:generalconstructionSVA}

We start by collecting the building blocks employed in the construction of a vertex superalgebra $\mathcal{V}^\#$ in this section. Let $\mathfrak{g}$ be a (complex) reductive Lie algebra, $G$ is the corresponding Lie group, and $(M,R)$ symplectic representations of $G$. Additionally, let $Z$ be a vector space, with $\dim Z=m$ and symmetric inner product $(\cdot,\cdot)$. With this data, one can define a vertex superalgebra $\mathcal{V}^\#[M,R;Z]$ to be 
\begin{equation}\label{eqdef:Vhash-generic}
    \mathcal{V}^\#[M,R;Z]:= \CV_{\beta\gamma}[M]\otimes\CV_{\Phi\Psi}[\Pi R]\otimes \mathcal{H}^{(\cdot,\cdot)}[Z] ~,
\end{equation}
where
\begin{itemize}
    \item the symplectic boson vertex operator algebra formed $\CV_{\beta\gamma}[M]$ is strongly generated by a system of $\beta\gamma$ fields $$(\beta_i (z),\gamma_i(z))_{i=1,\ldots,\dim M}~$$ with mode expansions $\beta_i(z)=\sum_{n\in\BZ}\beta_{i,(n)}z^{-n-1}$ and $\gamma_i(z)=\sum_{n\in\BZ}\gamma_{i,(n)}z^{-n}$, and where the vacuum module vector space of $\CV_{\beta\gamma}[M]$ 
    \begin{equation}
        [\beta_{i,(-n_\beta)},\gamma_{i,(-n_\gamma)} \, | \, n_\beta,n_\gamma\in \mathbb{Z}_{\geq 1}, i=1,\ldots ,\dim M]|0\rangle ~
    \end{equation}
    is generated by the action of the modes $\beta_{i,(n)}$, $\gamma_{i,(n)}$ with $n\in\mathbb{Z}_{\leq -1}$ on the Fock vacuum $|0\rangle$. The vertex algebra structure of $\CV_{\beta\gamma}[M]$ is given by the operator product expansion (OPE) of the fields 
    \begin{equation}
        \beta_i(z)\gamma_j(w)\sim \frac{\delta_{ij}}{z-w} ~,
    \end{equation}
    where we use the symbol "$\sim$" inside OPEs to denote an equivalence up to non-singular terms. $\CV_{\beta\gamma}[M]$ is also referred to as the vertex algebra of chiral differential operators (CDO) on $T^\ast V$, denoted $D^{\mathrm{ch}}(T^*V)$, with $V\cong \mathbb{C}^{\dim M}$.
    \item the free fermion vertex operator algebra $\CV_{\Phi\Psi}[\Pi R]$ is similarly strongly generated by a system of odd fields $(\Phi_i(z),\Psi_i(z))_{i=1,\ldots,\dim R}$ on the odd symplectic vector space $\Pi R$, with the OPE
    \begin{equation}
        \Phi_i(z)\Psi_j(w)\sim \frac{\delta_{ij}}{z-w} ~.
    \end{equation}
    \item the Heisenberg vertex algebra $\mathcal{H}^{(\cdot,\cdot)}[Z]$ on $Z=\oplus_{i=1}^m \mathbb{C}h^i$, with vacuum module vector space
    \begin{equation}
      [h^1_{(-n_1)},\ldots ,h^m_{(-n_m)} \, | \, n_i\in \mathbb{Z}_{\geq 1}, i=1,\ldots ,m]|0\rangle ~
    \end{equation}
    a direct sum of vector spaces of polynomials in the mode coefficients of the Heisenberg fields $h^i(z)=\sum_{n\in\mathbb{Z}}h^i_{(n)}z^{-n-1}$, with $1\leq i\leq m$ and modes $h^i_{(-n)},\, n\in\BZ_{\geq 1},$ acting on the vacuum $|0\rangle$ of the Heiseneberg Fock space, and with vertex algebra structure given by the OPE 
    \begin{equation}
        h^i(z)h^j(w)\sim \frac{(h^i,h^j)}{(z-w)^2} ~.
    \end{equation}
\end{itemize}
Additionally, to the reductive Lie algebra $\mathfrak g$, we associate a fermionic $\mathsf b \mathsf c$-ghost system $\CV_{\mathsf b \mathsf c}[\mathfrak g]$ with fields $(b_i(z),c_i(z))_{i=1,\ldots , \dim \mathfrak g}$ in the adjoint representation of $G$, with the OPE
\begin{equation}
    b_i(z)c_j(w)\sim \frac{\delta_{ij}}{z-w}~.
\end{equation}
$\CV_{\mathsf b \mathsf c}[\mathfrak g]$ is a system of chiral differential operators $D^{\mathrm{ch}}(\Pi T^*\mathfrak g)$ on the odd symplectic vector space $\Pi T^*\mathfrak g$.

\subsection{BRST reduction}\label{sec:BRST reduction}

Given a vertex superalgebra $\CV^\#$ of the type described above, there is a corresponding vertex superalgebra $\CV:=\CV^\#\Kquotient G$ which is constructed mathematically by BRST reduction \cite{Feigin_1984,Frenkel_1986,Voronov_1993}. We recall below the presentation in \cite{Coman:2023xcq}. For the reductive Lie algebra $\mathfrak{g}$, let $\kappa$ be a $\mathfrak{g}$-invariant symmetric inner form on $\mathfrak{g}$, and let $\kappa_{\mathfrak{g}}$ be the Killing form on $\mathfrak{g}$. Then the vector space of the current algebra $V^\kappa[\mathfrak g]$ is  
\begin{equation}
    \mathbb C[J_\infty\mathfrak g]=\mathbb C[E^\alpha_{(-n)}\:|\:n\in \mathbb Z_{\ge 1}, E^\alpha\in\text{a basis of }\mathfrak g]|0\rangle
\end{equation} 
endowed with the OPE
\begin{align}
    E^{\alpha}(z)E^\beta(w)\sim \frac{\kappa^{\alpha,\beta}}{(z-w)^2}+\frac{f^{\alpha\beta}_\gamma E^\gamma(w)}{z-w}\;,
\end{align}
where $\kappa^{\alpha,\beta}=\kappa(E^\alpha,E^\beta)$ is the matrix element of the inner form $\kappa$, and $f^{\alpha\beta}_\gamma$ defined by $f^{\alpha\beta}_\gamma E^\gamma=[E^\alpha,E^\beta]$ are the structure constants of $\mathfrak g$. Suppose now that the vertex superalgebra $V=\CV^\#$ is such that there exists a vertex superalgebra map
\begin{align}\label{map:VSA-JV}
    J_V: V^{-\kappa_{\mathfrak g}}[\mathfrak g]\to V \;.
\end{align}
Then we define the vertex superalgebra 
\begin{align}\label{BRST complex}
    \widetilde C(\mathfrak g, V):=V\otimes \CV_{\mathsf b\mathsf c}[\mathfrak g] \;,
\end{align}
where $\CV_{\mathsf b\mathsf c}[\mathfrak g]$ is the $\mathsf b\mathsf c$-system of $\mathfrak g$, which admits a vertex superalgebra map
\begin{align}
    J_{\mathsf b\mathsf c}: V^{\kappa_{\mathfrak g}}[\mathfrak g]\to \CV_{\mathsf b\mathsf c}[\mathfrak g]\;, \quad J_{\mathsf b\mathsf c}(E^\alpha)=f^{\alpha\beta}{}_{\gamma}\mathsf c_{\beta}\mathsf b^\gamma.
\end{align}
$\CV_{\mathsf b\mathsf c}[\mathfrak g]$ is graded by ghost numbers, i.e.\  $\deg \mathsf c=+1,\deg\mathsf b=-1$, so the image of $J$ is contained in degree zero. Consider the BRST current $\JBRST$ (see also \cite{Karabali:1989dk, Beem:2013sza}), which has ghost number $1$  
\begin{align}
    \JBRST:=\mathsf c_\alpha\left(J_V(E^\alpha)+\frac{1}{2}J_{\mathsf b\mathsf c}(E^\alpha)\right)\in  \widetilde C^1(\mathfrak g, V)\;. \label{def:BRSTcurrent}
\end{align}
The operator 
\begin{equation}
    \mathcal{Q}:=\JBRST_{(0)} := \oint \frac{dz}{2\pi i} \, \JBRST(z) \label{Q_BRST}
\end{equation} 
acts on $\widetilde C(\mathfrak g, V)$, and squares to zero $\mathcal{Q}^2=0$. Therefore $(\widetilde C(\mathfrak g, V), \mathcal{Q})$ is a chain complex of vertex superalgebra, called the BRST complex, which is graded by ghost numbers. Now the linear map 
\begin{align}\label{level zero currents in BRST complex}
    E^\alpha\mapsto J(E^\alpha):=\mathcal{Q}(\mathsf b^\alpha)=J_V(E^\alpha)+J_{\mathsf b\mathsf c}(E^\alpha) \;,
\end{align}
is a vertex superalgebra chain complex map 
\begin{equation} 
J: (V^0(\mathfrak g),0)\to (\widetilde C(\mathfrak g, V), \mathcal{Q})
\end{equation}
where the LHS is endowed with trivial differential. In particular the Lie algebra $\mathfrak g$ acts on the complex $(\widetilde C(\mathfrak g, V), \mathcal{Q})$ by $E^\alpha\mapsto J(E^\alpha)_{(0)}$. The relative BRST complex is defined to be the subspace 
\begin{align}
    C(\mathfrak g, V):=\{v\in \widetilde C(\mathfrak g, V)\:|\: J(E^\alpha)_{(0)}v=\mathsf b^\alpha_{(0)}v=0,\forall \alpha\}\;,
\end{align}
which is a sub-complex because $[J(E^\alpha)_{(0)},\mathcal{Q}]=0$ and $[\mathsf b^\alpha_{(0)},\mathcal{Q}]=J(E^\alpha)_{(0)}$. In terms of this complex, the relative BRST cohomology is defined to be
\begin{align}
    H^{\infty/2+\bullet}_{\mathrm{BRST}}(\mathfrak g,V):=H^{\bullet}(C(\mathfrak g, V), \mathcal{Q})\;.
\end{align}
This is a $\mathbb Z$-graded vertex superalgebra, with
\begin{equation}\label{eq:quiverVA-0}
    \CV:=H^{\infty/2+0}_{\mathrm{BRST}}(\mathfrak g,V) ~.
\end{equation}

\subsection{Quiver setting}\label{sec:VSA-quiverSetting}

Let us now recall the quiver setting from Section \ref{sec:GeomQuiverVarieties}, where we introduced a super-quiver $Q=(Q_0,Q_1)$ with its associated vector spaces $(V,W)$, dimension vector data $(\mathbf v, \mathbf w)$, and representation varieties. Throughout this subsection, we assume that
\begin{align*}
    \mathbf u:= (u_i)_{i\in Q_0}:=\mathbf w-\mathsf C\mathbf v\in \mathbb Z^{Q_0}_{\ge 0}~,
\end{align*}
and we define a $Q_0$-graded vector space $U=\bigoplus_{i\in Q_0}U_i$ such that $\dim U_i= u_i$. Then the role of the reductive Lie algebra $\mathfrak g$ from the previous subsections is played by $$\mathfrak g := \mathfrak{gl}(\mathbf v) := \prod_{i\in Q_0} \mathfrak{gl}(v_i)~,$$ and $G:=\GL(\mathbf v)$. 

\subsubsection{Chiral differential operators}

The role of the representations $(M,R)$ from Section \ref{sec:generalconstructionSVA} is taken here by $\mathrm{Rep}(\mathbf v,\mathbf w|\mathbf u)$, where
\begin{align}\label{def:BRST-cohomology}
    \mathrm{Rep}(\mathbf v,\mathbf w|\mathbf u):=\mathrm{Rep}(\mathbf v,\mathbf w)\oplus\bigoplus_{i\in Q_0}\Pi\Hom(U_i,V_i)~,
\end{align}
and we recall from equation \eqref{def:prequotient} the shorthand notation 
\begin{equation} 
\mathfrak{R}:=T^*\mathrm{Rep}(\mathbf v,\mathbf w |\mathbf u)\times\mathcal Z ~.
\end{equation}
The system of chiral differential operators $\mathcal D^{\mathrm{ch}}(T^*\mathrm{Rep}(\mathbf v,\mathbf w|\mathbf u))$ is now formed by matrix-valued $\beta\gamma$ fields and $\Phi\Psi$ free fermion fields
\begin{itemize}
    \item $\CV_{\beta\gamma}[\mathrm{Rep}(\mathbf v,\mathbf w)]$: bosonic fields $((I_i)^n{}_p,(J_i)^q{}_r)_{i\in Q_0;~\substack{n,r=1,\ldots,v_i \\ p,q=1,\ldots,w_i}}$ with the OPE 
    \begin{equation}
        (I_i)^n{}_p(z)(J_j)^q{}_r(w) \sim \frac{\delta^n_r \delta^q_p \delta_{ij}}{z-w} ~,
    \end{equation}
    and bosonic fields $((X_a)^n{}_p,(Y_a)^q{}_r)_{a\in Q_1;~\substack{n,r=1,\ldots,v_i\\ p,q=1,\ldots,v_{i+1}}}$ with the OPE
    \begin{equation}
        (X_a)^n{}_p(z)(Y_b)^q{}_r(w) \sim \frac{\delta^n_r \delta^q_p \delta_{ab}}{z-w} ~.
    \end{equation}
    \item $\CV_{\Phi\Psi}[\bigoplus_{i\in Q_0}\Pi\Hom(U_i,V_i)]$: fermionic fields $((\Phi_i)^n{}_p,(\Psi_i)^q{}_r)_{i\in Q_0;~\substack{n,r=1,\ldots,v_i \\ p,q=1,\ldots,u_i}}$ with the OPE 
    \begin{equation}
        (\Phi_i)^n{}_p(z)(\Psi_j)^q{}_r(w) \sim \frac{\delta^n_r \delta^q_p \delta_{ij}}{z-w} ~.
    \end{equation}
\end{itemize}
The index convention follows the assignment in Figure \ref{fig:convention}.

\begin{figure}[ht!]
    \centering
    \begin{tikzpicture}[vertex/.style={draw,circle,minimum size=4mm},fvertex/.style={draw,rectangle,minimum size=4mm}]
          \node[fvertex] (di) at (0,0) {$U_i$};
          \node[vertex] (giplus1) at (-3,-2) {$V_{i+1}$};
          \node[vertex] (gi) at (0,-2) {$V_i$};
          \node[vertex] (giminus1) at (3,-2) {$V_{i-1}$};
          \node[fvertex] (fi) at (0,-4) {$W_i$};
        %
          \draw[->] (giplus1) -- node[above]{$X_{i}^{\textcolor{blue}{\alpha_{i}}}$} (gi);
          \draw[->] (giminus1) -- node[above]{$Y_{i-1}^{\textcolor{blue}{\alpha_{i}}}$} (gi);
          \draw[->, bend right] (fi) to node[right]{$I_{i}^{\textcolor{blue}{\alpha_{i}}}$} (gi);
          \draw[->, bend right] (gi) to node[left]{$J_{i,\textcolor{red}{\alpha_{i}}}$} (fi);
          \draw[->, bend right] (di) to node[left]{$\Psi^{\textcolor{blue}{\alpha_{i}}}_{i}$} (gi);
          \draw[->, bend right] (gi) to node[right]{$\Phi_{i,\textcolor{red}{\alpha_{i}}}$} (di);
    \end{tikzpicture}
    \caption{Index convention at the node $i\in Q_0$, where $\alpha_{i}$ is the index for the subgroup $G_{i}$ of $G$. Other indices are suppressed.}
    \label{fig:convention}
\end{figure}

\subsubsection{A Heisenberg vertex algebra associated to a quiver \texorpdfstring{Q}{Q}}

We define the $U(1)^{Q_0}$ Heisenberg vertex algebra $\CH(\mathcal Z)$ associated to $Q$ by a direct sum of vector spaces of polynomials in mode coefficients of Heisenberg field $h^{i\in Q_0}$, ($h^i_{(-n)}$, $n\in\BZ_{\geq 1}$), acting on the Fock vacuum with the OPE,
\beq
h_i(z) h_j(w)\sim \frac{\mathsf{C}_{ij}}{(z-w)^2}~.
\nonumber
\eeq

\subsubsection{Parent vertex superalgebra \texorpdfstring{$\CV^\#[Q]$}{CVQ}}

We use the vertex algebra $\CH(\mathcal Z)$ as a building block of a \emph{parent} vertex superalgebra of chiral differential operators $\CV^\#[Q]$ associated to the prequotient $\mathfrak R=T^*\mathrm{Rep}(\mathbf v,\mathbf w | \mathbf u)\times  \mathcal Z$  \eqref{def:prequotient} of the extended moment map \eqref{eqdef:extendedsupermomentmap} which defined the Nakajima quiver super-variety \eqref{eqdef:ExtNakajimaQuiverSuperVariety} in Section \ref{sec:GeomQuiverVarieties}
\begin{equation} \label{eqdef:prequotientquiverVOA}
\CV^\#[Q] := \mathcal D^{\mathrm{ch}}(\mathfrak R) := \CV_{\beta\gamma}[\mathrm{Rep}(\mathbf v,\mathbf w)] \otimes \CV_{\Phi\Psi}[\bigoplus_{i\in Q_0}\Pi\Hom(U_i,V_i)] \otimes \CH(\mathcal Z) ~.
\end{equation}

\subsubsection{A \texorpdfstring{$\mathsf b \mathsf c$}{}-system associated to a super quiver}

We define the $\mathsf b \mathsf c$-system $\CV_{\bc}[\mathfrak g]$ associated to a super quiver $Q$ to be the vertex algebra generated by the matrix-valued fermionic fields $((b_i)^n{}_p,(c_i)^q{}_r)_{i\in Q_0;~\substack{n,r=1,\ldots,v_i \\ p,q=1,\ldots,v_i}}$ with the OPE 
\begin{equation}
        (b_i)^n{}_p(z)(c_j)^q{}_r(w) \sim \frac{\delta^n_r \delta^q_p \delta_{ij}}{z-w} ~.
\end{equation}

\subsubsection{BRST reduction in the quiver setting}

In Section \ref{sec:BRST reduction}, we described the construction of a vertex superalgebra $\CV=H^{\infty/2+0}_{\mathrm{BRST}}(\mathfrak g,\CV^\#)$ \eqref{eq:quiverVA-0} via the procedure of BRST reduction from a parent vertex superalgebra $\CV^\#$. Here we specialize and define the quiver vertex superalgebra $\CV[Q]:=\CV(Q,\mathbf v, \mathbf w)$ by BRST reduction from the vertex superalgebra $\CV^\#[Q]$ \eqref{eqdef:prequotientquiverVOA} associated to the extended representation space $\mathfrak R=T^*\mathrm{Rep}(\mathbf v,\mathbf w | \mathbf u)\times  \mathcal Z$  \eqref{def:prequotient} of the quiver. The role of the vertex superalgebra map $J_V$ \eqref{map:VSA-JV} is played by the extended chiral moment map
\begin{align}
\label{eqdef:ChiralExtNakajimaQuiverSuperVariety}
    \widetilde\mu_{\mathsf s}^{\mathrm{ch}}: V^{-\kappa_{\mathfrak g}}[\mathfrak g]\to \CV^\#[Q] \;,
\end{align}
which is a chiralization of the classical counterpart \eqref{eqdef:extendedsupermomentmap}, where the component at node $i\in Q_0$
\begin{align}\label{eq:chiral extended moment map}
    \widetilde\mu^{\mathrm{ch}}_{i}:=J_{\mathfrak{u}(v_i)}-h^i\cdot\mathrm{id}
\end{align}
is defined in \cite{Coman:2023xcq}, analogously to \eqref{extended super moment map}, in terms of the Heisenberg field $h^i$ and an affine $\mathfrak{u}(v_i)$ current of the form (cf.\ \cite{Costello:2018fnz}) 
\begin{align}
\label{def:unBRSTcurrentViShifted_general}
    (J_{\mathfrak{u}({v}_i)})^\alpha{}_\beta &= 
    -\sum_{\mathsf{h}(a)=i} (X_{a})^\alpha{}_\gamma (Y_{a})^{\gamma}{}_\beta
    +\sum_{\mathsf{t}(a)=i} 
    (Y_{a})^\alpha{}_\gamma(X_{a})^{\gamma}{}_\beta-(I_{i})^\alpha{}_\gamma(J_{i})^{\gamma}{}_\beta + (\Psi_{i})^\alpha{}_\gamma(\Phi_{i})^{\gamma}{}_\beta  \;.
\end{align}

\bigskip

\begin{defn}
The quiver vertex superalgebra $\CV[Q]$ associated to $\CV^\#[Q]$ \eqref{eqdef:prequotientquiverVOA} is defined by zeroth relative BRST cohomology 
\begin{equation}\label{eqdef:quiverVSA}
    \CV[Q]:=H^{\infty/2+0}_{\mathrm{BRST}}(\mathfrak g,\CV^\#[Q]) ~.
\end{equation}
\end{defn}
\noindent This is the vertex superalgebra analog of the deformed Nakajima quiver super-variety $\widetilde{\mathcal M}(\mathbf v,\mathbf w|\mathbf u)$ by Hamiltonian reduction in \eqref{eqdef:ExtNakajimaQuiverSuperVariety},
\begin{align}
    \widetilde{\mathcal M}(\mathbf v,\mathbf w|\mathbf u)=\mu^{-1}_{\mathsf s}(\mathcal Z)^{\mathrm{st}}\sslash \GL(\mathbf v)~,
\end{align}
which chiralizes the ring of global sections $\Gamma(\widetilde{\mathcal M}(\mathbf v,\mathbf w|\mathbf u), \mathcal{O}_{\widetilde{\mathcal M}(\mathbf v,\mathbf w|\mathbf u)})$. 

\subsubsection{Modules of the quiver vertex superalgebra}

Having defined the quiver vertex superalgebra $\CV[Q]$ \eqref{eqdef:quiverVSA}, we will also consider its module  
\begin{equation}\label{eqdef:quiverVSAmodule}
    \CV_\lambda[Q]:=H^{\infty/2+0}_{\mathrm{BRST}}(\mathfrak g,\CV_\lambda^\#[Q]) ~,
\end{equation}
which was defined in \cite{Coman:2023xcq} with $\lambda :=(\lambda_i)_{i\in Q_0}\in\mathbb{Z}^{Q_0}$, where 
\begin{equation} \label{eqdef:prequotientquiverVOA-module}
\CV_\lambda^\#[Q] := \mathcal D_\lambda^{\mathrm{ch}}(\mathfrak R) := \CV_{\beta\gamma}[\mathrm{Rep}(\mathbf v,\mathbf w)] \otimes \CV_{\Phi\Psi}[\bigoplus_{i\in Q_0}\Pi\Hom(U_i,V_i)] \otimes \CH_\lambda(\mathcal Z) ~
\end{equation}
is a module of the vertex superalgebra $\mathcal D^{\mathrm{ch}}(\mathfrak R)$, and $\CH_\lambda(\mathcal Z)$ is the Verma module of the Heisenberg algebra $\CH(\mathcal Z)=\{h^i, i\in Q_0\}$ which is generated from a vector $|\lambda\rangle$, on which the zero modes $h^i_{(0)}$ act with weights $\lambda_i\in\mathbb{Z}$ as $h^i_{(0)} |\lambda\rangle= \lambda_i |\lambda\rangle$.

\subsection{Associated variety}\label{sec:MapPoissonAlgebra}

Let us recall now some standard concepts from the literature on vertex algebras (see e.g.\ \cite{arakawa2017introduction}). A canonical filtration (also known as Li's filtration) $F^\bullet$ of a finitely strongly generated vertex algebra $\CV$, with strong generators $\mathcal{O}^1, \dots , \mathcal{O}^k\in\CV$, is defined by 
\beq\label{eq:canonical filtration}
\begin{split}
    \CV &=F^{0}\CV\supset F^{1}\CV\supset\cdots\supset F^{p}\CV\supset F^{p+1}\CV\supset\cdots, 
    \\
    F^{p}\CV :&=\mathrm{Span}\left\{\mathcal{O}^{1}_{(-n_1-1)}\cdots \mathcal{O}^{k}_{(-n_k-1)}| 0\rangle\Biggr|\, 
    \mathcal{O}^1, \dots , \mathcal{O}^k \in \CV,  \sum_{i=1}^{k}n_i\geq p\in\BZ_{\geq0}\right\}\;.  
\end{split}
\eeq
The associated graded vector space
\beq\label{def:associated graded algebra}
    \mathrm{gr}_{F}\CV:=\bigoplus_{p=0}\mathrm{gr}_{F}^{p}\CV \;,
    \quad 
    \mathrm{gr}_{F}^{p}\CV:= F^{p}\CV/F^{p+1}\CV \;
\eeq  
admits a natural vertex Poisson algebra structure \cite[Proposition 3.14]{arakawa2017introduction}, with $\mathrm{gr}_{F}\CV$ isomorphic to $\cV$ as vector spaces, while the vertex Poisson algebra structure of the former replaces the vertex operator algebra structure of the latter. There is a canonical filtration \eqref{eq:canonical filtration} $F^\bullet \CV$ for every vertex algebra $\CV$. Zhu's $C_2$-algebra of $\CV$ 
\begin{equation}
    \mathcal{R}(\CV):=F^0\CV/F^1\CV
\end{equation}
is a Poisson vertex algebra \cite[Proposition 3.15]{arakawa2017introduction}. Denoting by $\overline{\mathcal{O}}$ the image of $\mathcal{O}\in\CV$ under the surjection $\CV\twoheadrightarrow \CR(\CV)$, Zhu's $C_2$-algebra has a commutative product $\overline{\mathcal{O}^i} \cdot \overline{\mathcal{O}^j} = \overline{\mathcal{O}^i_{(-1)}\mathcal{O}^j}$. The vertex algebra $\CV$ is finitely strongly generated by a set of elements $\mathcal{O}^1, \dots , \mathcal{O}^n\in\CV$ if their images $\overline{\mathcal{O}^1}, \dots , \overline{\mathcal{O}^n}\in\CR(\CV)$ generate $\CR(\CV)$ as a $\mathbb{C}$-algebra. Then $\CV$ is spanned by elements of the form $\mathcal{O}^{i_1}_{-n_1}\cdots \mathcal{O}^{i_k}_{-n_k}|0\rangle$, with $k\geq 0$ and $n_1\geq 0$.

\medskip 

Having defined $\CR(\CV)$ from the associated graded algebra $\mathrm{gr}_{F}\CV$, in the opposite direction this is recovered from the $\infty$-jet $J_\infty\CR(\CV)$, which admits a natural vertex Poisson algebra structure \cite[Theorem 3.9]{arakawa2017introduction}, via a natural surjective map \cite[Theorem 3.17]{arakawa2017introduction} $J_\infty\CR(\CV)\twoheadrightarrow \mathrm{gr}_{F}\CV$. 

\medskip 

The associated variety $X_{\CV}$ of $\CV$ is the geometry corresponding to the non-nilpotent part of Zhu's $C_2$-algebra
\begin{equation}
    X_{\CV}:=\mathrm{Specm}\,\CR(\CV) ~,
\end{equation}
with a sequence of surjective maps $\CV \twoheadrightarrow \CR(\CV)\twoheadrightarrow (\CR(\CV))_{\mathrm{red}}=\mathbb{C}[X_{\CV}]$.  

\medskip

\paragraph{Note:} in the context of quiver vertex algebras for type A linear quivers $(Q,\mathbf v,\mathbf w)$, it is expected \cite[Conjecture 7.1]{Coman:2023xcq} (see also \cite{Beem:2023dub}) that the associated variety can be identified with the affine quiver variety $\widetilde{\CM}^0(\mathbf v,\mathbf w)$ \eqref{def:affinequivervariety}.

\section{Quiver sheaves of \texorpdfstring{$\hbar$}{}-Adic Vertex Superalgebras}\label{sec:QuiverSheavesVSA}

In Section \ref{sec:ChiralizationQuiverRep}, we constructed a vertex superalgebra $\CV[Q]$ \eqref{eqdef:quiverVSA} associated with the quiver data $(Q,\mathbf{v},\mathbf{w})$ by chiralizing the corresponding Nakajima quiver super-variety $\widetilde{\mathcal{M}}(\mathbf{v},\mathbf{w}|\mathbf{u})$. In contrast, in this section, we will construct the sheaf of ($\hbar$-adic) vertex superalgebras $\mathcal{D}^{\mathrm{ch}}_{\widetilde{\mathcal M},\hbar}$ by micro-localization\footnote{That is, localizing on open sets in the (extended) phase space $\mathfrak{R}$ \eqref{def:prequotient}, where elements are invertible.} of the parent vertex superalgebra $\CV^\#[Q]$ \eqref{eqdef:prequotientquiverVOA}, in order to define a sheaf of $\hbar$-adic vertex superalgebras $\mathcal{D}^{\mathrm{ch}}_{\mathfrak R,\hbar}$ associated to the extended representation space $\mathfrak R$ of the quiver \eqref{def:prequotient}, and then perform the BRST reduction procedure locally to reduce to the vertex superalgebra $\mathcal{D}^{\mathrm{ch}}_{\widetilde{\mathcal M},\hbar}$.

\subsection{Sheaves of \texorpdfstring{$\hbar$}{}-adic vertex algebras}

$\hbar$-adic vertex algebras were introduced in \cite{li2004vertex} with $\hbar$ a formal variable. These allow to consider inverses of rational functions and infinite sums with singular terms, and to localize vertex algebras. 

\medskip

\begin{defn}\label{def:h-adic vertex superalgebras}
Formally, an $\hbar$-adic vertex algebra is a tuple $(V, |0\rangle, \partial, Y(-,z))$ such that $V$ is a flat $\mathbb C[\![\hbar]\!]$-module complete in $\hbar$-adic topology, and $Y(-,z):V\to \mathrm{End}_{\mathbb C[\![\hbar]\!]}(V)[\![z^{\pm}]\!]$ is a $\mathbb C[\![\hbar]\!]$-linear map such that
\begin{enumerate}
    \item modes $a_{(n)}$ are continuous with respect to the $\hbar$-adic topology, for all $a\in V, n\in \mathbb Z$;
    \item $(V/\hbar^NV,|0\rangle,\partial,Y(-,z))$ is a vertex algebra for all $N\in \mathbb Z_{\ge 1}$.
\end{enumerate}
\end{defn}

\subsubsection{Example (sheaf) of \texorpdfstring{$\hbar$}{}-adic vertex algebra}\label{sec:hbaradicbetagammapair} 

The first building block introduced in Section \ref{sec:generalconstructionSVA} and employed to construct a vertex superalgebra has been a pair of $\beta\gamma$ bosonic fields or chiral differential operators $D^{\mathrm{ch}}( T^*\mathbb{C})$ (specializing $M$ in Section \ref{sec:generalconstructionSVA} to $\dim M=1$). The $\hbar$-adic version of this $\beta\gamma$-pair has vector space (or vacuum module)  
\begin{align}
    D^{\mathrm{ch}}(T^\ast \mathbb C)_{\hbar}:=\mathbb C[\![\hbar]\!][\beta_{(-n)},\gamma_{(-n)}\:|\:n\in \mathbb Z_{\ge 1}]|0\rangle \;,
\end{align}
with vertex algebra structure given by the OPE
\begin{align}
    \beta(z)\gamma(w)\sim \frac{\hbar}{z-w}\;.
\end{align}
$D^{\mathrm{ch}}(T^\ast\mathbb C)_\hbar/(\hbar)$ is a vertex Poisson algebra isomorphic to the coordinate ring on the jet bundle $$D^{\mathrm{ch}}(T^\ast\mathbb C)_\hbar/(\hbar)\cong\mathbb C[{\jet} T^\ast \mathbb C]$$ endowed with the natural vertex Poisson algebra structure induced by the Poisson structure on $T^*\mathbb C$. From the $\hbar$-adic CDO $D^{\mathrm{ch}}( T^*\mathbb{C})$, we can construct a sheaf of $\hbar$-adic vertex algebra $\mathscr D^{\mathrm{ch}}_{T^\ast\mathbb C,\hbar}$ on $T^\ast\mathbb C$ by the sheafification of the following functor
\begin{align}
    U_f\mapsto\mathscr D^{\mathrm{ch}}_{T^\ast\mathbb C,\hbar}(U_f):=\mathbb C[{\jet}U_f][\![\hbar]\!] \;,
\end{align}
where $f$ is a polynomial in $\beta,\gamma$, and $U_f\subset  T^\ast \mathbb C$ is the open subset of nonzero locus of $f$ allowing to perform micro-localization. The $\hbar$-adic vertex algebra structure on the vector space $\mathbb C[{\jet}U_f][\![\hbar]\!]$ is defined using Wick's contraction formula \cite{MR1651389} (see also \cite{Kapranov_1998,Bezrukavnikov_2004} for localization)
\begin{align}
    A(z)B(w)\sim :e^{\frac{\hbar}{z-w}\partial_\beta\wedge\partial_\gamma}(A(z)\otimes B(w)): \;
\end{align}
where $A,B\in \mathbb C[\partial^n\beta,\partial^n\gamma\:|\:n\in \mathbb Z_{\ge 0}][f^{-1}]$, and $::$ is the normal-ordered product. For example in \cite{Coman:2023xcq} we let $f=\beta\gamma$ and took $A=\beta^{-1}$ and $B=\gamma^{-1}$, for which the OPE is
\begin{align}
    \beta^{-1}(z)\gamma^{-1}(w)\sim \sum_{n=1}^{\infty}n!\hbar^n\frac{:\beta^{-n-1}(z)\gamma^{-n-1}(w):}{(z-w)^n} \;.
\end{align}
In the opposite direction to the sheaf construction, we recover
\begin{equation}
     D^{\mathrm{ch}}(T^\ast \mathbb C)_{\hbar}= \Gamma(T^\ast\mathbb C, \mathscr D^{\mathrm{ch}}_{T^\ast \mathbb C,\hbar}) ~.
\end{equation}

\subsubsection{General building blocks}

The $\hbar$-adic $\beta\gamma$-pair from Section \ref{sec:hbaradicbetagammapair} and the corresponding sheaf construction of $\mathscr D^{\mathrm{ch}}_{T^\ast \mathbb C,\hbar}$ by localization generalize to the building blocks introduced in Section \ref{sec:generalconstructionSVA}, which entered in the construction of a vertex superalgebra $\CV^\#$ \eqref{eqdef:prequotientquiverVOA}. These building blocks become
\begin{itemize}
    \item $D^{\mathrm{ch}}(T^*V)_{\hbar}$ is a an $\hbar$-adic $\beta\gamma$-system $$D^{\mathrm{ch}}(T^*V)_{\hbar}=D^{\mathrm{ch}}(T^*\mathbb C)_{\hbar}^{\otimes m}~, \qquad V\cong \mathbb C^m~, \qquad \forall m\in \mathbb Z_{\ge 1} ~$$ and $\mathscr D^{\mathrm{ch}}_{T^*V,\hbar}$ is a sheaf of $\hbar$-adic asymptotic chiral differential operators (ACDO) on $T^*V$ \cite{arakawa2015localization} defined by the localization of $D^{\mathrm{ch}}(T^*V)_{\hbar}$ using Wick's contraction formula;
    \item $D^{\mathrm{ch}}(T^*\Pi U)_{\hbar}$ is similarly a system of $\hbar$-adic $\Phi\Psi$ free fermions where $U$ is a vector space and $\Pi U$ is the odd vector space associated to $U$;
    \item $D^{\mathrm{ch}}(T^*\Pi S)_{\hbar}$ is an $\hbar$-adic $\mathsf b\mathsf c$-system where $S$ is a vector space; 
    \item More generally, the vertex superalgebra $D^{\mathrm{ch}}(T^*(V\oplus\Pi S))_{\hbar}$ is an $\hbar$-adic $\beta\gamma\mathsf b\mathsf c$-system. We only localize the bosonic variables, i.e.\  only the $\beta\gamma$ part is allowed to take the inverse, thus defining a sheaf of $\hbar$-adic vertex superalgebras $\mathscr D^{\mathrm{ch}}_{T^*(V\oplus\Pi S),\hbar}$ on the variety $T^*V$;
    \item $\mathcal{H}(W,(\cdot,\cdot))_\hbar$ is the $\hbar$-adic Heisenberg algebra associated with the vector space $W=\bigoplus_{i=1}^m\mathbb C\cdot a^i$ with a symmetric inner product $(\cdot,\cdot)$. The $\hbar$-adic Heisenberg vertex algebra has vector space  
    \begin{align}
    \mathcal{H}(W,(\cdot,\cdot))_\hbar:=\mathbb C[\![\hbar]\!][a^1_{(-n)},\cdots,a^m_{(-n)}\:|\:n\in \mathbb Z_{\ge 1}]|0\rangle \;,
    \end{align}
    with $\hbar$-adic vertex algebra structure given by the OPE
    \begin{align}
    a^i(z)a^j(w)\sim \frac{\hbar^2(a^i,a^j)}{(z-w)^2} \;.
    \end{align}
    $\mathscr H^{(\cdot,\cdot)}_{W,\hbar}$ is a sheaf of $\hbar$-adic Heisenberg vertex algebras defined by localization 
    \begin{align}
    \mathscr H^{(\cdot,\cdot)}_{W,\hbar}(U_f):=\mathbb C[J_\infty U_f][\![\hbar]\!] \;,
    \end{align}
    endowed with the aforementioned $\hbar$-adic vertex algebra structure (see also \cite[4.3]{kuwabara2017vertex}), where $U_f$ is the locus where a polynomial function $f\in \mathbb C[a^1,\cdots,a^m]$ is non-vanishing. The localization is defined similarly to that in the case of $\hbar$-adic $\beta\gamma$ systems, using the Wick contraction formula for the OPE 
    \begin{align}
    A(z)B(w)\sim :e^{\frac{\hbar^2(a^i,a^j)}{(z-w)^2} \partial_{a^i} \otimes \partial_{a^j}}(A(z)\otimes B(w)): \;
    \end{align}
    for $A,B\in \mathbb C[\partial^n a^1,\cdots,\partial^na^m\:|\:n\in \mathbb Z_{\ge 0}][f^{-1}]$, where $::$ is the normal-ordered product\footnote{Recall the example from \cite{Coman:2023xcq}: let $f=a^1a^2$, then take $A=a^1$ and $B=a^2$, the OPE reads:
    \begin{align}
    (a^1)^{-1}(z)(a^2)^{-1}(w)\sim \sum_{n=1}^{\infty}n!\hbar^{2n}(a^1,a^2)^n\frac{:(a^1)^{-n-1}(z)(a^2)^{-n-1}(w):}{(z-w)^{2n}} \;.
    \end{align}}.
\end{itemize}

\subsubsection{Quiver setting \texorpdfstring{$\hbar$}{}-adic vertex superalgebras}

In the quiver setting, for the quiver data $(Q,\mathbf v, \mathbf w)$ from Sections \ref{sec:GeomQuiverVarieties} and \ref{sec:VSA-quiverSetting}, we define a sheaf of $\hbar$-adic vertex superalgebras $\mathscr D^{\mathrm{ch}}_{\mathfrak R,\hbar}$ on the prequotient $\mathfrak R$ \eqref{def:prequotient}, by 
\begin{align}\label{eqdef:sheafhbaradicQuiverSVA-Prequotient}
    \mathscr D^{\mathrm{ch}}_{\mathfrak R,\hbar}:=\mathscr D^{\mathrm{ch}}_{T^*\mathrm{Rep}(\mathbf v,\mathbf w|\mathbf u),\hbar}\widehat{\boxtimes} \mathscr H^{(\cdot,\cdot)_Q}_{\mathbb C^{Q_0},\hbar} \;,
\end{align}
where $\widehat\boxtimes$ is the $\hbar$-completed external tensor product of sheaves, and $(\cdot,\cdot)_{Q}$ is the Cartan inner product on $\mathcal Z$
\begin{align}
    (h_i,h_j)_Q:=2\delta_{ij}-\#(a:i\to j\in Q_1)-\#(a:j\to i\in Q_1)\;.
\end{align}
$\mathscr D^{\mathrm{ch}}_{\mathfrak R,\hbar}$ is the sheafified version of $\CVs[Q]$, namely the $\hbar$-adic completion $\CVs[Q]_{\hbar}$ is isomorphic to the ring of global section $\Gamma(\mathfrak R,\mathscr D^{\mathrm{ch}}_{\mathfrak R,\hbar})$.

\subsection{\texorpdfstring{$\hbar$}{}-adic BRST reduction}

In Section \ref{sec:ChiralizationQuiverRep}, we constructed a quiver vertex superalgebra $\CV[Q]=H^{\infty/2+0}_{\mathrm{BRST}}(\mathfrak g,\CV^\#[Q])$ \eqref{eqdef:quiverVSA} via the BRST reduction procedure outlined in Section \ref{sec:BRST reduction}, starting from the parent vertex superalgebra $\CV^\#[Q]$ \eqref{eqdef:prequotientquiverVOA}. With the aim to proceed analogously with the sheaf of $\hbar$-adic vertex superalgebras $\mathscr D^{\mathrm{ch}}_{\mathfrak R,\hbar}$, we will review here the procedure of $\hbar$-adic BRST reduction, following the presentation in \cite[Section 10.1]{Coman:2023xcq}. As before, we let $\mathfrak g$ be a reductive Lie algebra, with $\kappa$ a $\mathfrak g$-invariant symmetric inner form on $\mathfrak g$ and $\kappa_{\mathfrak g}$ the Killing form on $\mathfrak g$. 

\bigskip

\begin{defn}\label{def:hbaradicCurrentAlgebraVg}
    $V^\kappa(\mathfrak g)_\hbar$ is an $\hbar$-adic current algebra  with vector space the $\mathbb C[\![\hbar]\!]$-module 
    \begin{equation}
    \mathbb{C}[J_\infty\mathfrak g][\![\hbar]\!]:=\mathbb C[\![\hbar]\!][E^\alpha_{(-n)}\:|\:n\in \mathbb{Z}_{\ge 1}, E^\alpha\in\mathrm{a~basis~ of~ }\mathfrak g]|0\rangle    
    \end{equation} 
    endowed with the OPE
    \begin{align}
    E^{\alpha}(z)E^\beta(w)\sim \frac{\hbar^2\kappa^{\alpha,\beta}}{(z-w)^2}+\frac{\hbar f^{\alpha\beta}_\gamma E^\gamma(w)}{z-w} \;,
    \end{align}
where $\kappa^{\alpha,\beta}=\kappa(E^\alpha,E^\beta)$ and $f^{\alpha\beta}{}_\gamma$ are the structure constants of $\mathfrak g$, with $f^{\alpha\beta}{}_\gamma E^\gamma=[E^\alpha,E^\beta]$.  
\end{defn}

If  $V$ is an $\hbar$-adic vertex superalgebra such that there exists an $\hbar$-adic vertex superalgebra map
\begin{align}
    J_V: V^{-\kappa_{\mathfrak g}}(\mathfrak g)_\hbar\to V \;,
\end{align}
then we define $\hbar$-adic vertex superalgebra 
\begin{align}
    \widetilde C(\mathfrak g, V)_\hbar:=V\otimes \CV_{\mathsf b\mathsf c}(\mathfrak g)_\hbar \;,
\end{align}
where $\CV_{\mathsf b\mathsf c}(\mathfrak g)_\hbar$ is the $\hbar$-adic $\mathsf b\mathsf c$ system of $\mathfrak g$, i.e.\  chiral differential operators $D^{\mathrm{ch}}(\Pi T^*\mathfrak g)_\hbar$ of the odd symplectic vector space $\Pi T^*\mathfrak g$. $\CV_{\mathsf b\mathsf c}(\mathfrak g)_\hbar$ admits an $\hbar$-adic vertex superalgebra map
\begin{align}
    {}^{\hbar}J_{\mathsf b\mathsf c}: V^{\kappa_{\mathfrak g}}(\mathfrak g)_\hbar\to \CV_{\mathsf b\mathsf c}(\mathfrak g)_\hbar\;, \quad 
     {}^{\hbar}J_{\mathsf b\mathsf c}(E^\alpha)=f^{\alpha\beta}{}_{\gamma}\mathsf b^\gamma\mathsf c_{\beta}\;,
\end{align}
and is graded by ghost numbers with $\deg \mathsf c=+1$ and $\deg\mathsf b=-1$, so the image of $J$ is contained in the degree zero piece $\CV^0_{\mathsf b\mathsf c}(\mathfrak g)_\hbar$. The $\hbar$-adic BRST current is the ghost number $1$ element
\begin{align}
    \JBRST_{\hbar}:=\mathsf c_\alpha\left(J_V(E^\alpha)+\frac{1}{2}J_{\mathsf b\mathsf c}(E^\alpha)\right)\in  \widetilde C^1(\mathfrak g, V)_\hbar \;.
\end{align}
The operator 
\begin{equation} \label{eqdef:hbarBRSTcurrent}
\mathcal{Q}_{\hbar}:=\frac{1}{\hbar}\JBRST_{(0)}:=\frac{1}{\hbar}\oint \frac{dz}{2\pi i} \, \JBRST(z)
\end{equation}
acts on $\widetilde C(\mathfrak g, V)_\hbar$ and squares to zero $\mathcal{Q}_{\hbar}^2=0$. Therefore the $\hbar$-adic vertex superalgebra $\widetilde C(\mathfrak g, V)_\hbar$ defines a chain complex $(\widetilde C(\mathfrak g, V)_\hbar, \mathcal{Q}_{\hbar})$ which is graded by ghost numbers, called the $\hbar$-adic BRST complex. Moreover, the linear map 
\begin{align}\label{eq:J(E)}
    E^\alpha\mapsto J(E^\alpha):=\mathcal{Q}_{\hbar}(\mathsf b^\alpha)=J_V(E^\alpha)+J_{\mathsf b\mathsf c}(E^\alpha)
\end{align}
is an $\hbar$-adic vertex superalgebra chain complex map 
$$ J: (V^0(\mathfrak g)_\hbar,0)\to (\widetilde C(\mathfrak g, V)_\hbar, \mathcal{Q}_{\hbar}) \;, $$ 
where the LHS is endowed with trivial differential. In particular the Lie algebra $\mathfrak g$ acts on the complex $(\widetilde C(\mathfrak g, V)_\hbar, \mathcal{Q}_{\hbar})$ by $E^\alpha\mapsto \frac{1}{\hbar}J(E^\alpha)_{(0)}$. The relative $\hbar$-adic BRST complex is defined to be the subspace 
\begin{align}
    C(\mathfrak g, V)_\hbar:=\{v\in \widetilde C(\mathfrak g, V)_\hbar\:|\: J(E^\alpha)_{(0)}v=\mathsf b^\alpha_{(0)}v=0,\forall \alpha\} \;,
\end{align}
which is a sub-complex because $[J(E^\alpha)_{(0)},\mathcal{Q}_{\hbar}]=0$ and $[\mathsf b^\alpha_{(0)},\mathcal{Q}_{\hbar}]=J(E^\alpha)_{(0)}$, and which is $\BZ$-graded by ghost numbers. We define the relative $\hbar$-adic BRST cohomology to be the $\mathbb Z$-graded $\hbar$-adic vertex superalgebra
\begin{align}
    H^{\infty/2+\bullet}_{\hbar\mathrm{BRST}}(\mathfrak g,V):=H^{\bullet}(C(\mathfrak g, V)_\hbar, \mathcal{Q}_{\hbar})\;.
\end{align}

\subsection{Quiver sheaf of \texorpdfstring{$\hbar$}{}-adic vertex superalgebras}\label{sec:VSA-quiverSetting-Local}

Let us now return to the quiver setting of Section \ref{sec:GeomQuiverVarieties} with quiver and dimension data $(Q,\mathbf v, \mathbf w)$, and where $\mathfrak g=\mathfrak{gl}(\mathbf v)$. Recall that we have defined a sheaf of $\hbar$-adic vertex superalgebras $\mathscr D^{\mathrm{ch}}_{\mathfrak R,\hbar}$ \eqref{eqdef:sheafhbaradicQuiverSVA-Prequotient} on the extended representation space $\mathfrak R$ by\eqref{def:prequotient} \footnote{The notation $\widehat\boxtimes$ refers to the $\hbar$-adic completion tensor product, adapted to $\hbar$-adic topology, where for $\hbar$-adic vertex algebras $V,\,W$, $V\widehat\boxtimes W\cong V\widehat\otimes_{\mathbb{C}[[\hbar]]}W =\varprojlim_{n} \left( (V/\hbar^n V)\otimes (W/\hbar^n W) \right)$.}
\begin{align}
    \mathscr D^{\mathrm{ch}}_{\mathfrak R,\hbar}=\mathscr D^{\mathrm{ch}}_{T^*\mathrm{Rep}(\mathbf v,\mathbf w|\mathbf u),\hbar}\widehat{\boxtimes} \mathscr H^{(\cdot,\cdot)_Q}_{\mathbb C^{Q_0},\hbar} \;,
\end{align}
which is the sheaf analog of the vertex superalgebra $\CVs[Q]$. Moreover, in Section \ref{sec:VSA-quiverSetting} we constructed the quiver vertex superalgebra $\CV[Q]=H^{\infty/2+0}_{\mathrm{BRST}}(\mathfrak g,\CV^\#[Q])$ \eqref{eqdef:quiverVSA} by BRST reduction. Let us proceed here analogously for the sheaf $\mathscr D^{\mathrm{ch}}_{\mathfrak R,\hbar}$. The chiral extended moment map $\widetilde\mu_{\mathrm{ch}}$ is an $\hbar$-adic vertex superalgebra map from the current algebra 
\begin{equation}\label{def:hbar-BRST-cohomology}
    \widetilde\mu_{\mathrm{ch}}:V^{-\kappa_{\mathfrak{g}}}(\mathfrak{g})_\hbar \to \CVs[Q]_\hbar ~.
\end{equation}
In order to perform the BRST reduction procedure locally, we need to specify the open sets in the Nakajima quiver variety $\widetilde{\mathcal{M}}$ and prequotient $\mathfrak{R}$ which are relevant in the localization procedure.

\bigskip

\begin{definition}{\cite[Definition 10.1]{Coman:2023xcq}}\label{def:good open subsets}
We say an open subset $U\subset \widetilde\CM$ \textit{good} if there exists a $G$-closed open affine $\widetilde U\subset \mathfrak R^{\mathrm{st}}$ such that $\widetilde U\cap \widetilde\mu^{-1}(0)=q^{-1}(U)$ where $q^{-1}(U)$ is the preimage of $U$ under the quotient map $q:\widetilde\mu^{-1}(0)^{\mathrm{st}}\to \widetilde\CM$. We call such $\widetilde U$ a \textit{good lift} of $U$.
\end{definition}
\noindent It follows from the definition that a good open subset is affine. Remark now the following. 
\begin{lem}{\cite{Coman:2023xcq}}\label{lem:good open form a base}
The set of good open subsets forms a base of Zariski topology on $\widetilde\CM$.
\end{lem}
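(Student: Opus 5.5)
To prove Lemma~\ref{lem:good open form a base} we must show: for every point $x\in\widetilde\CM$ and every open neighborhood $V\ni x$, there is a good open $U$ with $x\in U\subset V$. We work on the prequotient. The stable locus $\mathfrak R^{\mathrm{st}}$ is the semistable locus for a linearization by a character $\chi$ of $G=\GL(\mathbf v)$, so it is covered by principal opens $\mathfrak R_f=\{f\neq 0\}$ with $f$ a $\chi^k$-semi-invariant, $k\geq 1$. Each $\mathfrak R_f$ is affine. Its GIT quotient is $\Spec\mathbb C[\mathfrak R]_{(f)}^G$, and since stability equals semistability here, every $G$-orbit in $\mathfrak R_f$ is closed in $\mathfrak R_f$.

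\emph{Step 1.} Pick $\tilde x\in q^{-1}(x)\subset\widetilde\mu^{-1}(0)^{\mathrm{st}}$ and $f$ with $f(\tilde x)\neq0$. Then $\mathfrak R_f\cap\widetilde\mu^{-1}(0)=q^{-1}(U_f)$, where $U_f\subset\widetilde\CM$ is the principal open set defined by the image of $f$ in the Proj description of $\widetilde\CM$. This holds because $\widetilde\CM=\widetilde\mu^{-1}(0)\sslash_\chi G$ and $\widetilde\mu^{-1}(0)\subset\mathfrak R$ is a closed $G$-stable subscheme, so restriction of semi-invariants is compatible with this description. Hence $U_f$ is good, with good lift $\mathfrak R_f$; these lifts are $G$-closed, meaning $G$-stable with closed orbits.

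\emph{Step 2.} Refine inside $V$. Since $U_f$ is affine with coordinate ring $\mathbb C[\widetilde\mu^{-1}(0)_f]^G$, we can choose a $G$-invariant function $\bar g\in\mathbb C[\widetilde\mu^{-1}(0)_f]^G$ with $x\in(U_f)_{\bar g}\subset V\cap U_f$. Because $G$ is reductive and $\widetilde\mu^{-1}(0)_f\subset\mathfrak R_f$ is a closed $G$-stable subscheme of an affine scheme, the restriction map $\mathbb C[\mathfrak R_f]^G\to\mathbb C[\widetilde\mu^{-1}(0)_f]^G$ is surjective. Lift $\bar g$ to $g\in\mathbb C[\mathfrak R_f]^G$ and set $\widetilde U:=(\mathfrak R_f)_g$. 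This set is affine, $G$-stable, and contained in $\mathfrak R^{\mathrm{st}}$, and orbits in it remain closed because $g$ is invariant. Moreover $\widetilde U\cap\widetilde\mu^{-1}(0)=q^{-1}((U_f)_{\bar g})$. So $(U_f)_{\bar g}$ is a good open neighborhood of $x$ inside $V$, which proves the base property.

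\emph{Obstacle.} The main point requiring care is the meaning of ``$G$-closed'' in Definition~\ref{def:good open subsets} and the handling of super-structure. If ``$G$-closed'' means saturated with respect to the quotient map $\mathfrak R^{\mathrm{st}}\to\mathfrak R^{\mathrm{st}}/G$, the argument goes through as above, since principal opens defined by invariants or semi-invariants are saturated. For super-schemes (when $\mathbf u\neq0$) the topology is bosonic, so it suffices to run the argument on the underlying bosonic schemes. Surjectivity of invariants holds equally for $G$-equivariant quotients of super-commutative algebras by reductivity.
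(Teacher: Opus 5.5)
The paper does not prove this lemma itself; it quotes it from \cite{Coman:2023xcq}. Your argument is the standard GIT proof and is correct as written. $\mathfrak R^{\mathrm{st}}$ is the $\chi$-semistable locus, since stability equals semistability for this character. The principal opens $D_+(\bar f)$ of the Proj quotient, refined by invariant functions, form a base. Reductivity of $G$ lets you lift both the semi-invariant $\bar f$ and the invariant $\bar g$ from the closed $G$-stable subscheme $\widetilde\mu^{-1}(0)$ to $\mathfrak R$. The resulting lifts $(\mathfrak R_f)_g\subset\mathfrak R^{\mathrm{st}}$ are $G$-stable, affine and saturated with closed orbits, so they satisfy every reasonable reading of ``$G$-closed''. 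Your reduction of the super case to the underlying bosonic topology is also fine.
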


In \cite[Section 10]{Coman:2023xcq}, we defined a presheaf of $\mathbb Z$-graded $\hbar$-adic vertex superalgebras on the set of good open subsets of $\widetilde\CM$ by assigning every good open subset $U\subset \widetilde\CM$ to the relative $\hbar$-adic BRST cohomology
\begin{align}\label{presheaf}
    U\mapsto H^{\infty/2+\bullet}_{\hbar\mathrm{BRST}}(\mathfrak{g},\mathscr D^{\mathrm{ch}}_{\mathfrak R,\hbar}(\widetilde U))\;,
\end{align}
where $\widetilde U$ is a good lift of $U$, and   $H^{\infty/2+\bullet}_{\hbar \mathrm{BRST}} (\mathfrak{g} ,\mathscr D^{\mathrm{ch}}_{\mathfrak R,\hbar}(\widetilde U))$ does not depend on the choice of good lift $\widetilde U$. 

\medskip

\begin{prop}
$H^{\infty/2+\bullet}_{\hbar\mathrm{BRST}}(\mathfrak{g},\mathscr D^{\mathrm{ch}}_{\mathfrak R,\hbar}(\widetilde U))$ does not depend on the choice of good lift $\widetilde U$.
\end{prop}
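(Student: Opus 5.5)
Given two good lifts $\widetilde U_1,\widetilde U_2$ of the same good open $U\subset\widetilde\CM$, the plan is to compare both with a common third lift and show that restriction induces isomorphisms on relative $\hbar$-adic BRST cohomology. First I check that $\widetilde U_{12}:=\widetilde U_1\cap\widetilde U_2$ is again a good lift of $U$. Intersections of $G$-closed open affines in $\mathfrak R^{\mathrm{st}}$ are affine (the ambient space is separated) and $G$-stable. We also have $\widetilde U_{12}\cap\widetilde\mu^{-1}(0)=q^{-1}(U)$. I take ``$G$-closed'' to mean saturated with respect to the action, so that property is preserved as well. It therefore suffices to treat an inclusion $\widetilde U'\subset\widetilde U$ of good lifts and to prove that the restriction map
\begin{align*}
H^{\infty/2+\bullet}_{\hbar\mathrm{BRST}}(\mathfrak g,\mathscr D^{\mathrm{ch}}_{\mathfrak R,\hbar}(\widetilde U))\to H^{\infty/2+\bullet}_{\hbar\mathrm{BRST}}(\mathfrak g,\mathscr D^{\mathrm{ch}}_{\mathfrak R,\hbar}(\widetilde U'))
\end{align*}
is an isomorphism.

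Both complexes are $\hbar$-adically complete and $\mathbb C[\![\hbar]\!]$-flat, and the cohomology is compatible with the $\hbar$-filtration. The standard reduction is then to the $\hbar=0$ (semiclassical) level, using a spectral sequence for the $\hbar$-adic filtration that converges because of completeness, or an inductive argument mod $\hbar^N$ via the five lemma. So it suffices to show that the classical relative BRST complexes
\begin{align*}
C(\mathfrak g,\mathbb C[J_\infty\widetilde U]\otimes\bigwedge(J_\infty(\mathfrak g\oplus\mathfrak g^*)))^{\mathrm{rel}}
\end{align*}
for $\widetilde U$ and $\widetilde U'$ (tensored with the odd generators) have isomorphic cohomology under restriction.

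On the stable locus $\widetilde\mu$ is smooth and $G$ acts freely, so the same holds for $J_\infty G$ on $J_\infty\widetilde U$. Hence $\{\partial^n\widetilde\mu^*(a_i)\}$ is a regular sequence on $\mathbb C[J_\infty\widetilde U]$. Filtering the BRST differential into its Koszul part and its Lie-algebra-cohomology part, following \cite{arakawa2015localization} and the argument behind Lemma \ref{lem: regular sequence} for the odd variables, gives the classical cohomology as
\begin{align*}
\mathbb C[J_\infty(\widetilde U\cap\widetilde\mu^{-1}(0))]^{J_\infty G}=\mathbb C[J_\infty q^{-1}(U)]^{J_\infty G}\cong\mathbb C[J_\infty U],
\end{align*}
concentrated in degree $0$. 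The last identification uses that $q^{-1}(U)\to U$ is a principal $G$-bundle over an affine base, and hence so is its jet. This description depends only on $q^{-1}(U)$, and restriction along $\widetilde U'\subset\widetilde U$ induces the identity on it. Therefore the restriction is a quasi-isomorphism at $\hbar=0$, and lifting gives the claim.

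The main obstacle is the $\hbar$-adic lifting step. One must verify that vanishing of the semiclassical cohomology outside degree $0$, together with the degree-$0$ isomorphism, propagates through the completed $\hbar$-filtration without $\varprojlim^1$ issues. I would handle this by working mod $\hbar^N$, where the five lemma applies, and passing to the limit using Mittag-Leffler, which holds because the transition maps are surjective.
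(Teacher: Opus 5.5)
Your argument is correct, but it follows a different route from the paper's. Both proofs start by passing to $\widetilde U_{12}=\widetilde U_1\cap\widetilde U_2$.

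\textbf{The paper's route.} It writes $\widetilde U_1=\widetilde U_{12}\cup W$ with $W=\widetilde U_1\setminus(\widetilde U_{12}\cap\widetilde\mu^{-1}(0))$ and $V=W\cap\widetilde U_{12}$. It then takes the Mayer--Vietoris sequence of $\mathscr D^{\mathrm{ch}}_{\mathfrak R,\hbar}$-sections. Finally it kills the $W$ and $V$ terms with Lemma~\ref{lem: vanishing BRST away from zero}: BRST cohomology vanishes on $G$-stable affine opens missing $\widetilde\mu^{-1}(0)$, via the same $\hbar$-adic spectral sequence and an acyclic Koszul computation.

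\textbf{Your route.} You compare the two lifts directly through the restriction map. This needs less than you actually prove. The restriction is a map of filtered complexes, and on the Koszul page at $\hbar=0$ it is the restriction $\mathbb C[J_\infty(\widetilde U\cap\widetilde\mu^{-1}(0))]\to\mathbb C[J_\infty(\widetilde U'\cap\widetilde\mu^{-1}(0))]$. This is an isomorphism because both are the same open subscheme $q^{-1}(U)$ of $\widetilde\mu^{-1}(0)$. The comparison theorem then finishes the argument, with convergence coming from the conformal-weight grading. You do not need the Chevalley step to be concentrated in degree $0$, nor the identification with $\mathbb C[J_\infty U]$; those give the stronger sheaf-level statement of Theorem~\ref{thm: vanishing for sheaf}.

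\textbf{The $\hbar$-adic lifting.} Your lifting step is fine. Its cleanest form: the cone of the restriction map is complete, $\hbar$-torsion-free and acyclic mod $\hbar$. Hence it is acyclic mod $\hbar^N$, and then acyclic by the Milnor sequence. Mittag-Leffler is needed for the chain-level system, not for the cohomology. You should also note that passing to the relative subcomplex commutes with reduction mod $\hbar$, because $G$ is reductive.

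\textbf{What each approach buys.} The paper only needs the degenerate Koszul computation where the zero fiber is empty, so nothing has to be tracked naturally. Your route needs regularity of $\{\partial^n\widetilde\mu^*(a_i)\}$ on the stable locus, which you justify correctly via smoothness of $\widetilde\mu$ there. In return it avoids Mayer--Vietoris entirely, and that is a genuine advantage here.

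Since $\widetilde U_{12}\cap\widetilde\mu^{-1}(0)=\widetilde U_1\cap\widetilde\mu^{-1}(0)$, the paper's $W$ is $\widetilde U_1$ minus the nonempty locus $q^{-1}(U)$, which has codimension $\dim\mathfrak g$. Once $\dim\mathfrak g\ge 2$ this $W$ is not affine: by Hartogs, sections over $W$ extend to $\widetilde U_1$. The same holds for $V$. So the vanishing lemma does not apply to $W$ and $V$ as stated. The paper's argument has to be repaired by covering $\widetilde U_1\setminus\widetilde U_{12}$ with finitely many $G$-invariant principal opens $D(f)$ disjoint from $\widetilde\mu^{-1}(0)$, and then running a \v{C}ech spectral sequence. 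Your direct comparison sidesteps this issue.
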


\medskip

\begin{proof}
Let $\widetilde U_1,\widetilde U_2$ be two good lifts of $U$. Then $\widetilde U_{12}:=\widetilde U_1\cap\widetilde U_2$ is also a good lift of $U$. Then it is enough to show that the induced maps 
\begin{align*}
    H^{\infty/2+\bullet}_{\hbar\mathrm{BRST}}(\mathfrak{g},\mathscr D^{\mathrm{ch}}_{\mathfrak R,\hbar}(\widetilde U_{i}))\longrightarrow H^{\infty/2+\bullet}_{\hbar\mathrm{BRST}}(\mathfrak{g},\mathscr D^{\mathrm{ch}}_{\mathfrak R,\hbar}(\widetilde U_{12}))\:, \text{ for }\:i=1,2
\end{align*}
are isomorphisms. By symmetry, we only need to show this for $i=1$. Let us introduce the notations $W=\widetilde U_1\setminus \left(\widetilde U_{12}\cap \widetilde{\mu}^{-1}(0)\right)$ and $V=W\cap \widetilde U_{12}$. Note that both $W$ and $V$ are $G$-closed affine open subsets in the stable locus $\mathfrak{R}^{\mathrm{st}}$, and $W\cap \widetilde{\mu}^{-1}(0)=V\cap \widetilde{\mu}^{-1}(0)=\emptyset$. We have the short exact sequence 
\begin{align*}
    0\longrightarrow\mathscr D^{\mathrm{ch}}_{\mathfrak R,\hbar}(\widetilde U_1)\longrightarrow \mathscr D^{\mathrm{ch}}_{\mathfrak R,\hbar}(\widetilde U_{12})\oplus \mathscr D^{\mathrm{ch}}_{\mathfrak R,\hbar}(W)\longrightarrow \mathscr D^{\mathrm{ch}}_{\mathfrak R,\hbar}(V)\longrightarrow 0
\end{align*}
since $\mathscr D^{\mathrm{ch}}_{\mathfrak R,\hbar}$ is a sheaf. Then the proposition follows from the relative $\hbar$-adic BRST cohomology long exact sequence associated to the above short exact sequence, and the Lemma \ref{lem: vanishing BRST away from zero} below.
\end{proof}

\medskip

\begin{lem}\label{lem: vanishing BRST away from zero}
Let $S\subset \mathfrak{R}$ be a $G$-closed open affine subset such that $S\cap \widetilde{\mu}^{-1}(0)=\emptyset$, then $$H^{\infty/2+\bullet}_{\hbar \mathrm{BRST}} (\mathfrak{gl}(\mathbf v),\mathscr D^{\mathrm{ch}}_{\mathfrak R,\hbar}(S))=0\:.$$
\end{lem}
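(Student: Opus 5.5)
We will prove the vanishing by the standard "contracting homotopy" argument for BRST reduction away from the zero fibre, working first modulo $\hbar$ and then lifting. Since $S$ is affine and $S\cap\widetilde\mu^{-1}(0)=\emptyset$, the components $\widetilde\mu^*(E^\alpha)$ (the images of a basis $E^\alpha$ of $\mathfrak{gl}(\mathbf v)$ under the classical extended moment map) generate the unit ideal in $\mathbb C[S]$. Hence there exist functions $f_\alpha\in\mathbb C[S]$ with $\sum_\alpha f_\alpha\,\widetilde\mu^*(E^\alpha)=1$. Because $S$ is $G$-closed and $G$ is reductive, we may average and also arrange that the tuple $(f_\alpha)$ transforms in the coadjoint-dual way, i.e.\ defines a $G$-equivariant map $S\to\mathfrak g$. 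Then $f_\alpha\mathsf b^\alpha$ is $\mathfrak g$-invariant.

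Set $h:=\sum_\alpha f_\alpha\,\mathsf b^\alpha\in\widetilde C(\mathfrak g,\mathscr D^{\mathrm{ch}}_{\mathfrak R,\hbar}(S))_\hbar$, an element of ghost number $-1$. By \eqref{eq:J(E)},
\[
\mathcal Q_\hbar(h)=\sum_\alpha f_\alpha J(E^\alpha)+\sum_\alpha \mathcal Q_\hbar(f_\alpha)\mathsf b^\alpha+O(\hbar).
\]
Modulo $\hbar$, the classical BRST differential is the Koszul-plus-Chevalley–Eilenberg differential on $\mathbb C[J_\infty S]\otimes\mathbb C[J_\infty\Pi T^*\mathfrak g]$. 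There the leading term is $\sum f_\alpha\widetilde\mu^*(E^\alpha)=1$, and the remaining terms carry positive $\mathsf c$-$\mathsf b$ filtration degree; they are nilpotent modulo each fixed conformal weight. Thus $\mathcal Q_\hbar(h)=1+n$ with $n$ topologically nilpotent in the filtration by conformal weight and powers of $\hbar$. Since $1+n$ is then invertible in the $\hbar$-adically complete vertex superalgebra, we obtain an element $h'$ with $\mathcal Q_\hbar(h')=\mathbf 1$.

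We then use that $\mathcal Q_\hbar$ acts as a derivation of the $(-1)$-product. For any cocycle $x$, $\mathcal Q_\hbar(h'_{(-1)}x)=x+(\text{correction})$; the correction, computed via the Borcherds identity, is $\mathcal Q_\hbar$-exact up to higher $\hbar$ or weight order. Iterating this and using completeness shows that $x$ is exact, so the full complex $\widetilde C$ is acyclic.

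For the relative complex we must ensure that the homotopy preserves $C(\mathfrak g,-)_\hbar$. Because $h$ is $\mathfrak g$-invariant, $J(E^\alpha)_{(0)}$ commutes with the homotopy. However, $\mathsf b^\alpha_{(0)}$ does not annihilate $h$, so a cleaner route may be needed: we would instead apply a spectral sequence. First take cohomology of the semi-infinite part with respect to the positive-mode ghosts, which is acyclic by the $h$ argument restricted to the negative-mode $\mathsf c$'s. The remaining finite Lie algebra cohomology of a zero complex is then zero.

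The main obstacle is this compatibility with the relative subcomplex and the $\hbar$-adic convergence. We would first try reducing modulo $\hbar$: show that the classical relative complex $C(\mathfrak g,\mathbb C[J_\infty S]\otimes\ldots)$ is acyclic using the Koszul/unit-ideal argument on jets, namely that $\{\partial^n\widetilde\mu^*(E^\alpha)\}$ generate the unit ideal in $\mathbb C[J_\infty S]$. Then we would conclude by $\hbar$-adic completeness and flatness, via the long exact sequences for $0\to\hbar C\to C\to C/\hbar\to0$ and a Mittag-Leffler limit.
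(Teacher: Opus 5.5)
Your final plan is essentially the paper's proof: reduce modulo $\hbar$, show the classical relative complex over $\mathbb C[J_\infty S]$ is acyclic because the $\widetilde\mu^*(E^\alpha)$ generate the unit ideal, then lift through flatness and $\hbar$-adic completeness. The paper does the lift with the spectral sequence of the $\hbar$-adic filtration (complete convergence) rather than your long exact sequences and Mittag-Leffler argument. It gets the classical vanishing by splitting $\mathcal Q_P=\mathcal Q_P^++\mathcal Q_P^-$: the first page is the Koszul cohomology of the moment map components, which is zero since $S\cap\widetilde\mu^{-1}(0)=\emptyset$.

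The homotopy route you abandoned is actually sound, and the obstruction you saw is not there. The $\mathsf b$'s have no singular OPE with each other and the $f_\beta$ contain no ghosts, so $\mathsf b^\alpha_{(0)}h=0$. Your equivariant choice of $(f_\alpha)$ also gives $J(E^\alpha)_{(0)}h=0$, so $h$ already lies in the relative complex. That complex is closed under all products, being the joint kernel of the derivations $J(E^\alpha)_{(0)}$ and $\mathsf b^\alpha_{(0)}$.

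Classically there is nothing to invert. Up to signs, the Chevalley--Eilenberg part of $\mathcal Q_P h$ is $\sum_\beta \mathsf c_\beta\,(E^\beta\cdot h)=0$. Hence $\mathcal Q_P h=\sum_\alpha f_\alpha\widetilde\mu^*(E^\alpha)=1$, and $x\mapsto hx$ contracts the classical relative complex.

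Quantumly, ``inverting $1+n$'' is not meaningful in a non-associative vertex algebra. Moreover, an $h'$ with $\mathcal Q_\hbar h'=|0\rangle$ would need no correction at all, since $\mathcal Q_\hbar$ is a derivation of the $(-1)$-product and $|0\rangle_{(-1)}=\mathrm{id}$. What you actually have is $\mathcal Q_\hbar h=|0\rangle+\hbar r$, with $r$ a cocycle in the relative complex by torsion-freeness. Then $\mathcal Q_\hbar(h_{(-1)}y)=y+\hbar\, r_{(-1)}y$ for every cocycle $y$. Your iteration works with $h$ itself: $x=\mathcal Q_\hbar\bigl(h_{(-1)}\sum_{k\ge 0}(-\hbar)^k (r_{(-1)})^k x\bigr)$, which converges $\hbar$-adically.

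Compared with the paper, this gives an explicit contracting homotopy and avoids both spectral sequences and the external convergence lemma. Its cost is using reductivity of $G$ to choose $(f_\alpha)$ equivariantly. The paper's argument instead reuses the Koszul--Chevalley spectral sequence machinery that also computes the cohomology near $\widetilde\mu^{-1}(0)$.
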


\medskip

\begin{proof}
Consider the $\hbar$-adic filtration $\prescript{\hbar}{}{F}_{\bullet}$ on the relative $\hbar$-adic BRST complex $C(\mathfrak{gl}(\mathbf v),\mathscr D^{\mathrm{ch}}_{\mathfrak R,\hbar}(S))_\hbar$ (see below Section \ref{sec:MapPoissonAlgebra}). Let $\prescript{\hbar}{}{E}^{p,q}_r$ be the spectral sequence associated with this filtration. Since $\prescript{\hbar}{}{F}_{\bullet}$ is bounded above and complete, then we have a convergent spectral sequence $$\prescript{\hbar}{}{E}^{p,q}_r\Longrightarrow_p H^{\infty/2+p+q}_{\hbar\mathrm{BRST}}(\mathfrak{gl}(\mathbf v),\mathscr D^{\mathrm{ch}}_{\mathfrak R,\hbar}(S))$$ by the complete convergence theorem. Note that $\prescript{\hbar}{}{E}^{p,q}_0\cong \hbar^p C^{p+q}(\mathfrak{gl}(\mathbf v),\mathscr O_{J_\infty\mathfrak{R}}(S))$, where the coboundary map is the relative vertex Poisson BRST differential $\mathcal Q_P$. Then it is enough to show that the relative vertex Poisson BRST reduction $$H^{\infty/2+\bullet}_{\mathrm{PBRST}}(\mathfrak{gl}(\mathbf v),\mathscr O_{J_\infty\mathfrak{R}}(S))$$ is trivial. Write $\mathcal Q_P=\mathcal Q_P^++\mathcal Q_P^-$ as in \cite[Equation (E.7)]{Coman:2023xcq}, and let $$E^{p,q}_0=H^q(C^{p,q}(\mathfrak{gl}(\mathbf v),\mathscr O_{J_\infty\mathfrak{R}}(S)),\mathcal Q_P^-)$$ be the first page of the associated spectral sequence $E^{p,q}_r$. By \cite[Lemma E.2]{Coman:2023xcq}, $E^{p,q}_r$ converges to $H^{\infty/2+p+q}_{\mathrm{PBRST}}(\mathfrak{gl}(\mathbf v),\mathscr O_{J_\infty\mathfrak{R}}(S))$  and its first page is computed in \cite[(E.15)]{Coman:2023xcq}, which is zero in our case because $S\cap \widetilde{\mu}^{-1}(0)$ is an empty set. This implies $H^{\infty/2+\bullet}_{\mathrm{PBRST}}(\mathfrak{gl}(\mathbf v),\mathscr O_{J_\infty\mathfrak{R}}(S))=0$, therefore the lemma follows. 
\end{proof}

\subsubsection{Sheafification of the quiver presheaf of \texorpdfstring{$\hbar$}{}-adic vertex superalgebras}

Now $\mathscr H^{\infty/2+\bullet}_{\hbar\mathrm{BRST}}(\mathfrak{gl}(\mathbf v),\mathscr D^{\mathrm{ch}}_{\mathfrak R,\hbar})$ denotes the sheafification of the presheaf \eqref{presheaf} in the Zariski topology. This is a sheaf of $\mathbb Z$-graded $\hbar$-adic vertex superalgebras, with cohomological degree zero piece  
\begin{align}\label{def:sheaf of BRST reductions}
    \mathscr D^{\mathrm{ch}}_{\widetilde\CM,\hbar}:= \mathscr H^{\infty/2+0}_{\hbar\mathrm{BRST}}(\mathfrak{gl}(\mathbf v),\mathscr D^{\mathrm{ch}}_{\mathfrak R,\hbar})\;.
\end{align}
We note that if $U\subset \widetilde\CM$ is a good open affine subset, then 
\begin{align}\label{eq:sheaf on affine}
    \mathscr D^{\mathrm{ch}}_{\widetilde\CM,\hbar}(U)=H^{\infty/2+0}_{\hbar\mathrm{BRST}}(\mathfrak{gl}(\mathbf v),\mathscr D^{\mathrm{ch}}_{\mathfrak R,\hbar}(\widetilde U))\;.
\end{align}
In \cite{Coman:2023xcq}, we have proven the following theorem.

\medskip

\begin{thm}{\cite[Theorem 10.1]{Coman:2023xcq}}\label{thm: vanishing for sheaf}
The BRST cohomology sheaf $\mathscr H^{\infty/2+n}_{\hbar\mathrm{BRST}}(\mathfrak{gl}(\mathbf v),\mathscr D^{\mathrm{ch}}_{\mathfrak R,\hbar})$ vanishes for $n\neq 0$. Moreover, there is an isomorphism between sheaves of vertex Poisson superalgebras
\begin{align}
    \mathscr D^{\mathrm{ch}}_{\widetilde\CM,\hbar}/\hbar\mathscr D^{\mathrm{ch}}_{\widetilde\CM,\hbar}\cong \mathscr O_{J_\infty\widetilde\CM(Q,\mathbf v,\mathbf w|\mathbf u)}\;,
\end{align}
where $\mathbf u:=\mathbf w-\mathsf C\mathbf v$ and $\widetilde\CM(Q,\mathbf v,\mathbf w|\mathbf u)$ is the extended quiver super variety.
\end{thm}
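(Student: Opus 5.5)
The plan is to prove everything locally on good open subsets and then sheafify. By Lemma \ref{lem:good open form a base} and \eqref{eq:sheaf on affine}, it suffices to fix a good open $U\subset\widetilde\CM$ with good lift $\widetilde U$ and prove two things. First, $H^{\infty/2+n}_{\hbar\mathrm{BRST}}(\mathfrak{gl}(\mathbf v),\mathscr D^{\mathrm{ch}}_{\mathfrak R,\hbar}(\widetilde U))=0$ for $n\neq0$. Second, the degree zero piece is $\hbar$-torsion free with reduction modulo $\hbar$ equal to $\mathscr O_{J_\infty\widetilde\CM}(U)$, compatibly with restriction maps. Compatibility with shrinking $\widetilde U$ is already controlled by the independence-of-lift proposition and Lemma \ref{lem: vanishing BRST away from zero}.

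As in the proof of Lemma \ref{lem: vanishing BRST away from zero}, I would use the $\hbar$-adic filtration on the relative complex $C(\mathfrak{gl}(\mathbf v),\mathscr D^{\mathrm{ch}}_{\mathfrak R,\hbar}(\widetilde U))_\hbar$. This filtration is complete and bounded above, so its spectral sequence converges. Its $E_0$ page is $\hbar^p$ times the relative vertex Poisson BRST complex of $\mathscr O_{J_\infty\mathfrak R}(\widetilde U)$ with differential $\mathcal Q_P$.

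The core step is to compute the Poisson BRST cohomology. I would split $\mathcal Q_P=\mathcal Q_P^++\mathcal Q_P^-$ and use the second spectral sequence, which converges by \cite[Lemma E.2]{Coman:2023xcq}.
\begin{itemize}
\item The differential $\mathcal Q_P^-$ is the Koszul differential of the sequence $\{\partial^n\widetilde\mu_{\mathsf s}^*(a_i)\}$. On $\widetilde U\subset\mathfrak R^{\mathrm{st}}$ the map $\widetilde\mu$ is smooth, hence jet-flat, so this sequence is regular on the bosonic part. Lemma \ref{lem: regular sequence} (with Proposition \ref{prop: J_n-flatness of super moment maps}) transfers regularity to the super setting. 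Hence the $\mathcal Q_P^-$-cohomology is concentrated in Koszul degree $0$ and equals $\mathscr O_{J_\infty\widetilde\mu_{\mathsf s}^{-1}(0)}(\widetilde U)\otimes\Lambda(\mathsf c)$.
\item The differential $\mathcal Q_P^+$ then computes relative Lie algebra cohomology of $J_\infty\mathfrak g$ modulo $\mathfrak g$. It acts on functions on $J_\infty(\widetilde\mu_{\mathsf s}^{-1}(0)\cap\widetilde U)$. This space is a principal $J_\infty\GL(\mathbf v)$-bundle over $J_\infty U$, because jets of a principal bundle are a principal bundle for the jet group.
\item For a free action of the pro-unipotent part $J_\infty\GL(\mathbf v)\to\GL(\mathbf v)$, the relative cohomology is concentrated in degree $0$ and equals the invariants. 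These invariants are exactly $\mathscr O_{J_\infty\widetilde\CM}(U)$.
\end{itemize}

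It follows that the Poisson BRST cohomology is concentrated in degree $0$, so the $\hbar$-adic spectral sequence degenerates. Consequently $H^{n}$ vanishes for $n\ne0$. Moreover, the long exact sequence for multiplication by $\hbar$, together with the vanishing of $H^{1}$ of the reduced complex, shows that $H^0$ has no $\hbar$-torsion and that $H^0/\hbar H^0$ equals the Poisson BRST $H^0$. Completeness of $H^0$ follows from completeness of the complex, again using Mittag-Leffler and the vanishing in degree $\pm1$.

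The identification is one of vertex Poisson superalgebras. The reason is that the $\hbar$-adic OPE of $\mathscr D^{\mathrm{ch}}_{\mathfrak R,\hbar}$ reduces modulo $\hbar$ (after the $1/\hbar$ normalization of $\mathcal Q_\hbar$ in \eqref{eqdef:hbarBRSTcurrent}) to the Poisson $\lambda$-bracket. Also, the Heisenberg factor with its $\hbar^2$ pairing contributes only the coordinate functions on $J_\infty\mathcal Z$ to the extended moment map. Finally, sheafifying these local isomorphisms over the base of good opens gives the statement.

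The main obstacle I expect is the $\mathcal Q_P^+$ step. There I must justify freeness of the $J_\infty\GL(\mathbf v)$-action, and also show vanishing of the higher relative cohomology of the infinite-dimensional pro-unipotent Lie algebra $t\mathfrak g[\![t]\!]$ with coefficients in functions on a trivializable torsor. My first attempt would be to trivialize the torsor locally (étale, or Zariski after shrinking $\widetilde U$) as $J_\infty U\times J_\infty\GL(\mathbf v)$. I would then reduce to the standard computation that $\mathbb C[J_\infty G]$ is cofree for the Lie algebra $J_\infty\mathfrak g$ relative to $\mathfrak g$, and handle the super/fermionic factors via Lemma \ref{lem: regular sequence}.
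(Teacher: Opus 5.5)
Your proposal is correct and takes essentially the same route as the paper. The paper quotes this result from \cite[Theorem 10.1]{Coman:2023xcq}, and its mechanism is the one visible in the proof of Lemma~\ref{lem: vanishing BRST away from zero}: the $\hbar$-adic filtration reduces to the Poisson BRST complex, and the splitting $\mathcal Q_P=\mathcal Q_P^++\mathcal Q_P^-$ with \cite[Lemma E.2]{Coman:2023xcq} gives a Koszul step (exact on the stable locus, where $\widetilde\mu$ is smooth) followed by $J_\infty\mathfrak g$-cohomology relative to $\mathfrak g$ of functions on a $J_\infty\GL(\mathbf v)$-torsor, concentrated in degree $0$.

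One bookkeeping correction to the $\hbar$ long exact sequence: $\hbar$-torsion-freeness of $H^0$ follows from the vanishing of $H^{-1}$ of the reduced complex, not $H^1$. Surjectivity of $H^0/\hbar H^0\to H^0_{\mathrm{PBRST}}$ follows from the vanishing of $H^1$ of the $\hbar$-adic complex itself. You have already established both vanishings.
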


There is a one-dimensional torus $\mathbb S$ that acts on $\mathfrak R$ with a weight assignment to the coordinates (see \cite[Section 10.2]{Coman:2023xcq}; remark that the arguments in loc.cit. carry though when $\mathbf u\neq\mathbf 0$, with $\mathrm{wt}(\Phi_{(-n)})=\mathrm{wt}(\Psi_{(-n)})=n$). $\mathscr{D}^{\mathrm{ch}}_{\widetilde{\mathcal{M}},\hbar}(\widetilde{\mathcal{M}})$ is a complete $\mathbb S$-module, with weights bounded from below
\begin{align*}
    \mathscr D^{\mathrm{ch}}_{\widetilde\CM,\hbar}(\widetilde\CM)={\prod_{m\ge -M}}'\mathscr D^{\mathrm{ch}}_{\widetilde\CM,\hbar}(\widetilde\CM)_m \;,
\end{align*}
where $\mathscr D^{\mathrm{ch}}_{\widetilde\CM,\hbar}(\widetilde\CM)_m$ is the weight $m$ subspace and ${\prod}'$ means the subspace of the product space consisting of elements $(x_i)_{i=-M}^{\infty}$ such that there exists $N$ and $x_n\in \hbar^{n-N}\mathscr D^{\mathrm{ch}}_{\widetilde\CM,\hbar}(\widetilde\CM)_N$ for all $n>N$. The subspace of $\mathbb S$-finite elements
\begin{align}
    \mathscr D^{\mathrm{ch}}_{\widetilde\CM,\hbar}(\widetilde\CM)_{\mathbb S\text{-fin}}:=\bigoplus_{m\ge 0}\mathscr D^{\mathrm{ch}}_{\widetilde\CM,\hbar}(\widetilde\CM)_m \;
\end{align}
is a vertex superalgebra over the base ring $\mathbb C[\hbar]$. We thus construct a vertex superalgebra associated to the quiver data $(Q,\mathbf v, \mathbf w)$, which we denote $\mathsf D^{\mathrm{ch}}(\widetilde\CM)$, as the $\hbar=1$ specialization of the $\mathbb S$-finite part
\begin{align}\label{eqdef:quiverVSA-Local}
\mathsf D^{\mathrm{ch}}(\widetilde\CM):=\mathscr D^{\mathrm{ch}}_{\widetilde\CM,\hbar}(\widetilde\CM)_{\mathbb S\text{-fin}}/(\hbar-1)\;.
\end{align}
We claim, in particular, that there exists a natural vertex superalgebra map from $\CV[Q]$ \eqref{eqdef:quiverVSA}
\begin{equation}\label{naturalVSAmap-1}
    \CV[Q]\to\mathsf D^{\mathrm{ch}}(\widetilde\CM)~.
\end{equation}
We will analyze this in the next section.

\subsubsection{Map of quiver Poisson algebras}

The canonical filtration from Section \ref{sec:MapPoissonAlgebra} naturally extends to a sheaf $\mathscr D$ of $\hbar$-adic vertex algebras. Namely, we define the subsheaf $^{\hbar}F^p\mathscr D$ to be the sheafification of the sub-presheaf
\begin{align}
    U\mapsto {}^{\hbar}F^p(\mathscr D(U)) \;,
\end{align}
where ${}^{\hbar}F^p(\mathscr D(U))$ is defined the same way as \eqref{eq:canonical filtration}. This allows us to define the Zhu's $C_2$ sheaf
\begin{align}\label{eq:C2sheaf}
    \CR({\mathscr D}):={}^{\hbar}F^0\mathscr D/{}^{\hbar}F^1\mathscr D\;.
\end{align}
In the quiver vertex superalgebra case where $\mathscr D=\mathscr D^{\mathrm{ch} }_{\widetilde\CM,\hbar}$, 
\begin{align}
    \CR(\mathscr D^{\mathrm{ch}}_{\widetilde\CM,\hbar})/\hbar \CR(\mathscr D^{\mathrm{ch}}_{\widetilde\CM,\hbar})\cong \mathscr D^{\mathrm{ch}}_{\widetilde\CM,\hbar}/(\hbar \mathscr D^{\mathrm{ch}}_{\widetilde\CM,\hbar}+{}^{\hbar}F^2\mathscr D^{\mathrm{ch}}_{\widetilde\CM,\hbar}) \cong  \CR(\mathscr O_{J_\infty\widetilde\CM(Q,\mathbf v,\mathbf w|\mathbf u)})\cong  \mathscr O_{\widetilde\CM(Q,\mathbf v,\mathbf w|\mathbf u)}\;
\end{align} 
induces (see \cite[Section 10.3]{Coman:2023xcq}) an isomorphism 
\begin{align}
    \CR(\mathscr D^{\mathrm{ch}}_{\widetilde\CM,\hbar})\cong \mathscr O_{\widetilde\CM(Q,\mathbf v,\mathbf w|\mathbf u)}[\![\hbar]\!]\;.
\end{align}
We have an obvious inclusion 
\begin{align}
    {}^{\hbar}F^1(\mathscr D^{\mathrm{ch}}_{\widetilde\CM,\hbar}(\widetilde\CM))\subset 
    {}^{\hbar}F^1(\mathscr D^{\mathrm{ch}}_{\widetilde\CM,\hbar})(\widetilde\CM)\;,
\end{align}
and thus get the following natural maps between $\hbar$-adic Poisson algebras
\begin{align}
    \CR(\mathscr D^{\mathrm{ch}}_{\widetilde\CM,\hbar}(\widetilde\CM))\twoheadrightarrow \mathscr D^{\mathrm{ch}}_{\widetilde\CM,\hbar}(\widetilde\CM)/{}^{\hbar}F^1(\mathscr D^{\mathrm{ch}}_{\widetilde\CM,\hbar})(\widetilde\CM)\hookrightarrow \CR(\mathscr D^{\mathrm{ch}}_{\widetilde\CM,\hbar})(\widetilde\CM)\cong \mathscr O_{\widetilde\CM(Q,\mathbf v,\mathbf w|\mathbf u)}(\widetilde\CM)[\![\hbar]\!] \;,
\end{align}
where the first map is surjective and the second map is injective. Restricting to $\mathbb S$-finite part and specializing to $\hbar=1$,  we get a map between Poisson algebras
\begin{align}\label{eq:map from C_2 alg to function ring of quiver variety}
    \CR(\mathsf D^{\mathrm{ch}}(\widetilde\CM))\longrightarrow \mathscr O_{\widetilde\CM(Q,\mathbf v,\mathbf w|\mathbf u)}(\widetilde\CM) \;.
\end{align}

\subsubsection{Quiver sheaf of modules of \texorpdfstring{$\hbar$}{}-adic vertex superalgebras}

Having defined the module $\CV_\lambda[Q]$ \eqref{eqdef:quiverVSAmodule} of the quiver vertex superalgebra $\CV[Q]$ \eqref{eqdef:quiverVSA}, we define here its local analog.  
\begin{equation}
    \mathscr D^{\mathrm{ch}}_{\mathfrak R,\hbar,\lambda}=\mathscr D^{\mathrm{ch}}_{T^*\mathrm{Rep}(\mathbf v,\mathbf w|\mathbf u),\hbar}\widehat{\boxtimes} \mathscr H^{(\cdot,\cdot)_Q}_{\mathbb C^{Q_0},\hbar,\lambda} 
\end{equation}
is the localization of the module $\mathcal D_\lambda^{\mathrm{ch}}(\mathfrak R)$ \eqref{eqdef:prequotientquiverVOA-module}, where 
\begin{equation}
    \mathscr D^{\mathrm{ch}}_{\mathfrak R,\hbar,\lambda}(\widetilde{U})=\mathscr D^{\mathrm{ch}}_{T^*\mathrm{Rep}(\mathbf v,\mathbf w|\mathbf u),\hbar}(U')\widehat{\boxtimes} \mathcal H^{(\cdot,\cdot)_Q}_{\mathbb C^{Q_0},\hbar,\lambda} 
\end{equation}
for $\widetilde{U}=U'\times\mathcal{Z}$, with $U'\subset T^*\mathrm{Rep}(\mathbf v,\mathbf w|\mathbf u)$ and $\widetilde{U}\cap\widetilde\mu^{-1}_{\mathsf{s}}(0)$ identified with the preimage of $U\subset\widetilde{\mathcal{M}}$ under the quotient map \eqref{quotientmaps-super}. $\mathcal H_{\hbar,\lambda}:= \mathcal H^{(\cdot,\cdot)_Q}_{\mathbb C^{Q_0},\hbar,\lambda}$ is the Verma module of the $\hbar$-adic Heisenberg algebra $\mathcal H_{\hbar}:=\mathcal H^{(\cdot,\cdot)_Q}_{\mathbb C^{Q_0},\hbar}$. Similarly to the sheaf of BRST reductions $\mathscr D^{\mathrm{ch}}_{\widetilde\CM,\hbar}$ \eqref{def:sheaf of BRST reductions}, in \cite{Coman:2023xcq} we defined the module
\begin{align}\label{def:sheaf of BRST reductions module}
    \mathscr D^{\mathrm{ch}}_{\widetilde\CM,\hbar,\lambda}:= \mathscr H^{\infty/2+0}_{\hbar\mathrm{BRST}}(\mathfrak{gl}(\mathbf v),\mathscr D^{\mathrm{ch}}_{\mathfrak R,\hbar,\lambda})\;,
\end{align}
where, for $U\subset \widetilde\CM$ a good open affine subset, 
\begin{align}\label{eq:sheaf on affine module}
    \mathscr D^{\mathrm{ch}}_{\widetilde\CM,\hbar,\lambda}(U)=H^{\infty/2+0}_{\hbar\mathrm{BRST}}(\mathfrak{gl}(\mathbf v),\mathscr D^{\mathrm{ch}}_{\mathfrak R,\hbar,\lambda}(\widetilde U))\;.
\end{align}
The classical limit $\mathscr D^{\mathrm{ch}}_{\widetilde\CM,\hbar,\lambda}/\hbar \mathscr D^{\mathrm{ch}}_{\widetilde\CM,\hbar,\lambda}$ is isomorphic to a tautological line bundle $\mathcal{L}_\lambda$ on $J_\infty\widetilde{\mathcal{M}}(\mathbf v , \mathbf w | \mathbf u)$. Finally, the $\hbar=1$ specialization of the $\mathbb S$-finite part of $\mathscr D^{\mathrm{ch}}_{\widetilde\CM, \hbar,\lambda}$ defines cf. \cite[Section 13.2]{Coman:2023xcq} the module 
\begin{align}\label{eqdef:quiverVSA-Local-module}
\mathsf D^{\mathrm{ch}}_\lambda(\widetilde\CM):=\mathscr D^{\mathrm{ch}}_{ \widetilde\CM, \hbar,\lambda}(\widetilde\CM)_{\mathbb S\text{-fin}}/(\hbar-1)\;
\end{align}
of the vertex superalgebra $\mathsf D^{\mathrm{ch}}(\widetilde\CM)$ \eqref{eqdef:quiverVSA-Local}. We claim that there exists a natural map of vertex superalgebra modules, which is the analog of \eqref{naturalVSAmap-1}
\begin{equation}\label{naturalVSAModulemap-1}
    \CV_\lambda[Q]\to\mathsf D^{\mathrm{ch}}_\lambda(\widetilde\CM)~.
\end{equation}

\section{Vertex Superalgebra of Global Sections}\label{sec:VSA-globalsections}

Having recalled in Sections \ref{sec:VSA-quiverSetting} and \ref{sec:VSA-quiverSetting-Local} the construction of the vertex superalgebras $\CV(Q,\mathbf v,\mathbf w)$ \eqref{eqdef:quiverVSA} and $\mathsf D^{\mathrm{ch}}(\widetilde\CM)$ \eqref{eqdef:quiverVSA-Local} from \cite{Coman:2023xcq}, which are both chiralizations of the extended Nakajima quiver variety $\widetilde\CM$ \eqref{eqdef:ExtNakajimaQuiverSuperVariety}, we will now focus on the natural vertex superalgebra map \eqref{naturalVSAmap-1}
\begin{equation}\label{naturalVSAmap-2}
    \CV[Q]\to\mathsf D^{\mathrm{ch}}(\widetilde\CM)~.
\end{equation}
There exist, in particular, quivers and dimension data $(Q,\mathbf v,\mathbf w)$, for which the BRST cohomology $H^{\infty/2+n}_{\mathrm{BRST}}(\mathfrak{gl}(\mathbf v),\CVs(Q,\mathbf v,\mathbf w))$ vanishes for degree $n <0$, and where natural vertex superalgebra map $\CV[Q]\to\mathsf D^{\mathrm{ch}}(\widetilde\CM)$ is injective. This section makes this statement precise.

\subsection{Vanishing Theorem for the Negative Degree BRST Cohomologies}

\noindent{To ease formulae, throughout this section we will use the abbreviated notation $\mathfrak{g}=\gl (\mathbf v)$ and $\mathfrak{R}=T^*\mathrm{Rep}(\mathbf v,\mathbf w | \mathbf u)\times \mathcal{Z}$ \eqref{def:prequotient}. The following theorem is an adaptation of Theorem 2.3.3.1 \cite{arakawa2015localization}, where 
\begin{equation}
    \mathcal{O}_{J_\infty \mathfrak{R}} \otimes \Lambda^{\mathrm{vert}}(\mathfrak{g}\oplus\mathfrak{g}^\ast)
\end{equation}
is a vertex Poisson algebra; $\mathcal{O}_{J_\infty \mathfrak{R}}$ is a vertex Poisson algebra associated to the Poisson scheme $\mathfrak{R}$, and $\Lambda^{\mathrm{vert}}(\mathfrak{g}\oplus\mathfrak{g}^\ast)$ is the vertex Poisson algebra $\mathbb{C}[J_\infty T^\ast\Pi\mathfrak{g}]$. This contains is a globally defined element 
\begin{equation}\label{def:Arakawa:dchi}
    d_{\mathbf z} = \sum_i \tilde\mu^\ast(g_i)\otimes\phi^\ast_i - \frac{1}{2}\sum_{i,j,k}{\bf{1}}\otimes f_{ij}{}^k \phi_k\phi^\ast_i\phi^\ast_j
\end{equation}
where $i$ runs over basis elements $\{g_i\}$ of $\mathfrak{g}$, while $\{\phi_i\}$ and $\{\phi_i^\ast\}$ are a copy and dual copy of this basis  inside the exterior algebra $\Lambda(\mathfrak{g}\oplus\mathfrak{g}^\ast)\cong \mathbb{C}[T^\ast\Pi\mathfrak{g}]$, with $f_{ij}{}^k$ the structure constants relative to this basis. The operator $d_{\mathbf z}$ is a derivation and $(d_{\mathbf z})_{(0)}^2=0$. The vertex Poisson algebra $\mathcal{O}_{J_\infty \mathfrak{R}} \otimes \Lambda^{\mathrm{vert}}(\mathfrak{g}\oplus\mathfrak{g}^\ast)$ has a bi-grading where $\Pi\mathfrak{g}^\ast$ has bi-degree $(1,0)$, and $\Pi\mathfrak{g}$ has bi-degree $(0,-1)$. $\mathrm H^{\infty/2+n}_{d_{\mathbf z}} ( \mathcal{O}_{J_\infty \mathfrak{R}} (U) \otimes \Lambda^{\mathrm{vert}}(\mathfrak{g}\oplus\mathfrak{g}^\ast))$ is a (presheaf) cohomology vertex Poisson algebra for open affine subsets $U\in\mathfrak{R}$, and $\mathcal H^{\infty/2+n}_{d_{\mathbf z}} ( \mathcal{O}_{J_\infty \mathfrak{R}} \otimes \Lambda^{\mathrm{vert}}(\mathfrak{g}\oplus\mathfrak{g}^\ast))$ is the associated sheaf\footnote{With regard to the Theorem \ref{lem:vanishingHigherDegCoh-SemiClassical}, see \cite[Section 5.3]{kuwabara2017vertex} for Poisson BRST reduction in the case of vertex algebras associated to hypertoric varieties.}.  
}

\bigskip

{
\begin{thm}[cf. Theorem 2.3.3.1 \cite{arakawa2015localization}]\label{lem:vanishingHigherDegCoh-SemiClassical}
If $\widetilde\mu$ is jet-flat (property $(\widetilde P_2)$ in Proposition \ref{prop: various jet flatness}), then 
\begin{equation} 
\mathrm H^{\infty/2+n}_{\mathrm{BRST}}(\gl(\mathbf v),\mathcal{O}_{J_\infty \mathfrak{R}})=0 ~,  \qquad n<0~,
\end{equation}
and 
\begin{equation} 
\mathrm H^{\infty/2+0}_{\mathrm{BRST}}(\gl(\mathbf v),\mathcal{O}_{J_\infty \mathfrak{R}})\cong{\mathbb C}[J_\infty \widetilde{\mu}_{\mathsf s}^{-1}(0)]^{J_\infty\mathrm{GL}(\mathbf v)}\,.
\end{equation}
\end{thm}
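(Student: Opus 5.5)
This follows the strategy of \cite[Theorem 2.3.3.1]{arakawa2015localization}. We split the Poisson BRST differential $d_{\mathbf z}$ in \eqref{def:Arakawa:dchi} according to the bi-grading into a Koszul part and a Chevalley--Eilenberg part,
\[
d_{\mathbf z}=d_K+d_{CE},\qquad d_K=\sum_i \widetilde\mu_{\mathsf s}^*(g_i)\otimes\phi_i^*\ \text{(restricted to the $\Pi\mathfrak g$-variables)} .
\]
Equivalently, we write $\mathcal Q_P=\mathcal Q_P^-+\mathcal Q_P^+$ as in \cite[(E.7)]{Coman:2023xcq}, where $\mathcal Q_P^-$ contracts the ghosts $\phi_{i,(-n)}$ against the jets $\partial^n\widetilde\mu_{\mathsf s}^*(g_i)$, and $\mathcal Q_P^+$ is the Lie algebra cohomology differential of $J_\infty\mathfrak g$ (relative to $\mathfrak g$) with coefficients. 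The relative complex is bi-graded, and we filter it by the $\mathcal Q_P^+$-degree. This filtration is bounded in each conformal weight because the $\mathbb S$-weights are bounded below. It therefore gives a convergent spectral sequence $E_r^{p,q}\Rightarrow H^{\infty/2+p+q}_{\mathrm{BRST}}$, and its $E_1$ page is the $\mathcal Q_P^-$-cohomology \cite[Lemma E.2]{Coman:2023xcq}.

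The key input is jet-flatness. By property $(\widetilde P_2)$ and Proposition \ref{prop: J_n-flatness of super moment maps}, $J_n\widetilde\mu_{\mathsf s}$ is flat for every $n$. Since the target $J_n\mathfrak g^*$ is smooth, this flatness is equivalent to $\{\partial^m\widetilde\mu_{\mathsf s}^*(g_i)\}_{i,\,m\le n}$ being a regular sequence in $\mathbb C[J_n\mathfrak R]$ (for the super case, using Lemma \ref{lem: regular sequence}). Passing to the direct limit over $n$, the infinite sequence $\{\partial^m\widetilde\mu_{\mathsf s}^*(g_i)\}$ is regular in $\mathbb C[J_\infty\mathfrak R]$. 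The $\mathcal Q_P^-$-complex is the Koszul complex of this sequence, tensored with the free exterior algebra on the dual ghosts $\phi^*_{i,(-n)}$. Regularity gives
\[
H^{q}(\mathcal Q_P^-)=0\quad (q<0),\qquad H^{0}(\mathcal Q_P^-)\cong \mathbb C[J_\infty\widetilde\mu_{\mathsf s}^{-1}(0)]\otimes\Lambda(\phi^*).
\]
We must also account for the relative condition, which removes the zero modes. We would check that taking the $\mathfrak g$-invariant part commutes with $\mathcal Q_P^-$-cohomology; this holds because $\mathfrak g$ is reductive and acts semisimply on each weight space.

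It follows that $E_1^{p,q}=0$ for $q<0$. Moreover the $E_1$ page is concentrated in $q=0$, so its $p$-direction is the relative Lie algebra cohomology complex $C^\bullet(J_\infty\mathfrak g,\mathfrak g;\mathbb C[J_\infty\widetilde\mu_{\mathsf s}^{-1}(0)])$. Since $p\ge 0$ on this complex, total degree $p+q<0$ vanishes at $E_1$ and hence in the limit, which gives the first claim. The degree-zero part is $E_2^{0,0}=H^0(J_\infty\mathfrak g,\mathfrak g;\mathbb C[J_\infty\widetilde\mu_{\mathsf s}^{-1}(0)])$, namely the $J_\infty\mathfrak g$-invariants. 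All later differentials into and out of $(0,0)$ vanish for degree reasons, so this is the degree-zero cohomology. The connected group $J_\infty\mathrm{GL}(\mathbf v)$ is generated by $\mathrm{GL}(\mathbf v)$ together with the pro-unipotent kernel exponentiating $t\mathfrak g[[t]]$. Hence these invariants coincide with $\mathbb C[J_\infty\widetilde\mu_{\mathsf s}^{-1}(0)]^{J_\infty\mathrm{GL}(\mathbf v)}$.

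I expect the main obstacle to be the convergence and limit bookkeeping in infinitely many variables. First, regularity of the infinite sequence must be deduced from regularity at each finite jet level; this uses that $\mathbb C[J_\infty\mathfrak R]=\varinjlim\mathbb C[J_n\mathfrak R]$ is a filtered colimit, so Koszul homology commutes with it. Second, the filtration must be shown to be exhaustive and bounded in each conformal weight, so that the spectral sequence converges. For both points I would work weight by weight with respect to the $\mathbb S$-grading, where each piece is finite-dimensional in the ghost directions, mirroring the argument of \cite{arakawa2015localization}.
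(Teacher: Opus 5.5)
Your proposal is correct and follows essentially the same route as the paper. The bigrading splits the Poisson BRST differential into a Koszul part and a Chevalley--Eilenberg part, and jet-flatness makes $\{\partial^n\widetilde\mu^*(g_i)\}$ a regular sequence, so the $E_1$ page is concentrated in Koszul degree zero; boundedness of the bidegree in each graded piece gives convergence, and degree zero yields the $\mathfrak g[t]$-invariants of $\mathbb C[J_\infty\widetilde\mu_{\mathsf s}^{-1}(0)]$. Your version also makes explicit several points the paper leaves implicit: the passage from $\widetilde\mu$ to $\widetilde\mu_{\mathsf s}$ via the super regular-sequence lemma, the colimit over finite jet levels, compatibility of the relative condition with cohomology, and the identification of $\mathfrak g[t]$-invariants with $J_\infty\mathrm{GL}(\mathbf v)$-invariants.
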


\bigskip

\begin{proof}
In the notation of Section \ref{sec:BRST reduction}, the cohomology vertex Poisson algebra is
\begin{equation} 
\mathrm H^{\infty/2+n}_{\mathrm{BRST}}(\gl(\mathbf v),\mathcal{O}_{J_\infty \mathfrak{R}})=\mathrm H^{\infty/2+n}_{d_{\mathbf z}} ( \mathcal{O}_{J_\infty \mathfrak{R}} \otimes \Lambda^{\mathrm{vert}}(\mathfrak{g}\oplus\mathfrak{g}^\ast))  ~.
\end{equation}
The jet-flatness of $\tilde\mu$ implies that $\{\partial^n(\tilde\mu^\ast (g_i))\}$ is a regular sequence for all $n\in\mathbb{Z}_{\geq 0}$. The complex $( \mathcal{O}_{J_\infty \mathfrak{R}} \otimes \Lambda^{\mathrm{vert}}(\mathfrak{g}\oplus\mathfrak{g}^\ast),(d_{\mathbf z})_{(0)})$ has a bi-grading with $\Pi\mathfrak{g}^\ast$ degree $(1,0)$, and $\Pi\mathfrak{g}$ degree $(0,-1)$, and gives rise to a spectral sequence in terms of the Koszul complex $K_\bullet\left(\{\partial^n(\tilde\mu^\ast(g_i)\}, \mathcal{O}_{J_\infty\mathfrak{R}}(U)\right)=\mathcal{O}_{J_\infty\mathfrak{R}}(U)\otimes \Lambda^\bullet(\mathfrak{g}^\ast[t])$ and Koszul cohomology $H_\bullet\left(\{\partial^n(\tilde\mu^\ast(g_i)\}, \mathcal{O}_{J_\infty\mathfrak{R}}(U)\right)$ of $\mathcal{O}_{J_\infty\mathfrak{R}}(U)$ with respect to the regular sequence $\{\partial^n(\tilde\mu^\ast(g_i))\}$
\begin{itemize}
    \item $E^{p,q}_0(U)=K_q\left(\{\partial^n(\tilde\mu^\ast(g_i)\}, \mathcal{O}_{J_\infty\mathfrak{R}}(U)\right)\otimes\Lambda^p(\mathfrak{g}^\ast[t^{-1}]t^{-1})$\,;
    \item $E^{p,q}_1(U)=C^p \left( \mathfrak{g}[t], K_q\left(\{\partial^n(\tilde\mu^\ast(g_i)\}, \mathcal{O}_{J_\infty\mathfrak{R}}(U)\right) \right)$ is the Chevalley complex of $\mathfrak{g}[t]$ with coefficients in the Koszul cohomology;
    \item $E^{p,q}_2(U)=H^p \left( \mathfrak{g}[t], K_q\left(\{\partial^n(\tilde\mu^\ast(g_i)\}, \mathcal{O}_{J_\infty\mathfrak{R}}(U)\right) \right)$ is the cohomology of $\mathfrak{g}[t]$ with coefficients in the Koszul cohomology.
\end{itemize}
The sheaf $\mathcal{O}_{J_\infty \mathfrak{R}} \otimes \Lambda^{\mathrm{vert}}(\mathfrak{g}\oplus\mathfrak{g}^\ast)$ is a sheaf of $\mathcal{O}_{ \mathfrak{R} \times T^\ast\Pi\mathfrak{g}}$-modules graded by the order of the jet (i.e.\  the number of times $\partial$ is applied \cite{arakawa2015localization}). Each graded component has a bounded cohomological bi-degree, and the spectral sequence converges to $\mathrm H^{\infty/2+p+q}_{d_{\mathbf z}} ( \mathcal{O}_{J_\infty \mathfrak{R}} \otimes \Lambda^{\mathrm{vert}}(\mathfrak{g}\oplus\mathfrak{g}^\ast))$, supported on $J_\infty \tilde\mu^{-1}(0)\cap U$. This proves the  vanishing of $\mathrm H^{\infty/2+n}_{\mathrm{BRST}}(\gl(\mathbf v),\mathcal{O}_{J_\infty \mathfrak{R}})$ in negative degree $n$. 

The second page of spectral sequence degenerates and becomes the Chevalley-Eilenberg complex for the $\mathfrak{g}[t]$ action on $\widetilde{\mu}^{-1}(0)$. Taking $0^{th}$ cohomology of $E_2$, we find the $\mathfrak{g}[t]$ invariant part of $\widetilde{\mu}^{-1}(0)$.
\end{proof}
}

\bigskip

{
Theorem \ref{lem:vanishingHigherDegCoh-SemiClassical} is a semiclassical analog of the following theorem, with a key role in its proof. The chiralization of $\mathcal{O}_{J_\infty \mathfrak{R}}$ is the vertex superalgebra $\mathcal D^{\mathrm{ch}}(\mathfrak R)$ \eqref{eqdef:prequotientquiverVOA}, as defined in Section \ref{sec:VSA-quiverSetting}, where 
\begin{equation}
    d^{ch}:=\sum_{\ell} \left(\tilde\mu^{ch}_\ell +\hbar^{-1} h_\ell \cdot\mathrm{id} \right)\otimes c_\ell(z) -\frac{1}{2} \hbar^{-1}\sum_{i,j,k} {\mathbf 1}\otimes f_{ij}{}^k (b_{\ell,k})_{(-1)}((c_{\ell,i})_{(-1)}(c_{\ell,j}))(z)
\end{equation}
is the quantum counterpart of $d_{\mathbf z}$ \eqref{def:Arakawa:dchi}, and $\ell\in Q_0$. Following \cite[Theorem 2.3.5.1]{arakawa2015localization}, the BRST cohomology $\mathrm H^{\infty/2+n}_{\mathrm{BRST}}(\gl(\mathbf v),\mathcal D^{\mathrm{ch}} \mathfrak{R})$ vanishes in negative degree $n$.

\bigskip 

\begin{thm}[cf. Theorem 2.3.5.1 \cite{arakawa2015localization}]\label{thm:vanishingHigherDegCoh}
If $\widetilde\mu$ is jet-flat (property $(\widetilde P_2)$ in Proposition \ref{prop: various jet flatness}), then 
\begin{equation} 
\mathrm H^{\infty/2+n}_{\mathrm{BRST}}(\gl(\mathbf v),\mathcal D^{\mathrm{ch}}(\mathfrak{R}))=0~, \qquad n<0~.
\end{equation}
\end{thm}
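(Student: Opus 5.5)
The plan is to deduce the quantum vanishing from the semiclassical one, Theorem \ref{lem:vanishingHigherDegCoh-SemiClassical}, by a filtration/spectral sequence argument, following \cite[Theorem 2.3.5.1]{arakawa2015localization}. Concretely, I would pass to the $\hbar$-adic version $\mathcal D^{\mathrm{ch}}(\mathfrak R)_\hbar=\Gamma(\mathfrak R,\mathscr D^{\mathrm{ch}}_{\mathfrak R,\hbar})$ and its relative BRST complex $C(\mathfrak g,\mathcal D^{\mathrm{ch}}(\mathfrak R)_\hbar)$ with differential $\mathcal Q_\hbar$. The $\hbar$-adic filtration (equivalently, after restricting to $\mathbb S$-weight spaces, a grading by $\hbar$-degree) has associated graded equal to the relative vertex Poisson BRST complex of $\mathcal O_{J_\infty\mathfrak R}$, exactly as in the proof of Lemma \ref{lem: vanishing BRST away from zero}, where the $E_0$ page is $\hbar^p C^{p+q}(\mathfrak g,\mathcal O_{J_\infty\mathfrak R})$ with differential the Poisson BRST differential $d_{\mathbf z}$.

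The key steps, in order, are as follows. First, I would use the $\mathbb S$-action, with weights of the $\beta\gamma$, $\Phi\Psi$, ghost and Heisenberg fields as in \cite[Section 10.2]{Coman:2023xcq}, under which $\mathcal Q_\hbar$ is homogeneous, so that the complex splits into weight pieces. Within each weight piece (and fixed conformal weight) the BRST complex is finite dimensional in each ghost degree, and $\hbar$ carries positive weight, so the $\hbar$-filtration is finite there and the spectral sequence converges, with no completeness issues. Second, the $E_1$ page is $\mathrm H^{\infty/2+\bullet}_{\mathrm{BRST}}(\gl(\mathbf v),\mathcal O_{J_\infty\mathfrak R})\otimes\mathbb C[\hbar]$, which vanishes in negative degree by Theorem \ref{lem:vanishingHigherDegCoh-SemiClassical} under the jet-flatness hypothesis $(\widetilde P_2)$. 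Since the total degree is the ghost number and the filtration does not shift it, every term of $E_1$ with total degree $n<0$ is zero. Hence $\mathrm H^{\infty/2+n}_{\hbar\mathrm{BRST}}(\gl(\mathbf v),\mathcal D^{\mathrm{ch}}(\mathfrak R)_\hbar)=0$ for $n<0$, weight piece by weight piece. Third, I would pass to the $\mathbb S$-finite part and specialize at $\hbar=1$: on $\mathbb S$-finite vectors the $\hbar$-adic complex is $C(\mathfrak g,\mathcal D^{\mathrm{ch}}(\mathfrak R))\otimes\mathbb C[\hbar]$ with $\mathcal Q_\hbar$ corresponding to $\mathcal Q$ after rescaling by $\hbar$-weights. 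Because $\mathbb C[\hbar]$ is flat and the grading makes $\hbar-1$ act compatibly, the cohomology of the specialization is the specialization of the cohomology. Alternatively, one can put Li's filtration (or the conformal-weight/$\mathbb S$ filtration) directly on $C(\mathfrak g,\mathcal D^{\mathrm{ch}}(\mathfrak R))$, whose associated graded is the Poisson BRST complex, and run the same argument without $\hbar$.

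The main obstacle I expect is the convergence and identification of the spectral sequence. We need the associated graded differential to be exactly $d_{\mathbf z}$ including the Heisenberg shift $h_\ell$, and each graded piece to be bounded in ghost degree so that vanishing on $E_1$ lifts. I would handle this by working with a grading rather than a filtration: assign the $\mathbb S$-weight together with $\hbar$-degree, check that $\mathcal Q=\mathcal Q_P+(\text{terms of strictly higher filtration degree})$ from the Wick formula, and note that each fixed conformal weight subspace of the relative complex is finite dimensional. This makes the filtration finite there and the spectral sequence converges trivially.
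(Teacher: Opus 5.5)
Your proposal is correct and follows essentially the same route as the paper: both deduce the vanishing from the semiclassical Theorem \ref{lem:vanishingHigherDegCoh-SemiClassical} via the $\hbar$-adic filtration of the relative BRST complex, whose associated graded is the Poisson BRST complex. The differences are technical. The paper appeals to $\hbar$-adic completeness, working with sections over affine opens, rather than making the filtration finite weight by weight via the $\mathbb S$-grading, and it leaves implicit the passage to $\hbar=1$, which you handle explicitly. In that step the relevant fact is that $\hbar-1$ is a non-zero-divisor on $\mathbb S$-graded modules, needed for example on $H^0$ in the case $n=-1$, rather than flatness of $\mathbb C[\hbar]$.
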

\begin{proof}
By definition, for open affine subsets $U\in J_\infty \mathfrak{R}$
\begin{equation} 
\mathrm H^{\infty/2+n}_{\mathrm{BRST}}(\gl(\mathbf v),\mathscr D^{\mathrm{ch}}_{\mathfrak{R},\hbar}(U))=H^{\infty/2+n}_{d^{ch}}(\mathscr D^{\mathrm{ch}}_{\mathfrak{R},\hbar}(U)\otimes\CV_{\mathsf b\mathsf c}(\mathfrak g)_\hbar) 
\end{equation}
is an asymptotic vertex superalgebra, with a filtration by powers of $\hbar$ preserved by the differential $d^{ch}$. This gives rise to a spectral sequence, which does not converge to $H^{\infty/2+n}_{d^{ch}}(\mathscr  D^{\mathrm{ch}}_{\mathfrak{R},\hbar}(U)\otimes\CV_{\mathsf b\mathsf c}(\mathfrak g)_\hbar)$, but where the $\hbar$-adic completeness of $\mathscr D^{\mathrm{ch}}_{\mathfrak{R},\hbar}(U)\otimes\CV_{\mathsf b\mathsf c}(\mathfrak g)_\hbar$ together with Theorem \ref{lem:vanishingHigherDegCoh-SemiClassical} imply the analogous statements
\begin{equation}
    \mathrm H^{\infty/2+n}_{d_{\mathbf z}} ( \mathcal{O}_{J_\infty \mathfrak{R}}(U) \otimes \Lambda^{\mathrm{vert}}(\mathfrak{g}\oplus\mathfrak{g}^\ast)) =0 ~\Rightarrow ~ H^{\infty/2+n}_{d^{ch}}(\mathscr  D^{\mathrm{ch}}_{\mathfrak{R},\hbar}(U)\otimes\CV_{\mathsf b\mathsf c}(\mathfrak g)_\hbar) =0 ~.
\end{equation}
\end{proof}
}

\subsection{Vertex superalgebra maps}

{Recall from equation \eqref{def:BRST-cohomology} that the relative BRST cohomology of a vertex superalgebra $V$ is 
\begin{align}
    H^{\infty/2+\bullet}_{\mathrm{BRST}}(\mathfrak g,V):=H^{\bullet}(C(\mathfrak g, V), \mathcal{Q})\;,
\end{align}
where $C(\mathfrak g, V)$ is a canonically defined chain complex. In this context, the vertex superalgebra $V$ is associated to a quiver, as described in Section \ref{sec:generalconstructionSVA}. Li's filtration\footnote{This filtration was introduced in Section \ref{sec:MapPoissonAlgebra}, compatible with the differential $\mathcal{Q}(F^pC)\subset F^p(C)$.} $F^p C(\mathfrak g, V):= F^p C \subset C(\mathfrak g, V)$ on the complex $C(\mathfrak g, V)$ induces, in the notation of \cite{arakawa2017introduction}, a filtration on the BRST cohomology
\begin{equation}\label{def:inducedfiltrationoncohom}
    G^p H^i(C(\mathfrak g, V), \mathcal{Q}):= \mathrm{Im} \left( H^i (F^p C,\mathcal{Q}) \rightarrow H^i (C(\mathfrak g, V), \mathcal{Q}) \right) ~,
\end{equation}
which we denote by $G^p H^i(C)$ in a compact manner, and where 
\begin{equation}
    \mathrm{gr}_G^p H^0(C(\mathfrak g, V), \mathcal{Q}) := G^p H^0(C(\mathfrak g, V), \mathcal{Q}) / G^{p+1} H^0(C(\mathfrak g, V), \mathcal{Q}) ~. 
\end{equation}

\bigskip

\begin{lem}\label{lem:grH-vs-Hgr}
    If $\widetilde\mu$ is flat, and $\widetilde\mu^{-1}(0)$ is reduced and irreducible and has rational singularities (i.e.\  property $(\widetilde P_1)$ in Proposition \ref{prop: various jet flatness}), then the following natural vertex Poisson algebra map is injective
    \begin{equation}
        \mathrm{gr}_G\, H^{\frac{\infty}{2}+0}_{\mathrm{BRST}} (\mathfrak{g},V) \rightarrow H^{\frac{\infty}{2}+0}_{\mathrm{BRST}} ( \mathfrak{g}, \mathrm{gr}_F\, V ) ~.
    \end{equation}
\end{lem}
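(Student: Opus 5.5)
The plan is to compare the two filtrations through the spectral sequence of the filtered complex $(C(\mathfrak g,V),\mathcal Q)$ with respect to Li's filtration $F^\bullet C$. Its $E_1$ page is $H^\bullet(\mathrm{gr}_F C,\mathrm{gr}\,\mathcal Q)$. Since $\mathrm{gr}_F V\cong\mathcal O_{J_\infty\mathfrak R}$ (the parent algebra is a free field algebra), $\mathrm{gr}_F C(\mathfrak g,V)$ is the relative Poisson BRST complex of $\mathcal O_{J_\infty\mathfrak R}$, so $E_1^{\bullet}$ in total degree $i$ is $H^{\infty/2+i}_{\mathrm{BRST}}(\mathfrak g,\mathrm{gr}_F V)$. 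The map in the statement is the canonical composite $\mathrm{gr}_G H^0=E_\infty^{\bullet,0}\hookrightarrow E_r\hookrightarrow\cdots$, with the last step being the edge inclusion into $E_1$. It is injective precisely when no nonzero differential $d_r$ ($r\ge1$) lands in total degree $0$, i.e.\ when the incoming differentials originate from a zero group. Those differentials come from total degree $-1$. The strategy is therefore to show that the $E_1$ terms in total degree $-1$ vanish.

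First, property $(\widetilde P_1)$ implies property $(\widetilde P_2)$ (jet-flatness of $\widetilde\mu$) by Proposition \ref{prop: various jet flatness}. By Proposition \ref{prop: J_n-flatness of super moment maps}, $\widetilde\mu_{\mathsf s}$ is then also jet-flat. Theorem \ref{lem:vanishingHigherDegCoh-SemiClassical} then gives $H^{\infty/2+n}_{\mathrm{BRST}}(\mathfrak g,\mathcal O_{J_\infty\mathfrak R})=0$ for $n<0$, and identifies the degree-zero group with $\mathbb C[J_\infty\widetilde\mu_{\mathsf s}^{-1}(0)]^{J_\infty\mathrm{GL}(\mathbf v)}$. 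Consequently $E_1^{p,q}=0$ whenever $p+q=-1$. So every differential $d_r$ into total degree $0$ is zero, and $E_\infty^{p,-p}$ is a subquotient of $E_1^{p,-p}$ that is actually a subobject. This is what yields injectivity once convergence is settled.

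Second, I need the spectral sequence to converge to $H^0(C)$ with the filtration $G^\bullet$ from \eqref{def:inducedfiltrationoncohom}, so that $E_\infty^{p,-p}=\mathrm{gr}^p_G H^0$. Li's filtration is decreasing with $F^0C=C$, so it is exhaustive. The main obstacle is separatedness and completeness, and the fact that the complex is unbounded in ghost degree. I would handle this by using the conformal weight grading, which is preserved by $\mathcal Q$ (the BRST current has weight one and the relative complex has weight-bounded pieces). In each fixed conformal weight $\Delta$ the filtration is finite, $F^{p}C_\Delta=0$ for $p>\Delta$, and the ghost degrees occurring are bounded. The spectral sequence therefore decomposes into finitely filtered pieces, each converging strongly to $H^0(C)_\Delta$, and the statement holds weight by weight. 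This is the same argument as in \cite[Theorem 3.17 and the BRST discussion]{arakawa2017introduction}. I would also check that $\mathrm{gr}_F$ commutes with taking relative ($\mathsf b_{(0)}$- and $J_{(0)}$-invariant) subcomplexes; this holds by reductivity, since the invariants are a direct summand in each weight.

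Finally, the map is one of vertex Poisson algebras, because the edge map respects the induced products. The rational-singularity part of $(\widetilde P_1)$ enters only through the implication to jet-flatness, while reducedness and irreducibility are used later, via Theorem \ref{thm:semiclassical-injectivity}, to embed the target into functions on $J_\infty\widetilde{\mathcal M}$.
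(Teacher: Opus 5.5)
Your argument is correct and is essentially the paper's. Both reduce injectivity to the vanishing of $H^{-1}(\mathrm{gr}^p_F C,\mathcal Q)$, obtained from $(\widetilde P_1)\Rightarrow(\widetilde P_2)$ and Theorem \ref{lem:vanishingHigherDegCoh-SemiClassical}; the paper phrases this through the long exact sequences of $0\to F^{p+1}C\to F^pC\to\mathrm{gr}^p_FC\to 0$ rather than a spectral sequence, and that version makes your convergence step unnecessary. Since $F^0C=C$, the resulting injections $H^0(F^{p+1}C)\hookrightarrow H^0(F^pC)$ compose to give $H^0(F^pC)\cong G^pH^0(C)$, hence $\mathrm{gr}^p_GH^0(C)\cong H^0(F^pC)/H^0(F^{p+1}C)\hookrightarrow H^0(\mathrm{gr}^p_FC)$ without any separatedness or weight-grading argument.
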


\bigskip

\begin{proof}
    Using the above notation, the short exact sequence 
    \begin{equation}
        0 \rightarrow F^{p+1} C \rightarrow F^{p} C \rightarrow \mathrm{gr}^p_F C \rightarrow 0
    \end{equation}
    induces a long exact sequence on cohomology
    \begin{equation}
        \cdots H^{-1}(\mathrm{gr}^{p}_F C,\mathcal{Q}) \rightarrow H^{0}(F^{p+1}C,\mathcal{Q}) \rightarrow H^{0}(F^{p}C,\mathcal{Q}) \rightarrow H^{0}(\mathrm{gr}^{p}_F C,\mathcal{Q}) \rightarrow H^{1}(F^{p+1}C,\mathcal{Q}) \rightarrow \cdots ~.
    \end{equation}
    From the definition \eqref{def:inducedfiltrationoncohom} of the filtration induced by Li's filtration on the relative BRST cohomology, {the map $H^0(F^pC,\mathcal{Q})\rightarrow G^p H^0(C(\mathfrak{g},V),\mathcal{Q})$ is surjective} for all $p\in\mathbb{Z}_{\geq 0}$. At the same time, $\mathrm{Ker}\left( H^i(F^pC,\mathcal{Q})\rightarrow H^i (C(\mathfrak g, V), \mathcal{Q}) \right)$ is trivial for all $p\in\mathbb{Z}_{\geq 0}$, so the map $H^0(F^pC,\mathcal{Q}) \rightarrow G^p H^0 (C(\mathfrak{g},V),\mathcal{Q})$ is an isomorphism for all $p\in\mathbb{Z}_{\geq 0}$. 

    \bigskip

    In Proposition \ref{prop: various jet flatness}, the property $(\widetilde P_1)$ implies that the moment map $\widetilde\mu$ is jet-flat, which is property $(\widetilde P_2)$. Consequently, Theorem \ref{lem:vanishingHigherDegCoh-SemiClassical} implies that the negative degree cohomology $H^{n<0}(\mathrm{gr}_F C,\mathcal{Q})$ vanishes. We therefore have the following long exact sequence in cohomology, where the left-most vertical maps are isomorphisms

    \bigskip

\[\adjustbox{scale=0.87,center}{
\begin{tikzcd}
	{0=H^{-1}(\mathrm{gr}^{p}_F C,\mathcal{Q})} & {H^{0}(F^{p+1}C,\mathcal{Q})} & {H^{0}(F^{p}C,\mathcal{Q})} & {H^{0}(\mathrm{gr}^{p}_F C,\mathcal{Q})} & {H^{1}(F^{p+1}C,\mathcal{Q})} & \cdots \\
	0 & {G^{p+1}H^{0}(C,\mathcal{Q})} & {G^{p}H^{0}(C,\mathcal{Q})} & {\mathrm{gr}^{p}_{G} H^{0}(C,\mathcal{Q})} & 0
	\arrow[from=1-2, to=1-3]
	\arrow[from=1-3, to=1-4]
	\arrow[from=1-1, to=1-2]
	\arrow[from=1-5, to=1-6]
	\arrow[from=1-4, to=1-5]
	\arrow[from=2-1, to=2-2]
	\arrow[shift right=2, two heads, from=1-2, to=2-2]
	\arrow[shift right=2, two heads, from=1-3, to=2-3]
	\arrow[from=2-2, to=2-3]
	\arrow[from=2-3, to=2-4]
	\arrow[shift right=2, hook, from=2-4, to=1-4]
	\arrow[shift right=2, hook, from=2-3, to=1-3]
	\arrow[shift right=2, hook, from=2-2, to=1-2]
    \arrow[from=2-4, to=2-5]
\end{tikzcd}}\]

\bigskip

and the second line is a short exact sequence. The map $\alpha^p$ in the following part of the top sequence 
\begin{equation}
        0 \longrightarrow H^{0}(F^{p+1}C,\mathcal{Q}) \stackrel{\alpha^p}{\longrightarrow} H^{0}(F^{p}C,\mathcal{Q}) \longrightarrow H^{0}(\mathrm{gr}^{p}_F C,\mathcal{Q}) \longrightarrow  \cdots ~
\end{equation}
is injective, and therefore 
\begin{equation}
    \mathrm{gr}^{p}_{G} H^{0}(C,\mathcal{Q})\cong H^{0}(F^{p}C,\mathcal{Q}) / H^{0}(F^{p+1}C,\mathcal{Q})\cong \mathrm{coker}(\alpha^p) \hookrightarrow  H^{0}(\mathrm{gr}^{p}_F C,\mathcal{Q}) ~.
\end{equation}
Taking the direct sum over $p\in\mathbb{Z}_{\geq 0}$ induces an injection 
\begin{equation}
\mathrm{gr}_{G} H^{0}(C(\mathfrak{g},V),\mathcal{Q})\hookrightarrow  H^{0}(\mathrm{gr}_{F}C,\mathcal{Q}) = H^{\frac{\infty}{2}+0}_{\textrm{BRST}}(\mathfrak{g},\mathrm{gr}_{F}V) ~.
\end{equation}
\end{proof}
}

\bigskip

\begin{theorembox}
\begin{thm}\label{thm:VOAinjectivity}
If $\widetilde\mu$ is flat, and $\widetilde\mu^{-1}(0)$ is reduced and irreducible and has rational singularities (i.e.\  property $(\widetilde P_1)$ in Proposition \ref{prop: various jet flatness}), then the natural vertex superalgebra map \eqref{naturalVSAmap-2} $$\mathcal V(\mathbf v,\mathbf w | \mathbf u)\stackrel{f}{\longrightarrow}\mathsf D^{\mathrm{ch}}(\widetilde{\mathcal M}(\mathbf v,\mathbf w | \mathbf u))$$ is injective.
\end{thm}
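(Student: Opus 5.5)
The plan is to reduce injectivity of $f$ to injectivity of its associated graded map with respect to Li's filtration, and then identify that associated graded map with the semiclassical restriction map of Theorem \ref{thm:semiclassical-injectivity}. Put the filtration $G^\bullet$ of \eqref{def:inducedfiltrationoncohom} on $\mathcal V=H^{\infty/2+0}_{\mathrm{BRST}}(\mathfrak g,\mathcal D^{\mathrm{ch}}(\mathfrak R))$. On $\mathsf D^{\mathrm{ch}}(\widetilde{\mathcal M})$ use the filtration induced by the sheafified Li filtration ${}^{\hbar}F^\bullet\mathscr D^{\mathrm{ch}}_{\widetilde{\mathcal M},\hbar}$, restricted to $\mathbb S$-finite parts and specialized at $\hbar=1$. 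The map $f$ comes from restricting BRST classes of global sections on $\mathfrak R$ to good lifts $\widetilde U$. Since this restriction preserves Li filtrations, $f$ is filtered. Both filtrations are separated in each conformal weight, because each weight space is finite dimensional and $F^p$ lies in weight $\ge p$. So it suffices to show that $\mathrm{gr} f$ is injective.

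To control $\mathrm{gr}\,\mathcal V$, use Lemma \ref{lem:grH-vs-Hgr}. Under $(\widetilde P_1)$ it gives an injection $\mathrm{gr}_G\mathcal V\hookrightarrow H^{\infty/2+0}_{\mathrm{BRST}}(\mathfrak g,\mathcal O_{J_\infty\mathfrak R})$. By Theorem \ref{lem:vanishingHigherDegCoh-SemiClassical}, jet-flatness holds via Proposition \ref{prop: various jet flatness}, so the target is $\mathbb C[J_\infty\widetilde\mu_{\mathsf s}^{-1}(0)]^{J_\infty\mathrm{GL}(\mathbf v)}$.

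On the target side, Theorem \ref{thm: vanishing for sheaf} identifies $\mathscr D^{\mathrm{ch}}_{\widetilde{\mathcal M},\hbar}/\hbar$ with $\mathscr O_{J_\infty\widetilde{\mathcal M}}$. Taking global sections and the $\mathbb S$-finite part at $\hbar=1$, the associated graded of $\mathsf D^{\mathrm{ch}}(\widetilde{\mathcal M})$ embeds into $\Gamma(J_\infty\widetilde{\mathcal M},\mathscr O_{J_\infty\widetilde{\mathcal M}})$, in the same way as the construction of \eqref{eq:map from C_2 alg to function ring of quiver variety} but for all Li degrees. One then checks that the composite $\mathrm{gr}_G\mathcal V\to\mathrm{gr}\,\mathsf D^{\mathrm{ch}}\hookrightarrow\Gamma(J_\infty\widetilde{\mathcal M},\mathscr O)$ agrees with the composite of the injection from Lemma \ref{lem:grH-vs-Hgr} and the natural restriction
\[\mathbb C[J_\infty\widetilde\mu_{\mathsf s}^{-1}(0)]^{J_\infty\mathrm{GL}(\mathbf v)}\to\Gamma(J_\infty\widetilde{\mathcal M},\mathscr O_{J_\infty\widetilde{\mathcal M}}).\]
This compatibility holds because both maps are induced by restricting symbols from $J_\infty\mathfrak R$ to $J_\infty$ of the stable locus and passing to the quotient.

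It remains to show that this restriction map is injective. A function on $J_\infty\widetilde\mu_{\mathsf s}^{-1}(0)$ is pulled back from some finite jet level $J_n$, and $J_\infty\widetilde{\mathcal M}\to J_n\widetilde{\mathcal M}$ is surjective, so injectivity at level $\infty$ follows from injectivity at every finite level $n$. That finite-level statement is exactly Theorem \ref{thm:semiclassical-injectivity} under $(\widetilde P_1)$. Combining these, $\mathrm{gr} f$ is injective, and hence $f$ is injective.

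The main obstacle will be the compatibility of filtrations on the target. The global sections of the sheafified Li filtration, ${}^\hbar F^p(\mathscr D)(\widetilde{\mathcal M})$, may be larger than the Li filtration of the global sections, and passing to $\mathbb S$-finite parts and then setting $\hbar=1$ mixes the $\hbar$-adic grading with the Li grading. To handle this, I would first work $\hbar$-adically. There I would show that the induced map $\mathrm{gr}_G\mathcal V_\hbar\to\Gamma(\widetilde{\mathcal M},\mathrm{gr}\mathscr D^{\mathrm{ch}}_{\widetilde{\mathcal M},\hbar})$ is injective modulo $\hbar$, using Theorem \ref{thm: vanishing for sheaf} together with the $\mathbb S$-weight decomposition, which keeps everything finite in each weight. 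Only then would I specialize to $\hbar=1$, using that $\mathbb S$-weight together with $\hbar$-degree recovers conformal weight.
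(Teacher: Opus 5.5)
Your proposal is correct and follows essentially the same route as the paper. Both pass to the associated graded map and embed $\mathrm{gr}_G\mathcal V$ into $H^{\infty/2+0}_{\mathrm{BRST}}(\mathfrak g,\mathcal O_{J_\infty\mathfrak R})\cong\mathbb C[J_\infty\widetilde\mu_{\mathsf s}^{-1}(0)]^{J_\infty\mathrm{GL}(\mathbf v)}$ via Lemma \ref{lem:grH-vs-Hgr} and Theorem \ref{lem:vanishingHigherDegCoh-SemiClassical}, then conclude with Theorem \ref{thm:semiclassical-injectivity}. Your extra care goes beyond the paper's proof: you need only an embedding of $\mathrm{gr}\,\mathsf D^{\mathrm{ch}}(\widetilde{\mathcal M})$ into $\Gamma(J_\infty\widetilde{\mathcal M},\mathscr O_{J_\infty\widetilde{\mathcal M}})$ rather than the isomorphism the paper asserts, you make the passage from finite jets to $J_\infty$ explicit, and you check filtration compatibility on the target, all points the paper leaves implicit.
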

\end{theorembox}

\begin{proof}
    If $\mathcal V(\mathbf v,\mathbf w | \mathbf u)\stackrel{f}{\longrightarrow}\mathsf D^{\mathrm{ch}}(\widetilde{\mathcal M}(\mathbf v,\mathbf w | \mathbf u))$ is a vertex superalgebra homomorphism, then it respects the canonical filtration induced by Li's filtration and induces a homomorphism 
    \begin{equation}
        \mathrm{gr}_G\mathcal V(\mathbf v,\mathbf w | \mathbf u)\stackrel{\mathrm{gr}f}{\longrightarrow} \mathrm{gr}_G \mathsf D^{\mathrm{ch}}(\widetilde{\mathcal M}(\mathbf v,\mathbf w | \mathbf u))
    \end{equation}
    on the associated graded algebras introduced above. {In order to prove the injectivity of the map $\mathcal V(\mathbf v,\mathbf w | \mathbf u)\stackrel{f}{\longrightarrow}\mathsf D^{\mathrm{ch}}(\widetilde{\mathcal M}(\mathbf v,\mathbf w | \mathbf u))$, it suffices to prove the injectivity of $\mathrm{gr}f$ at the level of the associated graded algebras.} By the definition of the vertex superalgebra $\mathcal V(\mathbf v,\mathbf w | \mathbf u)=H^{\frac{\infty}{2}+0}_{\mathrm{BRST}}(\mathfrak{g}, \mathcal D^{\mathrm{ch}}\mathfrak{R})$, 
    \begin{equation}
        \mathrm{gr}_G \mathcal V(\mathbf v,\mathbf w | \mathbf u) = \mathrm{gr}_G H^{\frac{\infty}{2}+0}_{\mathrm{BRST}}(\mathfrak{g}, \mathcal D^{\mathrm{ch}}\mathfrak{R})\hookrightarrow H^{\frac{\infty}{2}+0}_{\textrm{BRST}}(\mathfrak{g},\mathrm{gr}_{F} \mathcal D^{\mathrm{ch}}\mathfrak{R}) \cong H^{\frac{\infty}{2}+0}_{\textrm{BRST}}(\mathfrak{g},\mathcal{O} (J_\infty \mathfrak{R}) )~,
    \end{equation}
    where the injection into $ H^{\frac{\infty}{2}+0}_{\textrm{BRST}}(\mathfrak{g},\mathrm{gr}_{F} \mathcal D^{\mathrm{ch}}\mathfrak{R})$ follows from Lemma \ref{lem:grH-vs-Hgr}, and for the right-most identification we have used the vertex Poisson algebra isomorphism $\mathrm{gr}_{F} \mathcal D^{\mathrm{ch}}\mathfrak{R}\cong \mathcal{O} (J_\infty \mathfrak{R})$, which is due, cf. \cite[Lemma 4.4]{arakawa2024arc}, to $\mathcal D^{\mathrm{ch}}\mathfrak{R}$ having a PBW basis\footnote{ \cite{2018arXiv180206533A} has in fact shown that the surjective homomorphism of differential graded rings $\mathcal{O}(J_\infty \tilde{X}_\mathcal{V})\to \mathrm{gr}_F\mathcal{V}$, for $\mathcal{V}$ any vertex algebra and $\tilde{X}_\mathcal{V}$ its associated scheme, is an isomorphism at the level of varieties if $\mathcal{V}$ is quasi-lisse, that is if its associated variety has finitely many symplectic leaves. In the case of Nakajima quiver varieties and the representation spaces $\mathfrak{R}$ of quivers that we consider, these have finitely many symplectic leaves \cite{bellamy2016symplectic}.
    }. 
    Under the conditions $(\widetilde{P}_1)$ of Proposition \ref{prop: various jet flatness} for the flatness of $\tilde\mu$ and $\tilde\mu^{-1}(0)$ reduced, irreducible and with rational singularities, with the isomorphism $ {H^{\frac{\infty}{2}+0}_{\mathrm{BRST}}}({\mathfrak{g}},\mathcal{O} (J_\infty \mathfrak{R})) \cong {\mathbb C}[J_\infty \widetilde{\mu}_{\mathsf s}^{-1}(0)]^{J_\infty\mathrm{GL}(\mathbf v)}$ from Theorem  \ref{lem:vanishingHigherDegCoh-SemiClassical}, the following natural map 
    \begin{equation}
     {H^{\frac{\infty}{2}+0}_{\mathrm{BRST}}}({\mathfrak{g}},\mathcal{O} (J_\infty \mathfrak{R})) \cong {\mathbb C}[J_\infty \widetilde{\mu}_{\mathsf s}^{-1}(0)]^{J_\infty\mathrm{GL}(\mathbf v)}\longrightarrow \Gamma(J_\infty\widetilde{\mathcal M}(\mathbf v,\mathbf w|\mathbf u),\mathcal O_{J_\infty \widetilde{\mathcal M}(\mathbf v,\mathbf w|\mathbf u)})~
    \end{equation}
    was demonstrated by Theorem \ref{thm:semiclassical-injectivity} to be injective, where\footnote{When a vertex algebra $\mathcal{V}$ has a PBW basis, $\Gamma(J_\infty \tilde{X}_\mathcal{V},\mathcal{O}_{J_\infty \tilde{X}_\mathcal{V}} )\cong \mathrm{gr}_F\mathcal{V}$.}   
    \begin{equation} 
    \Gamma(J_\infty\widetilde{\mathcal M}(\mathbf v,\mathbf w|\mathbf u),\mathcal O_{J_\infty \widetilde{\mathcal M}(\mathbf v,\mathbf w|\mathbf u)}) \cong \mathrm{gr}_G\mathsf D^{\mathrm{ch}}(\widetilde{\mathcal M}(\mathbf v,\mathbf w | \mathbf u))~.
    \end{equation}
\end{proof}

\medskip

Theorem \ref{thm:VOAinjectivity} is a quantum counterpart of Theorem \ref{thm:semiclassical-injectivity}, which holds at the semiclassical level. It is key to proving Conjecture 10.1 of \cite{Coman:2023xcq}, which is Theorem \ref{thm:vanishing and embedding} below, and has a counterpart Theorem \ref{thm:VOA-module-injectivity} for vertex superalgebra modules. 

\medskip

\begin{theorembox}
\begin{thm}\label{thm:VOA-module-injectivity}
If $\widetilde\mu$ is flat, and $\widetilde\mu^{-1}(0)$ is reduced and irreducible and has rational singularities (i.e.\  property $(\widetilde P_1)$ in Proposition \ref{prop: various jet flatness}), then the natural map \eqref{naturalVSAModulemap-1} of vertex superalgebra modules $$\mathcal V_\lambda(\mathbf v,\mathbf w | \mathbf u)\stackrel{f_\lambda}{\longrightarrow}\mathsf D_\lambda^{\mathrm{ch}}(\widetilde{\mathcal M}(\mathbf v,\mathbf w | \mathbf u))$$ is injective.
\end{thm}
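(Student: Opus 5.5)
The plan is to reduce to the semiclassical statement by passing to associated graded objects, in exact parallel with the proof of Theorem \ref{thm:VOAinjectivity}. The only new ingredient is the Heisenberg Verma module $\CH_\lambda(\mathcal Z)$ in place of the vacuum module. Its effect is to replace functions by sections of the tautological line bundle $\mathcal L_\lambda$, that is, by $\lambda$-semi-invariants.

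First, equip the relative BRST complex $C_\lambda:=C(\mathfrak g,\CV^\#_\lambda[Q])$ with the Li filtration $F^\bullet$ of the module, generated by the strong generators of $\CV^\#[Q]$ acting on $|\lambda\rangle$. Then $\mathrm{gr}_F\CV^\#_\lambda[Q]\cong\mathcal O(J_\infty\mathfrak R)\otimes\mathbb C_\lambda$ as modules over $\mathcal O(J_\infty\mathfrak R)$, by the PBW basis of the Verma module. Here the zero mode $h^i_{(0)}=\lambda_i$ shifts the classical moment map by the character $\lambda$: the $\mathfrak g$-action on the associated graded is twisted by the character $\mathrm{tr}_\lambda$ of $\mathfrak{gl}(\mathbf v)$. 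The Koszul part of the differential is unchanged, since $\lambda$ only affects the constant term of $\widetilde\mu$ along the $\mathcal Z$-direction, which is absorbed by the extended variable $z$. Hence the argument of Theorem \ref{lem:vanishingHigherDegCoh-SemiClassical} goes through verbatim under $(\widetilde P_2)$, which follows from $(\widetilde P_1)$ by Proposition \ref{prop: various jet flatness}. It gives vanishing of the negative-degree cohomology of $\mathrm{gr}_F C_\lambda$, and identifies
\[
H^{0}(\mathrm{gr}_F C_\lambda)\cong \mathbb C[J_\infty\widetilde\mu_{\mathsf s}^{-1}(0)]^{J_\infty\GL(\mathbf v),\lambda},
\]
the $\lambda$-semi-invariants. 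The same long exact sequence argument as in Lemma \ref{lem:grH-vs-Hgr} then yields an injection $\mathrm{gr}_G\CV_\lambda[Q]\hookrightarrow H^0(\mathrm{gr}_F C_\lambda)$.

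Second, the target side. The classical limit of $\mathscr D^{\mathrm{ch}}_{\widetilde\CM,\hbar,\lambda}$ is $\mathcal L_\lambda$ on $J_\infty\widetilde\CM$. So, exactly as for $\mathsf D^{\mathrm{ch}}(\widetilde\CM)$, the associated graded of $\mathsf D^{\mathrm{ch}}_\lambda(\widetilde\CM)$ embeds into $\Gamma(J_\infty\widetilde\CM,\mathcal L_\lambda)$. Pulling back along the principal bundle $J_\infty\widetilde\mu_{\mathsf s}^{-1}(0)^{\mathrm{st}}\to J_\infty\widetilde\CM$ identifies this space with the $\lambda$-semi-invariant sections over the stable locus. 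The map $\mathrm{gr} f_\lambda$ then factors as the restriction
\[
\mathbb C[J_n\widetilde\mu_{\mathsf s}^{-1}(0)]^{J_n\GL(\mathbf v),\lambda}\longrightarrow \mathcal O\big(J_n\widetilde\mu_{\mathsf s}^{-1}(0)^{\mathrm{st}}\big)^{J_n\GL(\mathbf v),\lambda},
\]
taken at each finite jet level and then passed to the limit.

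Third, injectivity of this restriction is the semi-invariant version of Theorem \ref{thm:semiclassical-injectivity}. The proof there shows that, before taking invariants, the map $\mathbb C[J_n\widetilde\mu_{\mathsf s}^{-1}(0)]\to\mathcal O(J_n\widetilde\mu_{\mathsf s}^{-1}(0)^{\mathrm{st}})$ is injective. This uses property $(S_1)$ via Lemmas \ref{lem: regular sequence}, \ref{lem: property S_i} and \ref{lem: restrict to open is injective}, together with density of the stable locus, which holds by irreducibility of $J_n\widetilde\mu^{-1}(0)$. Restricting an injective equivariant map to any isotypic component stays injective, so the semi-invariant statement follows. Since the filtrations are exhaustive and separated in each conformal weight, injectivity of $\mathrm{gr} f_\lambda$ implies injectivity of $f_\lambda$.

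I expect the main obstacle to be the first step: checking carefully that the $\lambda$-shift does not spoil the Koszul regularity and the spectral sequence degeneration. The concern is that $\lambda$ enters through $h^i_{(0)}$ and could a priori modify the differential at the leading order. I would handle it by absorbing the shift into a translation of the $\mathcal Z$-coordinate (zero mode of $h$), which is an automorphism of $J_\infty\mathfrak R$ compatible with jet-flatness. A secondary point is to confirm that the $\mathbb S$-finite, $\hbar=1$ specialization commutes with taking associated graded for the module; this is handled as in the vacuum case.
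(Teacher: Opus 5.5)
Your argument is correct, and its skeleton matches the paper's proof, which simply reruns Theorem \ref{thm:VOAinjectivity}. That skeleton has three parts: the Li filtration on the module complex, the injection of Lemma \ref{lem:grH-vs-Hgr} obtained from negative-degree vanishing under $(\widetilde P_2)$, and reduction to restriction to the stable locus on finite jets.

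The difference lies in how the twist by $\lambda$ is handled. The paper makes two identifications and then quotes Theorem \ref{thm:semiclassical-injectivity} as stated:
\begin{itemize}
\item the classical reduction with $h^i_{(0)}\to\lambda_i$ ``is isomorphic to'' $\mathbb C[J_\infty\widetilde\mu_{\mathsf s}^{-1}(0)]^{J_\infty\mathrm{GL}(\mathbf v)}$;
\item $\Gamma(J_\infty\widetilde{\mathcal M},\mathcal L_\lambda)\cong\Gamma(J_\infty\widetilde{\mathcal M},\mathcal O)$, on the grounds that $\mathcal L_\lambda$ is locally trivial.
\end{itemize}
You instead keep the twist on both sides and identify everything with $\lambda$-semi-invariants. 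You then use the fact that the proof of Theorem \ref{thm:semiclassical-injectivity} already gives injectivity of $\mathbb C[J_n\widetilde\mu_{\mathsf s}^{-1}(0)]\to\mathcal O(J_n\widetilde\mu_{\mathsf s}^{-1}(0)^{\mathrm{st}})$ before any invariants are taken, so injectivity holds on every semi-invariant component.

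The paper's route is shorter. Yours avoids identifying sections of a possibly nontrivial line bundle with functions, which local triviality alone does not justify. For example, on $T^*\mathbb P^1=\mathcal M(1,2)$ for the $A_1$ quiver, the fibre-weight-$n$ pieces of $\Gamma(\mathcal O)$ and $\Gamma(\mathcal O(1))$ have dimensions $2n+1$ and $2n+2$. Your version is also consistent with the paper's own description of the classical limit of $\mathscr D^{\mathrm{ch}}_{\widetilde{\mathcal M},\hbar,\lambda}$ as $\mathcal L_\lambda$.

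One correction concerns what you call the main obstacle: the shift should not be absorbed by translating the $\mathcal Z$-coordinate.
\begin{itemize}
\item The coordinate $z_i$ is the classical limit of the field $h^i$, i.e.\ of the state $h^i_{(-1)}|0\rangle$. It is not the zero mode $h^i_{(0)}$, which acts on $\mathcal H_\lambda(\mathcal Z)$ as the scalar $\lambda_i$.
\item If $\lambda$ really were a shift of the level removable by $z\mapsto z-\lambda$, the twist would disappear. You would then land on invariants rather than semi-invariants, contradicting your own (correct) identification.
\end{itemize}
The real reason the Koszul part is untouched is simpler. The field $h^i$ enters $\widetilde\mu^{\mathrm{ch}}_i$ linearly, so $h^i_{(0)}$ occurs only in $J(E^\alpha)_{(0)}$. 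That operator enters only through the relative condition $J(E^\alpha)_{(0)}v=0$, and this is exactly what produces the $\lambda$-twisted $\mathfrak g$-action. The Koszul part and the $\mathfrak g[t]t$ Chevalley--Eilenberg part of the relative differential involve only the modes $h^i_{(n)}$ with $n\neq 0$. These act on $\mathcal H_\lambda(\mathcal Z)$ exactly as on $\mathcal H(\mathcal Z)$, so the regular-sequence argument from $(\widetilde P_2)$ in Theorem \ref{lem:vanishingHigherDegCoh-SemiClassical} applies without change.
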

\end{theorembox}

\bigskip

\begin{proof}
    The proof is largely similar to that of Theorem \ref{thm:VOAinjectivity}. Taking the associated graded of both modules $V_\lambda(\mathbf v,\mathbf w | \mathbf u)$ and $D_\lambda^{\mathrm{ch}}(\widetilde{\mathcal M}(\mathbf v,\mathbf w | \mathbf u))$ gives on the left hand side the injection $\mathrm{gr}_G \mathcal V(\mathbf v,\mathbf w | \mathbf u) \hookrightarrow {\mathbb C}[J_\infty \widetilde{\mu}_{\mathsf s}^{-1}(0)\, \big|\, h^i_{(0)}\to \lambda_i \, , i\in Q_0 ]^{J_\infty\mathrm{GL}(\mathbf v)}$, where the image of the zero modes of the Heisenberg fields $h^i$ is fixed to the values $\lambda_i$ in the arc space $J_\infty \widetilde{\mu}_{\mathsf s}^{-1}(0)$, and the resulting ring of functions is isomorphic to ${\mathbb C}[J_\infty \widetilde{\mu}_{\mathsf s}^{-1}(0)]^{J_\infty\mathrm{GL}(\mathbf v)}$. Similarly, on the right hand side, 
    \begin{equation} 
    \Gamma(J_\infty\widetilde{\mathcal M}(\mathbf v,\mathbf w|\mathbf u),\mathcal O_{J_\infty \widetilde{\mathcal M}(\mathbf v,\mathbf w|\mathbf u)}) \cong  \Gamma(J_\infty\widetilde{\mathcal M}(\mathbf v,\mathbf w|\mathbf u),\mathcal{L}_\lambda) \cong \mathrm{gr}_G\mathsf D_\lambda^{\mathrm{ch}}(\widetilde{\mathcal M}(\mathbf v,\mathbf w | \mathbf u))~,
    \end{equation}
    where $\mathcal{L}_\lambda$ is a line bundle over $J_\infty\widetilde{\mathcal{M}}$ locally identified with $\mathcal{O}_{J_\infty\widetilde{\mathcal{M}}}$,  whereby the injectivity of $\mathrm{gr}_G\mathcal V_\lambda(\mathbf v,\mathbf w | \mathbf u)\stackrel{\mathrm{gr}f_\lambda}{\longrightarrow} \mathrm{gr}_G \mathsf D_\lambda^{\mathrm{ch}}(\widetilde{\mathcal M}(\mathbf v,\mathbf w | \mathbf u))$ follows from Theorem \ref{thm:semiclassical-injectivity}.
\end{proof}

\bigskip

\begin{theorembox}
\begin{thm}\label{thm:vanishing and embedding}
Let $Q$ be a quiver and choose the gauge and framing dimensions $\mathbf v,\mathbf w$ such that $\mathbf w-\mathsf C\mathbf v\in \mathbb Z^{Q_0}_{\ge 0}$. Assume moreover that one of the following conditions holds:
\begin{itemize}
    \item $Q$ is totally negative, i.e.\  $\mathsf C_{ij}<0$ for every pair $i,j\in Q_0$,
    \item $Q$ is a Dynkin quiver, and $\sum_{i\in Q_0}( v_i-1)\le 2$,
    \item $Q$ is the $A_1$ quiver,
    \item $Q$ is the Jordan quiver, and $ w_i\ge  v_i$,
    \item $\forall i\in Q_0$, $ v_i=1$,
\end{itemize}
then we have
\begin{enumerate}
    \item $H^{\infty/2+n}_{\mathrm{BRST}}(\mathfrak{gl}(\mathbf v),\mathcal{D}^{\mathrm{ch}}(\mathfrak{R}))$ vanishes for $n<0$,
    \item the natural vertex superalgebra map $\CV(Q,\mathbf v,\mathbf w)\to \mathsf D^{\mathrm{ch}}(\widetilde\CM)$ is injective.
\end{enumerate}
\end{thm}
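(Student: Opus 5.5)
The plan is to reduce everything to the two general results already proved: Theorem \ref{thm:vanishingHigherDegCoh}, which gives statement 1 once $\widetilde\mu$ is jet-flat (property $(\widetilde P_2)$), and Theorem \ref{thm:VOAinjectivity}, which gives statement 2 once $(\widetilde P_1)$ holds. By Proposition \ref{prop: various jet flatness}, $(\widetilde P_1)\Rightarrow(\widetilde P_2)$. Proposition \ref{prop: J_n-flatness of super moment maps} says the odd directions $\mathbf u=\mathbf w-\mathsf C\mathbf v$ do not affect jet-flatness. So it suffices to verify $(\widetilde P_1)$ for the bosonic data $(Q,\mathbf v,\mathbf w)$ in each of the five cases. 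Since $(P_1)\Rightarrow(\widetilde P_1)$ and $(P_2)\Rightarrow(\widetilde P_1)$ in the quiver case, it is also enough to establish any of $(P_1)$, $(P_2)$ or $(\widetilde P_1)$.

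The cases would be handled as follows.

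\emph{Totally negative.} Example \ref{eq:totally negative quiver} gives $(P_2)$ as soon as $\mu$ is flat. Flatness I would check via the criterion \eqref{key inequality} of Proposition \ref{prop: flat moment map}, using $\mathbf w-\mathsf C\mathbf v\ge 0$ and $\mathsf C_{ij}<0$. Concretely, for a decomposition $\mathbf v=\sum_l\mathbf v^{(l)}$ the cross terms $-\mathbf v^{(l)}\cdot\mathsf C\mathbf v^{(l')}$ are positive, which should dominate the constants $2k$. Alternatively I would show $\mu^{-1}(0)^{\mathrm{reg}}\neq\emptyset$ through Proposition \ref{prop: simple locus criterion}, which gives $(P_1)$ outright.

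\emph{Dynkin with $\sum(v_i-1)\le 2$.} This means each $v_i\le 3$, with at most two extra units in total. When all $v_i\le 2$ and at most two nodes have $v_i=2$, Theorem \ref{thm: Dynkin quiver tilde P_1, II} applies directly. The remaining subcase is a single node with $v_i=3$ (and possibly $v_i=0$ elsewhere, which is harmless since those nodes drop out). Here I would invoke Theorem \ref{thm: Dynkin quiver tilde P_1, I}: $\mu$ is flat by Remark \ref{rmk: KM quiver simple locus} and Proposition \ref{prop: simple locus}, and the inequality $\star)$ has to be checked for the few admissible decompositions into positive roots with multiplicity. A node of size $3$ only permits small multiplicities $k_s\le 3$, so this is a finite check.

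\emph{$A_1$.} Here $\mathbf w-\mathsf C\mathbf v\ge0$ means $w\ge 2v$, and this is exactly Example \ref{ex: A_1 quiver tilde P_1}.

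\emph{Jordan with $w\ge v$.} This is Theorem \ref{thm: Jordan quiver tilde P_1, III}.

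\emph{All $v_i=1$.} Then $\mathrm{GL}(\mathbf v)$ is a torus, so by the torus example $\widetilde\mu$ is smooth and hence $(\widetilde P_1)$ holds trivially. Jet-flatness is automatic because smooth morphisms are jet-flat.

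The main obstacle I expect is the totally negative case. Example \ref{eq:totally negative quiver} is stated without proof and relies on \cite{2022arXiv220914791V}. Moreover, $\mathbf w-\mathsf C\mathbf v\ge0$ alone (for instance with $\mathbf w=0$ allowed) must still yield flatness of $\mu$ or $\mu^{-1}(0)^{\mathrm{reg}}\ne\emptyset$. I would first try to prove the strict inequality \eqref{key strict inequality} by expanding $\mathbf v\cdot\mathsf C\mathbf v$ over the decomposition: each $v^{(l)}_i v^{(l')}_j$ cross term contributes at least $2$ because $\mathsf C_{ij}\le -1$ and loops give diagonal entries $\mathsf C_{ii}\le -2$. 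If needed, I would fall back on citing the flatness results for totally negative quivers. A secondary nuisance is the single-node $v_i=3$ Dynkin subcase of $\star)$, but that is routine enumeration.
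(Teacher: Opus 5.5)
Your reduction is exactly the paper's: Theorems \ref{thm:vanishingHigherDegCoh} and \ref{thm:VOAinjectivity} reduce everything to $(\widetilde P_1)$, and Proposition \ref{prop: various jet flatness} lets you get it from $(P_1)$, $(P_2)$ or $(\widetilde P_1)$ case by case. Your $A_1$ and Jordan cases also coincide with the paper's. In the other three cases you take different routes, and each is more careful than the paper.

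\textbf{Abelian case.} For $v_i\equiv 1$ you note that $\mathcal Z=\mathfrak{gl}(\mathbf v)^*$. Hence $\widetilde\mu$ is smooth and $(\widetilde P_1)$ is immediate. The paper instead appeals to the hypertoric literature for $(P_1)$; your argument is simpler and needs no outside input.

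\textbf{Dynkin case.} You correctly see that $\sum_i(v_i-1)\le 2$ allows a node with $v_i=3$. Theorem \ref{thm: Dynkin quiver tilde P_1, II} is the only result the paper cites here, and it does not cover that node. Routing it through Theorem \ref{thm: Dynkin quiver tilde P_1, I} fills a real hole.

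\textbf{Totally negative case.} Your route, flatness $\Rightarrow(P_2)\Rightarrow(\widetilde P_1)$, is better than the paper's appeal to $(P_1)$. Example \ref{eq:totally negative quiver} gives $(P_1)$ only when $\mu^{-1}(0)^{\mathrm{reg}}\neq\emptyset$. That fails, for instance, when $\mathbf v\cdot\mathbf w=1$: the decomposition $\mathbf v^{(0)}=0$, $k=1$ turns \eqref{key strict inequality} into $\mathbf v\cdot\mathbf w>1$. So drop your fallback through the regular locus.

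Two steps need repair.

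(i) \emph{Flatness for totally negative quivers.} The two sides of \eqref{key inequality} differ by
\[
2\sum_{l\ge1}\mathbf v^{(l)}\cdot\mathbf w-\sum_{l\neq l'}\mathbf v^{(l)}\cdot\mathsf C\mathbf v^{(l')}-2k .
\]
Since all entries of $\mathsf C$ are $\le -1$, each ordered cross term between nonzero parts contributes at least $1$.
\begin{itemize}
\item If $\mathbf v^{(0)}\neq0$, there are $k(k+1)\ge 2k$ such terms, which suffices.
\item If $\mathbf v^{(0)}=0$, there are only $k(k-1)$, which is $<2k$ for $k=1,2$. So the cross terms do \emph{not} dominate, and you need $\mathbf v\cdot\mathbf w\ge1$.
\end{itemize}
This bound does not follow from $\mathbf w-\mathsf C\mathbf v\ge0$, which is vacuous for totally negative quivers. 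It comes from the standing assumption $\mu^{-1}(0)^{\mathrm{st}}\neq\emptyset$ of Proposition \ref{prop: stable locus}: if $\mathbf w$ vanished on the support of $\mathbf v$, then $S=0$ would destabilize every point. Your worry about $\mathbf w=0$ is therefore misplaced. In that case $\mathrm{tr}\,\mu\equiv0$, so $\mu$ is not flat and $\widetilde{\mathcal M}=\emptyset$; the case is excluded by hypothesis, not something to prove.

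(ii) \emph{The $v_i=3$ Dynkin subcase is not a finite check.} $Q$ ranges over all $A_n,D_n,E_{6,7,8}$ with $v_j=1$ off $i$, and $\mathbf w$ is arbitrary. A uniform argument works, sketched as follows.
\begin{itemize}
\item Put $\mathbf y=\mathbf v-\mathbf v^{(0)}$ and $\mathbf u=\mathbf w-\mathsf C\mathbf v\ge0$. Then condition $\star)$ is equivalent to $\mathbf y\cdot\mathbf u+\tfrac12\mathbf y\cdot\mathsf C\mathbf y+2r\ge\sum_s k_s^2$.
\item Only $\mathbf v^{(s)}=\mathbf e_i$ can carry $k_s\ge2$. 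If no root does, the inequality is trivial.
\item Otherwise, each neighbour $j$ of $i$ satisfies $(\mathsf C\mathbf v)_j\le 2-v_i=-1$, hence $u_j\ge1$. This pays for the negative terms $k\,\mathbf e_i\cdot\mathsf C\mathbf v^{(s)}$.
\item The remaining roots form a $0/1$ vector whose Tits form counts the components of its support. The difference of the two sides then comes out positive.
\end{itemize}

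Finally, say explicitly that $\sum_i(v_i-1)\le2$ is taken over the support of $\mathbf v$. Read literally, each node with $v_j=0$ loosens the bound. For example, $\mathbf v=(3,3,0,0)$ on $A_4$ satisfies the hypothesis, yet condition $\star)$ fails when $\mathbf u$ vanishes at the first two nodes, and $v_i=3$ excludes Theorem \ref{thm: Dynkin quiver tilde P_1, II}. So neither Dynkin theorem applies to it.
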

\end{theorembox}
\begin{proof}
    From Theorem \ref{thm:vanishingHigherDegCoh}, the first condition  
    $$H^{\infty/2+n}_{\mathrm{BRST}}(\mathfrak{gl}(\mathbf v),\mathcal{D}^{\mathrm{ch}}(\mathfrak{R}))=0~, \quad n<0$$ is satisfied if the quiver and dimension data $(Q,\mathbf v,\mathbf w)$ are such that the extended moment map $\widetilde\mu$ has property $(\widetilde P_2)$ from Proposition \ref{prop: various jet flatness} ($\widetilde\mu$ is jet-flat). From Theorem \ref{thm:VOAinjectivity}, the second condition, namely the injectivity of the natural vertex superalgebra map $\CV(Q,\mathbf v,\mathbf w)\to \mathsf D^{\mathrm{ch}}(\widetilde\CM)$, holds under the stronger condition $(\widetilde P_1)$ from Proposition \ref{prop: various jet flatness} (that $\widetilde\mu$ is flat, and $\widetilde\mu^{-1}(0)$ is reduced and irreducible and has rational singularities), which in turn implies property $(\widetilde P_2)$. Therefore, in order to prove this theorem, it suffice to demonstrate that property $(\widetilde P_1)$ holds in each of the cases:
    \begin{itemize}
    \item  $Q$ totally negative (see Example \ref{eq:totally negative quiver}) has property $(P_1)$ from \cite{2022arXiv220914791V}, which implies property $(\widetilde P_1)$ in Proposition \ref{prop: various jet flatness};
    \item  $Q$ a Dynkin quiver, with $\sum_{i\in Q_0}( v_i-1)\le 2$, has property $(\widetilde P_1)$ from Theorem \ref{thm: Dynkin quiver tilde P_1, II};
    \item $Q$ the $A_1$ quiver has property $(\widetilde P_1)$ from Example \ref{ex: A_1 quiver tilde P_1};
    \item $Q$ the Jordan quiver, with $ w_i\ge  v_i$ has property $(\widetilde P_1)$ from Theorem \ref{thm: Jordan quiver tilde P_1, III}
    \item $\forall i\in Q_0$, with $ v_i=1$, represents the abelian case where the Nakajima quiver variety is a hypertoric variety; this case has been studied in \cite{kuwabara2017vertex} and has property $(P_1)$ cf. Lemma 3.3 in loc. cit. (Lemma 4.7 \cite{Bellamy2010OnDQ}), together with Propositions \ref{prop: flat moment map} and \ref{prop: flat moment map 2}.  
\end{itemize}
\end{proof}

\bigskip

\begin{remark}
    Under the assumption that $\mu^{-1}(0)^{\mathrm{reg}}$ is nonempty, Theorem \ref{thm:semiclassical-injectivity2} proves that the natural map  
\begin{align*}
    \mathbb C[\widetilde{\mathcal M}^0(\mathbf v,\mathbf w|\mathbf u)]\longrightarrow \Gamma(\widetilde{\mathcal M}(\mathbf v,\mathbf w|\mathbf u),\mathcal O_{\widetilde{\mathcal M}(\mathbf v,\mathbf w|\mathbf u)})
\end{align*}
is an isomorphism. In turn, this proves that \cite[Conjecture 10.2]{Coman:2023xcq} holds under this assumption, specifically that the map \eqref{eq:map from C_2 alg to function ring of quiver variety} 
\begin{align}
    \CR(\mathsf D^{\mathrm{ch}}(\widetilde\CM))\longrightarrow \mathscr O_{\widetilde\CM(Q,\mathbf v,\mathbf w|\mathbf u)}(\widetilde\CM) \;
\end{align}
is an isomorphism of Poisson algebras.
\end{remark}

\newpage

\appendix

\section{Étale Slice and Auxiliary Quivers}\label{sec:étaleSlice}

In this appendix, we review and generalize the étale slice construction in \cite{crawley2003normality}, which we employ in Section \ref{Conditions for moment map jet-flatness}. From a framed quiver data $(Q,\mathbf v,\mathbf w)$, we canonically associate an unframed quiver data $(Q^+,\mathbf v^+)$ using the construction in Remark \ref{rmk: CB trick}, then \cite[Section 3]{crawley2003normality} gives a quadruple $(A,M,\omega,\zeta)$, where $A$ is the semisimple algebra 
\begin{align*}
    A=\bigoplus_{i\in Q^+_0}\End(V^+_i)=\mathbb C\oplus\bigoplus_{i\in Q_0}\End(V_i),
\end{align*}
$M$ is the $A$-$A$ bimodule $T^*\mathrm{Rep}(\mathbf v,\mathbf w)$, $\omega$ is the standard symplectic form on the cotangent bundle $T^*\mathrm{Rep}(\mathbf v,\mathbf w)$, and $\zeta\in A^*$ is a trace function i.e.\  $\zeta([a,b])=0$ for $a,b\in A$. $\omega$ is balanced in the sense that $\omega(xa,y)=\omega(x,ay)$ for $a\in A$ and $x,y\in M$. The moment map $\mu:M\to A^\ast$ is defined by $\mu(m)(a)=\tfrac{1}{2}\omega(m,[a,m])$, where $m\in M$ and $a\in A$. For a subspace $U\subset M$, we denote $U^\perp = \{m\in M| \omega(m,u)=0, ~ \forall u\in U\}$; if $\omega$ is nondegenerate on $U$, then $\dim U^\perp = \dim M - \dim U$ and $U^{\perp \perp}=U$. 

\smallskip Fix $x\in \mu^{-1}(\zeta)$ such that it has a closed $G$-orbit where $G=\GL(\mathbf v)$, then $[A,x]$ is an isotropic subspace of $M$ \cite[Lemma 4.1]{crawley2003normality} and $$A_x:=\{a\in A: ax=xa\}$$ is a semisimple subalgebra of $A$ \cite[Lemma 4.2]{crawley2003normality}. Denote by $G_x$ the stabilizer of $x$ in $G$; this is the group of units of $A_x$. The construction of an étale slice can be summarized as follows.
\begin{itemize}
    \item Choose an $A_x$-$A_x$-bimodule complement $L$ to $A_x$ in $A$, so that $A=A_x\oplus L$.
    \item Choose a coisotropic $A_x$-$A_x$-bimodule complement $C$ to $[A, x]$ in $M$, so $M = [A, x] \oplus C$, the existence of $C$ is guaranteed by \cite[Corollary 2.3]{crawley2003normality}.
    \item Let $W:=C\cap [A,x]^\perp$. Let $\omega_x$ be the restriction of $\omega$ to $W$. Then $\omega_x$ is nondegenerate, and $W$ is a $A_x$-$A_x$ bimodule with balanced symplectic form $\omega_x$ \cite[Lemma 4.3]{crawley2003normality}.
    \item Let $\mu_x: M\to A^*_x$ be the composition of $\mu:M\to A^*$ with the restriction map $A^*\to A^*_x$. Let $\hat\mu: W\to A^*_x$ be the restriction of $\mu_x$ to $W$, then $\hat\mu$ is the moment map for $W$ \cite[Lemma 4.3]{crawley2003normality}.
    \item  Define $\nu:C\to L^*$ by $\nu(c)(\ell)=\omega(c,[\ell, x])+\omega(c,\ell c)$. Thus
    \begin{align*}
        \mu(x+c)(a+\ell)=\zeta(a+\ell)+\mu_x(c)(a)+\nu(c)(\ell)
    \end{align*}
    for $c\in C$, $a\in A_x$ and $\ell\in L$.
\end{itemize}

\begin{defn}
    For $G$ a reductive group acting on an affine variety $X$, with $x\in X$ a point with closed $G$-orbit and stabilizer $G_x$, an étale slice is a $G_x$-invariant, locally closed, affine subvariety $S\subset X$ containing $x$, such that the induced $G$-equivariant morphism $$G\times_{G_x} S\to X$$ is strongly étale onto its image. A morphism of schemes is étale if it is flat and unramified. A $G$-equivariant morphism $f:X\to Y$  is strongly étale if $f/G:X\sslash G\to Y\sslash G$ is étale and $f$, $f/G$, and the quotient morphism induce a $G$-isomorphism $X\cong Y\times_{Y\sslash G} (X\sslash G)$), which is a $G$-saturated affine open subset $U\subset X$.  
\end{defn}

\begin{proposition}[{\cite[Lemma 4.4-4.8, Theorem 4.9]{crawley2003normality}}]\label{prop: étale slice}
$\nu:C\to L^*$ is smooth at $0$, thus $\nu^{-1}(0)$ is smooth at $0$. The tangent space of $\nu^{-1}(0)$ at $0$ is $W$. Moreover, we have the following.
\begin{itemize}
    \item[(1)] For $G\times_{G_x} C = (G\times C)\sslash G_x$, the map $\phi:G\times_{G_x} C\to M$ such that $\phi(g,c)=g(x+c)$ is étale at $(1,0)$, and $\phi^{-1}(\mu^{-1}(\zeta))=G\times_{G_x}(\mu_x^{-1}(0)\cap \nu^{-1}(0))$.
    \item[(2)] There exists an $A_x$-$A_x$-bimodule decomposition $C=C^\perp\oplus W$. Denote by $\varphi: \nu^{-1}(0)\to W$ the restriction of the projection $C\to W$ to $\nu^{-1}(0)$, then $\varphi$ is étale at $0$ and it is $G_x$-equivariant, and $\varphi^{-1}(\hat\mu^{-1}(0))=\mu_x^{-1}(0)\cap \nu^{-1}(0)$.
    \item[(3)] It follows from (1) and (2) that the maps $\phi$ and $\mathrm{id}_G\times\varphi$ in the following diagram are étale at $(1,0)$
    \begin{equation*}
    \begin{tikzcd}
        & G\times _{G_x}(\mu_x^{-1}(0)\cap \nu^{-1}(0)) \ar[dl,"\phi"]\ar[dr,"\mathrm{id}_G\times\varphi" '] &\\
        \mu^{-1}(\zeta) & &  G\times _{G_x}\hat\mu^{-1}(0)
    \end{tikzcd}~.
    \end{equation*}
    The induced maps between quotients are étale at $0$
    \begin{equation*}
    \begin{tikzcd}
        & \mu_x^{-1}(0)\cap \nu^{-1}(0)\sslash G_x \ar[dl,"\phi/ G_x"]\ar[dr,"\varphi/ G_x" '] &\\
        \mu^{-1}(\zeta)\sslash G & & \hat\mu^{-1}(0)\sslash G_x
    \end{tikzcd}~.
    \end{equation*}
    Therefore the neighborhood of $x\in \mu^{-1}(\zeta)$ is isomorphic to the neighborhood of $(1,0)\in  G\times _{G_x}\hat\mu^{-1}(0)$ in étale topology, and the neighborhood of $x\in \mu^{-1}(\zeta)\sslash G$ is isomorphic to the neighborhood of $0\in \hat\mu^{-1}(0)\sslash G_x$ in étale topology.
\end{itemize}
\end{proposition}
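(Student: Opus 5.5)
The statement is Proposition~\ref{prop: étale slice}, the Crawley-Boevey étale slice theorem applied to the quadruple $(A,M,\omega,\zeta)$ attached to $(Q^+,\mathbf v^+)$. The plan is to follow \cite[Section 4]{crawley2003normality} essentially verbatim. Two facts keep us inside its hypotheses: $G=\GL(\mathbf v)$ is the group of units of $A$ modulo the central $\mathbb C^\times$ at the framing vertex $\infty$, and that $\mathbb C^\times$ acts trivially on $M$.

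\textbf{Smoothness of $\nu$.} Compute the differential of $\nu$ at $0$: $d\nu_0(c)(\ell)=\omega(c,[\ell,x])$, since the term $\omega(c,\ell c)$ is quadratic in $c$. Surjectivity of $d\nu_0:C\to L^*$ is then equivalent to the map $L\to C^*$, $\ell\mapsto\omega(-,[\ell,x])|_C$, being injective. Suppose $\omega(C,[\ell,x])=0$. Since $[A,x]$ is isotropic, also $\omega([A,x],[\ell,x])=0$. As $M=[A,x]\oplus C$, this gives $[\ell,x]\in M^\perp=0$, so $\ell\in A_x\cap L=0$. Hence $\nu$ is smooth at $0$, and $T_0\nu^{-1}(0)=\ker d\nu_0=C\cap[L,x]^\perp=C\cap[A,x]^\perp=W$; here $[A,x]=[L,x]$ because $[A_x,x]=0$. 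The decomposition $C=C^\perp\oplus W$ in (2) follows because $C$ is coisotropic, so $C^\perp\subset C$, and $\omega_x$ is nondegenerate on $W$ by \cite[Lemma 4.3]{crawley2003normality}.

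\textbf{Parts (1) and (2).} For (1), the differential of $\phi$ at $(1,0)$ is $(a,c)\mapsto[a,x]+c$ on $(\mathfrak g/\mathfrak g_x)\oplus C\cong L\oplus C$. It is bijective onto $[A,x]\oplus C=M$, so $\phi$ is étale at $(1,0)$. Using the identity $\mu(x+c)(a+\ell)=\zeta(a+\ell)+\mu_x(c)(a)+\nu(c)(\ell)$ together with $G$-equivariance of $\mu$, we get $\phi^{-1}(\mu^{-1}(\zeta))=G\times_{G_x}(\mu_x^{-1}(0)\cap\nu^{-1}(0))$. For (2), $\varphi=\mathrm{pr}_W|_{\nu^{-1}(0)}$ has differential the identity on $T_0=W$, hence is étale at $0$. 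It is $G_x$-equivariant because the decomposition is one of bimodules. The remaining claim is the equality $\mu_x(c)=\hat\mu(\mathrm{pr}_W c)$ for $c\in\nu^{-1}(0)$. This follows from the balanced form: pairing against $a\in A_x$, the cross terms involving $C^\perp$ vanish because $C^\perp$ is $\omega$-orthogonal to $C$.

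\textbf{Part (3).} This is Luna's fundamental lemma. Both $\phi$ and $\mathrm{id}_G\times\varphi$ are $G$-equivariant, étale at $(1,0)$, and send the closed orbit through $(1,0)$ to closed orbits with the same stabilizer $G_x$. Luna's lemma then upgrades each to a strongly étale map on a saturated affine neighbourhood. Passing to quotients gives the étale maps $\phi/G_x$ and $\varphi/G_x$, and composing one with a local inverse of the other gives the stated étale-local isomorphisms.

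The step I expect to be the main obstacle is verifying closedness of orbits and equality of stabilizers, which Luna's lemma requires. For $\phi$, closedness is inherited from $x$. For $\varphi$ one must check that $0\in\hat\mu^{-1}(0)$ has stabilizer exactly $G_x$, which holds since $G_x$ acts linearly on $W$, and that its $G$-orbit in $G\times_{G_x}\hat\mu^{-1}(0)$ is closed. This is exactly where \cite[Lemmas 4.6--4.8]{crawley2003normality} are needed, and we would cite them directly, checking only that the framing vertex causes no change.
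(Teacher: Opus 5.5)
The step that does not go through as written is (3). Your arguments for the smoothness of $\nu$ at $0$, for $T_0\nu^{-1}(0)=W$, for $C=C^\perp\oplus W$, and for parts (1) and (2) are correct, and they are the arguments of \cite[Section 4]{crawley2003normality}. Your computation in (2) in fact gives $\mu_x(c)=\hat\mu(\mathrm{pr}_W c)$ for every $c\in C$.

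In (3) you apply Luna's fundamental lemma directly to $\phi$ and $\mathrm{id}_G\times\varphi$ on $G\times_{G_x}(\mu_x^{-1}(0)\cap\nu^{-1}(0))$. That lemma \cite[p.~94]{luna1973slices} is formulated for a source that is normal at the point. The zero fibre here is étale-locally $\mu^{-1}(\zeta)$ at $x$, and in the generality of the proposition (no flatness is assumed) it can be non-normal, even reducible, at $x$; this is exactly the singularity the slice is meant to model.

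The missing ingredient is Luna's restriction lemma \cite[p.~95]{luna1973slices}: if a $G$-morphism $X\to Y$ of affine $G$-varieties is strongly étale and $Y'\subset Y$ is closed and $G$-stable, then $X\times_Y Y'\to Y'$ is strongly étale. This is how Crawley-Boevey argues, as the paper recalls in the proof of the super-analogue in the appendix. The repair has three steps:
\begin{itemize}
\item Apply the fundamental lemma to the smooth ambient $G$-map $\phi:G\times_{G_x}C\to M$.
\item Apply it also to the $G_x$-map $\varphi$ from a smooth, $G_x$-saturated affine neighbourhood of $0$ in $\nu^{-1}(0)$ to $W$. Such a neighbourhood exists because a $G_x$-invariant function separates $\{0\}$ from the singular locus.
\item Restrict to the closed invariant subvarieties $\mu^{-1}(\zeta)\subset M$ and $\hat\mu^{-1}(0)\subset W$.
\end{itemize}
By the preimage identities you proved in (1) and (2), the restricted sources are (open saturated pieces of) $G\times_{G_x}(\mu_x^{-1}(0)\cap\nu^{-1}(0))$ and $\mu_x^{-1}(0)\cap\nu^{-1}(0)$. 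So $\phi/G_x$ and $\varphi/G_x$ are étale at $0$.

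Two smaller points. First, the orbit-closedness and stabilizer conditions you single out as the main obstacle are immediate and need no further lemmas. The $G$-orbit of $(1,0)$ in $G\times_{G_x}S$, for any $G_x$-stable $S\ni 0$, is the zero section $G\times_{G_x}\{0\}\cong G/G_x$. It is closed with stabilizer $G_x$, and $\phi$ maps it onto the closed orbit $Gx$, whose stabilizer is also $G_x$.

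Second, the subgroup of $A^\times=\mathbb C^\times\times\mathrm{GL}(\mathbf v)$ acting trivially on $M$ is the diagonal $\mathbb C^\times\cdot 1_A$, not the factor at the vertex $\infty$, which rescales the $\alpha_i,\beta_i$. The correct statements are $\mathrm{GL}(\mathbf v)\cong A^\times/\mathbb C^\times 1_A$ and $G_x\cong A_x^\times/\mathbb C^\times 1_A$. These are what you actually use when identifying $\mathfrak g/\mathfrak g_x$ with $A/A_x\cong L$ in the differential of $\phi$.
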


We say that $x$ has representation type $\tau=(k_0,\beta^{(0)}; \cdots; k_r,\beta^{(r)})$ if the $\overline{Q}^+$-representation $x$ decomposes into $$x=y_0^{\oplus k_0}\oplus\cdots\oplus y_r^{\oplus k_r}$$ such that $y_s$ are simple $\overline{Q}^+$-representations of dimension $\beta^{(s)}$ and $\zeta(\beta^{(s)})=0$, for $s\in\{0,\ldots, r\}$. Note that there exists $0\le s\le r$ such that $\beta^{(s)}_{\infty}=1$ and $\beta^{(t)}_{\infty}=0$ if $t\neq s$. Let us assume that $\beta^{(0)}_{\infty}=1$, thus $k_0=1$, then $\beta^{(0)}=\mathbf v^{(0)+}$ for a unique $\mathbf v^{(0)}\in \mathbb Z^{Q_0}_{\ge 0}$, and $\beta^{(t)}\in \mathbb Z^{Q_0}_{\ge 0}$ for $t>0$.

\begin{defn}\label{defn: auxiliary quiver}
The auxiliary quiver data $(Q_\tau,\mathbf v_\tau,\mathbf w_\tau)$ associated to the representation type $$\tau=(1,\mathbf v^{(0)+}; k_1,\beta^{(1)}; \cdots; k_r,\beta^{(r)})$$ is defined to have the set of nodes $Q_{\tau,0}:=\{1,\cdots,r\}$, and the Cartan matrix $\mathsf C_\tau$ of $Q_\tau$ is determined by
\begin{align}\label{Cartan matrix of aux quiver}
    (\mathsf C_\tau)_{ij}=\beta^{(i)}\cdot \mathsf C\beta^{(j)}.
\end{align}
The dimension vectors $\mathbf v_\tau,\mathbf w_\tau$ are determined by 
\begin{align}
     v_{\tau,i}=k_i,\quad  w_{\tau,i}=\beta^{(i)}\cdot\left(\mathbf w -\mathsf C\mathbf v^{(0)}\right).
\end{align}
\end{defn}

According to the discussion in the end of \cite[Section 4]{crawley2003normality}, we have the following characterization of $W$.

\begin{proposition}\label{prop: auxiliary quiver}
Let $x\in \mu^{-1}(\zeta)$ be a point with closed $G$-orbit, and its representation type is denoted by $\tau=(1,\mathbf v^{(0)+};k_1,\beta^{(1)};\cdots;k_r,\beta^{(r)})$. Let $(Q_\tau,\mathbf v_\tau,\mathbf w_\tau)$ be the auxiliary quiver data associated to $\tau$. Then $G_x\cong \GL(\mathbf v_\tau)$, and there is an isomorphism of $G_x$-modules $$W\cong T^*\left(\mathrm{Rep}(\mathbf v_{\tau},\mathbf w_{\tau})\oplus F_\tau\right),$$ where $F_\tau$ is a vector space with trivial $G_x$-action, and it has dimension
\begin{align*}
    \dim F_\tau=\mathbf v^{(0)}\cdot\left(\mathbf w -\frac{1}{2}\mathsf C\mathbf v^{(0)}\right).
\end{align*}
Moreover, $\hat\mu$ agrees with the composition of the projection $W\to T^*\mathrm{Rep}(\mathbf v_{\tau},\mathbf w_{\tau})$ with the moment map for $T^*\mathrm{Rep}(\mathbf v_{\tau},\mathbf w_{\tau})$. In particular we have a $\GL(\mathbf v_\tau)$-equivariant isomorphism
\begin{align}
    \hat\mu^{-1}(0)\cong \mu_\tau^{-1}(0)\times T^*F_\tau,
\end{align}
where $\mu_\tau :T^*\mathrm{Rep}(\mathbf v_\tau,\mathbf w_\tau)\to \gl(\mathbf v_\tau)^*$ is the moment map for $(Q_\tau,\mathbf v_\tau,\mathbf w_\tau)$. Thus we have an isomorphism
\begin{align}
    \hat\mu^{-1}(0)\sslash G_x\cong \mathcal M^0(\mathbf v_{\tau},\mathbf w_{\tau})\times T^*F_\tau.
\end{align}
\end{proposition}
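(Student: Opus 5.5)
The statement to prove is Proposition \ref{prop: auxiliary quiver}. The plan is to compute the $A_x$-$A_x$-bimodule $W$ explicitly from the decomposition of $x$ into simples, following the discussion at the end of \cite[Section 4]{crawley2003normality}, and then compare moment maps.

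\textbf{Step 1: the stabilizer.} Write $x=y_0\oplus y_1^{\oplus k_1}\oplus\cdots\oplus y_r^{\oplus k_r}$ as a representation of $\overline{Q}^+$. By Schur's lemma, $\End(x)\cong \mathbb C\times\prod_{s=1}^r \mathrm{Mat}_{k_s}(\mathbb C)$. The factor $\mathbb C$ attached to $y_0$ is fixed by the scalar normalisation at the framing vertex $\infty$, so $A_x$ restricted to $\bigoplus_i\End(V_i)$ gives $G_x\cong\prod_s\GL(k_s)=\GL(\mathbf v_\tau)$.

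\textbf{Step 2: identifying $W$.} By construction $W\cong[A,x]^\perp/[A,x]$, a symplectic reduction of $M$ by the tangent space to the orbit. This is the space of first-order deformations $\mathrm{Ext}^1_\Pi(x,x)$ for the deformed preprojective algebra $\Pi$ of $Q^+$. Decomposing,
\begin{align*}
W\cong\bigoplus_{s,t}\mathrm{Ext}^1(y_s,y_t)\otimes\Hom(\mathbb C^{k_s},\mathbb C^{k_t}).
\end{align*}
Crawley-Boevey's formula gives $\dim\mathrm{Ext}^1(y_s,y_t)=2\delta_{st}-(\beta^{(s)},\beta^{(t)})_{Q^+}$ for simples, where $(\cdot,\cdot)_{Q^+}$ is the symmetric form of $Q^+$ (so that $(\beta,\beta)_{Q^+}=\beta\cdot\mathsf C\beta$ when $\beta_\infty=0$).

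We then sort the summands:
\begin{itemize}
\item For $s,t\ge1$: $(\beta^{(s)},\beta^{(t)})_{Q^+}=\beta^{(s)}\cdot\mathsf C\beta^{(t)}$. These summands give the doubled arrows and loops between the nodes $s,t$ of $Q_\tau$, matching \eqref{Cartan matrix of aux quiver}.
\item For the mixed terms $s=0$, $t\ge1$: a computation with the framing arrows gives $-(\mathbf v^{(0)+},\beta^{(t)})_{Q^+}=\beta^{(t)}\cdot(\mathbf w-\mathsf C\mathbf v^{(0)})=w_{\tau,t}$. These summands provide the framings $\Hom(W_\tau,V_\tau)\oplus\Hom(V_\tau,W_\tau)$.
\item For $s=t=0$: $\mathrm{Ext}^1(y_0,y_0)$ has dimension $2-(\mathbf v^{(0)+},\mathbf v^{(0)+})_{Q^+}=2\mathbf v^{(0)}\cdot(\mathbf w-\tfrac12\mathsf C\mathbf v^{(0)})$. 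This summand carries the trivial $G_x$-action and is $T^*F_\tau$.
\end{itemize}
For each pair $s\ne t$ we choose a Lagrangian splitting of the summands, which is compatible with $G_x$. This exhibits $W\cong T^*(\mathrm{Rep}(\mathbf v_\tau,\mathbf w_\tau)\oplus F_\tau)$ as $G_x$-modules with balanced symplectic form.

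\textbf{Step 3: the moment map.} The moment map of a symplectic representation is determined by the $G_x$-action and $\omega_x$: $\hat\mu(m)(a)=\tfrac12\omega_x(m,[a,m])$. Since $F_\tau$ carries the trivial action, $\hat\mu$ factors through the projection onto $T^*\mathrm{Rep}(\mathbf v_\tau,\mathbf w_\tau)$, where it equals the standard quiver moment map $\mu_\tau$.

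It follows that $\hat\mu^{-1}(0)\cong\mu_\tau^{-1}(0)\times T^*F_\tau$. Taking $G_x$-invariants, and using that $G_x$ acts trivially on $T^*F_\tau$, gives the quotient statement.

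\textbf{Main obstacle.} The hard part is the bookkeeping in Step 2. One must check that the $\mathrm{Ext}^1$ dimensions, computed via the Euler form including the vertex $\infty$ with $\mathbf w$-many arrows, reproduce exactly $\mathsf C_\tau$, $\mathbf w_\tau$ and $\dim F_\tau$. One must also make sure the off-diagonal pairing is symplectically dual, so that the resulting $G_x$-module is genuinely a cotangent representation.
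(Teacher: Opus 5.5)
Your proposal is correct and follows essentially the route the paper relies on. The paper gives no argument of its own and simply defers to the discussion at the end of \S4 of Crawley-Boevey's normality paper. That discussion computes the local quiver of the semisimple point $x$ exactly as you do, via $W\cong[A,x]^\perp/[A,x]\cong\mathrm{Ext}^1_{\Pi^\lambda}(x,x)$ and $\dim\mathrm{Ext}^1(y_s,y_t)=2\delta_{st}-(\beta^{(s)},\beta^{(t)})$, and your framed bookkeeping for $\mathsf C_\tau$, $\mathbf w_\tau$ and $\dim F_\tau$ checks out. One small omission: the diagonal blocks $e_sWe_s$ (including $s=t=0$) also need a Lagrangian in the symplectic multiplicity space $\mathrm{Ext}^1(y_s,y_s)$, not just the pairs $s\neq t$, but this is immediate.
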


The auxiliary quiver of a Dynkin quiver is again a Dynkin quiver, more precisely we have the following lemma.

\begin{lem}\label{lem: aux quiver of Dynkin quiver}
Let $Q$ be a Dynkin quiver, then any auxiliary quiver $Q_\tau$ is isomorphic to a sub-quiver of $Q$, in particular $Q_\tau$ is also a Dynkin quiver. Moreover the transition map $\mathcal Z\to \mathcal Z_\tau$ is surjective.
\end{lem}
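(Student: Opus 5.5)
The plan is to use Lie theory for the Dynkin quiver $Q$, with Lie algebra $\mathfrak q$ and root lattice $\mathbb Z^{Q_0}$. The Cartan form $\mathsf C$ is positive definite, and its values on roots are $\beta\cdot\mathsf C\beta=2$.

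First we determine the data of a representation type $\tau=(1,\mathbf v^{(0)+};k_1,\beta^{(1)};\cdots;k_r,\beta^{(r)})$. The simple $\overline{Q}^+$-representations $y_s$ for $s\ge 1$ have $\beta^{(s)}_\infty=0$, so they are simple representations of the preprojective algebra $\Pi^\zeta(Q)$. By Crawley-Boevey's theorem, the dimension vector of such a simple must be a positive root $\beta$ with $p(\beta)=1-\tfrac12\beta\cdot\mathsf C\beta$ maximal. For Dynkin $Q$ this forces each $\beta^{(s)}$ to be a positive real root satisfying $\zeta\cdot\beta^{(s)}=0$. The roots $\beta^{(s)}$ are pairwise distinct, since the corresponding simples are non-isomorphic and real roots have rigid simples.

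The key step, and the main obstacle, is to show that $\{\beta^{(1)},\dots,\beta^{(r)}\}$ is linearly independent and forms the simple system of a root subsystem, so that $(\mathsf C_\tau)_{ij}=\beta^{(i)}\cdot\mathsf C\beta^{(j)}\le 0$ for $i\neq j$. The argument rests on the fact that $\mathrm{Ext}^1$ between distinct simples of $\Pi^\zeta$ gives $\dim\mathrm{Ext}^1(y_i,y_j)=-(\beta^{(i)},\beta^{(j)})\ge 0$ by the Crawley-Boevey formula $\dim\mathrm{Ext}^1=(\alpha,\beta)_{\mathrm{sym}}$ up to the Hom terms. This means the off-diagonal entries are non-positive integers.

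With $\mathsf C_\tau$ symmetric and positive definite, having diagonal entries $2$ and non-positive integer off-diagonal entries, $\mathsf C_\tau$ is a Cartan matrix of finite ADE type, and so $Q_\tau$ is Dynkin. To realize it as a sub-quiver, note that a set of real roots with pairwise non-positive inner products is linearly independent, as a standard consequence of positive definiteness. It is therefore a simple system of the root subsystem it generates. This subsystem is the root system of $[\mathfrak l,\mathfrak l]$ for a regular subalgebra, and after a Weyl group conjugation I would arrange it to be a Levi subalgebra $\mathfrak l$ with simple roots a subset of $Q_0$. I expect to justify the Levi property via $\zeta$, since the $\beta^{(s)}$ lie in $\zeta^\perp$ and every root in $\zeta^\perp\cap\mathrm{span}$ is attained. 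The Dynkin diagram of $\mathfrak l$ is then a full sub-diagram of $Q$, which gives $Q_\tau\subset Q$.

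Finally, for surjectivity, the transition map $\mathcal Z\to\mathcal Z_\tau$ is induced by the dual of $\eta:\mathbb C^{Q_{\tau,0}}\to\mathbb C^{Q_0}$, $\mathbf e_i\mapsto\beta^{(i)}$, that is $\lambda\mapsto(\lambda\cdot\beta^{(i)})_i$. Since the $\beta^{(i)}$ are linearly independent, $\eta$ is injective and its transpose is surjective.
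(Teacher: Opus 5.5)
Your arguments for Dynkin-ness and for surjectivity work, by a route different from the paper's. Crawley-Boevey's $\mathrm{Ext}$ formula for non-isomorphic simples gives $(\mathsf C_\tau)_{ij}\le 0$ for $i\neq j$. Because the $\beta^{(s)}$ are \emph{positive} vectors, this forces linear independence. Note that your sentence omits positivity, and without it the claim is false: $\alpha_1,\alpha_2,-\alpha_1-\alpha_2$ in $A_2$ have pairwise non-positive products but are dependent. So $\mathsf C_\tau$ is positive definite of ADE type, $\eta$ is injective, and $\mathcal Z\to\mathcal Z_\tau$ is surjective.

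The gap is the claim that $Q_\tau$ is isomorphic to a sub-quiver of $Q$. What you have established (positive real roots with pairwise non-positive products) does not imply it, and no Weyl-group conjugation turns a non-parabolic subsystem into a Levi one. Take $D_4$ with central node $\alpha_0$. The roots $\alpha_1,\alpha_2,\alpha_3$ and the highest root $\theta=\alpha_1+\alpha_2+\alpha_3+2\alpha_0$ are positive, pairwise orthogonal and linearly independent. They generate a subsystem of type $4A_1$, which is Dynkin but not a full subdiagram of $D_4$. Your phrase ``every root in $\zeta^\perp\cap\mathrm{span}$ is attained'' is exactly the parabolicity that has to be proved, and nothing in your argument supplies it.

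The missing input is the full content of Crawley-Boevey's Theorem 1.2, which your paraphrase (``$p(\beta)$ maximal'') loses. Each $\beta^{(s)}$ lies in $\Sigma_\zeta$. For a real root ($p=0$) this means precisely that $\beta^{(s)}$ is not a sum of two or more positive roots orthogonal to $\zeta$. Hence the $\beta^{(s)}$ are simple roots of the subsystem $R\cap\zeta^\perp$ with positive system $R^+\cap\zeta^\perp$. That subsystem is the root system of the Levi subalgebra $\mathfrak l$ centralizing $\zeta$, so after $W$-conjugation its Dynkin diagram is a full subdiagram of $Q$. Then $Q_\tau$ is the full subdiagram on the nodes $\beta^{(s)}$. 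This is the paper's argument.

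It also gives the non-positive off-diagonal entries and linear independence immediately, so your $\mathrm{Ext}$ computation is not needed for the full statement. In the $D_4$ example, having $\theta$ and all $\alpha_i$ orthogonal to $\zeta$ forces $\zeta=0$. Then $\theta$ is decomposable into positive roots, so this condition excludes that configuration.
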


\begin{proof}
Let $x\in \widetilde{\mathcal M}^0(\mathbf v_\tau,\mathbf w_\tau)$ be of representation type $\tau=(1,\mathbf v^{(0)+};k_1,\beta^{(1)};\cdots;k_r,\beta^{(r)})$, and denote by $\zeta=\mu(x)$, then $\beta^{(1)},\cdots,\beta^{(r)}$ are roots of $Q$ such that $\beta^{(s)}\cdot\zeta=0$ \cite[Theorem 1.2]{crawley2001geometry}. Note that $\beta^{(1)},\cdots,\beta^{(r)}$ are real roots since $Q$ is a Dynkin quiver, whence $\beta^{(1)},\cdots,\beta^{(r)}$ are distinct because there is a unique (up to isomorphism) simple representation of a given dimension which equals to a real root.

Denote by $\mathfrak q$ the simple Lie algebra associated to $Q$. Identify $\zeta$ with an element of a Cartan subalgebra $\mathfrak h$ of $\mathfrak q$, then we define $\mathfrak l$ to be a Levi subalgebra of $\mathfrak q$. It is easy to see that $\mathfrak l$ is reductive with a Cartan subalgebra $\mathfrak h$. The root system of the semi-simple part of $\mathfrak l$ is a sub-system of the root system of $\mathfrak q$ and therefore annihilated $\zeta$, so the quiver associated to its simple roots, denoted by $Q'$, is isomorphic to a sub-quiver of $Q$. For an arbitrary $s\in \{1,\cdots,r\}$, the root $\beta^{(s)}$ is also a root of $\mathfrak l$, and moreover $\beta^{(s)}$ can not be written as sum of positive roots of $\mathfrak l$ which are annihilated by $\zeta$ according to \cite[Theorem 1.2]{crawley2001geometry}, thus $\beta^{(s)}$ is a simple root of $\mathfrak l$. It follows that $Q_\tau$ is a sub-quiver of $Q'$, thus $Q_\tau$ is isomorphic to a sub-quiver of $Q$. 

Since $\beta^{(s)},s=1,\cdots,r$ are distinct simple roots of $\mathfrak l$, it follows that the linear map $\mathbb C^{Q_{\tau,0}}\to \mathbb C^{Q_0}$ such that it maps the $s$-th unit vector $\mathbf e_s$ to $\beta^{(s)}$, is injective. The transition map $\mathcal Z\to \mathcal Z_\tau$ is the dual to the linear map $\mathbb C^{Q_{\tau,0}}\hookrightarrow \mathbb C^{Q_0}$, so it is surjective.
\end{proof}

Some properties are hereditary from a quiver to its auxiliary quiver.

\begin{lem}\label{lem: inherit flatness}
If $\mu$ is flat, then for every auxiliary quiver $(Q_\tau,\mathbf v_\tau,\mathbf w_\tau)$, the moment map $\mu_\tau$ is flat.
\end{lem}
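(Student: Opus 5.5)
The plan is to verify flatness of $\mu_\tau$ through the combinatorial criterion of Proposition \ref{prop: flat moment map}, i.e.\ the inequality \eqref{key inequality} for $(Q_\tau,\mathbf v_\tau,\mathbf w_\tau)$. We show that each decomposition of $\mathbf v_\tau$ pushes forward to a decomposition of $\mathbf v$, and that the inequality for $(Q,\mathbf v,\mathbf w)$ pulls back.

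Let $\tau=(1,\mathbf v^{(0)+};k_1,\beta^{(1)};\cdots;k_r,\beta^{(r)})$, and let $\eta:\mathbb Z^{Q_{\tau,0}}\to\mathbb Z^{Q_0}$ be the map $\mathbf e_s\mapsto\beta^{(s)}$. Then $\mathbf v=\mathbf v^{(0)}+\eta(\mathbf v_\tau)$. By Definition \ref{defn: auxiliary quiver}, for all $\mathbf a,\mathbf b$ we have
\[
\mathbf a\cdot\mathsf C_\tau\mathbf b=\eta(\mathbf a)\cdot\mathsf C\eta(\mathbf b),\qquad \mathbf a\cdot\mathbf w_\tau=\eta(\mathbf a)\cdot(\mathbf w-\mathsf C\mathbf v^{(0)}).
\]
Take a decomposition $\mathbf v_\tau=\sum_{l=0}^k\mathbf v_\tau^{(l)}$ with $k\ge1$ and $\mathbf v_\tau^{(s)}\neq0$ for $s>0$. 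Set $\bar{\mathbf v}^{(0)}:=\mathbf v^{(0)}+\eta(\mathbf v_\tau^{(0)})$ and $\bar{\mathbf v}^{(l)}:=\eta(\mathbf v_\tau^{(l)})$ for $l\ge1$. This is a decomposition of $\mathbf v$ of the allowed form, since $\eta$ maps nonzero nonnegative vectors to nonzero ones ($\beta^{(s)}\neq0$). Flatness of $\mu$ gives \eqref{key inequality} for it.

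Next we expand both sides using the two identities. The plan is to check, as in the computation in the proof of Theorem \ref{thm: Dynkin quiver tilde P_1, I}, that
\[
\mathbf v\cdot(2\mathbf w-\mathsf C\mathbf v)-\bar{\mathbf v}^{(0)}\cdot(2\mathbf w-\mathsf C\bar{\mathbf v}^{(0)}) = \mathbf v_\tau\cdot(2\mathbf w_\tau-\mathsf C_\tau\mathbf v_\tau)-\mathbf v^{(0)}_\tau\cdot(2\mathbf w_\tau-\mathsf C_\tau\mathbf v^{(0)}_\tau).
\]
The $\mathbf v^{(0)}\cdot(2\mathbf w-\mathsf C\mathbf v^{(0)})$ terms cancel, and the cross terms $-2\eta(\cdot)\cdot\mathsf C\mathbf v^{(0)}$ are absorbed exactly into $\mathbf w_\tau$. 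The remaining terms $2-\bar{\mathbf v}^{(l)}\cdot\mathsf C\bar{\mathbf v}^{(l)}$ equal $2-\mathbf v_\tau^{(l)}\cdot\mathsf C_\tau\mathbf v_\tau^{(l)}$. Hence \eqref{key inequality} for $(Q,\mathbf v,\mathbf w)$ with the decomposition $\bar{\mathbf v}^{(\bullet)}$ is literally \eqref{key inequality} for $(Q_\tau,\mathbf v_\tau,\mathbf w_\tau)$ with $\mathbf v_\tau^{(\bullet)}$.

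By Proposition \ref{prop: flat moment map}, $\mu_\tau$ is then flat. The main obstacle is bookkeeping in this bilinear identity, together with the edge cases: $Q_\tau$ may have loops and negative off-diagonal Cartan entries, and $\mathbf w_\tau$ must be nonnegative. The criterion of Proposition \ref{prop: flat moment map}, coming from \cite{crawley2001geometry}, is stated for general quivers with loops, so Cartan matrices of the form \eqref{Cartan matrix of aux quiver} are covered. Nonnegativity of $w_{\tau,i}=\beta^{(i)}\cdot(\mathbf w-\mathsf C\mathbf v^{(0)})$ follows from $\beta^{(i)}\cdot\mathsf C\beta^{(i)}$ together with the fact that $\mathbf w_\tau$ counts arrows $\infty\to i$ in the Crawley-Boevey Ext-quiver, by Proposition \ref{prop: auxiliary quiver}.

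As a sanity check and alternative route, the slice Proposition \ref{prop: étale slice} and Proposition \ref{prop: auxiliary quiver} show that $\mu^{-1}(0)$ near $x$ is étale-locally $G\times_{G_x}(\mu_\tau^{-1}(0)\times T^*F_\tau)$. Equidimensionality of the expected dimension then transfers near $0$. Combined with $\mathbb C^\times$-contraction, this gives $\dim\mu_\tau^{-1}(0)$ equal to its expected value, hence flatness by Proposition \ref{prop: flat moment map}.
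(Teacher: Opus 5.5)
Your proof is correct, but it takes a different route from the paper's.

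\textbf{The paper's route.} The paper argues geometrically. Since $\mu_\tau$ is homogeneous, flatness reduces to $\dim_0\mu_\tau^{-1}(0)=\dim T^*\mathrm{Rep}(\mathbf v_\tau,\mathbf w_\tau)-\dim\gl(\mathbf v_\tau)$. By Proposition \ref{prop: auxiliary quiver} this is the statement $\dim_0\hat\mu^{-1}(0)=\dim W-\dim\gl(\mathbf v_\tau)$. The \'etale map $\phi\colon G\times_{G_x}C\to M$ of the slice construction, together with $\dim L=\dim G-\dim G_x$, $\dim C=\dim W+\dim L$ and $M=[A,x]\oplus C$, turns this into $\dim_x\mu^{-1}(\zeta)=\dim M-\dim\gl(\mathbf v)$. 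That holds because $\mu$ is flat. This is essentially your closing ``sanity check''.

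\textbf{Your route.} Your main argument shows instead that every instance of \eqref{key inequality} for $(Q_\tau,\mathbf v_\tau,\mathbf w_\tau)$ is literally an instance of \eqref{key inequality} for $(Q,\mathbf v,\mathbf w)$, via the pushforward $\eta$. The bilinear identity checks out. Put $q(\mathbf x)=\mathbf x\cdot(2\mathbf w-\mathsf C\mathbf x)$; then for any $\mathbf y$ one has $q(\mathbf v^{(0)}+\eta(\mathbf y))-q(\mathbf v^{(0)})=\mathbf y\cdot(2\mathbf w_\tau-\mathsf C_\tau\mathbf y)$. Apply this to $\mathbf y=\mathbf v_\tau$ and $\mathbf y=\mathbf v_\tau^{(0)}$ and subtract. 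This is exactly the computation the paper uses for Lemma \ref{lem: inherit simple locus} and for condition $\star)$ in Theorem \ref{thm: Dynkin quiver tilde P_1, I}.

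\textbf{Comparison.} Your approach handles all three inheritance statements uniformly and never touches \'etale slices. The price is that it needs the full Crawley-Boevey criterion for $Q_\tau$, which in general has loops, and it needs $(Q_\tau,\mathbf w_\tau)$ to be a genuine framed quiver. The paper only uses the elementary fact that a homogeneous map is flat iff its zero fibre has the expected dimension, plus the local \'etale structure. That argument would also work for symplectic representations that do not come from quivers.

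\textbf{A correction to your edge-case paragraph.} Nonnegativity of $w_{\tau,i}$ has nothing to do with $\beta^{(i)}\cdot\mathsf C\beta^{(i)}$. One has $w_{\tau,i}=-(\mathbf v^{(0)+},\beta^{(i)})_{Q^+}=\dim\mathrm{Ext}^1(y_0,y_i)\ge 0$. Likewise, what you actually need is:
\begin{itemize}
\item $(\mathsf C_\tau)_{ij}=-\dim\mathrm{Ext}^1(y_i,y_j)\le 0$ for $i\neq j$;
\item $2-(\mathsf C_\tau)_{ii}=\dim\mathrm{Ext}^1(y_i,y_i)$, which is nonnegative and even.
\end{itemize}
Negative off-diagonal entries are therefore the normal case, not an edge case. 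All of these are encoded in Proposition \ref{prop: auxiliary quiver}, so they are not obstacles.
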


\begin{proof}
It suffices to show that $\dim_0\mu_\tau^{-1}(0)=\dim T^*\mathrm{Rep}(\mathbf v_{\tau},\mathbf w_{\tau})-\dim\gl(\mathbf v_\tau)$, where $\dim_x X$ of a point $x$ in a variety $X$ is defined as the dimension of a sufficiently small Zariski open neighborhood of $x$. By the Proposition \ref{prop: auxiliary quiver}, it is equivalent to showing that $\dim_0\hat\mu^{-1}(0)=\dim W-\dim\gl(\mathbf v_\tau)$. Since $\dim L=\dim G-\dim G_x$ and $\dim W+\dim L=\dim C$, it boils down to showing that $\dim_{(1,0)} G\times _{G_x}\hat\mu^{-1}(0)=\dim C+\dim L-\dim\gl(\mathbf v)$. Finally, since $\dim L=\dim [A,x]$ and $M=[A,x]\oplus C$ and an étale neighborhood of $(1,0)$ in $G\times _{G_x}\hat\mu^{-1}(0)$ is isomorphic to an étale neighborhood of $x$ in $\mu^{-1}(\zeta)$, we only need to show that $\dim_x\mu^{-1}(\zeta)=\dim M-\dim\gl(\mathbf v)$, which holds by the flatness of $\mu$.
\end{proof}

\begin{lem}\label{lem: inherit simple locus}
If the simple locus $\mu^{-1}(0)^{\mathrm{reg}}$ is non-empty, then for every auxiliary quiver $(Q_\tau,\mathbf v_\tau,\mathbf w_\tau)$, the simple locus $\mu_\tau^{-1}(0)^{\mathrm{reg}}$ is non-empty.
\end{lem}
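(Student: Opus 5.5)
We will use Crawley-Boevey's numerical criterion, Proposition \ref{prop: simple locus criterion}, on both sides. The hypothesis that $\mu^{-1}(0)^{\mathrm{reg}}\neq\emptyset$ means the strict inequality \eqref{key strict inequality} holds for every decomposition of $\mathbf v$. The goal is the same strict inequality for every decomposition $\mathbf v_\tau=\sum_{l=0}^k\mathbf v_\tau^{(l)}$ with $k\ge1$ and $\mathbf v_\tau^{(s)}\neq0$ for $s>0$. As in the proof of Theorem \ref{thm: Dynkin quiver tilde P_1, I}, we transport such a decomposition to one of $\mathbf v$. Let $\eta:\mathbb Z^{Q_{\tau,0}}\to\mathbb Z^{Q_0}$ be the map with $\eta(\mathbf e_i)=\beta^{(i)}$. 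Set
\begin{align*}
\bar{\mathbf v}^{(0)}:=\mathbf v^{(0)}+\eta(\mathbf v^{(0)}_\tau),\qquad \bar{\mathbf v}^{(l)}:=\eta(\mathbf v^{(l)}_\tau)\quad (l\ge1).
\end{align*}
Then $\mathbf v=\bar{\mathbf v}^{(0)}+\sum_{l\ge1}\bar{\mathbf v}^{(l)}$, and each $\bar{\mathbf v}^{(l)}$ is nonzero because the $\beta^{(i)}$ are nonzero nonnegative vectors.

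Next we rewrite both sides of \eqref{key strict inequality} for the decomposition $\bar{\mathbf v}$ in terms of $\tau$-data, using Definition \ref{defn: auxiliary quiver}. The two identities needed are $\eta(\mathbf a)\cdot\mathsf C\eta(\mathbf b)=\mathbf a\cdot\mathsf C_\tau\mathbf b$ and $\eta(\mathbf a)\cdot(\mathbf w-\mathsf C\mathbf v^{(0)})=\mathbf a\cdot\mathbf w_\tau$. Expanding with $\mathbf v=\mathbf v^{(0)}+\eta(\mathbf v_\tau)$ gives
\begin{align*}
\mathbf v\cdot(2\mathbf w-\mathsf C\mathbf v)=\mathbf v^{(0)}\cdot(2\mathbf w-\mathsf C\mathbf v^{(0)})+\mathbf v_\tau\cdot(2\mathbf w_\tau-\mathsf C_\tau\mathbf v_\tau).
\end{align*}
The $\bar{\mathbf v}^{(0)}$ term expands the same way with $\mathbf v^{(0)}_\tau$ in place of $\mathbf v_\tau$. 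For $l\ge1$ we have $2-\bar{\mathbf v}^{(l)}\cdot\mathsf C\bar{\mathbf v}^{(l)}=2-\mathbf v^{(l)}_\tau\cdot\mathsf C_\tau\mathbf v^{(l)}_\tau$. The common term $\mathbf v^{(0)}\cdot(2\mathbf w-\mathsf C\mathbf v^{(0)})$ cancels from both sides, which is the same cancellation as in the proof of Theorem \ref{thm: Dynkin quiver tilde P_1, I}. The strict inequality for $(Q,\mathbf v,\mathbf w)$ and $\bar{\mathbf v}$ then becomes exactly the strict inequality \eqref{key strict inequality} for $(Q_\tau,\mathbf v_\tau,\mathbf w_\tau)$ and $\mathbf v_\tau^{(l)}$. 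Applying Proposition \ref{prop: simple locus criterion} to $Q_\tau$ then gives $\mu_\tau^{-1}(0)^{\mathrm{reg}}\neq\emptyset$.

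The main obstacle is making sure Proposition \ref{prop: simple locus criterion} is legitimately applicable to $Q_\tau$ and that the bookkeeping matches. $Q_\tau$ has Cartan matrix $\mathsf C_\tau$, and it may have loops (when $\beta^{(i)}\cdot\mathsf C\beta^{(i)}\le0$) or off-diagonal entries of either sign. One must check that it is a genuine quiver whose framing $\mathbf w_\tau$ is nonnegative, which are the facts implicit in Proposition \ref{prop: auxiliary quiver}. Should those facts prove delicate, there is a geometric fallback. By Proposition \ref{prop: étale slice}, a neighbourhood of $x$ in $\mu^{-1}(0)$ is étale-locally $G\times_{G_x}(\mu_\tau^{-1}(0)\times T^*F_\tau)$. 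The open, nonempty, dense simple locus of $\mu^{-1}(0)$ meets this neighbourhood, since $\mu^{-1}(0)$ is irreducible by Proposition \ref{prop: simple locus}. A point there has trivial stabilizer, so its image in the slice has trivial $G_x$-stabilizer and a closed orbit. That image is therefore a simple point of $\mu_\tau^{-1}(0)$, by the criterion that simple representations are exactly those with closed orbit and scalar stabilizer.
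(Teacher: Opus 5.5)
Your proposal is correct and is essentially the paper's proof. You transport a decomposition of $\mathbf v_\tau$ to the decomposition $\bar{\mathbf v}^{(0)}=\mathbf v^{(0)}+\sum_i v^{(0)}_{\tau,i}\beta^{(i)}$, $\bar{\mathbf v}^{(l)}=\sum_i v^{(l)}_{\tau,i}\beta^{(i)}$ of $\mathbf v$, check that both sides of \eqref{key strict inequality} agree after cancelling $\mathbf v^{(0)}\cdot(2\mathbf w-\mathsf C\mathbf v^{(0)})$, and apply Proposition \ref{prop: simple locus criterion} in both directions, exactly as the paper does; the paper takes for granted (via Definition \ref{defn: auxiliary quiver} and Proposition \ref{prop: auxiliary quiver}) that $(Q_\tau,\mathbf v_\tau,\mathbf w_\tau)$ is a genuine framed quiver, so your geometric fallback is not needed.
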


\begin{proof}
We use the equivalent characterization of non-emptiness of simple locus (Proposition \ref{prop: simple locus criterion}). Namely, let $\mathbf v_\tau=\sum_{l=0}^t\mathbf v_\tau^{(l)}$ be a decomposition such that $t\ge 1$ and $\mathbf v_\tau^{(l)}\in \mathbb Z^{Q_{\tau,0}}_{\ge 0}$, $l=0,1,\cdots,t$, and $\mathbf v_\tau^{(s)}\neq 0$ if $s>0$. We need to show that the inequality \eqref{key inequality} holds strictly, i.e.
\begin{align}\label{key inequality_temp}
    \mathbf v_\tau\cdot(2\mathbf w_\tau-\mathsf C_\tau\mathbf v_\tau)- \mathbf v_\tau^{(0)}\cdot(2\mathbf w_\tau-\mathsf C_\tau\mathbf v_\tau^{(0)})>\sum_{l=1}^t (2-\mathbf v_\tau^{(l)}\cdot\mathsf C_\tau\mathbf v_\tau^{(l)}).
\end{align}
Write $\tau=(1,\mathbf v^{(0)+};k_1,\beta^{(1)};\cdots;k_r,\beta^{(r)})$, and set
\begin{align*}
    \bar{\mathbf v}^{(0)}:=\mathbf v^{(0)}+\sum_{i=1}^r v_{\tau,i}^{(0)}\beta^{(i)},\quad \bar{\mathbf v}^{(l)}:=\sum_{i=1}^r  v_{\tau,i}^{(l)}\beta^{(i)},\; l=1,\cdots,t.
\end{align*}
Then $\mathbf v=\sum_{l=0}^t\bar{\mathbf v}^{(l)}$ is a decomposition such that $\bar{\mathbf v}_\tau^{(l)}\in \mathbb Z^{Q_{0}}_{\ge 0}$, $l=0,1,\cdots,t$, and $\bar{\mathbf v}_\tau^{(s)}\neq 0$ if $s>0$. Moreover we have
\begin{align}
\text{LHS of \eqref{key inequality_temp}}=\mathbf v\cdot(2\mathbf w-\mathsf C\mathbf v)-\bar{\mathbf v}^{(0)}\cdot(2\mathbf w-\mathsf C\bar{\mathbf v}^{(0)}),
\label{key inequality_tempV2}
\end{align}
and 
\begin{align*}
\text{RHS of \eqref{key inequality_temp}}=\sum_{l=1}^t (2-\bar{\mathbf v}^{(l)}\cdot\mathsf C\bar{\mathbf v}^{(l)}).
\end{align*}
Applying Proposition \ref{prop: simple locus criterion} to the decomposition $\mathbf v=\sum_{l=0}^t\bar{\mathbf v}^{(l)}$, we have
\begin{align*}
    \mathbf v\cdot(2\mathbf w-\mathsf C\mathbf v)-\bar{\mathbf v}^{(0)}\cdot(2\mathbf w-\mathsf C\bar{\mathbf v}^{(0)})>\sum_{l=1}^t (2-\bar{\mathbf v}^{(l)}\cdot\mathsf C\bar{\mathbf v}^{(l)}),
\end{align*}
whence the inequality \eqref{key inequality_temp} follows.
\end{proof}

\begin{lem}\label{lem: inherit positivity}
If $\mathbf w-\mathsf C\mathbf v\in \mathbb Z^{Q_0}_{\ge 0}$, then for every auxiliary quiver $(Q_\tau,\mathbf v_\tau,\mathbf w_\tau)$, $\mathbf w_\tau-\mathsf C_\tau\mathbf v_\tau\in \mathbb Z^{Q_{\tau,0}}_{\ge 0}$.
\end{lem}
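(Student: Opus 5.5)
The plan is to compute $\mathbf w_\tau-\mathsf C_\tau\mathbf v_\tau$ componentwise straight from Definition \ref{defn: auxiliary quiver} and rewrite it as a pairing of a root with $\mathbf w-\mathsf C\mathbf v$. For $i\in Q_{\tau,0}$ we have
\begin{align*}
(\mathbf w_\tau-\mathsf C_\tau\mathbf v_\tau)_i
=\beta^{(i)}\cdot(\mathbf w-\mathsf C\mathbf v^{(0)})-\sum_{j=1}^r k_j\,\beta^{(i)}\cdot\mathsf C\beta^{(j)}
=\beta^{(i)}\cdot\Big(\mathbf w-\mathsf C\big(\mathbf v^{(0)}+\textstyle\sum_{j}k_j\beta^{(j)}\big)\Big).
\end{align*}
The next step is to use the fact that the representation type decomposes the full dimension vector, i.e.\ $\mathbf v=\mathbf v^{(0)}+\sum_{j=1}^r k_j\beta^{(j)}$. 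This holds because $x=y_0\oplus\bigoplus_j y_j^{\oplus k_j}$ with $\dim y_0=\mathbf v^{(0)+}$ and $\dim y_j=\beta^{(j)}$, and $\dim x=\mathbf v^+$. Symmetry of $\mathsf C$ is used implicitly when the bilinear pairing is moved across.

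Substituting, I get $(\mathbf w_\tau-\mathsf C_\tau\mathbf v_\tau)_i=\beta^{(i)}\cdot(\mathbf w-\mathsf C\mathbf v)$. By construction $\beta^{(i)}\in\mathbb Z^{Q_0}_{\ge0}$ for $i\ge1$, since only the $0$-th summand carries the vertex $\infty$. By hypothesis $\mathbf w-\mathsf C\mathbf v\in\mathbb Z^{Q_0}_{\ge 0}$. Hence the dot product is a nonnegative integer, and the lemma follows.

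There is essentially no obstacle. The only point needing care is bookkeeping: $\beta^{(i)}$ must have vanishing $\infty$-component and nonnegative integer entries, so that the pairing uses only $\mathsf C$ on $Q_0$. Consistency with the normalization in \eqref{Cartan matrix of aux quiver} is automatic, because the same symmetric form appears in both $\mathsf C_\tau$ and $\mathbf w_\tau$.
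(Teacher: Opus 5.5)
Your proposal is correct and is the same as the paper's proof: the paper computes $(\mathbf w_\tau-\mathsf C_\tau\mathbf v_\tau)_i=\beta^{(i)}\cdot(\mathbf w-\mathsf C\mathbf v)\ge 0$ in one line. You additionally state explicitly the dimension identity $\mathbf v=\mathbf v^{(0)}+\sum_j k_j\beta^{(j)}$, which the paper leaves implicit, and your appeal to symmetry of $\mathsf C$ is unnecessary since only linearity of $\beta^{(i)}\cdot\mathsf C(\,\cdot\,)$ is used, though this is harmless.
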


\begin{proof}
Write $\tau=(1,\mathbf v^{(0)+};k_1,\beta^{(1)};\cdots;k_r,\beta^{(r)})$, then the $i$-th coordinate $(\mathbf w_\tau-\mathsf C_\tau\mathbf v_\tau)_i$ can be written as
\begin{align*}
    (\mathbf w_\tau-\mathsf C_\tau\mathbf v_\tau)_i=\beta^{(i)}\cdot (\mathbf w-\mathsf C\mathbf v)\ge 0.
\end{align*}
In other words $\mathbf w_\tau-\mathsf C_\tau\mathbf v_\tau\in \mathbb Z^{Q_{\tau,0}}_{\ge 0}$. 
\end{proof}

\subsection{Étale slice for \texorpdfstring{$\widetilde\mu^{-1}(0)$}{}}

Proposition \ref{prop: étale slice} gives a characterization of the étale neighborhood of a point in $\mu^{-1}(\zeta)\sslash \GL(\mathbf v)$. In this subsection we generalize the construction to give a characterization of the étale neighborhood of a point in $x\in \widetilde\mu^{-1}(0)\sslash \GL(\mathbf v)$.

Using the same notation as the previous subsection, except that now $x\in \widetilde\mu^{-1}(0)$ such that, for $G=\GL(\mathbf v)$, the orbit $Gx$ is closed in $\widetilde\mu^{-1}(0)$, and we define 
\begin{align*}
    \widetilde M:=M\oplus\mathcal Z,
\end{align*}
where $\mathcal Z$ is the linear subspace of $A^*$ which annihilates $[A,A]$, see \eqref{the center Z}. $\widetilde M$ is naturally a $A$-$A$-bimodule where the $A$-$A$-bimodule structure on $\mathcal Z$ is trivial. Then we modify the construction of étale slice in the previous subsection as follows.
\begin{itemize}
    \item We have the same $A_x$-$A_x$-bimodule decomposition $A=A_x\oplus L$.
    \item Define $\widetilde C:=C\oplus \mathcal Z$, so that we have an $A_x$-$A_x$-bimodule decomposition $\widetilde M = [A, x] \oplus \widetilde C$.
    \item Let $\widetilde W:=W\oplus \mathcal Z$, then we have $A_x$-$A_x$-bimodule decomposition $\widetilde C=C^\perp\oplus \widetilde W$.
    \item Let $\widetilde\mu_x: \widetilde M\to A^*_x$ be the composition of $\widetilde\mu:\widetilde M\to A^*$ with restriction map $A^*\to A^*_x$. Let $\hat{\widetilde\mu}: \widetilde W\to A^*_x$ be the restriction of $\widetilde\mu_x$ to $\widetilde W$, then $\hat{\widetilde\mu}$ is the extended moment map for $\widetilde W$.
    \item  Define $\widetilde\nu:\widetilde C\to L^*$ by $\widetilde\nu(c+z)(\ell)=\omega(c,[\ell, x])+\omega(c,\ell c)+\mathrm{pr}_{L^*}(z)$, where $\mathrm{pr}_{L^*}: A^*\to L^*$ is the restriction map. Thus
    \begin{align*}
        \widetilde\mu(x+c+z)(a+\ell)=\widetilde\mu_x(c+z)(a)+\widetilde\nu(c+z)(\ell)
    \end{align*}
    for $c\in C$, $z\in \mathcal Z$, $a\in A_x$ and $\ell\in L$.
\end{itemize}

The same argument in the proof of {\cite[Lemma 4.4-4.8, Theorem 4.9]{crawley2003normality}} can be applied to the extended version, and we have the following analog of Proposition \ref{prop: étale slice}.

\begin{proposition}\label{prop: étale slice for extended moment map}
$\widetilde\nu:\widetilde C\to L^*$ is smooth at $0$, thus $\widetilde\nu^{-1}(0)$ is smooth at $0$. The tangent space of $\widetilde\nu^{-1}(0)$ at $0$ is $\widetilde W$. Moreover, we have the following.
\begin{itemize}
    \item[(1)] The map $\widetilde\phi:G\times_{G_x} \widetilde C\to \widetilde M$ such that $\widetilde\phi(g,c+z)=g(x+c+z)$ is étale at $(1,0)$, and  $\widetilde\phi^{-1}(\widetilde\mu^{-1}(0))=G\times_{G_x}(\widetilde\mu_x^{-1}(0)\cap \widetilde\nu^{-1}(0))$.
    \item[(2)] Denote by $\widetilde\varphi: \widetilde\nu^{-1}(0)\to \widetilde W$ the restriction of the projection $\widetilde C\to \widetilde W$ to $\widetilde\nu^{-1}(0)$, then $\widetilde\varphi$ is étale at $0$ and it is $G_x$-equivariant, and $\widetilde\varphi^{-1}(\hat{\widetilde\mu}^{-1}(0))=\widetilde\mu_x^{-1}(0)\cap \widetilde\nu^{-1}(0)$.
    \item[(3)] It follows from (1) and (2) that the maps $\widetilde\phi$ and $\mathrm{id}_G\times\widetilde\varphi$ in the following diagram are étale at $(1,(0,0))$
    \begin{equation*}
    \begin{tikzcd}
        & G\times _{G_x}(\widetilde\mu_x^{-1}(0)\cap \widetilde\nu^{-1}(0)) \ar[dl,"\widetilde\phi"]\ar[dr,"\mathrm{id}_G\times\widetilde\varphi" '] &\\
        \widetilde\mu^{-1}(0) & &  G\times _{G_x}\hat{\widetilde\mu}^{-1}(0)
    \end{tikzcd}
    \end{equation*}
    The induced maps between quotients are étale at $0$ 
    \begin{equation*}
    \begin{tikzcd}
        & \widetilde\mu_x^{-1}(0)\cap \widetilde\nu^{-1}(0)\sslash G_x \ar[dl,"\widetilde\phi/ G_x"]\ar[dr,"\widetilde\varphi/ G_x" '] &\\
        \widetilde\mu^{-1}(0)\sslash G & & \hat{\widetilde\mu}^{-1}(0)\sslash G_x
    \end{tikzcd}
    \end{equation*}
    Therefore the neighborhood of $x\in \widetilde\mu^{-1}(0)$ is isomorphic to the neighborhood of $(1,0)\in  G\times _{G_x}\hat{\widetilde\mu}^{-1}(0)$ in étale topology, and the neighborhood of $x\in \widetilde\mu^{-1}(0)\sslash G$ is isomorphic to the neighborhood of $0\in \hat{\widetilde\mu}^{-1}(0)\sslash G_x$ in étale topology.
\end{itemize}
\end{proposition}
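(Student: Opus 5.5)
The statement to prove is Proposition \ref{prop: étale slice for extended moment map}, the extended analogue of Proposition \ref{prop: étale slice}. I would follow the proof of \cite[Lemmas 4.4--4.8, Theorem 4.9]{crawley2003normality} step by step, carrying the extra $\mathcal Z$ summand along. The guiding observation is that $\mathcal Z$ is a trivial $A$-$A$-bimodule. So it is orthogonal to everything for $\omega$ (we extend $\omega$ by zero on $\mathcal Z$), and it only enters $\widetilde\mu$ linearly through $i:\mathcal Z\hookrightarrow A^*$. The decomposition $\widetilde\mu(x+c+z)(a+\ell)=\widetilde\mu_x(c+z)(a)+\widetilde\nu(c+z)(\ell)$ is a direct expansion of $\mu(x+c)+z$. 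It uses $\mu(x)(a+\ell)=-z_x(a+\ell)$, where $x\in\widetilde\mu^{-1}(0)$ lies over some $z_x$, together with the cross terms already present in the unextended case. I would absorb the constant $\zeta=-z_x$ by translating the $\mathcal Z$-coordinate.

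\textbf{Smoothness of $\widetilde\nu$ at $0$.} The differential of $\widetilde\nu$ at $0$ is $(c,z)\mapsto \omega(c,[\,\cdot\,,x])+\mathrm{pr}_{L^*}(z)$. Its restriction to $C$ is already surjective onto $L^*$ by \cite[Lemma 4.4]{crawley2003normality}, because $\omega$ pairs $C$ nondegenerately against $[A,x]\cong L$. So $d\widetilde\nu_0$ is surjective and $\widetilde\nu$ is smooth at $0$. Its kernel is $\{(c,z): c\in[A,x]^\perp\cap C \text{ modulo the } \mathcal Z\text{-correction}\}$. I would then check that a complement to this kernel is projected isomorphically by $\widetilde C=C^\perp\oplus\widetilde W\to\widetilde W$. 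It is cleanest to verify that $\ker d\widetilde\nu_0\cap C^\perp=0$ and to compare dimensions, giving $T_0\widetilde\nu^{-1}(0)\cong\widetilde W$. This last step is the one subtle point, since $\mathrm{pr}_{L^*}(z)$ need not vanish. I would handle it by noting that $\mathcal Z\subset A^*$ is the annihilator of $[A,A]$, so $\mathrm{pr}_{L^*}(z)$ can be cancelled by a unique element of $C^\perp\cong L^*$. The tangent space is then the graph of a linear map $\widetilde W\to C^\perp$, and the projection to $\widetilde W$ is still an isomorphism on tangent spaces.

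\textbf{Steps (1) and (2).} For (1), the map $\widetilde\phi$ is $\phi\times\mathrm{id}_{\mathcal Z}$ after identifying $G\times_{G_x}\widetilde C=(G\times_{G_x}C)\times\mathcal Z$, since $G$ acts trivially on $\mathcal Z$. Étaleness at $(1,0)$ is therefore inherited from Proposition \ref{prop: étale slice}(1). The preimage identity follows from $G$-equivariance of $\widetilde\mu$ and the splitting $A=A_x\oplus L$. For (2), $\widetilde\varphi$ is étale at $0$ because $\widetilde\nu^{-1}(0)$ is smooth at $0$ with tangent space mapping isomorphically onto $\widetilde W$. It is $G_x$-equivariant because all the decompositions are $A_x$-bimodule decompositions. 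The identity $\widetilde\varphi^{-1}(\hat{\widetilde\mu}^{-1}(0))=\widetilde\mu_x^{-1}(0)\cap\widetilde\nu^{-1}(0)$ requires $\widetilde\mu_x(c+z)=\hat{\widetilde\mu}(\widetilde\varphi(c+z))$ on $\widetilde\nu^{-1}(0)$. For this I would reproduce \cite[Lemma 4.7]{crawley2003normality}: the $C^\perp$-component pairs trivially with $A_x$ under $\mu_x$ on $\nu^{-1}(0)$, and the $\mathcal Z$-component contributes identically to both sides.

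\textbf{Step (3).} This follows formally. Étaleness at a point with closed orbit and reductive stabilizer passes to quotients by Luna's fundamental lemma, exactly as in \cite[Theorem 4.9]{crawley2003normality}, applied on the saturated open loci where $\widetilde\phi$ and $\mathrm{id}_G\times\widetilde\varphi$ are étale. The main obstacle is the tangent-space computation in the smoothness step, namely controlling $\mathrm{pr}_{L^*}(z)$. Everything else is a routine transcription of the unextended argument.
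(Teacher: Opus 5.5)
Your proposal is correct and is essentially the paper's own argument, which simply asserts that the proofs of \cite[Lemmas 4.4--4.8, Theorem 4.9]{crawley2003normality} carry over once $\mathcal Z$ is adjoined; your reductions ($\widetilde\phi$ is $\phi$ times a translation of $\mathcal Z$, and $d\widetilde\nu_0$ is already surjective on $C$) make that transfer explicit. You are also right that $T_0\widetilde\nu^{-1}(0)$ is in general only the graph of a linear map $\widetilde W\to C^\perp$: it equals $\widetilde W$ only when $\mathcal Z|_L=0$, which fails e.g.\ at a simple point, where $A_x=\mathbb C\cdot 1$, and the graph statement is all that (2) needs. In (2), however, drop the qualifier ``on $\nu^{-1}(0)$'', since for $c+z\in\widetilde\nu^{-1}(0)$ one only has $\nu(c)=-\mathrm{pr}_{L^*}(z)$; use instead that $\mu_x|_C=\hat\mu\circ\mathrm{pr}_W$ holds on all of $C$, because $C^\perp$ and $W$ are $A_x$-bimodules and $C^\perp\subseteq C$ is $\omega$-orthogonal to $C$.
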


The analog of Proposition \ref{prop: auxiliary quiver} is as follows.

\begin{proposition}
If $x$ has representation type $\tau=(1,\mathbf v^{(0)+};k_1,\beta^{(1)};\cdots;k_r,\beta^{(r)})$, and denote by $(Q_\tau,\mathbf v_\tau,\mathbf w_\tau)$ the auxiliary quiver in Proposition \ref{prop: auxiliary quiver}, then there is a $\GL(\mathbf v_\tau)$-equivariant isomorphism
\begin{align}
    \hat{\widetilde\mu}^{-1}(0)\cong \left(\widetilde\mu_\tau ^{-1}(0)\times _{\mathcal Z_\tau}\mathcal Z\right)\times T^*F_\tau,
\end{align}
where $\widetilde\mu_\tau :T^*\mathrm{Rep}(\mathbf v_\tau,\mathbf w_\tau)\times \mathcal Z_\tau\to \gl(\mathbf v_\tau)^*$ is the extended moment map for $(Q_\tau,\mathbf v_\tau,\mathbf w_\tau)$, and the map $\mathcal Z\to \mathcal Z_\tau$ is induced by the restriction map $A^*\to A^*_x$. Thus we also have isomorphism
\begin{align}
    \hat{\widetilde\mu}^{-1}(0)\sslash G_x\cong \left(\widetilde{\mathcal M}^0(\mathbf v_\tau,\mathbf w_\tau)\times _{\mathcal Z_\tau}\mathcal Z\right)\times T^*F_\tau.
\end{align}
\end{proposition}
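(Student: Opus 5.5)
The plan is to reduce to the non-extended slice description of Proposition \ref{prop: auxiliary quiver}, using the fact that the slice for $\widetilde\mu$ is built from the one for $\mu$ by adjoining $\mathcal Z$.

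\textbf{Step 1: identify the slice and its extended moment map.} By construction $\widetilde W=W\oplus\mathcal Z$. By Proposition \ref{prop: auxiliary quiver} there is a $G_x\cong\GL(\mathbf v_\tau)$-equivariant isomorphism
\[
W\cong T^*\mathrm{Rep}(\mathbf v_\tau,\mathbf w_\tau)\oplus T^*F_\tau .
\]
Under it, $\hat\mu$ is the projection to $T^*\mathrm{Rep}(\mathbf v_\tau,\mathbf w_\tau)$ followed by $\mu_\tau$. Unwinding the definition, for $w\in W$, $z\in\mathcal Z$ and $a\in A_x$ we get $\hat{\widetilde\mu}(w+z)(a)=\hat\mu(w)(a)+z(a)$. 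Hence $\hat{\widetilde\mu}(w+z)=\hat\mu(w)+r(z)$, where $r:A^*\to A_x^*$ is restriction.

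\textbf{Step 2: show $r$ maps $\mathcal Z$ into $\mathcal Z_\tau$.} Here $\mathcal Z\subset A^*$ consists of the functionals killing $[A,A]$, and $\mathcal Z_\tau\subset A_x^*$ of those killing $[A_x,A_x]$. Since $[A_x,A_x]\subset[A,A]$, the restriction $r$ sends $\mathcal Z$ into $\mathcal Z_\tau$; this defines the transition map $\mathcal Z\to\mathcal Z_\tau$. The extra summand $\mathbb C$ of $A$ at the framing node $\infty$ should be handled so that the trace pairing with the $\infty$-component is compatible. I will check this via the identification $A_x\cong\bigoplus_{s\ge1}\End(\mathbb C^{k_s})$ plus the scalar part coming from $y_0$.

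\textbf{Step 3: compute the fibre.} It follows that
\[
\hat{\widetilde\mu}^{-1}(0)=\{(p,f,z)\in T^*\mathrm{Rep}(\mathbf v_\tau,\mathbf w_\tau)\times T^*F_\tau\times\mathcal Z:\ \mu_\tau(p)+r(z)=0\}.
\]
This is exactly $\big(\widetilde\mu_\tau^{-1}(0)\times_{\mathcal Z_\tau}\mathcal Z\big)\times T^*F_\tau$. Indeed, $\widetilde\mu_\tau^{-1}(0)=\{(p,z'):\mu_\tau(p)+z'=0\}$, and the fibre product over $\mathcal Z_\tau$ imposes $z'=r(z)$. Equivariance holds because $G_x$ acts trivially on $\mathcal Z$ and on $T^*F_\tau$. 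For the quotient statement, take $G_x$-invariants. The group acts trivially on $\mathcal Z$ and $T^*F_\tau$, and taking invariants of a reductive group commutes with the flat base change along $\mathcal Z\to\mathcal Z_\tau$ (invariants are a direct summand, so they commute with tensoring over $\mathbb C[\mathcal Z_\tau]$). This gives $\widetilde{\mathcal M}^0(\mathbf v_\tau,\mathbf w_\tau)\times_{\mathcal Z_\tau}\mathcal Z\times T^*F_\tau$.

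\textbf{Main obstacle.} The real content is in Step 2: matching the restricted trace functional with the extended moment-map convention of $(Q_\tau,\mathbf v_\tau,\mathbf w_\tau)$. This requires tracking how $\mathcal Z$, the scalars at each node $i\in Q_0$, restricts along the embedding $A_x\hookrightarrow A$. Each factor $\End(\mathbb C^{k_s})$ sits diagonally with multiplicity given by $\beta^{(s)}$, so the restriction should be $\lambda\mapsto(\lambda\cdot\beta^{(s)})_s$, i.e. the dual of $\mathbf e_s\mapsto\beta^{(s)}$. I will verify this by computing traces of scalar matrices under that embedding.
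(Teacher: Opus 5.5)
Your proposal is correct and follows essentially the argument the paper has in mind. The paper gives no separate proof: it presents the statement as the direct analogue of Proposition~\ref{prop: auxiliary quiver}, using $\widetilde W=W\oplus\mathcal Z$ and $\hat{\widetilde\mu}(w+z)=\hat\mu(w)+z|_{A_x}$. Your transition map $\lambda\mapsto(\lambda\cdot\beta^{(s)})_s$ is the one used in Lemma~\ref{lem: aux quiver of Dynkin quiver}.

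The loose end you flag about the $\infty$-summand closes immediately. Write $A_x=\mathbb C\cdot 1\oplus\bigoplus_{s}\End(\mathbb C^{k_s})$; the extra factor coming from $\End(y_0)$ then only contributes the pairing with $1$. There $\hat\mu(w)(1)=0$, and $z(1)=0$ once $\mathcal Z\cong\mathbb C^{Q_0}$ is embedded as trace functionals killing $1$, which is the embedding needed for $\widetilde\mu^{-1}(0)\cong\mu^{-1}(\mathcal Z)$. So that component of the equation is redundant.

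Also, $\mathcal Z\to\mathcal Z_\tau$ need not be flat, since it can fail to be surjective. This does not affect you, because your direct-summand argument for passing to $G_x$-invariants does not use flatness.
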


\subsection{Super analog of étale slice}

Next, we extend the construction of étale slice to the super setting. Let us fix a coordinate vector $\mathbf u\in \mathbb Z^{Q_0}_{\ge 0}$ and a $Q_0$ graded vector space $U$ such that $\dim U=\mathbf u$, and define the super symplectic vector space $$M_{\mathsf s}:=M\oplus\Pi S,\quad S:=\bigoplus_{i\in Q_0}T^*\Hom(U_i,V_i).$$ $M_{\mathsf s}$ is equipped with standard super symplectic structure $\omega_{\mathsf s}$ such that it is balanced with respect to the $A$-$A$-bimodule structure, i.e.\  $\omega_{\mathsf s}(x,ay)=\omega_{\mathsf s}(xa,y)$ for $a\in A, x,y\in M_{\mathsf s}$. We have defined a moment map $\mu_{\mathsf s}:M_{\mathsf s}\to A^*$ in \eqref{super moment map}. Let us fix $\zeta\in \mathcal Z$ and $x\in \mu^{-1}(\zeta)$ such that it has closed $G$-orbit, where $\mu$ is the usual moment map. Then $x$ is automatically a point in $\mu_{\mathsf s}^{-1}(\zeta)$. We would like to give a characterization of étale neighborhood of $x$ in $\mu_{\mathsf s}^{-1}(\zeta)$, and also étale neighborhood of its image in $\mu_{\mathsf s}^{-1}(\zeta)\sslash G$. 
\begin{itemize}
    \item We have the same $A_x$-$A_x$-bimodule decomposition $A=A_x\oplus L$.
    \item Define $C_{\mathsf s}:=C\oplus \Pi S$, so that we have a $\mathbb Z_2$-graded $A_x$-$A_x$-bimodule decomposition $M_{\mathsf s} = [A, x] \oplus C_{\mathsf s}$.
    \item Let $W_{\mathsf s}:=W\oplus \Pi S$, then we have $A_x$-$A_x$-bimodule decomposition $C_{\mathsf s}=C^\perp\oplus W_{\mathsf s}$.
    \item Let $\mu_{\mathsf s,x}:  M_{\mathsf s}\to A^*_x$ be the composition of $\mu_{\mathsf s}:M_{\mathsf s}\to A^*$ with restriction map $A^*\to A^*_x$. Let $\hat{\mu}_{\mathsf s}: W_{\mathsf s}\to A^*_x$ be the restriction of $\mu_{\mathsf s,x}$ to $W_{\mathsf s}$, then $\hat{\mu}_{\mathsf s}$ is the moment map for $W_{\mathsf s}$.
    \item  Define $\nu_{\mathsf s}:C_{\mathsf s}\to L^*$ by $\nu_{\mathsf s}(c+s)(\ell)=\omega_{\mathsf s}(c,[\ell, x])+\omega_{\mathsf s}(c,\ell c)+\omega_{\mathsf s}(s,\ell s)$. Thus
    \begin{align*}
        \mu_{\mathsf s}(x+c+s)(a+\ell)=\zeta(a+\ell)+\mu_{\mathsf s,x}(c+s)(a)+\nu_{\mathsf s}(c+s)(\ell)
    \end{align*}
    for $c\in C$, $s\in \Pi S$, $a\in A_x$ and $\ell\in L$.
\end{itemize}

\begin{proposition}\label{prop: étale slice for super moment map}
$\nu_{\mathsf s}:C\to L^*$ is smooth at $0$, thus $\nu_{\mathsf s}^{-1}(0)$ is smooth at $0$. The tangent space of $\nu_{\mathsf s}^{-1}(0)$ at $0$ is $W_{\mathsf s}$. Moreover, we have the following.
\begin{itemize}
    \item[(1)] The map $\phi_{\mathsf s}:G\times_{G_x} C_{\mathsf s}\to M_{\mathsf s}$ such that $\phi_{\mathsf s}(g,c+s)=g(x+c+s)$ is étale at $(1,0)$, and $\phi_{\mathsf s}^{-1}(\mu_{\mathsf s}^{-1}(\zeta))=G\times_{G_x}(\mu_{\mathsf s,x}^{-1}(0)\cap \nu_{\mathsf s}^{-1}(0))$.
    \item[(2)] Denote by $\varphi_{\mathsf s}: \nu_{\mathsf s}^{-1}(0)\to W_{\mathsf s}$ the restriction of the projection $C_{\mathsf s}\to W_{\mathsf s}$ to $\nu_{\mathsf s}^{-1}(0)$, then $\varphi_{\mathsf s}$ is étale at $0$ and it is $G_x$-equivariant, and $\varphi_{\mathsf s}^{-1}(\hat\mu_{\mathsf s}^{-1}(0))=\mu_{\mathsf s,x}^{-1}(0)\cap \nu_{\mathsf s}^{-1}(0)$.
    \item[(3)] It follows from (1) and (2) that the maps $\phi_{\mathsf s}$ and $\mathrm{id}_G\times\varphi_{\mathsf s}$ in the following diagram are étale at $(1,0)$
    \begin{equation*}
    \begin{tikzcd}
        & G\times _{G_x}(\mu_{\mathsf s,x}^{-1}(0)\cap \nu_{\mathsf s}^{-1}(0)) \ar[dl,"\phi_{\mathsf s}"]\ar[dr,"\mathrm{id}_G\times\varphi_{\mathsf s}" '] &\\
        \mu_{\mathsf s}^{-1}(\zeta) & &  G\times _{G_x}\hat\mu_{\mathsf s}^{-1}(0)
    \end{tikzcd}
    \end{equation*}
    The induced maps between quotients are étale at $0$
    \begin{equation*}
    \begin{tikzcd}
        & \mu_{\mathsf s,x}^{-1}(0)\cap \nu_{\mathsf s}^{-1}(0)\sslash G_x \ar[dl,"\phi_{\mathsf s}/ G_x"]\ar[dr,"\varphi_{\mathsf s}/ G_x" '] &\\
        \mu_{\mathsf s}^{-1}(\zeta)\sslash G & & \hat\mu_{\mathsf s}^{-1}(0)\sslash G_x
    \end{tikzcd}
    \end{equation*}
    Therefore the neighborhood of $x\in \mu_{\mathsf s}^{-1}(\zeta)$ is isomorphic to the neighborhood of $(1,0)\in  G\times _{G_x}\hat\mu_{\mathsf s}^{-1}(0)$ in étale topology, and the neighborhood of $x\in \mu_{\mathsf s}^{-1}(\zeta)\sslash G$ is isomorphic to the neighborhood of $0\in \hat\mu_{\mathsf s}^{-1}(0)\sslash G_x$ in étale topology.
\end{itemize}
\end{proposition}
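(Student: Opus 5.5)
The statement to prove is Proposition \ref{prop: étale slice for super moment map}. I would reduce it to the bosonic Proposition \ref{prop: étale slice}. The key observation is that all odd directions in $M_{\mathsf s}$ form the summand $\Pi S$. The group $A_x$, and hence $G_x$, acts on $\Pi S$ linearly, and $S=\bigoplus_i T^*\Hom(U_i,V_i)$ is an $A$-$A$-bimodule on which $[A,x]$ has no component. Since $x$ is purely even, $[A,x]\subset M$. Therefore the decompositions $M_{\mathsf s}=[A,x]\oplus C_{\mathsf s}$ and $C_{\mathsf s}=C^\perp\oplus W_{\mathsf s}$ follow immediately from the bosonic ones by adjoining $\Pi S$. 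The formula $\mu_{\mathsf s}(x+c+s)(a+\ell)=\zeta(a+\ell)+\mu_{\mathsf s,x}(c+s)(a)+\nu_{\mathsf s}(c+s)(\ell)$ is checked by expanding $\tfrac12\omega_{\mathsf s}(m,[a,m])$ with $m=x+c+s$. Cross terms between the even and odd parts vanish because $\omega_{\mathsf s}$ is block diagonal with respect to $M\oplus\Pi S$.

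\textbf{Smoothness of $\nu_{\mathsf s}$ at $0$ and the tangent space.} I would write $\nu_{\mathsf s}=\nu\circ\mathrm{pr}_C+q$, where $q(s)(\ell)=\omega_{\mathsf s}(s,\ell s)$ is quadratic in the odd variables. Its differential at $0$ equals $d\nu_0\circ\mathrm{pr}_C$, since $q$ has vanishing linear part. This differential is surjective onto $L^*$ by Proposition \ref{prop: étale slice}, and its kernel is $W\oplus\Pi S=W_{\mathsf s}$. For a morphism from a super-scheme to a purely even smooth target, surjectivity of the differential at a point gives smoothness there. Concretely, the pulled-back coordinates of $L^*$ have linearly independent even differentials, so they extend to a system of local coordinates, and the super implicit function theorem applies. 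Equivalently, one can use the local criterion: the components form a regular sequence whose images in the bosonic reduction give smoothness, as in Lemma \ref{lem: regular sequence}.

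\textbf{Items (1) and (2).} For $\phi_{\mathsf s}$, the differential at $(1,0)$ is $(\xi,c+s)\mapsto[\xi,x]+c+s$ on $L\oplus C_{\mathsf s}$. This is an isomorphism onto $M_{\mathsf s}$, so $\phi_{\mathsf s}$ is étale at $(1,0)$. Here I use the standard fact that a morphism of smooth super-schemes is étale at a point if its differential there is an isomorphism, which holds because étaleness can be checked on the bosonic reduction together with the odd tangent directions. The identity $\phi_{\mathsf s}^{-1}(\mu_{\mathsf s}^{-1}(\zeta))=G\times_{G_x}(\mu_{\mathsf s,x}^{-1}(0)\cap\nu_{\mathsf s}^{-1}(0))$ follows from the displayed decomposition of $\mu_{\mathsf s}$ together with $G$-equivariance. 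For $\varphi_{\mathsf s}$: the tangent space of $\nu_{\mathsf s}^{-1}(0)$ at $0$ is $W_{\mathsf s}$, and the projection $C_{\mathsf s}\to W_{\mathsf s}$ restricts to the identity on it, so $\varphi_{\mathsf s}$ is étale at $0$. The equality $\varphi_{\mathsf s}^{-1}(\hat\mu_{\mathsf s}^{-1}(0))=\mu_{\mathsf s,x}^{-1}(0)\cap\nu_{\mathsf s}^{-1}(0)$ should hold because $\mu_{\mathsf s,x}$ restricted to $\nu_{\mathsf s}^{-1}(0)$ factors through the projection to $W_{\mathsf s}$. This is the super analogue of \cite[Lemma 4.7]{crawley2003normality}, which rests on the balanced property of $\omega_{\mathsf s}$ and on $C^\perp$ pairing trivially with $W_{\mathsf s}$. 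I would redo that computation verbatim; the odd summand enters only through the term $\omega_{\mathsf s}(s,as)$, and that term lives entirely in $W_{\mathsf s}$.

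\textbf{Item (3) and the main obstacle.} The last step is to pass from étale at a point to strongly étale maps on quotients, via Luna's fundamental lemma in the super setting. This is where I expect the main difficulty. My approach would be to reduce to the bosonic case using the nilpotent filtration by $\mathfrak n^k$. Both $G\times_{G_x}(\cdots)$ and $\mu_{\mathsf s}^{-1}(\zeta)$ are finite-order nilpotent thickenings of their bosonic reductions, and these reductions are exactly the schemes appearing in Proposition \ref{prop: étale slice}. The maps $\phi_{\mathsf s}$ and $\mathrm{id}\times\varphi_{\mathsf s}$ are $G$-equivariant, and on bosonic reductions they are étale, with étale quotients after shrinking to saturated affine neighborhoods. Étaleness of a morphism of super-schemes can be checked as flatness plus unramifiedness on the associated graded. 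Taking $G$-invariants commutes with $\mathrm{gr}$ because $G$ is reductive, so invariants are exact. These two facts should give that the quotient maps are étale at $0$ and that the Luna fibre-product property holds, completing the étale-local identifications.
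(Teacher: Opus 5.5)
Your proposal is correct and follows the paper's proof essentially step by step. Smoothness of $\nu_{\mathsf s}$ and items (1)--(2) are obtained by reading off differentials at the base point and reducing to the bosonic Proposition \ref{prop: étale slice}. For item (3), you upgrade the bosonic Luna neighbourhoods to the super setting via the nilpotent filtration and exactness of $G$-invariants, and the paper packages exactly this argument as Lemma \ref{lem: strongly étale morphism}. There, for $f:\Spec B\to\Spec A$, the bosonic Cartesian square $\mathrm{gr}_0B\cong\mathrm{gr}_0A\otimes_{(\mathrm{gr}_0A)^G}(\mathrm{gr}_0B)^G$ is the input that lets invariants pass through the identification $\mathrm{gr}B\cong\mathrm{gr}A\otimes_{\mathrm{gr}_0A}\mathrm{gr}_0B$. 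This is the one step your sketch leaves implicit.
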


\begin{proof}
The smoothness of $\nu_{\mathsf s}$ follows from the smoothness of $\nu$ since the tangent map of the latter is already surjective. The tangent space of $\nu_{\mathsf s}^{-1}(0)$ at $0$ is the kernel of the tangent map at $0$, which is $W\oplus\Pi S=W_{\mathsf s}$.

The tangent map of $\phi_{\mathsf s}$ at $(1,0)$ is surjective, thus it is an isomorphism by dimensions, therefore $\phi_{\mathsf s}$ is étale at $(1,0)$. Notice that $\phi_{\mathsf s}(1,c+s)\in \mu_{\mathsf s}^{-1}(\zeta)$ if and only if $\mu_{\mathsf s}(c+s)=0$ and $\nu_{\mathsf s}(c+s)=0$; moreover, $\phi_{\mathsf s}$ is $G$-equivariant, thus $\phi_{\mathsf s}^{-1}(\mu_{\mathsf s}^{-1}(\zeta))=G\times_{G_x}(\mu_{\mathsf s,x}^{-1}(0)\cap \nu_{\mathsf s}^{-1}(0))$. This proves (1).

The tangent map at $0$ induced by the natural projection $\varphi_{\mathsf s}:\nu_{\mathsf s}^{-1}(0)\to W_{\mathsf s}$ is an isomorphism, thus $\varphi_{\mathsf s}$ is étale at $0$. $\varphi_{\mathsf s}$ is $G_x$-equivariant because $C_{\mathsf s}=C^\perp\oplus W_{\mathsf s}$ is an $A_x$-$A_x$-bimodule decomposition. The restriction of $\mu_{\mathsf s,x}$ to $C_{\mathsf s}$ factors into $\hat\mu_{\mathsf s}\circ \varphi_{\mathsf s}$, thus $\varphi_{\mathsf s}^{-1}(\hat\mu_{\mathsf s}^{-1}(0))=\mu_{\mathsf s,x}^{-1}(0)\cap \nu_{\mathsf s}^{-1}(0)$. This proves (2).

Finally, we prove (3). It remains to show that the quotient morphisms $\phi_{\mathsf s}/G$ and $\varphi_{\mathsf s}/G_x$ are étale. We notice that the morphism $\phi_{\mathsf s}$ (resp. $\varphi_{\mathsf s}$), after modulo the ideals on the source and the target which are generated by odd elements, agree with the morphism $\phi$ (resp. $\varphi$) in the Proposition \ref{prop: étale slice}. To show that the quotient morphism $\phi/G$ is étale, \cite{crawley2003normality} uses Luna's Fundamental Lemma \cite[p94]{luna1973slices} together with the Lemma in \cite[p95]{luna1973slices} to conclude that there exist $G$-invariant affine open subsets $U\subset G\times _{G_x}(\mu_x^{-1}(0)\cap \nu^{-1}(0))$ and $V\subset \mu^{-1}(\zeta)$ such that
\begin{itemize}
    \item $U$ and $V$ are $G$-saturated, i.e.\  $q^{-1}(q(V))=V$ and $q'^{-1}(q'(U))=U$, where $q:\mu^{-1}(\zeta)\to \mu^{-1}(\zeta)\sslash G$ and $q':G\times _{G_x}(\mu_x^{-1}(0)\cap \nu^{-1}(0))\to G\times _{G_x}(\mu_x^{-1}(0)\cap \nu^{-1}(0))\sslash G$ are quotient maps,
    \item $\phi|_U$ is a strongly étale $G$-morphism of $U$ onto $V$, i.e.\  $\phi|_U:U\to V$ is étale, and $\phi|_U/G: U\sslash G\to V\sslash G$ is étale, and moreover the natural diagram is Cartesian
    \begin{equation*}
    \begin{tikzcd}
    U \ar[d] \ar[r,"\phi|_U"] & V \ar[d]\\
    U\sslash G \ar [r,"\phi|_U/G"] & V\sslash G
    \end{tikzcd}.
    \end{equation*}
\end{itemize}
We use the same notation $U$ and $V$ to denote the corresponding open subschemes in the super-schemes $G\times _{G_x}(\mu_{\mathsf s,x}^{-1}(0)\cap \nu_{\mathsf s}^{-1}(0))$ and $\mu_{\mathsf s}^{-1}(\zeta)$ respectively, then the statement (3) follows from the next lemma.
\end{proof}

\begin{lem}\label{lem: strongly étale morphism}
Let $f:U\to V$ be an étale $G$-morphism between affine $G$-schemes. Suppose that $f^{\mathrm{bos}}:U^{\mathrm{bos}}\to V^{\mathrm{bos}}$ is a strongly étale $G$-morphism, then $f$ is also a strongly étale $G$-morphism.
\end{lem}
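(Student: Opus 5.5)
The plan is to check the three ingredients of ``strongly étale'' for $f$ in turn: $f$ itself is étale, $f/G\colon U\sslash G\to V\sslash G$ is étale, and the natural map $U\to V\times_{V\sslash G}(U\sslash G)$ is an isomorphism. The first holds by hypothesis. The other two will be reduced to the bosonic statement by a nilpotent filtration argument.

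Write $U=\Spec B$ and $V=\Spec A$, and let $\mathfrak n_A,\mathfrak n_B$ be the ideals generated by the odd parts. Then $U^{\mathrm{bos}}=\Spec B/\mathfrak n_B$, and similarly for $V$. Since $G$ is reductive, taking invariants is exact on rational $G$-modules. Hence $(B/\mathfrak n_B)^G=B^G/\mathfrak n_B^G$, and the filtration by powers $\mathfrak n_B^i$, which is $G$-stable and finite because the algebras are Noetherian, induces a finite filtration on $B^G$ whose associated graded pieces are $(\mathfrak n_B^i/\mathfrak n_B^{i+1})^G$. The same holds for $A$.

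\textbf{Step 1: the Cartesian square.} The morphism $f^*\colon A\to B$ is étale, and étale morphisms of super-schemes are flat. Therefore $\mathfrak n_B=\mathfrak n_A B$, and more precisely $B\cong A\otimes_{A^{\mathrm{bos}}}B^{\mathrm{bos}}$ compatibly with the filtrations; this is the standard fact that an étale super-algebra over $A$ is determined by its bosonic reduction. It gives $\mathfrak n_B^i/\mathfrak n_B^{i+1}\cong (\mathfrak n_A^i/\mathfrak n_A^{i+1})\otimes_{A^{\mathrm{bos}}}B^{\mathrm{bos}}$. By hypothesis $f^{\mathrm{bos}}$ is strongly étale, so
\[
B^{\mathrm{bos}}\cong A^{\mathrm{bos}}\otimes_{(A^{\mathrm{bos}})^G}(B^{\mathrm{bos}})^G .
\]
Substituting, we get $\mathfrak n_B^i/\mathfrak n_B^{i+1}\cong (\mathfrak n_A^i/\mathfrak n_A^{i+1})\otimes_{(A^{\mathrm{bos}})^G}(B^{\mathrm{bos}})^G$. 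Since $(B^{\mathrm{bos}})^G$ is étale, hence flat, over $(A^{\mathrm{bos}})^G$, taking invariants commutes with this base change, and so
\[
(\mathfrak n_B^i/\mathfrak n_B^{i+1})^G\cong(\mathfrak n_A^i/\mathfrak n_A^{i+1})^G\otimes_{(A^{\mathrm{bos}})^G}(B^{\mathrm{bos}})^G .
\]
Now consider the natural map $A\otimes_{A^G}B^G\to B$. On the associated graded level for the $\mathfrak n$-adic filtrations it is an isomorphism: here I plan to use that $B^G\otimes_{A^G}A^{\mathrm{bos}}$ has associated graded computed from the displayed isomorphisms. Since the filtrations are finite, the map itself is an isomorphism, which is the required Cartesian property.

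\textbf{Step 2: $f/G$ is étale.} The morphism $A^G\to B^G$ is a map of finitely generated super-commutative algebras. Its bosonic reduction is $(A^{\mathrm{bos}})^G\to (B^{\mathrm{bos}})^G$, which is étale by hypothesis. By the graded computation above, $\mathrm{gr}\,B^G\cong \mathrm{gr}\,A^G\otimes_{(A^{\mathrm{bos}})^G}(B^{\mathrm{bos}})^G$. This gives flatness by the local flatness criterion for nilpotent filtrations, and unramifiedness follows because $\Omega_{B^G/A^G}$ vanishes modulo the nilpotent ideal and hence vanishes by Nakayama.

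The main obstacle is making the associated graded comparisons honest. The filtration on $A^G$ induced from $\mathfrak n_A^i$ need not equal the $\mathfrak n_{A^G}$-adic filtration, so tensor products must be taken with care, using only exactness of invariants and flatness of $(B^{\mathrm{bos}})^G$ over $(A^{\mathrm{bos}})^G$. I would first establish $\mathfrak n_B=\mathfrak n_AB$ and $B\cong A\otimes_{A^{\mathrm{bos}}}B^{\mathrm{bos}}$ cleanly from étaleness, since everything else follows from it.
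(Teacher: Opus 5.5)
Your proposal is correct in outline and rests on the same core computation as the paper. Étaleness gives $\mathfrak n_B^i/\mathfrak n_B^{i+1}\cong(\mathfrak n_A^i/\mathfrak n_A^{i+1})\otimes_{A^{\mathrm{bos}}}B^{\mathrm{bos}}$. The bosonic hypothesis rewrites this as a base change along $(A^{\mathrm{bos}})^G\to(B^{\mathrm{bos}})^G$. Exactness of invariants then yields $\mathrm{gr}(B^G)\cong\mathrm{gr}(A^G)\otimes_{(A^{\mathrm{bos}})^G}(B^{\mathrm{bos}})^G$ for the filtrations by $(\mathfrak n^i)^G$.

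Where you differ is in how the two conclusions are extracted. The paper first proves $f/G$ étale by comparing associated graded algebras of completed local rings. It then gets the Cartesian square only indirectly: $h\colon U\to(U\sslash G)\times_{V\sslash G}V$ is étale because $f$ and $f/G$ are, is a homeomorphism by the bosonic hypothesis, and is an isomorphism on completed local rings, hence an isomorphism. You instead prove that $\theta\colon A\otimes_{A^G}B^G\to B$ is an isomorphism directly by a filtered comparison. You then get étaleness of $f/G$ from a graded flatness argument plus vanishing of $\Omega_{B^G/A^G}$. Your order is more self-contained: the Cartesian property no longer depends on $f/G$ being étale, nor on the pointwise ``étale, bijective, iso on completions $\Rightarrow$ iso'' step. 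Your insistence on the induced filtration $(\mathfrak n_A^i)^G$, rather than powers of $(\mathfrak n_A)^G$, is also well placed; the paper's write-up is loose on exactly this distinction.

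When you write it out, tighten three points.

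(i) The expression $A\otimes_{A^{\mathrm{bos}}}B^{\mathrm{bos}}$ is not defined, since $A^{\mathrm{bos}}$ is a quotient of $A$, not a subalgebra. The correct structural fact is $B\cong A\otimes_{A^0}B^0$ with $A^0\to B^0$ étale, which is what the paper cites from Moosavian. Flatness of $B^0$ over $A^0$ then gives $\mathfrak n_B^i=\mathfrak n_A^i\otimes_{A^0}B^0$ and your graded isomorphism. Also, $\mathfrak n_B=\mathfrak n_AB$ comes from unramifiedness, not flatness: $\mathbb C\to\mathbb C[\theta]$ is flat.

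(ii) In Step 1, make explicit that for the tensor filtration there is always a natural surjection $\mathrm{gr}A\otimes_{\mathrm{gr}(A^G)}\mathrm{gr}(B^G)\twoheadrightarrow\mathrm{gr}(A\otimes_{A^G}B^G)$. Composed with $\mathrm{gr}\,\theta$, it becomes $\mathrm{gr}A\otimes_{(A^{\mathrm{bos}})^G}(B^{\mathrm{bos}})^G\cong\mathrm{gr}A\otimes_{A^{\mathrm{bos}}}B^{\mathrm{bos}}\cong\mathrm{gr}B$. Hence both maps are isomorphisms, and $\theta$ is one because the filtrations are finite. The same trick, applied to $J\otimes_{A^G}B^G\to B^G$ for ideals $J\subseteq A^G$, proves flatness. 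You need it because the textbook local flatness criterion is stated for $I$-adic filtrations, and yours is not one.

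(iii) $(A^{\mathrm{bos}})^G\to(B^{\mathrm{bos}})^G$ is the reduction of $A^G\to B^G$ modulo the nilpotent ideals $(\mathfrak n_A)^G$ and $(\mathfrak n_B)^G$. It is not the bosonic reduction of $A^G\to B^G$: for $\mathbb C^\times$ acting on odd coordinates $\psi,\phi$ with weights $\pm1$, the even invariant $\psi\phi$ lies in $(\mathfrak n_A)^G$, yet there are no nonzero odd invariants. Reduction modulo the nilpotent ideals is all you need. For unramifiedness, however, you must first show $I:=(\mathfrak n_B)^G=(\mathfrak n_A)^GB^G$, which follows from the graded isomorphism by descending induction on $i$. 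Then $\Omega_{B^G/A^G}\otimes_{A^G}(A^{\mathrm{bos}})^G\cong\Omega_{(B^{\mathrm{bos}})^G/(A^{\mathrm{bos}})^G}=0$, and Nakayama for the nilpotent ideal $I$ finishes. Without that equality, the conormal term $I/I^2$ is not controlled and ``vanishes modulo the nilpotent ideal'' is unjustified.
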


\begin{proof}
Let $U=\Spec B,V=\Spec A$, where $A$ and $B$ are super-commutative algebras, and we denote $A=A^0\oplus A^1$ and $B=B^0\oplus B^1$ the decomposition of $A$ and $B$ into $\mathbb Z_2$-homogeneous components. Since $f$ is étale, this implies that $f^0:\Spec B^0\to \Spec A^0$ is étale and $B^1\cong A^1\otimes _{A^0} B^0$ by \cite[Lemma 7.10]{moosavian2019existence}. We denote by $\mathfrak n_A$ (resp. $\mathfrak n_B$) the nilpotent ideal in $A$ (resp. in $B$) generated by $A^1$ (resp. by $B^1$). Consider the filtration on $A$ (resp. $B$) induced by the powers of $\mathfrak n_A$ (resp. $\mathfrak n_B$), and denote by $\overline{f}^*$ the induced map between associated graded super-commutative algebras, i.e.
\begin{align*}
    \overline{f}^*:\mathrm{gr}A=\bigoplus_{i=0}^k\mathfrak n_A^i/\mathfrak n_A^{i+1}\longrightarrow \bigoplus_{i=0}^k\mathfrak n_B^i/\mathfrak n_B^{i+1}=\mathrm{gr}B,
\end{align*}
where $k$ is the maximal integer such that $\mathfrak n_A^{k}\neq 0$ or $\mathfrak n_B^{k}\neq 0$. Note that $(f^{\mathrm{bos}})^*$ is identified with the degree zero map $\overline{f}^*_0:\mathrm{gr}_0A=A/\mathfrak n_A\to B/\mathfrak n_B=\mathrm{gr}_0B$. Since $\mathfrak n_B\cong \mathfrak n_A\otimes_A B$, it follows that $B/\mathfrak n_B\cong A/\mathfrak n_A\otimes_A B$, thus $\overline{f}_0: \Spec B/\mathfrak n_B\to \Spec A/\mathfrak n_A$ is étale. By \cite[Lemma 7.7]{moosavian2019existence}, the natural map
\begin{align*}
    \mathfrak n_A^i/\mathfrak n_A^{i+1}\otimes_{\mathrm{gr}_0 A} \mathrm{gr}_0 B\longrightarrow \mathfrak n_B^i/\mathfrak n_B^{i+1}
\end{align*}
is an isomorphism for all $i$. Now according to the assumption that $\overline{f}_0$ is strongly étale, we have isomorphism $\mathrm{gr}_0 B\cong \mathrm{gr}_0 A\otimes_{(\mathrm{gr}_0 A)^G}(\mathrm{gr}_0 B)^G$, thus the natural map 
\begin{align*}
    \mathfrak n_A^i/\mathfrak n_A^{i+1}\otimes_{(\mathrm{gr}_0 A)^G} (\mathrm{gr}_0 B)^G\longrightarrow \mathfrak n_B^i/\mathfrak n_B^{i+1}
\end{align*}
is an isomorphism for all $i$. Taking $G$-invariant, we get isomorphism $(\mathfrak n_A^i/\mathfrak n_A^{i+1})^G\otimes_{(\mathrm{gr}_0 A)^G} (\mathrm{gr}_0 B)^G\cong (\mathfrak n_B^i/\mathfrak n_B^{i+1})^G$. 

Note that the $G$-invariant of the filtration $A\supset \mathfrak n_A\supset \mathfrak n_A^2\supset \cdots \mathfrak n_A^k\supset 0$ induces a filtration $A^G\supset \mathfrak n_A^G\supset (\mathfrak n_A^2)^G\supset \cdots (\mathfrak n_A^k)^G\supset 0$ such that $(\mathfrak n_A^i)^G/(\mathfrak n_A^{i+1})^G=(\mathfrak n_A^i/\mathfrak n_A^{i+1})^G$ because $G$ is reductive. In particular, we have $\mathrm{gr}_i (A^G)=(\mathrm{gr}_i A)^G$. Similarly we also have $\mathrm{gr}_i (B^G)=(\mathrm{gr}_i B)^G$. Then it follows that the natural map
\begin{align*}
    \mathrm{gr} (A^G)\otimes_{\mathrm{gr}_0 (A^G)} \mathrm{gr}_0 (B^G)\longrightarrow \mathrm{gr} (B^G)
\end{align*}
is an isomorphism. $\mathrm{gr}_0 (A^G)\to \mathrm{gr}_0 (B^G)$ is étale by the assumption that $\overline{f}_0$ is étale, then it follows that for every point $x\in U\sslash G$, the induced map $\mathrm{gr}_0 (A^G)_y^{\wedge}\to \mathrm{gr}_0 (B^G)_x^{\wedge}$ between completed local rings is an isomorphism, where $y$ is the image of $x$ in $V\sslash G$. Since localization and taking completion are exact for finitely generated modules of a Noetherian ring, we have
\begin{align*}
    (\mathrm{gr} (A^G))_y^{\wedge}\cong \bigoplus_{i=0}^k ((\mathfrak n_A^G)^i)_y^{\wedge}/((\mathfrak n_A^G)^{i+1})_y^{\wedge},\quad (\mathrm{gr} (B^G))_x^{\wedge}\cong \bigoplus_{i=0}^k ((\mathfrak n_B^G)^i)_x^{\wedge}/((\mathfrak n_B^G)^{i+1})_x^{\wedge}.
\end{align*}
Hence the map $(A^G)_y^{\wedge}\to (B^G)_x^{\wedge}$ induces isomorphism between graded algebras associated to the filtrations $(A^G)_y^{\wedge}\supset \mathfrak (n_A^G)_y^{\wedge}\supset ((\mathfrak n_A^G)^2)_y^{\wedge}\supset \cdots ((\mathfrak n_A^G)^k)_y^{\wedge}\supset 0$ and $(B^G)_x^{\wedge}\supset \mathfrak (n_B^G)_x^{\wedge}\supset ((\mathfrak n_B^G)^2)_x^{\wedge}\supset \cdots ((\mathfrak n_B^G)^k)_x^{\wedge}\supset 0$ respectively. Thus $A^G\to B^G$ is étale, i.e.\  $f/G: U\sslash G\to V\sslash G$ is étale.

Finally, the induced map $h:U\to U':=(U\sslash G)\times_{V\sslash G}V$ is étale because both $f/G$ and $f$ are étale. $h$ induces isomorphism of underlying topological spaces of $U$ and $U'$ because $f^{\mathrm{bos}}$ is strongly étale. Moreover the map between local completions $h^*:\mathcal O_{U',p}^{\wedge}\to \mathcal O_{U,p}^{\wedge}$ is an isomorphism for all $p\in U$, thus $h$ is an isomorphism. This proves that $f$ is a strongly étale $G$-morphism.
\end{proof}

\begin{proposition}\label{prop: auxiliary super quiver}
Let $x\in \mu^{-1}(\zeta)$ be a point with closed $G$-orbit, and its representation type is denoted by $\tau=(1,\mathbf v^{(0)+};k_1,\beta^{(1)};\cdots;k_r,\beta^{(r)})$. Let $(Q_\tau,\mathbf v,\mathbf w_\tau)$ be the auxiliary quiver data associated to $\tau$ in the Definition \ref{defn: auxiliary quiver}. Then $G_x\cong \GL(\mathbf v_\tau)$, and there is an isomorphism of $G_x$-modules $$W_{\mathsf s}\cong T^*\left(\mathrm{Rep}(\mathbf v_{\tau},\mathbf w_{\tau}|\mathbf u_{\tau})\oplus F_\tau\oplus\Pi E_\tau\right),$$ where $F_\tau$ is the vector space in Proposition \ref{prop: auxiliary quiver}, and $E_\tau$ is a vector space with trivial $G_x$-action, and it has dimension
\begin{align*}
    \dim E_\tau=\mathbf v^{(0)}\cdot\mathbf u,
\end{align*}
and $\mathbf u_\tau$ is determined by
\begin{align*}
    \mathbf u_{\tau,i}=\beta^{(i)}\cdot \mathbf u.
\end{align*}
Moreover, $\hat\mu_{\mathsf s}$ agrees with the composition of the projection $W\to T^*\mathrm{Rep}(\mathbf v_{\tau},\mathbf w_{\tau}|\mathbf u_{\tau})$ with the moment map for $T^*\mathrm{Rep}(\mathbf v_{\tau},\mathbf w_{\tau}|\mathbf u_{\tau})$. In particular we have a $\GL(\mathbf v_\tau)$-equivariant isomorphism
\begin{align}
    \hat\mu_{\mathsf s}^{-1}(0)\cong \mu_{\tau,\mathsf s}^{-1}(0)\times T^*F_\tau\times \Pi(T^*E_\tau),
\end{align}
where $\mu_{\tau,\mathsf s} :T^*\mathrm{Rep}(\mathbf v_\tau,\mathbf w_\tau|\mathbf u_{\tau})\to \gl(\mathbf v_\tau)^*$ is the moment map for $(Q_\tau,\mathbf v_\tau,\mathbf w_\tau|\mathbf u_{\tau})$. Thus we have an isomorphism
\begin{align}
    \hat\mu^{-1}(0)\sslash G_x\cong \mathcal M^0(\mathbf v_{\tau},\mathbf w_{\tau}|\mathbf u_{\tau})\times T^*F_\tau\times \Pi(T^*E_\tau).
\end{align}
\end{proposition}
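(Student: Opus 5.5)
The plan is to reduce the super statement to the bosonic Proposition \ref{prop: auxiliary quiver} by computing the odd summand $\Pi S$ as an $A_x$-$A_x$-bimodule. We have $W_{\mathsf s}=W\oplus\Pi S$ by construction, and Proposition \ref{prop: auxiliary quiver} already gives $G_x\cong\GL(\mathbf v_\tau)$ together with $W\cong T^*(\mathrm{Rep}(\mathbf v_\tau,\mathbf w_\tau)\oplus F_\tau)$, compatibly with $\hat\mu$. So it suffices to decompose $S=\bigoplus_i T^*\Hom(U_i,V_i)$ as a $G_x$-module and to check how its contribution to $\mu_{\mathsf s}$ restricts to $A_x^*$.

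First, I will use the decomposition $x=y_0\oplus y_1^{\oplus k_1}\oplus\cdots\oplus y_r^{\oplus k_r}$. This yields a $Q_0$-graded decomposition
\[
V\cong V^{(0)}\oplus\bigoplus_{s=1}^r Y_s\otimes\mathbb C^{k_s},
\]
where $\dim V^{(0)}=\mathbf v^{(0)}$, $\dim Y_s=\beta^{(s)}$, and $G_x=\prod_s\GL(k_s)$ acts only on the multiplicity spaces $\mathbb C^{k_s}$; this is the description underlying Proposition \ref{prop: auxiliary quiver}. Then
\[
\Hom(U_i,V_i)\cong\Hom(U_i,V^{(0)}_i)\oplus\bigoplus_s \Hom(U_i,(Y_s)_i)\otimes\mathbb C^{k_s}.
\]
Summing over $i$, the $\GL(k_s)$-isotypic part is $\mathbb C^{k_s}\otimes\mathbb C^{\beta^{(s)}\cdot\mathbf u}$, which is $\Hom(U_{\tau,s},\mathbb C^{k_s})$ with $\dim U_{\tau,s}=\beta^{(s)}\cdot\mathbf u$. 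The trivial part is $E_\tau:=\bigoplus_i\Hom(U_i,V^{(0)}_i)$, of dimension $\mathbf v^{(0)}\cdot\mathbf u$. Taking cotangents (and $\Pi$) gives
\[
\Pi S\cong \Pi T^*\Bigl(\bigoplus_s\Hom(U_{\tau,s},\mathbb C^{k_s})\Bigr)\oplus\Pi T^*E_\tau,
\]
and combining this with $W$ yields the claimed $W_{\mathsf s}$.

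Next, I will identify $\hat\mu_{\mathsf s}$. Its odd contribution is $(\psi,\phi)\mapsto\sum_i\psi_i\phi_i$, restricted to $A_x=\prod_s\End(\mathbb C^{k_s})$ (embedded as $\mathrm{id}_{Y_s}\otimes\End(\mathbb C^{k_s})$). Pairing with $a\in A_x$ kills the $E_\tau$ part, since $A_x$ acts by zero on $V^{(0)}$. On the isotypic parts, taking the trace over $Y_s$ gives exactly the standard odd term $\sum_s\psi_{\tau,s}\phi_{\tau,s}$ of the super moment map for $(Q_\tau,\mathbf v_\tau,\mathbf w_\tau|\mathbf u_\tau)$. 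Cross terms between different summands vanish because they are off-diagonal with respect to the isotypic decomposition. The even part is handled by Proposition \ref{prop: auxiliary quiver}.

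Hence $\hat\mu_{\mathsf s}$ factors through the projection to $T^*\mathrm{Rep}(\mathbf v_\tau,\mathbf w_\tau|\mathbf u_\tau)$, which gives $\hat\mu_{\mathsf s}^{-1}(0)\cong\mu_{\tau,\mathsf s}^{-1}(0)\times T^*F_\tau\times\Pi(T^*E_\tau)$. The quotient statement then follows by taking $G_x$-invariants, since $G_x$ acts trivially on the last two factors.

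The main obstacle I expect is making sure that the chosen $A_x$-bimodule complements $C$ and $W$ are compatible with the splitting of $\Pi S$. Concretely, the odd part must lie entirely in $W_{\mathsf s}$ and must be symplectically orthogonal to $[A,x]$; this holds because $x$ has zero odd component, so $[A,x]\subset M$. I will also need to check that the symplectic pairing on $\Pi S$ matches the standard pairing $\Hom(U_{\tau},\mathbb C^{k})$ versus $\Hom(\mathbb C^{k},U_\tau)$ after choosing bases of the $Y_s$, with the trace normalisation absorbed by rescaling.
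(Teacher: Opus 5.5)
Your argument is correct. The paper gives no written proof of this proposition. It is left as a consequence of the definition $W_{\mathsf s}=W\oplus\Pi S$ and the bosonic Proposition~\ref{prop: auxiliary quiver}. Your isotypic decomposition of $\Pi S$ under $G_x$, together with the partial-trace computation of the odd moment map, is exactly the verification the paper leaves implicit.

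A few precision remarks.

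\begin{itemize}
\item The obstacle you anticipate does not arise. $C_{\mathsf s}$ and $W_{\mathsf s}$ are \emph{defined} as $C\oplus\Pi S$ and $W\oplus\Pi S$. Since $\omega_{\mathsf s}$ is an orthogonal sum whose summands are preserved by $A$, one has $\mu_{\mathsf s}(c+s)=\mu(c)+\mu_{\mathrm{odd}}(s)$ with no cross terms.

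\item Your claim that $A_x$ acts by zero on $V^{(0)}$ is right for $\mathrm{Lie}(G_x)=\prod_s\gl(k_s)\subset\gl(\mathbf v)$, which is what the target $\gl(\mathbf v_\tau)^*$ requires. Crawley-Boevey's $A_x$ taken literally, with $A=\mathbb C\oplus\bigoplus_i\End(V_i)$, contains an extra scalar factor acting on $V_\infty\oplus V^{(0)}$. That factor acts on $\Hom(U_i,V^{(0)}_i)$ by the same scalar on both sides, so $E_\tau$ still contributes nothing to the moment map. Its component on the $\Hom(U_{\tau,s},\mathbb C^{k_s})$ summands is the redundant framing-vertex component, fixed by $\mu_{\mathsf s}(m)(1)=0$.

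\item No rescaling is needed. The identification $\Hom(U_i,(Y_s)_i)\otimes\mathbb C^{k_s}\cong\Hom(U_i\otimes(Y_s)_i^*,\mathbb C^{k_s})$ preserves the pairing exactly, because $\mathrm{tr}_{Y_s\otimes\mathbb C^{k_s}}=\mathrm{tr}_{\mathbb C^{k_s}}\circ\mathrm{tr}_{Y_s}$.
\end{itemize}
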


\section{Vertex algebras and Poisson vertex algebras}\label{appendix:VA-PVA} 

In this appendix we collect some general background definitions and background relevant for vertex algebras (quantum chiral algebras) and Poisson vertex algebras (the classical limit) \cite{li2004vertex}. 

\begin{definition}
    A vertex algebra is a collection of data $(V,|0\rangle,Y(-,z),T)$ with
        \begin{itemize}
            \item a $\mathbb{C}$-vector space $V$ as the vacuum module of the vertex algebra,
            \item a distinguished vector (the vacuum) $|0\rangle \in V$,
            \item a linear map (state--field correspondence) $Y(-,z): V \to (\mathrm{End}_{\mathbb{C}}\,V)[[z,z^{-1}]]~,$ sending $a \in V$ to the field
            $$Y(a,z) = \sum_{n \in \mathbb{Z}} a_{(n)} z^{-n-1}~$$ with modes $a_{(n)}$, where $a_{(-1)}|0\rangle=a$ and $a_{(n>0)}|0\rangle =0$, with $a_{(n)}b=0~,\forall ~ a,b\in V$ if $n\gg 0$, 
            \item a linear translation operator $T: V \to V$,
        \end{itemize}
    subject to a set of axioms --- vacuum, translation, and locality (or Borcherds / Jacobi identity) 
        \begin{itemize}
            \item (Vacuum axiom):  $Y(|0\rangle ,z) = \mathrm{id}_V,$ the identity operator on $V$,  
            \item (Creation property): $Y(a,z)|0\rangle \in a + zV[[z]]$,
            \item (Translation covariance): $[T, Y(a,z)] = Y(T a , z) = \partial_z Y(a,z),~ \forall a\in V$, with  $T|0\rangle = 0$, 
            \item (Locality): for all $a,b \in V$, there exists $N \gg 0$ such that $(z-w)^N [Y(a,z), Y(b,w)] = 0$, whereby the OPE of these fields takes the form
            $$Y(a,z)Y(b,w) \sim \sum_{n \ge 0} \frac{Y(a_{(n)}b,w)}{(z-w)^{n+1}}~.$$
        \end{itemize}
\end{definition}

\begin{definition}
    A vertex Poisson algebra is a collection of data $(V,|0\rangle,\partial,~_{(n)} ~\mathrm{with}~n\geq -1)$ with 
        \begin{itemize}
            \item $\mathbb{C}$-vector space $V$,  
            \item distinguished vacuum vector $|0\rangle\in V$, 
            \item derivation $\partial\in\mathrm{End}_{\mathbb{C}}\,V$, 
            \item product $~_{(n)}:V\otimes V\to V$, $a\otimes b\mapsto a_{(n)}b$,
        \end{itemize}
    where $(V,|0\rangle,\partial,~_{(-1)})$ is a commutative associative unital algebra with derivation $\partial$, and $$(V,|0\rangle,\partial,~_{(n)} ~\mathrm{with}~n\geq 0)$$ is a vertex Lie algebra with
        $$a_{(n)}b=0~,~~\forall~a,b\in V~, ~~ n\gg0~,$$
        $$a_{(n)}b=(-1)^{n+1}\sum_{j=0}^\infty \frac{(-1)^j}{j!}\partial^j(b_{(n+1)}a)~,$$
    with Leibnitz rule $a_{(n)}(bc) = (a_{(n)}b)c + b(a_{(n)}c)$,
        $$a_{(m)}b_{(k)}c-b_{(k)}a_{(m)}c = \sum_{j=0}^\infty \left( \begin{smallmatrix}
            m \\ j
        \end{smallmatrix} \right) (a_{(j)}b)_{(m+k-j)}~, ~~ \forall~ a,b,c\in V,~n\geq 0~,$$
    which implies the Poisson bracket 
    $$\{a(z), b(w)\}=\sum_{n \ge 0} \frac{(a_{(n)}b)(w)}{(z-w)^{n+1}}~,$$
    and
        $$[\partial,a_{(n)}]=(\partial a)_{(n)}=-na_{(n-1)}~,~~\forall~n\geq 0~.$$
\end{definition}

For a scheme $X$, the jet scheme $J_\infty X$ is a $D$-scheme \cite{Beilinson_2004} and its structure sheaf $\mathcal{O}_{J_\infty X}$ carries a canonical derivation $\partial$. In particular, if $X$ is a Poisson scheme \cite{MR2875849}, then $\mathcal{O}_{J_\infty X}$ carries a unique vertex Poisson algebra structure such that, $\forall~f,g\in\mathcal{O}_X\subset \mathcal{O}_{J_\infty X}$, $f_{(0)}g=\{f,g\}$ and $f_{(n>0)}g=0$. Therefore the assignment $X\mapsto J_\infty X$ defines a functor from the category $\mathscr{C}_X$ of Poisson schemes $X$ to the category of vertex Poisson schemes.

\medskip

Given a filtered vertex algebra $V$, the associated graded algebra $\mathrm{gr}_F V$ defined cf. \eqref{def:associated graded algebra} is a vertex Poisson algebra. 

\subsection*{Canonical example: affine vertex algebra \texorpdfstring{$V^k(\mathfrak{g})$}{V} associated to $\mathfrak{g}$}

Let $\mathfrak{g}$ be a finite-dimensional simple Lie algebra with invariant bilinear form $\kappa$. Associated to this Lie algebra is the affine Kac--Moody algebra $$\hat{\mathfrak{g}} =\mathfrak{g} \otimes \mathbb{C}[t,t^{-1}] \oplus \mathbb{C}K$$ with bracket $$[x \otimes t^m, y \otimes t^n]=[x,y] \otimes t^{m+n}+ m \delta_{m+n,0}\,\kappa(x,y)\,K~.$$ The vacuum module at level $k$ for $k \in \mathbb{C}$ is defined as the induced representation $$V^k(\mathfrak{g})=\mathrm{Ind}_{\mathfrak{g}[[t]] \oplus \mathbb{C}K}^{\hat{\mathfrak{g}}} \mathbb{C}_k ~,$$ where $\mathfrak{g}[[t]]$ acts trivially and $K$ acts by $k$. For a basis $\{J^i\}_{i=1,\ldots , \dim\mathfrak{g}}$ of $\mathfrak{g}$, the associated universal affine vertex algebra is the vertex algebra structure on $V^k(\mathfrak{g})$ which is strongly generated by the fields $J^i(z)$ with operator product expansion (OPE)
$$J^i(z)J^j(w)\sim\frac{k\,\kappa(J^i,J^j)}{(z-w)^2}+\frac{[J^i,J^j](w)}{z-w}~.$$

\newpage

\bibliographystyle{alpha}
\bibliography{Bibliography}

\end{document}